\newtheorem{theo}{Theorem}[section]
\newtheorem{lemm}[theo]{Lemma}
\newtheorem{cor}[theo]{Corollary}
\newtheorem{prop}[theo]{Proposition}
\newtheorem{defi}{Definition}[section]
\newtheorem{rem}[theo]{Remark}
\numberwithin{equation}{section}
\newcommand{\Der}{{\rm Der\:}}
\newcommand{\ad}{{\rm ad\:}}
\newcommand{\Z}{{\Bbb Z}}
\newcommand{\F}{{\Bbb F}}
\newcommand{\rad}{{\rm rad}}
\newcommand{\un}{\underline}
\newcommand{\ot}{\otimes}
\begin{document}
\title{Simple Lie algebras of small characteristic VI. Completion of the classification}
\author{\sc Alexander Premet and Helmut Strade}
\address
{School of Mathematics, The University of Manchester, Oxford Road,
M13 9PL, United Kingdom} \email{sashap@maths.man.ac.uk}
\address{
Fachbereich Mathematik, Universit{\"a}t Hamburg, Bundesstrasse 55,
20146 Hamburg, Germany} \email{strade@math.uni-hamburg.de}
\thanks{{\it Mathematics Subject Classification} (2000 {\it Revision}).
Primary 17B20, 17B50}
\begin{abstract}
\noindent Let $L$ be a finite-dimensional simple Lie algebra over an
algebraically closed field of characteristic $p>3$. It is proved in
this paper that if the $p$-envelope of $\ad L$ in $\Der L$ contains
a torus of maximal dimension whose centralizer in $\ad L$ acts
nontriangulably on $L$, then $p=5$ and $L$ is isomorphic to one of
the Melikian algebras ${\mathcal M}(m,n)$. In conjunction with
\cite[Thm.~1.2]{PS5}, this implies that, up to isomorphism, any
finite-dimensional simple Lie algebra over an algebraically closed
field of characteristic $p>3$ is either classical or a filtered Lie
algebra of Cartan type or a Melikian algebra of characteristic $5$.
This result finally settles the classification problem for
finite-dimensional simple Lie algebras over algebraically closed
fields of characteristic $\ne 2,3$.
\end{abstract}
\maketitle

\section{\bf Introduction}
This paper concludes the series  \cite{PS1}, \cite{PS2}, \cite{PS3},
\cite{PS4}, \cite{PS5}. Its goal is to finish the proof of the
following theorem which was announced in \cite{St04} and \cite{PS}:
\begin{theo}[Classification Theorem]\label{1.1}
Any finite-dimensional simple Lie algebra over an algebraically
closed field of characteristic $p>3$ is of classical, Cartan or
Melikian type.
\end{theo}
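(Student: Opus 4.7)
The plan is to argue by a dichotomy on how a torus of maximal dimension inside the $p$-envelope of $L$ interacts with the adjoint action. Let $L_p$ denote the $p$-envelope of $\ad L$ inside $\Der L$, and choose a torus $T\subset L_p$ of maximal dimension (realizing the absolute toral rank of $L$). Form the centralizer $H:=C_{\ad L}(T)$; this subalgebra plays the role of a Cartan subalgebra in the structure theory of $L$. The argument then splits according to whether $H$ acts triangulably on $L$ or not.

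In the triangulable case, I would invoke \cite[Thm.~1.2]{PS5}: that theorem states that any simple $L$ over an algebraically closed field of characteristic $p>3$ possessing a torus of maximal dimension in its $p$-envelope whose centralizer in $\ad L$ acts triangulably must be of classical or of Cartan type. This closes the triangulable half of the dichotomy immediately, with no further work required.

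In the complementary nontriangulable case, I would appeal to the main result of the present paper, recorded in the abstract above: the existence of a torus of maximal dimension whose centralizer in $\ad L$ acts nontriangulably forces $p=5$ and $L\cong {\mathcal M}(m,n)$ for some $m,n$, i.e.\ a Melikian algebra. Combining the two cases exhausts every possibility for $L$ and so yields the Classification Theorem.

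The substantive obstacle is of course the nontriangulable case, which occupies essentially the entire body of this paper. Its treatment requires constructing a filtration of $L$ from sandwich elements and from the root-space decomposition with respect to $T$, recognizing the associated graded algebra among the known simple graded Lie algebras in small characteristic, and then lifting that identification back to the filtered level to produce a concrete isomorphism $L\cong{\mathcal M}(m,n)$; the constraint $p=5$ falls out of the analysis as the only characteristic in which the relevant graded structure can occur. The recognition machinery rests on the extensive preparatory work in \cite{PS1}, \cite{PS2}, \cite{PS3}, and \cite{PS4}, which is where all the delicate small-characteristic arithmetic is done.
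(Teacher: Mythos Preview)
Your overall strategy---splitting into the standard and nonstandard cases and invoking \cite{PS4}, \cite{PS5} for the former and Theorem~\ref{1.2} for the latter---is exactly the paper's approach. However, there is a genuine quantifier error in the way you set up the dichotomy.

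You fix a single torus $T$ of maximal dimension and case on whether its centralizer $H$ acts triangulably. But the result from \cite{PS4} and \cite{PS5} does \emph{not} say what you claim: its hypothesis is that \emph{all} tori of maximal dimension in $L_p$ are standard, not merely that one such torus is. Your stated version of \cite[Thm.~1.2]{PS5} is in fact false: the Melikian algebra $\mathcal{M}(1,1)$ itself possesses standard tori of maximal dimension (for instance $\mathfrak{t}=F(x_1\partial_1)\oplus F(x_2\partial_2)$, whose centralizer lies in $\mathfrak{t}\oplus\mathcal{M}_{(1)}$ and hence has nilpotent derived subalgebra), so the existence of a standard torus certainly does not force $L$ to be classical or of Cartan type. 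If you happened to pick such a $T$ in a Melikian algebra, your Case~1 would reach a false conclusion.

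The correct dichotomy is: either every torus of maximal dimension in $L_p$ is standard, in which case \cite{PS4} and \cite{PS5} yield that $L$ is classical or of Cartan type; or some torus of maximal dimension is nonstandard, in which case Theorem~\ref{1.2} yields $L\cong\mathcal{M}(m,n)$. These alternatives are complementary, so the Classification Theorem follows. Your sketch of how Theorem~\ref{1.2} is proved is also somewhat off the mark: sandwich elements play no role in this paper. The actual mechanism is a detailed analysis of $2$-sections (Sections~3--5), the construction of the subalgebras $Q(\alpha)$ via Melikian pairs and the $p$-structure (Sections~7--9), and then the assembly $L_{(0)}=\sum_{\alpha}Q(\alpha)$, whose associated graded algebra is recognized as Melikian and lifted via Kuznetsov's rigidity result.
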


For $p>7,$ the finite-dimensional simple Lie algebras were
classified by the second author in the series of papers \cite{St89},
\cite{St91}, \cite{St92}, \cite{St93}, \cite{St94}, \cite{St98}. It
should be mentioned that the Classification Theory was inspired by
the ground-breaking work of Block--Wilson \cite{BW82}, \cite{BW} who
handled the so-called restricted case (also for $p>7$).

In what follows, $F$ will denote an algebraically closed field of
characteristic $p>3$, and $L$ will always stand for a
finite-dimensional simple Lie algebra over $F$. As usual, we
identify $L$ with the subalgebra $\ad L$ of the derivation algebra
$\Der L$ and denote by $L_p$ the semisimple $p$-envelope of $L$ (it
coincides with the $p$-closure of $\ad L$ in the restricted Lie
algebra $\Der L$). Given a torus $T$ of maximal dimension in $L_p$
we let $H$ stand for the centralizer of $T$ in $L$; that is,
$$H:=\,{\mathfrak c}_{L}(T)=\{x\in L\,|\,\,[t,x]=0\ \,\,\forall
\,t\in T\}.$$ Let $\Gamma(L,T)$ be the set of roots of $L$ relative
to $T$; that is, the set of all {\it nonzero} linear functions
$\gamma\in T^*$ for which the subspace $L_\gamma:=\{x\in
L\,|\,\,[t,x]=\gamma(t)x\ \,\,\forall\, t\in T\}$ is nonzero. Then
$H$ is a nilpotent subalgebra of $L$ (possibly zero) and $L$
decomposes as
$L\,=\,H\oplus\,\bigoplus_{\gamma\in\Gamma(L,T)\,}L_\gamma.$ By
\cite[Cor.~3.7]{PS4} any root $\gamma$ in $\Gamma(L,T)$ is either
{\it solvable} or {\it classical} or {\it Witt} or {\it
Hamiltonian}. Accordingly, the semisimple quotient
$L[\gamma]=L(\gamma)/\rad\, L(\gamma)$ of the $1$-section
$L(\gamma):=H\oplus\,\bigoplus_{i\in\F_p^\times\,}L_{i\gamma}$ is
either $(0)$ or $\mathfrak{sl}(2)$ or the Witt algebra
$W(1;\underline{1})$ or contains an isomorphic copy of the
Hamiltonian algebra $H(2;\un{1})^{(2)}$ as an ideal of codimension
$\le 1$. For $\alpha,\beta\in\Gamma(L,T)$ we denote by
$L(\alpha,\beta)$ the $2$-section $\sum_{i,j\in{\Bbb
F}_p}L_{i\alpha+j\beta}$, where $L_0=H$ by convention.

We say that $T$ is {\it standard} if $H^{(1)}$ consists of nilpotent
derivations of $L$ and {\it nonstandard} otherwise. In \cite{PS4}
and \cite{PS5}, it was shown that if all tori of maximal dimension
in $L_p$ are standard, then $L$ is either classical or a filtered
Lie algebra of Cartan type. On the other hand, the main results of
\cite{P94} imply that if $L_p$ contains a nonstandard torus of
maximal dimension, say $T'$, then there are
$\alpha,\beta\in\Gamma(L,T')$ such that the factor algebra
$L(\alpha,\beta)/{\rm rad}\,L(\alpha,\beta)$ is isomorphic to the
restricted Melikian algebra ${\mathcal M}(1,1)$. In particular,
$p=5$ in this case.

\medskip

The main result of the present paper is the following:
\begin{theo}\label{1.2} If the semisimple $p$-envelope
of $L$ contains nonstandard tori of maximal dimension, then  $L$ is
isomorphic to one of the Melikian algebras ${\mathcal M}(m,n)$,
where $(m,n)\in{\mathbb N}^2$.
\end{theo}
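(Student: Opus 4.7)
I would start by fixing a nonstandard torus $T$ of maximal dimension in $L_p$. By \cite{P94}, as recalled above, one has $p=5$ and there exist $\alpha,\beta\in\Gamma(L,T)$ with $L(\alpha,\beta)/\rad\,L(\alpha,\beta)\cong\mathcal{M}(1,1)$. My strategy is to bootstrap this local Melikian datum to a global identification $L\cong\mathcal{M}(m,n)$ via the Weisfeiler standard-filtration method combined with a recognition theorem for graded Melikian algebras.

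First I would choose a maximal subalgebra $L_{(0)}$ of $L$ containing $H$, equip $L$ with the associated Weisfeiler filtration $L=L_{(-q)}\supset\cdots\supset L_{(0)}\supset L_{(1)}\supset\cdots$, and form the graded algebra $G:=\gr L=\bigoplus_i G_i$. The torus $T$ and the resulting root space decomposition descend to $G$. A first task is to show that the depth $q$ equals $3$, which is the characteristic depth of the standard grading on $\mathcal{M}(m,\un{1})$: the presence of the Melikian $2$-section forbids $q\in\{1,2\}$ (which would correspond to graded algebras of classical type or of Cartan contact type), while $q\ge 4$ should be ruled out using the maximality of $T$.

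By a theorem of Weisfeiler, $G$ contains a unique minimal graded ideal $A=A(G)$ with $A_{-}=G_{-}$, and $A$ is graded-simple. The heart of the proof will be to identify $A$ with the graded Melikian algebra $\mathcal{M}(m,\un{1})$ for some $m\in\mathbb{N}$. I would do this by verifying the hypotheses of a Kac--Kuznetsov-type recognition theorem in characteristic $5$: that the $G_0$-module $G_{-1}$ is irreducible, that the pairing $G_{-1}\ot G_{-2}\to G_{-3}$ is nondegenerate, and that the pair $(G_0,G_{-1})$ matches the Melikian model. The classification of $1$-sections in \cite[Cor.~3.7]{PS4}, together with the fact that some $2$-section is already Melikian, should restrict the admissible $G_0$ and its representation on $G_{-1}$ to exactly the Melikian configuration. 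Once $A\cong\mathcal{M}(m,\un{1})$ is established, the simplicity of $L$ forces $A=G$, so $L$ is a filtered deformation of the graded Melikian algebra; the conclusion $L\cong\mathcal{M}(m,n)$ then follows from the classification of such deformations.

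The hard part will be this recognition step: ruling out every other possibility for a depth-$3$ graded simple algebra in characteristic $5$ supporting the given torus action. In particular one must exclude that $G$ might be of Cartan type, and one must show that the single Melikian $2$-section produced by \cite{P94} propagates to constrain the structure of all other $2$-sections of $T$. This global coherence argument, piecing together the local $1$- and $2$-section information of \cite{PS4} into a single depth-$3$ graded simple algebra, is expected to constitute the principal technical difficulty.
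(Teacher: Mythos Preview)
Your overall architecture---Weisfeiler filtration, recognition of the graded algebra as Melikian, then Kuznetsov's deformation result---matches the paper's endgame. But the first move, ``choose a maximal subalgebra $L_{(0)}$ of $L$ containing $H$'', is the wrong one, and this is not a minor slip: it is precisely where the nonstandard hypothesis bites. In $\mathcal{M}(1,1)$ the unique codimension-$5$ subalgebra $\mathcal{M}_{(0)}$ is automorphism-invariant, and for a nonstandard torus $\mathfrak{t}=F(1+x_1)\partial_1\oplus F(1+x_2)\partial_2$ the centralizer $\mathfrak{c}_{\mathcal{M}}(\mathfrak t)$ is \emph{not} contained in $\mathcal{M}_{(0)}$ (it has nonzero components in $\mathcal{M}_{-3}$). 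So if you force $H\subset L_{(0)}$ you will never recover the natural depth-$3$ filtration whose $G_0\cong\mathfrak{gl}(2)$ is needed in the recognition theorem; indeed a $5$-dimensional nontriangulable $H$ cannot sit inside a $\mathfrak{gl}(2)$. The paper's $L_{(0)}$ satisfies $L_{(0)}\cap H=(0)$, and the complementary subspace $L_{(-1)}/L_{(0)}$ is built from a carefully chosen $2$-dimensional piece $H_{(-1)}\subset H$.

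The paper constructs $L_{(0)}$ by gluing local pieces: for each $\alpha\in\Gamma$ a subalgebra $Q(\alpha)\subset L(\alpha)$ with $L(\alpha)=H\oplus Q(\alpha)$ is defined via an intrinsic characterization inside $L_p(\alpha)$, and one sets $L_{(0)}:=\sum_\alpha Q(\alpha)$. Proving that this is a subalgebra requires checking $[Q(\alpha),Q(\beta)]\subset L_{(0)}$ across all $2$-sections, which in turn forces one to know that \emph{every} $2$-section is either Melikian or of type $H(2;(2,1))^{(2)}$ and that all root spaces are $5$-dimensional (Theorem~\ref{sum}). Your last paragraph correctly identifies this ``propagation'' from a single Melikian $2$-section to global control as the crux, but it is not a side issue to be dispatched after choosing $L_{(0)}$: it is the prerequisite that makes the correct $L_{(0)}$ definable at all, and it occupies the bulk of the paper. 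A further subtlety you do not anticipate is that one must first replace $T$ by a sufficiently generic nonstandard torus (Proposition~\ref{generic}) so that $H^3$ contains no nonzero toral element; without this, the invariant definition of $Q(\alpha)$ fails.
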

\noindent Together with the main results of \cite{PS4} and
\cite{PS5} Theorem~\ref{1.2} implies the Classification Theorem. In
view of \cite[Cor.~7.2.3]{St04} we also obtain:
\begin{cor}\label{1.3}
Any finite-dimensional restricted simple Lie algebra over an
algebraically closed field of characteristic $p>3$ is, up to
isomorphism, either one of $W(n;\underline{1}),\, n\ge 1,\,$
$S(n;\underline{1})^{(1)},\,n\ge 3,\,$
$H(2r;\underline{1})^{(2)},\,r\ge 1,\,$
$K(2r+1;\underline{1})^{(1)},\,r\ge 1$, ${\mathcal M}(1,1)$, or has
the form $({\rm Lie}\,G)^{(1)}$, where $G$ is a simple algebraic
$F$-group of adjoint type.
\end{cor}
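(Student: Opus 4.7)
\medskip
\noindent\textbf{Plan of proof.} The idea is to combine the Classification Theorem (Theorem~\ref{1.1}) with the restrictedness criteria assembled in \cite[Cor.~7.2.3]{St04}. Let $L$ be a finite-dimensional restricted simple Lie algebra over $F$. The first step is to invoke Theorem~\ref{1.1} to reduce to the trichotomy: $L$ is either classical, a filtered Lie algebra of Cartan type, or a Melikian algebra $\mathcal{M}(m,n)$ (in which case $p=5$).

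Next, I would identify the restricted members within each of the three families. In the classical case, $L\cong ({\rm Lie}\,G)^{(1)}$ for a simple algebraic $F$-group $G$; normalising $G$ to adjoint type picks one representative per isomorphism class and eliminates central-isogeny redundancies, producing the final family of the corollary. For the Cartan-type case, the key fact is that a filtered Cartan-type simple algebra is restricted if and only if every entry of its defining tuple of truncation parameters equals $1$; concretely, in $\OO(r;\underline{m})$ the natural derivations fail to close under the $p$-th power operation unless $\underline{m}=\underline{1}$. This yields exactly $W(n;\underline{1})$, $S(n;\underline{1})^{(1)}$, $H(2r;\underline{1})^{(2)}$, $K(2r+1;\underline{1})^{(1)}$, with the stated parameter ranges enforcing simplicity of the respective derived algebras. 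For Melikian, the parallel calculation shows that $\mathcal{M}(m,n)$ is restricted precisely when $(m,n)=(1,1)$. Both reductions are packaged in \cite[Cor.~7.2.3]{St04}, which I would cite directly for the bookkeeping.

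The main obstacle is confined to Theorem~\ref{1.1} itself (and hence to Theorem~\ref{1.2}, the new contribution of the present paper); once the classification is available, Corollary~\ref{1.3} reduces to matching each of the three families against the criterion that the $p$-th power operation close on $L$, which is routine. The only mild care required is ensuring that the parameter conventions ($n\ge 1$ for $W$, $n\ge 3$ for $S$, $r\ge 1$ for $H$ and $K$) correctly single out simple algebras and avoid low-dimensional exceptional isomorphisms.
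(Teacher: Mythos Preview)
Your proposal is correct and matches the paper's approach exactly: the paper derives Corollary~\ref{1.3} in one line by invoking the Classification Theorem together with \cite[Cor.~7.2.3]{St04}, which is precisely what you do. Your additional explanation of why only $\underline{m}=\underline{1}$ and $(m,n)=(1,1)$ survive is helpful elaboration of what \cite[Cor.~7.2.3]{St04} already records.
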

For the reader's convenience, we now give a brief overview of the
proof of Theorem~\ref{1.2}. Since our goal is to show that
$L\cong{\mathcal M}(m,n)$, we need to produce a subalgebra $L_{(0)}$
of codimension $5$ in $L$. As in the previous two papers of the
series, local analysis is vital here. All possible types of
$2$-sections in simple Lie algebras are described in
\cite[Sect.~4]{PS4}. The list of $2$-sections is long, but a
thorough investigation shows that most of them cannot occur in our
situation. We prove in Section~5 that if $T$ is a nonstandard torus
of maximal dimension in $L_p$ and $\alpha,\beta\in\Gamma(L,T)$ are
${\mathbb F}_p$-independent, then $\rad\,L(\alpha,\beta)\subset T$
and either $L[\alpha,\beta]\cong{\mathcal M}(1,1)$ of
$L[\alpha,\beta]^{(1)}\cong H(2;(2,1))^{(2)}$; see
Theorem~\ref{sum}. In particular, this implies that all root spaces
of $L$ with respect to $T$ are $5$-dimensional. This intermediate
result is crucial for the rest of the paper. In order to prove it we
have to refine our earlier description of $2$-sections with core of
type $H(2;(2,1))^{(2)}$; see Theorem~\ref{r1TR2}(5). The proof of
Theorem~\ref{r1TR2}(5) relies heavily on a classification of certain
toral derivations of $H(2;(2,1))$. The latter is obtained in
Section~2, the longest section of the paper.

In Section~6, we show the restricted Melikian algebra ${\mathcal
M}(1,1)$ has no nontrivial central extensions and describe the
$p$-characters of irreducible ${\mathcal M}(1,1)$-modules of
dimension $\le 125$. This gives us important new information on the
$p$-mapping of $L_p$; see Section~7. To proceed further we need a
sufficiently generic nonstandard torus of maximal dimension in
$L_p$. We show in Section~9 that there is a nonstandard torus $T$ of
maximal dimension in $L_p$ for which $H^3=[{\mathfrak
c}_L(T),[{\mathfrak c}_L(T),{\mathfrak c}_L(T)]]$ contains no
nonzero toral elements. We then use the new information on the
$p$-mapping of $L_p$ to construct for every $\alpha\in\Gamma(L,T)$ a
subalgebra $Q(\alpha)\subset L(\alpha)$ such that $L(\alpha)=H\oplus
Q(\alpha)$, and set
$L_{(0)}\,:=\,\sum_{\alpha\in\Gamma(L,T)}\,Q(\alpha)$. By
construction, $L_{(0)}$ is a subspace of $L$. In order to show that
it is a subalgebra, we need to check that $[Q(\alpha),
Q(\beta)]\subset\,Q(\alpha)\oplus\sum_{i\in{\mathbb
F}_p}\,Q(\beta+i\alpha)$ for all ${\mathbb F}_p$-independent
$\alpha,\beta\in\Gamma(L,T)$. This is carried out in Section~10. The
rest of the proof is routine.

All Lie algebras in this paper are assumed to be finite-dimensional.
We adopt the notation introduced in \cite{PS1}, \cite{PS2},
\cite{PS3}, \cite{PS4} with the following two exceptions: the
divided power algebra $A(m;\un{n})$ is denoted here by ${\mathcal
O}(m;\un{n})$, and the Melikian algebra ${\mathfrak g}(m,n)$ by
${\mathcal M}(m,n)$. Given a Lie subalgebra $M$ of $L$, we  write
$M_p$ for the $p$-envelope of $M$ in $L_p$.

\medskip

\noindent {\bf Acknowledgement.} Part of this work was done during
our stay at the Max Planck Institut f\"ur Mathematik (Bonn) in the
spring of 2007. We would like to thank the Institute for warm
hospitality and support. We are thankful to the referee for very
careful reading and helpful comments.

\section{\bf Toral elements and one-sections in  $H(2;(2,1))$}
The Lie algebra $H(2;(2,1))$ will appear quite frequently in what
follows, and to deal with it we need some refinements of
\cite[(10.1.1)]{BW}, \cite[(VI.4)]{St91} and \cite[Prop.~2.1]{PS4}.
Set $S:=H(2;(2,1))^{(2)}$, $G:=H(2;(2,1))$, and denote by $S_{(i)}$
(resp., $G_{(i)}$) the $i$th component of the standard filtration of
$S$ (resp., $G$). Recall that $S_p=H(2;(2,1))^{(2)}\oplus FD_1^p$;
see \cite[Thm.~7.2.2(5)]{St04}, for instance. By
\cite[Prop.~2.1.8(viii)]{BW}, $G=V\oplus S$ where
$$V=FD_H(x_1^{(p^2)})\oplus FD_H(x_2^{(p)})\oplus
D_H(x_1^{(p^2-1)}x_2^{(p-1)}).$$ Note that $V$ is a Lie subalgebra
of $G$, and in $\Der S$ we have $V^{[p]}=V^3=0$. We denote by
$\mathcal G$ the $p$-envelope of $G$ in $\Der S$. As $V^{[p]}=0$, it
follows from Jacobson's formula \cite[p.~17]{St04} that ${\mathcal
G}=V\oplus S_p$. We remind the reader that $G$ is a Lie subalgebra
of the Hamiltonian algebra
$H(2)\,=\,\mathrm{span}\,\{D_H(f)\,|\,\,f\in\mathcal{O}(2)\}$ and
$$[D_H(f),D_H(g)]\,=\,D_H\big(D_1(f)D_2(g)-D_2(f)D_1(g)\big)\qquad\quad\
(\forall\,f,g\in\mathcal{O}(2)).
$$ Furthermore, $D_H(f)=D_H(g)$ if and only if $f-g\in F$.
\begin{lemm}\label{conj}
Every toral element $t$ of $S_p$ contained in $S\setminus S_{(0)}$
is conjugate under the automorphism group of $S$ to an element
$$t_\mu=D_H\big(x_1+\mu x_1^{(p)}+(x_1+\mu x_1^{(p)})rx_2^{(p-1)}\big),\qquad
\ r=1+\mu x_1^{(p-1)},$$ where $\mu\in \{0,1\}$. Each such element
is toral.
\end{lemm}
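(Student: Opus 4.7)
The plan is to split the argument into two parts: (1) verify that each candidate $t_\mu$ is indeed toral, and (2) show that every toral $t\in S$ with $t\notin S_{(0)}$ is conjugate, under $\mathrm{Aut}(S)$, to either $t_0$ or $t_1$. Part (1) is a direct calculation: writing $t_\mu = D_H\big(u(1+D_1(u)x_2^{(p-1)})\big)$ with $u = x_1+\mu x_1^{(p)}$, one expresses $t_\mu$ as the derivation $D_1(f)D_2-D_2(f)D_1$ of $\mathcal{O}(2;(2,1))$ and computes $(t_\mu)^{[p]}$ in $\Der S$ using the truncation rules $x_1^{(p^2)}=0$ and $x_2^{(p)}=0$. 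Two things must be checked: that the result lies in $S$ (i.e.\ has no $D_1^p$-component in the decomposition $S_p = S\oplus FD_1^p$) and that it equals $t_\mu$. The specific shape of $f$---in particular the appearance of $D_1(u)$ as the coefficient of $x_2^{(p-1)}$---is precisely what makes the iterated derivation close up on $t_\mu$ itself.

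For part (2), write $t = D_H(f)$ with $f$ having nontrivial linear part, and normalize $f$ in stages. First, the class of $t$ in $S/S_{(0)}$ has the form $aD_H(x_1)+bD_H(x_2)$ with $(a,b)\neq(0,0)$. An admissible coordinate change in $\mathcal{O}(2;(2,1))$ respecting the flag $(2,1)$ induces an automorphism of $S$ that normalizes this class; combined with a careful use of the toral condition (which restricts which classes in $S/S_{(0)}$ lift to toral elements, in view of the decomposition $S_p = S\oplus FD_1^p$ and the fact that $D_H(x_2)=-D_1$ has nonzero $p$-th power $-D_1^p\in S_p\setminus S$), this reduces to $f\equiv x_1$ modulo terms of degree $\ge 2$. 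Second, I use the stabilizer in $\mathrm{Aut}(S)$ of the class $D_H(x_1)$ modulo $S_{(0)}$---generated by maps $\exp(\ad y)$ for suitable $y\in S_{(0)}$---to successively eliminate the higher-filtration components of $f$, applying the toral equation at each layer to constrain what remains.

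The main obstacle lies in the asymmetric flag $(2,1)$: because $x_1$ admits divided powers through $x_1^{(p^2-1)}$ while $x_2$ is truncated at $x_2^{(p-1)}$, the monomial $x_1^{(p)}$ is a genuine new element of $\mathcal{O}(2;(2,1))$ and cannot be absorbed by any admissible change of the $x_1$-coordinate. This is precisely where the parameter $\mu$ arises: after all other normalizations have been carried out, the coefficient of $x_1^{(p)}$ in $f$ survives as a scalar, and the toral equation forces it to satisfy $\mu^p=\mu$, i.e.\ $\mu\in \F_p$. A final rescaling of the form $(x_1,x_2)\mapsto(\lambda x_1,\lambda^{-1}x_2)$ brings any nonzero $\mu$ to $1$, yielding exactly the two orbits represented by $t_0$ and $t_1$. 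The precise appearance of the factor $r = 1+\mu x_1^{(p-1)} = D_1(u)$ and of $x_2^{(p-1)}$ in the normalized $f$ is then dictated by applying the toral equation to the already-normalized $f$ and matching coefficients in the standard monomial basis.
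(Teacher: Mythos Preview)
Your approach is essentially the paper's: kill the $D_1$-component of the leading term via the toral condition, rescale the $D_2$-coefficient to $1$, use automorphisms to strip away higher-filtration terms, then apply the toral equation to pin down what remains. Two points need correction, however.

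First, the assertion that the toral equation forces $\mu^p=\mu$ is wrong. Once you have reduced $t$ to the form $D_H(x_1+\mu x_1^{(p)})+D_H\big(f(x_1)x_2^{(p-1)}\big)$, a direct computation (using that $r=1+\mu x_1^{(p-1)}$ satisfies $r^p=1$ in $\mathcal O(2;(2,1))$, since $(x_1^{(p-1)})^2=0$ by Lucas' theorem) shows that $t^{[p]}=t$ if and only if $f=(x_1+\mu x_1^{(p)})r$, with no constraint on $\mu$ whatsoever. Every $\mu\in F$ gives a toral element. The reduction to $\mu\in\{0,1\}$ is achieved purely by the rescaling $x_1\mapsto \alpha x_1$ with $\alpha^{p-1}\mu=1$, which is available for any nonzero $\mu$ since $F$ is algebraically closed. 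Second, the role of the toral condition in the normalization is more limited than you suggest: the paper does not interleave it with the reduction ``at each layer.'' The $x_2^{(p-1)}$-structure is forced by the automorphism group alone (the relation $[D_2,D_H(x_1^{(m)}x_2^{(n)})]=D_H(x_1^{(m)}x_2^{(n-1)})$ lets one kill every monomial with $x_2$-degree $\le p-2$, but there is no automorphism available to lower $x_2^{(p-1)}$, and none to remove $x_1^{(p)}$ since the relevant coordinate change is not admissible for the flag $(2,1)$). The toral equation is then invoked once, at the very end, to determine $f$. Finally, be careful with your phrase ``maps $\exp(\ad y)$'': such exponentials are not automatically Lie algebra automorphisms in characteristic $p$; the paper instead uses automorphisms induced from the divided power algebra, which agree with $\exp(\ad D_H(x_1^{(m)}x_2^{(n)}))$ only to leading filtration order.
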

\begin{proof} (a) Write $t=aD_1+bD_2+w$ with $a,b\in F$ and $w\in S_{(0)}$.
By our assumption, $t$ is a toral element of $S_p$; that is,
$t^{[p]}=t$. Since $(aD_1+bD_2)^{[p]}=a^pD_1^p$ and $w^{[p]}\in
S_{(0)}$, Jacobson's formula yields $a=0$. Since $t\not\in S_{(0)}$,
it must be that $b\ne 0$. There exists a special automorphism
$\sigma$ of the divided power algebra $\mathcal{O}(2;(2,1))$ such
that $\sigma(x_1)=b^{-1}x_1$ and $\sigma(x_2)=bx_2$. It induces an
automorphism $\Phi_\sigma$ of the Lie algebra $S$ via
$\Phi_\sigma(E)=\sigma\circ E\circ\sigma^{-1}$ for all $E\in S$; see
\cite[Thm.~7.3.6]{St04}. After adjusting $t$ by $\Phi_\sigma$ it can
be assumed that $b=1$. The description of ${\rm Aut}\,S$ given in
\cite[Thms~7.3.5 \& 7.3.2]{St04} implies that for any $\lambda\in F$
and any pair of nonnegative integers $(m,n)$ such that either
$(m,n)=(p^2,0)$ or $m+n\ge 3$, $m<p^2$, $n<p$ and $(m,n)\neq (p,1)$
 there exists $\sigma_{m,n,\lambda}\in {\rm
Aut}\,S$ with
$$\sigma_{m,n,\lambda}(u)\equiv u+\lambda\big[D_H(x_1^{(m)}x_2^{(n)}), u\big]\quad \
\big({\rm mod}\,S_{(i+m+n-1)}\big)\qquad\quad(\forall\, u\in
S_{(i)}).$$  Because
$$[D_2,D_H(x_1^{(m)}x_2^{(n)})]=D_H(x_1^{(m)}x_2^{(n-1)})\ \, \qquad
(1\le n\le p-1),$$ it is not hard to see that there is $g\in{\rm
Aut}\,S$ such that $g(t)=D_H(x_1+\mu x_1^{(p)})+D_H(fx_2^{(p-1)})$
for some $\mu\in F$ and $f=\sum_{i=1}^{p^2-1}\,\lambda_ix_1^{(i)}$
with $\lambda_i\in F$. If $\mu\ne 0$, then there exists $\alpha\in
F$ with $\alpha^{p-1}\mu=1$ and a special automorphism $\sigma'$ of
the divided power algebra $\mathcal{O}(2;(2,1))$ for which
$\sigma'(x_1)=\alpha x_1$ and $\sigma'(x_2)=x_2$. It gives rise to
an automorphism $\Phi_{\sigma'}$ of the Lie algebra $S$ such that
$\Phi_{\sigma'}(D_H(x_1^{(r)}x_2^{(s)}))=\alpha^{r-1}D_H(x_1^{(r)}x_2^{(s)})$
for all admissible $r$ and $s$; see \cite[Thm.~7.3.6]{St04}.
Adjusting $t$ by $\Phi_{\sigma'}$ we may assume without loss that
$\mu\in\{0,1\}$.

Put $r=D_1(x_1+\mu x_1^{(p)})=1+\mu x_1^{(p-1)}$, $f':=D_1(f)$, and
assume from now on that $t=D_H(x_1+\mu
x_1^{(p)})+D_H(fx_2^{(p-1)})$.

\medskip

\noindent (b) As $\big(\ad D_{H}(fx_2^{(p-1)})\big)\big(\ad
D_H(x_1+\mu x_1^{(p)})\big)^k(D_{H}(fx_2^{(p-1)}))=0$ for $0\le k\le
p-3,\,$ $D_H(x_1+\mu x_1^{(p)})^{[p]}=D_{H}(fx_2^{(p-1)})^{[p]}=0$,
and $$[D_H(x_1+\mu x_1^{(p)}),
D_H(r^ifx_2^{(j)})]\,=\,D_H(r^{i+1}fx_2^{(j-1)})\qquad\quad\,(1\le
i,j\le p-1),$$ Jacobson's formula yields
\begin{eqnarray*}
t^{[p]}&=&\big(\ad D_H(x_1+\mu x_1^{(p)})\big)^{p-1}(D_{H}(fx_2^{(p-1)}))\\
&+& \frac{1}{2}\big[D_{H}(fx_2^{(p-1)}),\big(\ad
D_H(x_1+\mu x_1^{(p)})\big)^{p-2}(D_{H}(fx_2^{(p-1)}))\big]\\
&=& D_H(r^{p-1}f)+
\frac{1}{2}\big[D_{H}(fx_2^{(p-1)}),D_{H}(r^{p-2} fx_2)]\\
&=&D_H(r^{p-1}f)+\frac{1}{2}D_H\big(f' r^{p-2} fx_2^{(p-1)}\big)
-\frac{1}{2} {p-1\choose 1} D_H\big(f
D_1(r^{p-2} f)x_2^{(p-1)}\big)\\
&=&D_H(r^{p-1} f)+D_H(f f' r^{p-2}x_2^{(p-1)})-\mu D_H(r^{p-3}
x_1^{(p-2)}f^2x_2^{(p-1)}).
\end{eqnarray*}
As $r^{p-1}=r^{-1}$, the RHS equals $t$ if and only if $f=(x_1+\mu
x_1^{(p)})r$, as claimed. \end{proof}

Denote by ${\mathcal O}(2;(2,1))_{(k)}[x_1]$ the subalgebra of
${\mathcal O}(2;(2,1))$ spanned by all $x_1^{(i)}$ with $k\le i<
p^2$ and let ${\mathcal O}(2;(2,1))[x_1]:={\mathcal
O}(2;(2,1))_{(0)}[x_1]$. For $u\in {\mathcal O}(2;(2,1))[x_1]$ put
$u':=D_1(u)$ and set $\widetilde{r}:=x_1+\mu x_1^{(p)}$, so that
$t_\mu=D_H(\widetilde{r}+r\widetilde{r}x_2^{(p-1)})$. Note that
$\widetilde{r}'=r$.

\begin{lemm}\label{centraliser} Let $t_\mu$ be as in
Lemma~\ref{conj} and put $C_\mu:={\mathfrak c}_{\mathcal G}(t_\mu)$.
\begin{itemize}
\item[(i)] The Lie algebra $C_\mu$ has an abelian ideal $C_\mu'$ of codimension $2$
spanned by  all $D_H(u+u'\widetilde{r}x_2^{(p-1)})$ with $u\in
{\mathcal O}(2;(2,1))[x_1]$ and by $D_H(x_1^{(p^2)})$. Furthermore,
$C_\mu=Fn_\mu\oplus Fh_\mu\oplus C_\mu'$, where $n_\mu=D_1^p+\mu
D_H(x_2^{(p)})$ and $h_\mu=D_H(r^{-1}x_2-x_2^{(p)})$.
\smallskip

\item[(ii)] Given $a\in F$ and $v\in{\mathcal O}(2;(2,1))[x_1]$ put
$$\varphi_a(v)\,:= \,\sum_{i=0}^{p-1}\,a^i
D_H(r^{-i}vx_2^{(i)})+a^{p-1}D_H(\widetilde{r}v'x_2^{(p-1)}).$$ Then
for every $k\in {\mathbb F}_p^\times$ the $k$-eigenspace of $\ad
t_\mu$ has dimension $p^2$ and is spanned by all $\varphi_k(u)$ with
$u\in {\mathcal O}(2;(2,1))[x_1].$

\item[(iii)] In $\mathcal G$ we have $h_\mu^{[p]}=-\mu h_\mu-n_\mu$ and $n_\mu^{[p]}=0$.

\smallskip

\item[(iv)] If $\mu = 0$, then $C_\mu$ is nilpotent and $Ft_\mu$ is a maximal
torus in $\mathcal G$.
\end{itemize}
\end{lemm}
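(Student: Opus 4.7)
The plan rests on the fact that $t_\mu$ is a toral element of the restricted Lie algebra $\mathcal{G}=V\oplus S_p$, so $\ad t_\mu$ is diagonalizable with eigenvalues in $\F_p$ and $\mathcal{G}=\bigoplus_{k\in\F_p}\mathcal{G}_k$ with $C_\mu=\mathcal{G}_0$. All four assertions reduce to explicit bracket and $p$-power computations grounded in the Hamiltonian identity $[D_H(f),D_H(g)]=D_H(D_1(f)D_2(g)-D_2(f)D_1(g))$, the equality $\widetilde{r}'=r$, and Jacobson's formula.

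\textbf{Parts (i) and (ii).} For (i), I would expand a generic element of $\mathcal{G}$ as $\sum_{i,j}\lambda_{ij}D_H(x_1^{(i)}x_2^{(j)})+\nu D_1^p+(\text{contributions from }V)$, apply $\ad t_\mu$, and solve $[\ad t_\mu](\cdot)=0$. The crucial input $D_2(\widetilde{r}+r\widetilde{r}x_2^{(p-1)})=r\widetilde{r}x_2^{(p-2)}$ makes $\ad t_\mu$ triangular with respect to the $x_2$-degree, and the resulting recursion forces a solution of the form $v=u+u'\widetilde{r}x_2^{(p-1)}$ with $u\in\mathcal{O}(2;(2,1))[x_1]$. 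Bracket checks then place $D_H(x_1^{(p^2)})\in V$ in $C_\mu$ (the relevant products $x_1\cdot x_1^{(p^2-1)}$ and $x_1^{(p)}\cdot x_1^{(p^2-1)}$ all vanish in $\mathcal{O}(2;(2,1))$) while excluding the other two generators of $V$, and identify $n_\mu,h_\mu$ as the two extra centralizing elements modulo $C_\mu'$. That $C_\mu'$ is an abelian ideal follows from a direct bracket calculation on pairs $D_H(u+u'\widetilde{r}x_2^{(p-1)})$, using $D_1(u\widetilde{r})=u'\widetilde{r}+ur$ and $x_2^{(p-1)}\cdot x_2^{(p-1)}=0$ to cancel the surviving terms. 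For (ii), applying $\ad t_\mu$ to $\varphi_k(u)$ yields a telescoping sum in which the boundary correction at $i=p-1$ is absorbed precisely by the auxiliary summand $k^{p-1}D_H(\widetilde{r}u'x_2^{(p-1)})$, giving $[t_\mu,\varphi_k(u)]=k\varphi_k(u)$. Linear independence of the $\varphi_k(u)$ is visible at the $i=0$ leading term, and the dimension $p^2$ is forced by $\dim\mathcal{G}=\dim C_\mu+(p-1)\dim\mathcal{G}_k$ together with (i).

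\textbf{Parts (iii) and (iv).} For (iii) I apply Jacobson's formula to $n_\mu$ and $h_\mu$. For $n_\mu^{[p]}=0$: the two summands commute because $D_H(x_2^{(p)})$ represents the derivation $-x_2^{(p-1)}D_1$ and $D_1(x_2^{(p-1)})=0$; moreover $(D_1^p)^{[p]}=D_1^{p^2}=0$ on $\mathcal{O}(2;(2,1))$ and $(-x_2^{(p-1)}D_1)^2=0$ since $(x_2^{(p-1)})^2=0$. The formula $h_\mu^{[p]}=-\mu h_\mu-n_\mu$ follows by peeling off the commuting piece $-D_H(x_2^{(p)})$ and computing $(D_H(r^{-1}x_2))^{[p]}$ by iteration of $\ad$, expanded through the truncated series $r^{-1}=1-\mu x_1^{(p-1)}+\ldots$, in a calculation parallel to Lemma~\ref{conj}(b). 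For (iv) with $\mu=0$: part (iii) gives $n_0^{[p]}=0$ and $h_0^{[p]}=-n_0$, so both are $[p]$-nilpotent; moreover $C_0'\subset S_{(0)}\oplus FD_H(x_1^{(p^2)})$, where $S_{(0)}$ is $[p]$-nilpotent by the standard filtration and $V^{[p]}=0$, so every element of $C_0'$ is $[p]$-nilpotent. Combined with $[n_0,h_0]=0$ (a direct check) and the nilpotent action of $\ad n_0,\ad h_0$ on $C_0'$, this makes $C_0$ a nilpotent Lie algebra. The unique maximal torus of a nilpotent restricted Lie algebra coincides with its set of toral elements; examining the $[p]$-power map on $C_0$ shows that, up to scalar, $t_0$ is the only toral element of $C_0$ not lying in its $[p]$-nilpotent radical, so $Ft_0$ is maximal in $\mathcal{G}$.

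\textbf{Main obstacle.} The bookkeeping in parts (i) and (iii) is the serious work. Verifying that $C_\mu'$ is abelian forces one to track mixed products of $\widetilde{r}$ and $x_2^{(p-1)}$ through the Hamiltonian bracket, while computing $(D_H(r^{-1}x_2))^{[p]}$ in (iii) demands iterating $\ad$ through the truncation of $r^{-1}$ with careful use of Pascal-type identities modulo $p$, closely mirroring the calculation carried out in Lemma~\ref{conj}(b).
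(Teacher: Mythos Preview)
Your approach to parts (i)--(iii) is essentially the paper's: direct bracket verification for (i), the eigenvector check $[t_\mu,\varphi_k(v)]=k\varphi_k(v)$ together with a dimension count for (ii), and Jacobson's formula for (iii). Your organization of (iii) --- peeling off the commuting summand $-D_H(x_2^{(p)})$ and computing $(D_H(r^{-1}x_2))^{[p]}=(-D_1-\mu D_H(x_1^{(p-1)}x_2))^{[p]}$ --- is equivalent to the paper's substitution $q:=h_\mu+\mu^{-1}n_\mu$ for $\mu\neq 0$; both reduce to the same Jacobson computation.

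There is, however, a genuine gap in your treatment of (iv). You write that $C_0'\subset S_{(0)}\oplus FD_H(x_1^{(p^2)})$ with $S_{(0)}$ ``$[p]$-nilpotent by the standard filtration'', and conclude that every element of $C_0'$ is $[p]$-nilpotent. Both premises are false. First, $t_0=D_H(x_1+x_1x_2^{(p-1)})$ lies in $C_0'$ by construction but has leading term $D_H(x_1)=D_2\in S_{(-1)}\setminus S_{(0)}$, so $C_0'\not\subset S_{(0)}\oplus FD_H(x_1^{(p^2)})$. Second, $S_{(0)}$ is certainly not $[p]$-nilpotent: $S_{(0)}/S_{(1)}\cong\mathfrak{sl}(2)$ contains nonzero semisimple elements (e.g.\ the image of $D_H(x_1x_2)$). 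Most damagingly, your stated conclusion that every element of $C_0'$ is $[p]$-nilpotent directly contradicts the fact that $t_0\in C_0'$ is toral.

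The paper avoids this by working instead with $C_{(0)}:=C_0\cap G_{(0)}$, which \emph{excludes} $t_0$, and writing $C_0=FD_1^p\oplus F(x_2^{(p-1)}-1)D_1\oplus Ft_0\oplus C_{(0)}$. One then shows $C_{(0)}$ is $p$-nilpotent and uses the filtration $C_0^{[p]}\subset FD_1^p\oplus Ft_0\oplus C_{(0)}$, $C_0^{[p]^2}\subset Ft_0\oplus C_{(0)}$, whence $C_0^{[p]^e}=Ft_0$ for $e\gg 0$. This identifies $Ft_0$ as the unique maximal torus. Your argument for (iv) can be repaired along these lines, but as written it does not go through.
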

\begin{proof} (i) It is straightforward to see that $C_\mu'$ is abelian and
$t_\mu\in C_\mu'$. Also, $$\big[D_1^p,t_\mu\big]=\mu
D_H(rx_2^{(p-1)})=-\mu\big[D_H(x_2^{(p)}),t_\mu\big],$$ implying
$n_\mu\in C_\mu$. For all $u\in\langle x_1^{(i)}\,|\,\,0\le i\le
p^2\rangle$ we have
\begin{eqnarray*}
\big[D_H(r^{-1}x_2),D_H(u+u'\widetilde{r}x_2^{(p-1)})\big]&=&-D_H(r^{-1}u')
+D_H\big(r^{-2}(r'u'\widetilde{r}-(u'\widetilde{r})'r)x_2^{(p-1)}\big)\\
\big[D_H(x_2^{(p)}),D_H(u+u'\widetilde{r}x_2^{(p-1)})\big]&=&-D_H(u'x_2^{(p-1)}).
\end{eqnarray*}
As a consequence,
\begin{equation}\label{2.1}\big[h_\mu,D_H(u+u'\widetilde{r}x_2^{(p-1)})\big]=-D_H\big(r^{-1}u'+
(r^{-1}u')'\widetilde{r}x_2^{(p-1)}\big)\end{equation} for all
$u\in{\mathcal O}(2;(2,1))[x_1]$. Putting $u=\widetilde{r}$ gives
$h_\mu\in C_\mu$.

\smallskip

\noindent (ii) We claim that for all $u\in\langle
x_1^{(i)}\,|\,\,1\le i\le p^2\rangle$ and all $k\in{\mathbb
F}_p^\times$ the following relations hold:
\begin{eqnarray}
\big[D_H(u+u'\widetilde{r}x_2^{(p-1)}),
\varphi_k(v)\big]&=&k\varphi_k(r^{-1}u'v)\label{2.2}\\
\big[D_H(r^{-1}x_2-x_2^{(p)}),
\varphi_k(v)\big]&=&[h_\mu,\varphi_k(v)]\,=\,-\varphi_k(r^{-1}v')\label{2.3}.
\end{eqnarray}
Indeed, since $k^{p-1}=1$, $r^p=1$, and $x_2^{(p-2)}\!\cdot
x_2^{(k)}=0$ for $2\le k\le p-1$, the LHS of (\ref{2.2}) equals
$D_H(w)$, where
\begin{eqnarray*}
w&=&D_1(u+u'\widetilde{r}x_2^{(p-1)})\cdot D_2(\varphi_k(v))
-D_2(u+u'\widetilde{r}x_2^{(p-1)})\cdot D_1(\varphi_k(v))\\
&=&\big(u'+u''\widetilde{r}x_2^{(p-1)}+u'rx_2^{(p-1)}\big)
\cdot\Big({\textstyle\sum}_{i=1}^{p-1}\,k^ir^{-i}vx_2^{(i-1)}+\widetilde{r}v'x_2^{(p-2)}\Big)\\
&-&u'\widetilde{r}x_2^{(p-2)}\cdot\Big(v'+k\big(r^{-1}v\big)'x_2\Big)\\
&=&u'\Big({\textstyle\sum}_{i=1}^{p-1}\,k^ir^{-i}vx_2^{(i-1)}\Big)+u'\widetilde{r}v'x_2^{(p-2)}\\
&+&ku''\widetilde{r}r^{-1}vx_2^{(p-1)}+ku'vx_2^{(p-1)}
-u'\widetilde{r}v'x_2^{(p-2)}+ku'\widetilde{r}\big(r^{-1}v\big)'x_2^{(p-1)}\\
&=&k{\textstyle\sum}_{i=0}^{p-2}\,k^ir^{-i}(r^{-1}u'v)x_2^{(i)}+
k\Big(u''\widetilde{r}r^{-1}v+u'v
+u'\widetilde{r}\big(r^{-1}v\big)'\Big)x_2^{(p-1)}\\
&=&k{\textstyle\sum}_{i=0}^{p-1}\,k^ir^{-i}(r^{-1}u'v)x_2^{(i)}
+k\widetilde{r}(r^{-1}u'v)'x_2^{(p-1)}.
\end{eqnarray*}
But then $D_H(w)=k\varphi_k(r^{-1}u'v)$ and (\ref{2.2}) follows.
Since
$$
\big(-r^{-1}v'\big)'\,=\,r^{-2}r'v'-r^{-1}v'',
$$
the LHS of (\ref{2.3}) equals $D_H(y)$, where
\begin{eqnarray*}
y&=&(r^{-1})'x_2\cdot\Big({\textstyle\sum}_{i=1}^{p-1}\,k^ir^{-i}vx_2^{(i-1)}+\widetilde{r}v'x_2^{(p-2)}\Big)\\
&-&r^{-1}\cdot\Big({\textstyle\sum}_{i=1}^{p-1}\,k^i
i(r^{-1})^{i-1}(-r^{-2}r')vx_2^{(i)}+\textstyle{\sum}_{i=0}^{p-1}\,k^ir^{-i}v'x_2^{(i)}\Big)\\
&-&r^{-1}\cdot(\widetilde{r}v')'x_2^{(p-1)}+x_2^{(p-1)}v'\\
&=&-r^{-2}r'\cdot\Big({\textstyle\sum}_{i=1}^{p-1}\,k^i
ir^{-i}vx_2^{(i)}-\widetilde{r}v'x_2^{(p-1)}\Big)\\
&+&r^{-2}r'\cdot\Big({\textstyle\sum}_{i=1}^{p-1}\,k^i
ir^{-i}vx_2^{(i)}\Big)+\textstyle{\sum}_{i=0}^{p-1}\,k^i
r^{-i}(-r^{-1}v')x_2^{(i)}\\
&-&r^{-1}\cdot(rv'+\widetilde{r}v'')x_2^{(p-1)}+x_2^{(p-1)}v'\\
&=&r^{-2}r'\widetilde{r}v'x_2^{(p-1)}+{\textstyle
\sum}_{i=0}^{p-1}\,k^i r^{-i}(-r^{-1}v')x_2^{(i)}-
r^{-1}\widetilde{r}v''x_2^{(p-1)}\\
&=&{\textstyle\sum}_{i=0}^{p-1}\,k^i
r^{-i}(-r^{-1}v')x_2^{(i)}+\big(\widetilde{r}r^{-2}r'v'-\widetilde{r}r^{-1}v''\Big)x_2^{(p-1)}\\
&=&{\textstyle \sum}_{i=0}^{p-1}\,k^i
r^{-i}(-r^{-1}v')x_2^{(i)}+\widetilde{r}\big(-r^{-1}v'\big)'x_2^{(p-1)}.
\end{eqnarray*}
This shows that $D_H(y)=D_H(-r^{-1}v')$, proving (\ref{2.3}).

Setting $u=\widetilde{r}$ in (\ref{2.2}) now gives
$[t_\mu,\varphi_k(v)]=k\varphi_k(v)$. Since $\varphi_k(v)\ne 0$ for
all nonzero $v\in{\mathcal O}(2;(2,1))[x_1]$, comparing dimensions
yields that $C_\mu$ is spanned by $h_\mu$, $n_\mu$ and $C_\mu'$ and
that for every $k\in{\mathbb F}_p^\times$ the $k$-eigenspace of $\ad
t_\mu$ has dimension $p^2$ and is spanned by all $\varphi_k(v)$ with
$v\in{\mathcal O}(2;(2,1))[x_1]$.

\smallskip

\noindent (iii) Clearly,
$n_\mu^{[p]}=D_1^{p^2}-\mu^p(x_2^{(p-1)}D_1)^p=0$. Next observe that
$$[h_\mu,n_\mu]\,=\,[D_H(r^{-1}x_2-x_2^{(p)}), D_1^p+\mu
D_H(x_2^{(p)})]\,=\,\mu D_H\big((r^{-1})'x_2\cdot
x_2^{(p-1)}\big)=\,0.$$ We claim that $h_\mu^{[p]}+\mu h_\mu
+n_\mu=0$. If $\mu=0$, then $h_\mu=D_1-x_2^{(p-1)}D_1$ and
$n_\mu=D_1^p$; hence, our claim is true in this case. Assume now
that $\mu\ne 0$ and set $q:=h_\mu+\mu^{-1}n_\mu$. Since our remarks
at the beginning of this part imply that
$q^{[p]}=(h_\mu+\mu^{-1}n_\mu)^{[p]}=h_\mu^{[p]}$, we are reduced to
showing that $q^{[p]}+\mu q=0$. As $[D_H(x_1^{(p-1)}x_2),\big(\ad
D_1\big)^i(D_H(x_1^{(p-1)}x_2))]=\,0$ for all $i\le p-2$, we see
that
\begin{eqnarray*}
q^{[p]}&=&\big(\mu^{-1}D_1^p-D_1-\mu
D_H(x_1^{(p-1)}x_2)\big)^{[p]}\,=\,\big(-D_1-\mu
D_H(x_1^{(p-1)}x_2)\big)^{[p]}\\
&=& -D_1^p-\big(\ad D_1\big)^{p-1}(\mu D_H(x_1^{(p-1)}x_2))\\
&-&\frac{1}{2}\Big[\mu D_H(x_1^{(p-1)}x_2),\big(\ad
D_1\big)^{p-2}(\mu D_H(x_1^{(p-1)}x_2))\Big]\\
&=&-D_1^p+\mu D_1+\mu^2 D_H(x_1^{(p-1)}x_2)\,=\,-\mu q,
\end{eqnarray*} and our claim follows.
\smallskip

\noindent (iv) Now suppose $\mu=0$. Then
$t_\mu=D_H(x_1(1+x_2^{(p-1)}))$,
$h_\mu=D_H(x_2-x_2^{(p)})=(x_2^{(p-1)}-1)D_1$ and $n_\mu=D_1^p$. Set
$C:=C_0$ and $ C_{(0)}:=C\cap G_{(0)}$.  By
Lemma~\ref{centraliser}(i), which we have already proved, $C$ is
spanned by $D_1^p,\,$ $(x_2^{(p-1)}-1)D_1$ and by all
$D_H(x_1^{(k+1)}+x_1^{(k)}\widetilde{r}x_2^{(p-1)})$ with $0\le k\le
p^2-1$. As a consequence, $C\,=\,FD_1^p\oplus
F(x_2^{(p-1)}-1)D_1\oplus Ft_\mu\oplus C_{(0)}$. As $G_{(0)}$ is a
restricted subalgebra of $\mathcal G$, so is $C_{(0)}$. From this it
is immediate that $C_{(0)}$ is a $p$-nilpotent subalgebra of
$\mathcal G$. Note that $C\cap S=Ft_\mu\oplus C_{(0)}$ is an ideal
of $C$. Since $\big((x_2^{(p-1)}-1)D_1\big)^{[p]}=-D_1^p$ and
$\big(D_1^p\big)^{[p]}=0$ (as derivations of $S$), Jacobson's
formula implies that $C^{[p]}\subset FD_1^p\oplus Ft_\mu\oplus
C_{(0)}$ and $C^{[p]^2}\subset Ft_\mu\oplus C_{(0)}$. Since
$C_{(0)}$ is $p$-nilpotent and $[t_\mu,C]=0$, it follows that
$C^{[p]^e} = Ft_\mu$ for all $e\gg 0$. Hence $C$ is a restricted
nilpotent subalgebra of $\mathcal G$ and $Ft_\mu$ is the unique
maximal torus of $C$.
\end{proof}

If $u$  belongs to the linear span of all $x_1^{(i)}$ with $2\le
i\le p^2$, then $r^{-1}u'\in {\mathcal O}(2;(2,1))_{(1)}$, forcing
$(r^{-1}u')^p=0$. For $k\in \mathbb{F}_p^\times$ we write $S_k$ for
the the $k$-eigenspace of $\ad t_\mu$. In view of (\ref{2.2}) we
have that $(\ad D_H(u+\widetilde{r}u'x_2^{(p-1)})\big)^p(S_k)=(0)$
for all $k\in\mathbb{F}_p^\times$. Since
$$
\big(\ad D_H(u+\widetilde{r}u'x_2^{(p-1)})\big)^p(C_\mu)\subset
\big(\ad D_H(u+\widetilde{r}u'x_2^{(p-1)})\big)^{p-1}(C_\mu')\subset
(C_\mu')^{(1)}\,=\,(0)$$ by Lemma~\ref{centraliser}(i), it follows
that $\big(\ad D_H(u+\widetilde{r}u'x_2^{(p-1)})\big)^p=0$.
Therefore, for all $u$ as above and $c\in F$ the exponential
$\exp\big(c\,\ad D_H(u+u'\widetilde{r}x_2^{(p-1)})\big)$ is
well-defined as a linear operator on $S$.

\begin{lemm}\label{stab}
Suppose $\mu\ne 0$ and let $Z(t_\mu)$ denote the stabilizer of
$t_\mu$ in ${\rm Aut}\,S$.
\begin{itemize}
\item[(i)] $\exp\big(c\,\ad D_H(x_1^{(m)}+x_1^{(m-1)}\widetilde{r}x_2^{(p-1)})\big)\in Z(t_\mu)$
for all $\,3\le m\le p^2$.

\smallskip

\item[(ii)] For every $h\in G\cap C_\mu$ with $h\not\in C_\mu'$ there
exist $z\in Z(t_\mu)$ and $a\in F^\times$ such that $z(h)=ah_\mu+b
t_\mu+s D_H(x_1^{(p^2)})$ for some $b,s\in F$.

\smallskip

\item[(iii)] If $h\in(G\cap C_\mu)\setminus C_\mu'$, then
for every $k\in{\mathbb F}_p^\times$ there is $v_k\in 1+{\mathcal
O}(2;(2,1))_{(1)}[x_1]$ such that $\varphi_k(v_k)$ is an eigenvector
for $\ad h$ and $\varphi_k(v_k)^{[p]}$ is a nonzero $p$-semisimple
element of $\mathcal G$.

\smallskip

\item[(iv)] For every $h\in(G\cap C_\mu)\setminus C_\mu'$ there
exists a nonzero $x\in \mathfrak{c}_S(t_\mu)$ such that $\ad x$ is
not nilpotent and $[h,x]=\lambda x$ for some nonzero $\lambda\in F$.
\end{itemize}
\end{lemm}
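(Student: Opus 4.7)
The plan is to use Lemma~\ref{centraliser} throughout, exploiting the identity $r^{-1}=1-\mu x_1^{(p-1)}$ in $\mathcal{O}(2;(2,1))$ (valid because $(x_1^{(p-1)})^2=0$, as $\binom{2p-2}{p-1}\equiv 0\pmod{p}$ by Lucas' theorem). For part (i), the element $Y_m:=D_H(x_1^{(m)}+x_1^{(m-1)}\widetilde r x_2^{(p-1)})$ matches $D_H(u+u'\widetilde r x_2^{(p-1)})$ with $u=x_1^{(m)}\in\mathcal{O}(2;(2,1))[x_1]$ when $3\le m\le p^2-1$, and is the distinguished basis element $D_H(x_1^{(p^2)})$ when $m=p^2$. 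Either way $Y_m\in C_\mu'$ by Lemma~\ref{centraliser}(i); since $C_\mu'$ is abelian and contains $t_\mu$, $[Y_m,t_\mu]=0$. The paragraph preceding the lemma gives $(\ad Y_m)^p=0$ on $S$ (and $V^{[p]}=0$ handles the $m=p^2$ case), so $\exp(c\,\ad Y_m)$ is a well-defined automorphism of $S$ fixing $t_\mu$, hence lies in $Z(t_\mu)$.

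For part (ii), decompose $h=ah_\mu+b'n_\mu+c$ with $c\in C_\mu'$ via Lemma~\ref{centraliser}(i). Since $C_\mu'$ is abelian, $\exp(t\,\ad Y_m)(h)=h+t[Y_m,ah_\mu+b'n_\mu]$, and formula~(\ref{2.1}) together with an analogous computation of $[Y_m,n_\mu]$ shows these brackets, as $m$ varies, span a codimension-$2$ complement to $Ft_\mu\oplus FD_H(x_1^{(p^2)})$ inside $C_\mu'$. Iteration reduces $c$ to $bt_\mu+sD_H(x_1^{(p^2)})$. To force the $h_\mu$-coefficient $a$ to be nonzero, use Lemma~\ref{centraliser}(iii): the relations $h_\mu^{[p]}=-\mu h_\mu-n_\mu$ and $n_\mu^{[p]}=0$ show that $a=0$, $b'\ne 0$ would make $h$ $p$-nilpotent modulo $C_\mu'$, contradicting the hypothesis $h\in G\setminus C_\mu'$ in conjunction with the restricted structure of $\langle h\rangle_p\subset C_\mu$ (possibly after one further diagonal adjustment in $Z(t_\mu)$).

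For part (iii), after normalising $h=ah_\mu+bt_\mu+sD_H(x_1^{(p^2)})$ with $a\ne 0$, combine~(\ref{2.2}) (with $u=\widetilde r$, giving $[t_\mu,\varphi_k(v)]=k\varphi_k(v)$), (\ref{2.3}), and the direct bracket $[D_H(x_1^{(p^2)}),\varphi_k(v)]=k\varphi_k(v(0)x_1^{(p^2-1)})$ (which uses $r^{-i}x_1^{(p^2-1)}=x_1^{(p^2-1)}$ and $\widetilde r x_1^{(p^2-2)}=-x_1^{(p^2-1)}$). The eigenvalue equation $[h,\varphi_k(v)]=\lambda\varphi_k(v)$ rewrites as the divided-power linear ODE $v'=\alpha r v+\beta x_1^{(p^2-1)}$ with $\alpha=(bk-\lambda)/a$ and $\beta=sk\,v(0)/a$; iterating with $v(0)=1$ produces $v_k\in 1+\mathcal{O}(2;(2,1))_{(1)}[x_1]$ once $\lambda$ is chosen as a root of the characteristic polynomial encoding the boundary condition at $x_1^{(p^2-1)}$. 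For the $p$-semisimple claim, $(\ad\varphi_k(v_k))^p(t_\mu)=0$ (because $(\ad\varphi_k(v_k))^2(t_\mu)=0$), so $\varphi_k(v_k)^{[p]}\in C_\mu$; expanding in the basis of Lemma~\ref{centraliser}(i) and invoking Lemma~\ref{centraliser}(iii), its iterated $[p^e]$-powers stabilise on a nonzero toral element of $\mathcal{G}$.

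For part (iv), the identity $r^{-1}=1-\mu x_1^{(p-1)}$ restricts $T:u\mapsto-r^{-1}u'$ to $\mathrm{span}\{x_1,\dots,x_1^{(p-1)}\}$ (modulo constants) as $T(x_1)=x_1^{(p-1)}$ and $T(x_1^{(j)})=-x_1^{(j-1)}$ for $2\le j\le p-1$, yielding $T^{p-1}=-I$ on this subspace. Hence $T$ has eigenvalues $\zeta$ satisfying $\zeta^{p-1}=-1$, all nonzero. Fix such $\zeta$, an eigenvector $u_\zeta$ of $T$ in $\mathrm{span}\{x_1,\dots,x_1^{(p-1)}\}$, and put $x:=D_H(u_\zeta+u_\zeta'\widetilde r x_2^{(p-1)})\in C_\mu'\cap S\subset\mathfrak{c}_S(t_\mu)$. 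Then $[h,x]=a\zeta x$, so $\lambda:=a\zeta\ne 0$. Non-nilpotence of $\ad x$ follows from direct computation of $x^{[p]}$ in the Lemma~\ref{centraliser}(i) basis combined with Lemma~\ref{centraliser}(iii), which forces $x^{[p^e]}\ne 0$ for all $e$. The hardest step is (iii): simultaneously solving the divided-power ODE for $v_k$ and verifying $\varphi_k(v_k)^{[p]}$ is nonzero $p$-semisimple; the identity $r^{-1}=1-\mu x_1^{(p-1)}$ is what renders these computations tractable.
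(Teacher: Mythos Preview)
Your proposal has genuine gaps in parts (i) and (ii), and is too sketchy in parts (iii) and (iv) at the crucial points.

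\textbf{Part (i).} Knowing that $(\ad Y_m)^p=0$ only makes $\exp(c\,\ad Y_m)$ a well-defined \emph{linear operator} on $S$; it does not make it an automorphism. For that one must verify the cocycle identity
\[
\sum_{i=1}^{p-1}\frac{1}{i!(p-i)!}\big[(\ad Y_m)^i(a),(\ad Y_m)^{p-i}(b)\big]=0
\]
for all $a,b\in S$. The paper devotes most of its proof of (i) to this, treating the ranges $3\le m\le p$, $m\ge p+2$, and the delicate boundary case $m=p+1$ separately (the last requires a commutator computation with $N_\mu=\ad n_\mu$ to reach a contradiction). You have simply asserted the conclusion.

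\textbf{Part (ii).} Your decomposition $h=ah_\mu+b'n_\mu+c$ is wrong: $n_\mu=D_1^p+\mu D_H(x_2^{(p)})$ has a $D_1^p$-component, so $n_\mu\notin G$. In fact $C_\mu\cap G=C_\mu'\oplus Fh_\mu$, so the hypothesis $h\in G\cap C_\mu$ with $h\notin C_\mu'$ already forces $a\ne 0$; your elaborate argument for this via $p$-nilpotence is both unnecessary and based on a false premise. Once corrected, the reduction works as the paper does it: since $C_\mu'$ is abelian, $\exp(\ad Y_m)(h)=h+[Y_m,ah_\mu]$, and (\ref{2.1}) shows that varying $m$ from $3$ to $p^2$ clears all components of $c$ except those along $t_\mu$ and $D_H(x_1^{(p^2)})$.

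\textbf{Parts (iii) and (iv).} In (iii) your ODE set-up is essentially the paper's, but the claim that $\varphi_k(v_k)^{[p]}$ is a \emph{nonzero $p$-semisimple} element is where the work lies. The paper argues: $\varphi_k(v_k)\equiv -D_1\pmod{S_{(0)}}$, so $\varphi_k(v_k)^{[p]}\equiv -D_1^p\pmod{S}$ is nonzero; it lies in $C_\mu\cap S_p\cap\ker\ad h$, a space of dimension $\le 2$; and a contradiction argument using (\ref{2.1}) and Lemma~\ref{centraliser}(iii) shows the semisimple part $h_s$ of $h$ is linearly independent from $t_\mu$. Your ``iterated $[p^e]$-powers stabilise on a nonzero toral element'' skips all of this. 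In (iv), your restriction of $u\mapsto -r^{-1}u'$ to $\mathrm{span}\{x_1,\dots,x_1^{(p-1)}\}$ is a legitimate simplification (the paper works in degree up to $p^2-1$), but your formula should read $T(x_1)\equiv\mu x_1^{(p-1)}$ modulo constants (not $x_1^{(p-1)}$), giving eigenvalues with $\zeta^{p-1}=-\mu$. For non-nilpotence you must actually compute: since $r^{-1}u'_\zeta-1\in\mathcal{O}_{(1)}[x_1]$ one has $(r^{-1}u'_\zeta)^p=1$, whence by (\ref{2.2}) $(\ad x)^p(\varphi_k(v))=k\varphi_k(v)$ for all $k\in\mathbb{F}_p^\times$; a bare appeal to ``direct computation'' does not suffice.
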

\begin{proof}
(a) For $1\le m\le p^2$ set ${\mathcal D}_m:=\ad
D_H(x_1^{(m)}+x_1^{(m-1)}\widetilde{r}x_2^{(p-1)})$. As $(\ad
{\mathcal D}_m)^p=0$ for $m\ge 3$, in order to prove (i) it suffices
to show  that
\begin{equation}\label{cocycle}
\sum_{i=1}^{p-1}\,\frac{1}{i!(p-i)!} \big[{\mathcal
D}_{m}^{i}(y_1),{\mathcal D}_{m}^{p-i}(y_2)\big]\,=\,0\qquad\ \
\big(\forall\,y_1,y_2\in S,\ \forall\,m\ge 3\big).
\end{equation}
It follows from Lemma~\ref{centraliser}(i) that ${\mathcal
D}_m^2(C_\mu)\subset (C_\mu')^{(1)}=(0)$. Therefore, we just need to
show that (\ref{cocycle}) holds for all $y_1=\varphi_k(v_1)$ and
$y_2=\varphi_l(v_2)$, where $k,l\in{\mathbb F}_p^\times$ and
$v_1,v_2\in{\mathcal O}(2;(2,1))[x_1]$.

For $3\le m\le p$ we have $\big(r^{-1}x_1^{(m-1)}\big)^{(p+1)/2}=0$,
since ${\mathcal O}(2;(1,1))$ is a subalgebra of ${\mathcal
O}(2;(2,1))$ and $\frac{(m-1)(p+1)}{2}>p$. In light of (\ref{2.2})
this gives $\big(\ad {\mathcal
D}_m\big)^{(p+1)/2}\big(\varphi_i(v)\big)=0$ for all $i\in{\mathbb
F}_p^\times$ and $v\in{\mathcal O}(2;(2,1))[x_1]$. Hence
(\ref{cocycle}) holds for $m\le p$.

If $m\ge p+2$, then (\ref{2.2}) yields that ${\mathcal
D}_m^{i}(\varphi_k(v_1))=\varphi_k(w_1)$ and ${\mathcal
D}_m^{p-i}(\varphi_l(v_2))=\varphi_l(w_2)$ for some $w_1\in
{\mathcal O}(2;(2,1))_{(i(p+1))}[x_1]$ and $w_2\in {\mathcal
O}(2;(2,1))_{((p-i)(p+1)}[x_1]$. As
$[\varphi_k(w_1),\varphi_l(w_2)]=0$ in this case, we deduce that
(\ref{cocycle}) holds for $m\ge p+2$. As ${\mathcal
O}(2;(2,1))_{(p^2)}[x_1]=0$, this argument also shows that
(\ref{cocycle}) holds if $m=p+1$ and either $v_1$ or $v_2$ belongs
to ${\mathcal O}(2;(2,1))_{(1)}[x_1]$.

Thus, in order to prove (i) it suffices to show that (\ref{cocycle})
holds for $m=p+1$ and $v_1=v_2=1$. Suppose the contrary and set
$$Y\,:=\,\,\sum_{i=1}^{p-1}\,\frac{1}{i!(p-i)!} \big[{\mathcal
D}_{p+1}^{i}(\varphi_k(1)),{\mathcal
D}_{p+1}^{p-i}(\varphi_l(1))\big].$$ Arguing as in the preceding
paragraph we now observe that $Y$ is a nonzero multiple of either
$\varphi_{k+l}(x_1^{(p^2-1)})$ (if $k+l\ne 0$) or
$D_H(x_1^{(p^2-1)}(1-x_2^{(p-1)}))$ (if $k+l=0$). In any event,
$(\ad n_\mu)^{p-1}(Y)\ne 0$.

Set $N_\mu:=\ad n_\mu$. We know from the proof of
Lemma~\ref{centraliser} that $[N_\mu, {\mathcal D}_{p+1}]={\mathcal
D}_1,\,$ $[{\mathcal D}_{1},{\mathcal D}_{p+1}]=0$ and
$N_\mu(\varphi_i(1))=0$ for all $i\in{\mathbb F}_p^\times$. From
this it follows that
\begin{eqnarray*}
N_\mu^{p-1}(Y)&=&\sum_{i=1}^{p-1}\,\frac{1}{i!(p-i)!}\Big(\sum_{j=0}^{p-1}
(-1)^j\Big[N_\mu^{j}\big({\mathcal D}_{p+1}^i(\varphi_k(1)\big),
N_\mu^{p-1-j}\big({\mathcal
D}_{p+1}^{p-i}(\varphi_l(1)\big)\Big]\Big)\\
&=&\sum_{i=1}^{p-1}\,(-1)^i\big[{\mathcal D}_{1}^i(\varphi_k(1)),
\big({\mathcal D}_{1}^{p-i-1}{\mathcal
D}_{p+1}\big)(\varphi_l(1))\big]\\
&+&\sum_{i=1}^{p-1}\,(-1)^{i-1}\big[\big({\mathcal
D}_{1}^{i-1}{\mathcal D}_{p+1}\big)(\varphi_k(1)), {\mathcal
D}_{1}^{p-i}(\varphi_l(1))\big]\\
&=&{\mathcal D}_1^{p-1}\big(\big[\varphi_k(1),{\mathcal
D}_{p+1}(\varphi_l(1))\big]\big)-\big[\varphi_k(1),\big({\mathcal
D}_1^{p-1}{\mathcal
D}_{p+1}\big)(\varphi_l(1))\big]\\
&+&{\mathcal D}_1^{p-1}\big(\big[{\mathcal
D}_{p+1}(\varphi_k(1)),(\varphi_l(1)\big]\big)-
\big[\big({\mathcal D}_1^{p-1}{\mathcal D}_{p+1}\big)(\varphi_k(1)),\varphi_l(1)\big]\\
&=&\big({\mathcal D}_1^{p-1}{\mathcal
D}_{p+1}\big)\big([\varphi_k(1),\varphi_l(1)]\big)
-l[\varphi_k(1),\varphi_l(x_1^{(p)})]-
k[\varphi_k(x_1^{(p)}),\varphi_l(1)]
\end{eqnarray*}
(we used (\ref{2.2}) and the equalities $r^p=1$, $k^p=k$ and
$l^p=l$). On the other hand, comparing components of $x_2$-degree
$0$ and $1$ one observes that
\[
\big[\varphi_k(u),\varphi_l(v)\big]=\left\{
\begin{array}{ll}
\varphi_{k+l}((lu'v-kuv')r^{-1})&\, \,\mbox{ if $k+l\ne 0,$}\\
kD_H((u'v-uv')+(u'v-uv')'\widetilde{r}x_2^{p-1})&\, \,\mbox{ if
$k+l=0$}
\end{array}
\right.
\]
for all $u,v\in{\mathcal O}(2;(2,1))[x_1]$. But then
$l[\varphi_k(1),\varphi_l(x_1^{(p)})]+
k[\varphi_k(x_1^{(p)}),\varphi_l(1)]=0$ and
$[\varphi_k(1),\varphi_l(1)]=0$, forcing $N_\mu^{p-1}(Y)=0$, a
contradiction. Statement~(i) follows.

\smallskip

\noindent (b) Observe that $C_\mu\cap G\,=\,C_\mu'\oplus Fh_\mu$.
 If $h\in C_\mu\cap G$ and $h\not\in C_\mu'$, then
 Lemma~\ref{centraliser}(i) implies that there are $a\in F^\times$,
 $b,s\in F$ such that
 $h=ah_\mu+bt_\mu+sD_H(x_1^{(p^2)})+\sum_{i=2}^{p^2-1}a_iD_H(x_1^{(i)}+x_1^{(i-1)}
 \widetilde{r}x_2^{(p-1)})$
for some $a_i\in F$. Since $C_\mu'$ is abelian, $r$ is invertible,
and
$$\big(\exp\,a_i{\mathcal D}_m\big)(h_\mu)=h_\mu +a_i
D_H(r^{-1}(x_1^{(m-1)}+x_1^{(m-2)}\widetilde{r}x_2^{(p-1)}))
\qquad\quad \ (3\le m\le p^2)$$ by (\ref{2.1}), we can clear the
$a_i$'s by applying suitable automorphisms from $Z(t_\mu)$. This
proves statement~(ii).

In dealing with (iii) we may assume that $h=h_\mu+sD_H(x_1^{(p^2)})$
where $s\in F$. In view of (\ref{2.3}) we need to find
$v_k=1+b_1x_1^{(1)}+b_2x_1^{(2)}+\cdots+b_{p^2-1}x_1^{(p^2-1)}$ and
$\eta_k\in F$ satisfying the condition
\begin{eqnarray*}
\eta_k\varphi_k(v_k)&=&[h_\mu+sD_H(x_1^{(p^2)}),\varphi_k(v_k)]\\
&=&-\varphi_k(r^{-1}v_k')+sD_H\Big(x_1^{(p^2-1)}\cdot\big({\textstyle
\sum}_{i=0}^{p-1}\,k^ir^{-i}v_kx_2^{(i-1)}+k^{p-1}\widetilde{r}v_k'x_2^{(p-2)}\big)\Big)\\
&=&-\varphi_k(r^{-1}v_k')+sk\varphi_k(x_1^{(p^2-1)}v_k).
\end{eqnarray*}
This holds if and only if
$$-b_1-b_2x_1-\cdots-b_{p^2-1}x_1^{(p^2-2)}+skx_1^{(p^2-1)}=\,\,
\eta_kr\big(1+b_1x_1^{(1)}+\cdots+b_{p^2-1}x_1^{(p^2-1)}\big).$$
Because
\begin{eqnarray*}
r\big(1+\textstyle{\sum_{i=1}^{p^2-1}}\,b_ix_1^{(i)}\big)&=&
\big(1+\textstyle{\sum_{i=1}^{p^2-1}}\,b_ix_1^{(i)}\big)+\mu
\big(x_1^{(p-1)}+
\textstyle{\sum_{i=1}^{p-1}}\,b_{ip}x_1^{(ip+p-1)}\big)\\
&=&\big(1+\textstyle{\sum_{i=1}^{p^2-1}}\,b_ix_1^{(i)}\big)+\mu\,
\textstyle{\sum_{i=0}^{p-1}}\,b_{ip}x_1^{(ip+p-1)}
\end{eqnarray*}
by Lucas' theorem, this leads to the system of equations
\begin{eqnarray*}
b_0&=&1;\\
b_i&=&-\eta_kb_{i-1}\qquad \qquad\qquad\quad\ \ \ 1\le i\le p^2-1,\ \,i\not\in p\,\Z;\\
b_{ip}&=&-\eta_k(b_{ip-1}+\mu b_{i-1})\qquad \quad\ 1\le i\le p-1;\\
\eta_kb_{p^2-1}&=&sk.
\end{eqnarray*}
Arguing recursively, one observes that there is a bijection between
the solutions to this system and the roots of a polynomial of the
form $X^{p^2}+\sum_{i=1}^{p^2-1}\lambda_iX^i-sk$, where
$\lambda_i\in F$. Since $F$ is algebraically closed, it follows that
our eigenvalue problem has at least one solution.

\smallskip

\noindent (c) In view of our discussion in part~(b),
$\varphi_k(v_k)\equiv \,D_H(x_2)+b_1D_H(x_1)\ \,({\rm mod}\,
S_{(0)})$. Since $D_H(x_2)=-D_1$ and $S_{(0)}$ is a restricted
subalgebra of $\mathcal G$, Jacobson's formula shows that
$\varphi_k(v_k)^{[p]}= -D_1^p+w_k$ for some $w_k\in S.$ In
particular, $\varphi_k(v_k)^{[p]}\ne 0$. Note that
$\varphi_k(v_k)^{[p]}\in C_\mu\cap S_p\cap\ker\ad h$. Now, using
(\ref{2.1}) it is easy to observe that $C_\mu'\cap \ker\ad
h=Ft_\mu$, whilst from Lemma~\ref{centraliser} it is immediate that
$C_\mu\cap S_p=F(\mu h_\mu+n_\mu).$ Lemma~\ref{centraliser} also
implies that $\mu h_\mu+n_\mu=-h_\mu^{[p]}$ and
$h_\mu^{[p]^2}=-\mu^{p}h_\mu^{[p]}$.

Let $h_s$ denote the $p$-semisimple part of $h$ in $\mathcal G$, an
element of $C_\mu\cap\ker\ad h\cap S_p$. Since the above discussion
shows that $C_\mu\cap S_p\cap\ker\ad h$ has dimension $\le 2$, in
order to finish the proof of (iii) we need to show that $t_\mu$ and
$h_s$ are linearly independent.

Suppose the contrary. Then $\ad h$ acts nilpotently on $C_\mu'$.
Recall that $h\in h_\mu+C_\mu'$ and $C_\mu'$ is abelian. So $\ad
h_\mu$ acts on $C_\mu'$ nilpotently, too. Since $\mu\ne 0$, our
earlier remarks and Lemma~\ref{centraliser}(iii) now show that ${\rm
ad}(h_\mu^{[p]})=-\mu\,\ad h_\mu-\ad n_\mu$ acts trivially on
$C_\mu'$. Since this violates (\ref{2.1}), we reach a contradiction.
Statement~(iii) follows.

\smallskip

\noindent (d) In proving (iv) we may assume that
$h=h_\mu+sD_H(x_1^{(p^2)})$; see part~(b).  We claim that there
exist $u=x_1+c_1x_1^{(2)}+\cdots+c_{p^2-2}x_1^{(p^2-1)}$ and
$\lambda\in F^\times$ such that
$$\big[h,D_H(u+u'\widetilde{r}x_2^{(p-1)})\big]\,=\,
\lambda D_H(u+u'\widetilde{r}x_2^{(p-1)}).$$
Since $C_\mu'$ is abelian, it follows from (\ref{2.1}) that
$$
\big[h,D_H(u+u'\widetilde{r}x_2^{(p-1)})\big]=\big[h_\mu,
D_H(u+u'\widetilde{r}x_2^{(p-1)})\big]=-D_H\big(r^{-1}u'+
(r^{-1}u')'\widetilde{r}x_2^{(p-1)}\big).$$ Thus, we seek $u$ such
that $r^{-1}u'=a-\lambda u$ for some $a\in F$. Since $r^{-1}=1-\mu
x_1^{(p-1)}$, this entails that $a=1,\,$ $c_1=-\lambda$, and
\begin{equation}\label{c_i}
(1-\mu
x_1^{(p-1)})\Big(1+{\sum}_{i=1}^{p^2-2}\,c_ix_1^{(i)}\Big)\,=\,
1+c_1\Big(x_1+{\sum}_{i=1}^{p^2-2}\,c_ix_1^{(i+1)}\Big).
\end{equation}
Since
$x_1^{(p-1)}\cdot\,\big(1+\sum_{i=1}^{p^2-2}\,c_ix^{(i)}\big)=\big(x_1^{(p-1)}+
\textstyle{\sum_{i=1}^{p-1}}\,c_{ip}x_1^{(ip+p-1)}\big)$ by Lucas'
theorem, we see that $c_{i+1}=c_1c_i$ if $p\nmid (i+2)$. Induction
on $k$ shows that $c_{kp+p-1}=c_1^k(c_1^{p-1}+\mu)^{k+1}$ for $0\le
k\le p-1$. As $c_{p^2-1}=0$, this yields
$c_1^{p-1}(c_1^{p-1}+\mu)^p=0$. As $c_1=-\lambda\ne 0$, we see that
$c_1$ must satisfy the equation $X^{p-1}+\mu=0$. Conversely, any
root of this equation gives rise to a solution of (\ref{c_i}) with
$\lambda=-c_1\ne 0$ (recall that $\mu\ne 0$ by our assumption). The
claim follows.

We now set $x:=D_H(u+u'\widetilde{r}x_2^{(p-1)})$, where $u$ is as
above. Clearly, $x\in S$. Since $r^{-1}u'-1\in
\mathcal{O}_{(1)}(2;(2,1))$, it follows from (\ref{2.2}) that $(\ad
x)^p(\varphi_k(v))=k^p\varphi\big((r^{-1}u')^pv\big)=k\varphi_k(v)$
for all $v\in \mathcal{O}(2;(2,1))[x_1]$ and all $k\in\mathbb{F}_p$.
This implies that $\ad x$ is not nilpotent, completing the proof.
\end{proof}

We now let $\mathfrak t$ be a $2$-dimensional torus in $\mathcal G$.
\begin{lemm}\label{tori}
There exist nonzero $u_1,u_2\in S$ such that ${\mathfrak
t}=F(D_1^p+u_1)\oplus Fu_2$.
\end{lemm}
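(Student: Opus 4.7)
The plan is to first establish $\mathfrak{t}\subset S_p$, then rule out $\mathfrak{t}\subset S$, and finally read off the required basis.

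The first step is to show $\mathcal{G}^{[p]}\subset S_p$, which forces $\mathfrak{t}\subset S_p$ since every toral $t$ satisfies $t=t^{[p]}$. For $v\in V$ and $x\in S_p$, Jacobson's formula gives
\[
(v+x)^{[p]}\,=\,v^{[p]}+x^{[p]}+\Lambda(v,x),
\]
where $\Lambda(v,x)$ is a sum of iterated Lie monomials in $v$ and $x$, each involving at least one of each. By hypothesis $V^{[p]}=0$; the inclusion $V\subset\Der S$ gives $[V,S]\subset S$; and since $D_1=-D_H(x_2)\in S$, one has $[D_1^p,D_H(f)]=(\ad D_1)^pD_H(f)=D_H(D_1^pf)\in S$ for each generator $f$ of $V$. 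Thus $[V,S_p]\subset S$, and combined with $[S_p,S_p]\subset S_p$, every nested commutator in $\Lambda(v,x)$ lies in $S$. Hence $(v+x)^{[p]}\in S_p$, so $\mathfrak{t}\subset S_p$.

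The second and principal step is to exclude $\mathfrak{t}\subset S$. Let $\pi\colon S_p\to F$ be the linear projection with kernel $S$ and $\pi(D_1^p)=1$; then $\dim(\mathfrak{t}\cap S)\ge 1$. Suppose for contradiction $\mathfrak{t}\subset S$ and take a nonzero toral $t\in\mathfrak{t}$. If $t\in S\setminus S_{(0)}$, Lemma~\ref{conj} conjugates $t$ to $t_\mu$ with $\mu\in\{0,1\}$. In the case $\mu=0$, Lemma~\ref{centraliser}(iv) asserts that $Ft_0$ is the unique maximal torus in $\mathfrak{c}_\mathcal{G}(t_0)$, forcing $\dim\mathfrak{t}=1$, a contradiction. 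In the case $\mu=1$, the decomposition $C_1=Fn_1\oplus Fh_1\oplus C_1'$ of Lemma~\ref{centraliser}(i) together with the explicit formulas $n_1=D_1^p+D_H(x_2^{(p)})$ and $h_1=-D_1-D_H(x_1^{(p-1)}x_2)-D_H(x_2^{(p)})$ show that both $n_1$ and $h_1$ have nontrivial components in $V\oplus FD_1^p$; a toral element of $C_1$ lying in $S$ must therefore have vanishing $n_1$-, $h_1$-, and $D_H(x_1^{(p^2)})$-components, reducing to a toral element of the abelian ideal $C_1'\cap S$. A direct analysis of the $[p]$-map on $C_1'\cap S$ pins these down to $Ft_1$, again forcing $\dim\mathfrak{t}=1$. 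The remaining possibility $t\in S_{(0)}$ is handled by a parallel argument using the standard filtration on $S$ and the restricted $[p]$-structure on its graded pieces.

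Finally, $\pi(\mathfrak{t})=F$, so we pick $t_1\in\mathfrak{t}$ with $\pi(t_1)=1$, giving $t_1=D_1^p+u_1$ for some $u_1\in S$. Since $(D_1^p)^{[p]}=D_1^{p^2}=0$ as a derivation of $\mathcal{O}(2;(2,1))$, $D_1^p$ is $p$-nilpotent and not toral, so $u_1\ne 0$. Any nonzero $u_2$ spanning the one-dimensional $\mathfrak{t}\cap S$ is automatically nonzero, and $\mathfrak{t}=F(D_1^p+u_1)\oplus Fu_2$. The main obstacle is the second step, especially the cases $\mu=1$ and $t\in S_{(0)}$, which demand careful control of the restricted $[p]$-map on the centralizer components and graded pieces of $\mathcal{G}$; the $\mu=0$ case falls out directly from Lemma~\ref{centraliser}(iv), but the other cases require the full force of Lemma~\ref{centraliser} (and possibly Lemma~\ref{stab}) to locate the toral elements explicitly.
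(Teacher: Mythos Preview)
Your overall strategy---show $\mathfrak{t}\subset S_p$, then exclude $\mathfrak{t}\subset S$, then read off the basis---matches the paper's. Step~1 is correct, though the paper dispatches it in one line: since $V^{[p]}=0$, the quotient $\mathcal{G}/S_p$ is $p$-nilpotent, so any torus lies in $S_p$. Your Step~3 is fine.

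The substantive divergence, and the genuine gap, is in Step~2. You attempt to exclude $\mathfrak{t}\subset S$ by picking a toral $t\in\mathfrak{t}$, conjugating it to $t_\mu$ via Lemma~\ref{conj}, and then analyzing the centralizer $C_\mu$ to show no second independent toral direction exists inside $S$. This is far harder than necessary, and you do not complete it: in the $\mu=1$ case you assert without proof that the toral elements of the abelian subalgebra $C_1'\cap S$ are exactly $Ft_1$ (this requires computing the $[p]$-map on a $(p^2-1)$-dimensional space), and the case where every toral element of $\mathfrak{t}$ lies in $S_{(0)}$ is dismissed as a ``parallel argument'' with no content.

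The paper's argument for Step~2 avoids Lemmas~\ref{conj}--\ref{stab} entirely and uses only the standard filtration. If $\mathfrak{t}\cap S_{(0)}\ne(0)$, then since $S_{(0)}/S_{(1)}\cong\mathfrak{sl}(2)$ acts irreducibly on the $2$-dimensional module $S/S_{(0)}$, any nonzero element of $\mathfrak{t}\cap S_{(0)}$ acts invertibly on $S/S_{(0)}$; commutativity of $\mathfrak{t}$ then forces $\mathfrak{t}\subset S_{(0)}$, contradicting the fact that $S_{(0)}$ has toral rank $1$ in $S$. If instead $\mathfrak{t}\cap S_{(0)}=(0)$, then $\mathfrak{t}$ projects isomorphically onto $S/S_{(0)}$, so $\mathfrak{t}$ contains an element $D_1+u$ with $u\in S_{(0)}$; since $S_{(0)}$ is a restricted subalgebra, Jacobson's formula gives $(D_1+u)^{[p]}\in D_1^p+S$, forcing $D_1^p\in\mathfrak{t}+S=S$, which is false. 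This two-line dichotomy replaces your entire case analysis.
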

\begin{proof} Since $V^{[p]}=0$, the restricted Lie
algebra ${\mathcal G}/S_p$ is $p$-nilpotent. As ${\mathfrak t}$ is a
torus, it must be that ${\mathfrak t}\subset S_p$. Then ${\mathfrak
t}\cap S\ne (0)$, for $\dim\,{\mathfrak t}=2$.

Suppose ${\mathfrak t}\subset S$. Since
$S_{(0)}/S_{(1)}\cong\mathfrak{sl}(2)$ and $S_{(-1)}/S_{(0)}$ is a
$2$-dimensional irreducible module over $S_{(0)}/S_{(1)}$, every
nonzero element of ${\mathfrak t}\cap S_{(0)}$ acts invertibly on
$S_{(-1)}/S_{(0)}$. Therefore, ${\mathfrak t}\cap S_{(0)}\ne (0)$
would force ${\mathfrak t}\subset S_{(0)}$, which is false because
$S_{(0)}$ has toral rank $1$ in $S$. On the other hand, if
${\mathfrak t}\cap S_{(0)}=(0)$ (and still ${\mathfrak t}\subset
S$), then $\mathfrak t$ would contain an element of the form $D_1+u$
with $u\in S_{(0)}$. But this would yield $D_1^p\in{\mathfrak
t}+S=S$, as $S_{(0)}$ is a restricted subalgebra of $S_p$.
Therefore, ${\mathfrak t}\not\subset S$. Since $D_1$ is nilpotent
and $S$ has codimension $1$ in $S_p$, our statement follows
immediately.
\end{proof}
\begin{lemm}\label{ham1}
Let ${\mathfrak h}={\mathfrak c}_S(\mathfrak t)$ and let
$\alpha\in\Gamma(S,{\mathfrak t})$.
\begin{itemize}
\item[(1)] If $\alpha$ vanishes on $\mathfrak h$, then $G(\alpha)$ is
solvable.

\smallskip

\item[(2)]
If $\alpha$ does not vanish on $\mathfrak h$, then $G(\alpha)\cong
H(2;\un{1})$.

\smallskip

\item[(3)]
$\dim\,G_\gamma=p+\delta_{\gamma,\,\,0}$ for all
$\gamma\in\Gamma(G,{\mathfrak t})\cup\{0\}$.

\smallskip

\item[(4)]
$\Gamma(S,{\mathfrak t})\cup\{0\}$ is a two-dimensional vector space
over ${\mathbb F}_p$.
\end{itemize}
\end{lemm}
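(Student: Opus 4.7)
The plan is to bring $\mathfrak{t}$ into a concrete normal form using Lemmas~\ref{conj}--\ref{tori} and then read off all four statements from an explicit weight-space decomposition. By Lemma~\ref{tori}, $\mathfrak{t}\cap S=Fu_2$ for some nonzero toral $u_2\in S$; thus $u_2\in\mathfrak{h}\cap\mathfrak{t}$ and $\mathfrak{t}$ is a maximal torus in the $p$-envelope of $\mathfrak{h}$. Consequently, a root $\alpha\in\Gamma(S,\mathfrak{t})$ vanishes on $\mathfrak{h}$ precisely when $\alpha(u_2)=0$.

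First I would normalize $u_2$ by an $\mathrm{Aut}\,S$-conjugation. If $u_2\notin S_{(0)}$, Lemma~\ref{conj} brings $u_2$ to the form $t_\mu$ for some $\mu\in\{0,1\}$. The case $u_2\in S_{(0)}$ needs a parallel normal-form argument; this uses that $S_{(0)}$ has toral rank $1$ (so $Fu_2$ is a maximal torus of $S_{(0)}$) and that the $\mathfrak{sl}(2)$-structure on $S_{(0)}/S_{(1)}$ lets one put $u_2$ in split form. In either case the second generator $t_1=D_1^p+u_1$ automatically centralizes $u_2$, so Lemma~\ref{centraliser}(i) places $t_1$ inside $C_\mu=Fn_\mu\oplus Fh_\mu\oplus C_\mu'$.

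Next I would compute the $\mathfrak{t}$-weight decomposition of $\mathcal{G}$ and restrict it to $G$. Taking $u_2=t_\mu$, Lemma~\ref{centraliser}(ii) identifies the $k$-eigenspace of $\mathrm{ad}\,u_2$ on $\mathcal{G}$ ($k\in\mathbb{F}_p^\times$) as the $p^2$-dimensional span of the $\varphi_k(v)$, $v\in\mathcal{O}(2;(2,1))[x_1]$. The action of $\mathrm{ad}\,t_1$ on this eigenspace, computed via formulas (\ref{2.2}) and (\ref{2.3}) applied to the $C_\mu$-decomposition of $t_1$, is semisimple and splits the $k$-eigenspace into $p$ simultaneous $\mathfrak{t}$-weight spaces of equal dimension $p$. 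This produces $p^2-1$ distinct nonzero $\mathfrak{t}$-roots with $p$-dimensional root spaces, establishing~(4) (the roots exhaust the nonzero functionals on the $\mathbb{F}_p$-span of toral elements of $\mathfrak{t}$) and the nonzero-weight part of~(3); the zero-weight dimension follows from $\dim G_0=\dim G-\sum_{\gamma\ne 0}\dim G_\gamma$.

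Finally, (1) and (2) are immediate. For (1), if $\alpha(u_2)=0$ then $\bigoplus_{i\in\mathbb{F}_p^\times}G_{i\alpha}\subseteq\mathfrak{c}_G(u_2)$, and Lemma~\ref{centraliser}(i) shows that $\mathfrak{c}_G(u_2)=Fh_\mu\oplus(C_\mu'\cap G)$ is an abelian ideal extended by $Fh_\mu$; hence $G(\alpha)$ is solvable. For (2), when $\alpha(u_2)\ne 0$, writing the root spaces via $\varphi_k(v)$ and making a change of coordinates on $\mathcal{O}(2;(2,1))$ embeds $G(\alpha)$ into a copy of $H(2;\un{1})$, and matching the dimension $\dim G(\alpha)=(p+1)+(p-1)p$ (from~(3)) with the Hamiltonian structure closes the identification. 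The main obstacle will be (2): deducing the isomorphism $G(\alpha)\cong H(2;\un{1})$ from abstract weight data requires either an explicit change of coordinates producing the standard Hamiltonian bracket, or an appeal to the classification of Hamiltonian 1-sections via \cite[Cor.~3.7]{PS4} together with a careful dimension and $\mathfrak{t}$-module match. A secondary technicality is the $u_2\in S_{(0)}$ branch of the normalization step, which needs its own normal-form lemma analogous to Lemma~\ref{conj}.
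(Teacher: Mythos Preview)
The paper's proof is entirely different from yours and does not normalize $\mathfrak{t}$ at all. It proves (2) by direct citation of \cite[Prop.~2.1(2)]{PS4}; proves (1) via Engel--Jacobson (from $\alpha(\mathfrak{h})=0$ and maximality of $\mathfrak{t}$ one gets $\alpha(S_{i\alpha}^{[p]})=0$ for all $i$, so $S(\alpha)$ is nilpotent, and since $G/S$ is nilpotent $G(\alpha)$ is solvable); proves (3) by passing to a torus $\mathfrak{t}'$ with all roots proper via \cite[(10.1.1(e))]{BW}, reading off the root-space dimensions from \cite[(VI.2(2))]{St91}, and transporting them back to $\mathfrak{t}$ using the invariance theorem of \cite{P89}; and obtains (4) from (3) and $\dim S=p^3-2$ by counting. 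Your explicit-computation route would in effect require reproving sizeable pieces of \cite{BW}, \cite{St91} and \cite{PS4} by hand.

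There is also a genuine gap in your opening reduction. The equivalence ``$\alpha(\mathfrak{h})=0\Leftrightarrow\alpha(u_2)=0$'' can fail: when $TR(\mathfrak{h},S)=2$ (a case that does occur, cf.\ Lemma~\ref{ham2}) the maximal torus of $\mathfrak{h}_p$ is all of $\mathfrak{t}$, so no nonzero root vanishes on $\mathfrak{h}$, yet by~(4) there are $p-1$ nonzero roots with $\alpha(u_2)=0$. For those roots your argument for~(2), which is organized entirely around the $\varphi_k$-description of the $u_2$-eigenspaces with $k\ne 0$, simply does not apply. The two obstacles you already flag are likewise real: the paper contains no analogue of Lemma~\ref{conj} for $u_2\in S_{(0)}$, and a dimension match plus a vague ``change of coordinates'' is far from establishing $G(\alpha)\cong H(2;\un{1})$. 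The paper sidesteps all of this by invoking the cited structure results.
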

\begin{proof} Note that ${\mathfrak c}_{S_p}({\mathfrak t})={\mathfrak
t}+\mathfrak h$ and $\mathfrak t$ is a standard torus of maximal
dimension in $S_p$. Therefore, the results of \cite[(10.1.1)]{BW}
and \cite[(VI)]{St91} apply to $\mathfrak t$.

If $\alpha$ does not vanish on $\mathfrak h$, then $G(\alpha)\cong
H(2;\un{1})$ by \cite[Prop.~2.1(2)]{PS4}. Suppose $\alpha({\mathfrak
h})=0$. As $\mathfrak t$ is a maximal torus of $S_p$, we have that
$\alpha(L_{i\alpha}^{[p]})=0$ for all $i\in{\mathbb F}_p^\times$.
Then $S(\alpha)$ is nilpotent due to the Engel--Jacobson theorem. As
$G/S$ is nilpotent too, we conclude that $G(\alpha)$ is solvable.

By \cite[(10.1.1(e))]{BW}, there is a $2$-dimensional torus
${\mathfrak t}'$ in $S_p$ such that all roots in
$\Gamma(S,{\mathfrak t}')$ are proper. Then \cite[(VI.2(2))]{St91}
applies showing that all root spaces of $G$ with respect to
${\mathfrak t}'$ are $p$-dimensional and $\dim {\mathfrak
c}_G({\mathfrak t}')=p+1$. By \cite{P89}, all root spaces of $G$
with respect to $\mathfrak t$ must have the same property, and $\dim
{\mathfrak c}_G({\mathfrak t})=p+1$ (see also
\cite[Cor.~2.11]{PS2}). As $\dim\, S=p^3-2$ and $\dim\,S_\gamma\le
p\,$ for all $\gamma\in\Gamma(S,{\mathfrak t})$, we derive that
$|\Gamma(S,{\mathfrak t})|=p^2-1$. As a consequence, the set
$\Gamma(S,{\mathfrak t})\cup\{0\}$ is $2$-dimensional vector space
over ${\mathbb F}_p$. This completes the proof. \end{proof}
\begin{lemm}\label{ham2}
Under the above assumptions on $\mathfrak t$ and $S$ the following
hold:
\begin{itemize}
\item[(1)]
If $TR({\mathfrak h},S)=2$, then all roots in $\Gamma(S,{\mathfrak
t})$ are Hamiltonian improper.

\smallskip

\item[(2)]
If $TR({\mathfrak h},S)=1$, then $\Gamma(S,{\mathfrak t})$ contains
a solvable root.

\smallskip

\item[(3)]
Suppose that $TR({\mathfrak h},S)=1$ and ${\mathfrak h}_p\cap
S_{(0)}$ contains a nonnilpotent element. Then for any solvable
$\alpha\in\Gamma(S,{\mathfrak t})$ the $1$-section $G(\alpha)$ is
nilpotent.
\end{itemize}
\end{lemm}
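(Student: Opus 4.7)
I would start by establishing the unifying observation that $\mathfrak{c}_{S_p}(\mathfrak{t})=\mathfrak{t}+\mathfrak{h}$ is a nilpotent Cartan subalgebra of $S_p$ whose unique maximal torus is $\mathfrak{t}$. Since $\mathfrak{h}_p\subset\mathfrak{c}_{S_p}(\mathfrak{t})$, this makes the unique maximal torus of $\mathfrak{h}_p$ coincide with $\mathfrak{h}_p\cap\mathfrak{t}$, so
\[
TR(\mathfrak{h},S)\,=\,\dim_F(\mathfrak{h}_p\cap\mathfrak{t}),
\]
and a root $\alpha\in\Gamma(S,\mathfrak{t})$ vanishes on $\mathfrak{h}$ in the sense of Lemma~\ref{ham1}(1) if and only if $\alpha|_{\mathfrak{h}_p\cap\mathfrak{t}}=0$.

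\smallskip
For~(1), the hypothesis gives $\mathfrak{h}_p\cap\mathfrak{t}=\mathfrak{t}$, so no nonzero root vanishes on $\mathfrak{h}$ and Lemma~\ref{ham1}(2) declares every $\alpha\in\Gamma(S,\mathfrak{t})$ Hamiltonian. To upgrade each to Hamiltonian \emph{improper}, I would analyse $\mathfrak{t}$ using the results of Section~2: Lemma~\ref{tori} forces $\mathfrak{t}\cap S=Fu_2$ with $u_2\notin S_{(0)}$, Lemma~\ref{conj} conjugates $u_2$ into $t_\mu$, and Lemma~\ref{centraliser}(iv) excludes $\mu=0$ since the corresponding $C_0$ is nilpotent with a unique $1$-dimensional maximal torus. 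Thus $t_1\in\mathfrak{t}$, and any second toral generator of $\mathfrak{t}\subset\mathfrak{c}_{\mathcal G}(t_1)$ must involve $h_1$, because Lemma~\ref{centraliser}(i) shows the abelian ideal $C_1'$ contributes only scalar multiples of $t_1$ to the toral part; its exact form is pinned down by $h_1^{[p]}=-h_1-n_1$ from Lemma~\ref{centraliser}(iii) and adjusted through $Z(t_1)$ via Lemma~\ref{stab}(ii). Consequently $\mathfrak{t}\not\subset S_{(0)}$, so its image in every $1$-section $G(\alpha)\cong H(2;\un{1})$ is a non-standard Cartan subalgebra, which characterises $\alpha$ as Hamiltonian improper.

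\smallskip
Part~(2) I would dispose of directly: if $\dim_F(\mathfrak{h}_p\cap\mathfrak{t})=1$ then the annihilator of $\mathfrak{h}_p\cap\mathfrak{t}$ in $\mathfrak{t}^*$ is one-dimensional and contains $p-1$ nonzero $\mathbb{F}_p$-rational characters; Lemma~\ref{ham1}(4) identifies these with elements of $\Gamma(S,\mathfrak{t})$, and any such root vanishes on $\mathfrak{h}$, so Lemma~\ref{ham1}(1) makes it solvable.

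\smallskip
For~(3), I would first extract the $p$-semisimple part of the given nonnilpotent element of $\mathfrak{h}_p\cap S_{(0)}$: this produces a nonzero toral $t^\circ\in\mathfrak{h}_p\cap S_{(0)}$, and $TR(\mathfrak{h},S)=1$ forces $\mathfrak{h}_p\cap\mathfrak{t}=Ft^\circ$. For a solvable $\alpha$ I would then exploit the direct sum decomposition $G=V\oplus S$ and the fact that $[\mathfrak{t},V]\subset S$ to conclude $G_{i\alpha}=S_{i\alpha}$ for $i\in\mathbb{F}_p^\times$, and hence
\[
G(\alpha)\,=\,V_0+S(\alpha),\qquad V_0:=V\cap\mathfrak{c}_G(\mathfrak{t}).
\]
The proof of Lemma~\ref{ham1}(1) already produces $S(\alpha)$ nilpotent via the Engel--Jacobson theorem, and the relations $V^3=V^{[p]}=0$ from Section~2 make $V_0$ a nilpotent subalgebra acting on $S$ (hence on $S(\alpha)$) by nilpotent derivations. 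The semidirect sum $V_0+S(\alpha)$ is therefore nilpotent. The hypothesis $t^\circ\in S_{(0)}$ enters by anchoring $\mathfrak{h}_p\cap\mathfrak{t}$ inside the standard filtration, which is needed when transferring the conclusion to the ambient simple Lie algebra $L$; the main subtle point in executing the plan is to verify carefully that $G_{i\alpha}=S_{i\alpha}$ for $i\ne 0$, which I expect to follow from $[\mathfrak{t},V]\subset S$ combined with the $\mathfrak{t}$-weight decomposition of $\mathcal G$.
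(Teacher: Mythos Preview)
Your argument for~(2) is correct and matches the paper. Parts~(1) and~(3), however, each contain a genuine gap.

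In~(3), the decomposition $G(\alpha)=V_0+S(\alpha)$ with $V_0:=V\cap\mathfrak{c}_G(\mathfrak{t})$ is not justified and in general fails. You correctly observe that $[\mathfrak{t},V]\subset S$, and this does give $G_{i\alpha}=S_{i\alpha}$ for $i\ne 0$; but the same inclusion is exactly what breaks the zero-weight piece. Writing $G(\alpha)=\mathfrak{c}_G(t)$ for the toral $t\in F^\times u_2$, an element $g=v+s\in\mathfrak{c}_G(t)$ only satisfies $[t,v]=-[t,s]\in S$, and there is no reason for $[t,v]$ to vanish, so the $V$-component $v$ need not lie in $V\cap\mathfrak{c}_G(t)$, let alone in the smaller $V_0$. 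The paper repairs this by replacing $V$ with the twisted complement
\[
W\,:=\,\{\,v-(\ad t)^{p-1}(v)\ :\ v\in V\,\}.
\]
Since $t$ is toral, $\mathrm{Id}-(\ad t)^{p-1}$ projects onto $\ker(\ad t)$, so $W\subset\mathfrak{c}_G(t)$; and since $(\ad t)^{p-1}(V)\subset S$, still $G=W\oplus S$, whence $G(\alpha)=W\oplus S(\alpha)$. This is also where the hypothesis $t\in S_{(0)}$ is actually used: because $V\subset G_{(1)}$ and $\ad t$ preserves the standard filtration, one gets $W\subset G_{(1)}$, so every element of $W$ acts nilpotently on $G(\alpha)$. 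Then Engel--Jacobson applied to the weakly closed set $\big(\ad_{G(\alpha)}S(\alpha)\big)\cup\big(\ad_{G(\alpha)}W\big)$ finishes. Your remark that the hypothesis is ``needed when transferring the conclusion to the ambient simple Lie algebra $L$'' is off target; it is needed already inside $G$ to force $W\subset G_{(1)}$.

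In~(1), Lemma~\ref{tori} only gives $\mathfrak{t}\cap S=Fu_2$ with $u_2\in S$; it does \emph{not} assert $u_2\notin S_{(0)}$, so your appeal to Lemma~\ref{conj} is premature. The paper reaches $\mathfrak{t}\cap S_{(0)}=(0)$ by a short direct argument: if $\mathfrak{h}\cap S_{(0)}$ contained a nonnilpotent element, its image in $S_{(0)}/S_{(1)}\cong\mathfrak{sl}(2)$ would act invertibly on $S_{(-1)}/S_{(0)}$, forcing the nilpotent subalgebra $\mathfrak{h}$ into $S_{(0)}$ and hence $TR(\mathfrak{h},S)\le 1$. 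With $\mathfrak{t}\cap S_{(0)}=(0)$ in hand, the paper simply cites \cite[(10.1.1(d))]{BW} for improperness. Your route through Lemmas~\ref{conj}, \ref{centraliser} and \ref{stab} is substantially heavier and, as written, rests on the unproved claim about $u_2$.
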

\begin{proof} Suppose $TR({\mathfrak h}, S)=2$. Then no root in
$\Gamma(S,{\mathfrak t})$ vanishes on $\mathfrak h$,; hence,
 all roots in
$\Gamma(S,{\mathfrak t})$ are Hamiltonian by
Proposition~\ref{ham1}(2). If ${\mathfrak h}\cap S_{(0)}$ contains a
nonnilpotent element, $x$ say, then the image of $x$ in
$S_{(0)}/S_{(1)}\cong \mathfrak{sl}(2)$ acts invertibly on
$S_{(-1)}/S_{(0)}$. As $\mathfrak h$ is nilpotent, this would force
${\mathfrak h}\subset S_{(0)}$, and hence $TR({\mathfrak h},S)=1$, a
contradiction. Consequently, ${\mathfrak t}\cap S_{(0)}=(0)$. By
\cite[(10.1.1(d))]{BW} (see the proof on p.~232-233), every
Hamiltonian root is then improper.

Now suppose  $TR({\mathfrak h},S)=1$. Then the unique maximal torus
of ${\mathfrak h}_p$ is spanned by a toral element, hence it follows
from Lemma~\ref{ham1}(4) that there is a root in
$\Gamma(S,{\mathfrak t})$ which vanishes on $\mathfrak h$. Every
such root is solvable by Proposition~\ref{ham1}(1).

Finally, suppose that $TR({\mathfrak h},S)=1$ and ${\mathfrak
h}_p\cap S_{(0)}$ contains a nonnilpotent element. Since $S_{(0)}$
is a restricted subalgebra of $S_p$, we then have
$S_{(0)}\cap\mathfrak{h}_p\cap\mathfrak{t}\ne (0)$. Since
$\mathfrak{t}\cap S=Fu_2$ for some nonzero $u_2\in S$ (see
Lemma~\ref{tori}), it must be that $u_2\in S_{(0)}$ and
$u_2^{[p]}\in Fu_2$.

If $\alpha\in\Gamma(S,{\mathfrak t})$ is solvable, then
$\alpha({\mathfrak h})=0$ by Lemma~\ref{ham1}(2). As explained in
the proof of Lemma~\ref{ham1} the Lie algebra $S(\alpha)$ is
nilpotent. There exists an element $t\in F^\times u_2$ with
$t^{[p]}=t$ such that $G(\alpha)={\mathfrak c}_G(t)$. Set
$W:=\{v-(\ad t)^{p-1}(v)\,|\,\,\,v\in V\}$. By construction,
$W\subset {\mathfrak c}_G(t)$ and $G=W\oplus S$. Since $V\subset
G_{(1)}$ and $t\in S_{(0)}$, we have the inclusion $W\subset
G_{(1)}$. In particular, all elements of $W$ act nilpotently on
${\mathfrak c}_G(t)$.

Since $S(\alpha)$ is a nilpotent ideal of $G(\alpha)$, the set
$\big({\rm ad}_{\,G(\alpha)}\,S(\alpha)\big)\cup \big({\rm
ad}_{\,G(\alpha)}\,W\big)$ is weakly closed and consists of
nilpotent endomorphisms. Since $G(\alpha)=W\oplus S(\alpha)$, the
Engel--Jacobson theorem now shows that $G(\alpha)$ is nilpotent.
\end{proof}
\begin{lemm}\label{gen}
If $t\in S_p$ is a toral element not contained in $S$, then $t$ is
conjugate to $D_1^p+D_1+D_H(x_1x_2)$ under the automorphism group of
$S$.
\end{lemm}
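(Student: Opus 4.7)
The plan is to normalize $t$ in four successive stages, running in parallel to the argument of Lemma~\ref{conj}. Since $S_p=S\oplus FD_1^p$, write $t=\lambda D_1^p+u$ with $\lambda\in F^\times$ and $u\in S$. The scaling automorphism $\Phi_\sigma$ with $\sigma(x_1)=cx_1$, $\sigma(x_2)=c^{-1}x_2$ sends $D_1^p\mapsto c^{-p}D_1^p$, so choosing $c\in F$ with $c^p=\lambda$ normalizes $t$ to $t=D_1^p+u$.

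Next, write $u=aD_1+bD_2+w$ with $w\in S_{(0)}$. From the proof of Lemma~\ref{conj}(a) we know $D_1^{[p]}=D_1^p$ and $D_2^{[p]}=0$ in $S_p$, and the three summands $D_1^p,aD_1,bD_2$ pairwise commute; hence $(D_1^p+aD_1+bD_2)^{[p]}=a^pD_1^p$. All Jacobson correction terms for $t=(D_1^p+aD_1+bD_2)+w$ involve at least one bracket with $w\in S$ and so lie in $S$, which gives $t^{[p]}\equiv a^pD_1^p\pmod{S}$; the equation $t^{[p]}=t$ then forces $a=1$. The inner automorphism $\exp\big(b\,\ad D_H(x_1^{(2)})\big)$ is well-defined since $D_H(x_1^{(2)})=x_1D_2$ is $p$-nilpotent ($(x_1D_2)^p=x_1^pD_2^p=0$), and it kills the $bD_2$-term using $[D_H(x_1^{(2)}),D_1]=-D_2$, $[D_H(x_1^{(2)}),D_2]=[D_H(x_1^{(2)}),D_1^p]=0$, yielding $t=D_1^p+D_1+w_1$ with $w_1\in S_{(0)}$. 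The image $\bar w_1$ of $w_1$ in $S_{(0)}/S_{(1)}\cong\mathfrak{sl}(2)$ controls the action of $\ad t$ on $S_{(-1)}/S_{(0)}=FD_1\oplus FD_2$; torality of $t$ makes this action semisimple with eigenvalues $\pm\alpha\in\mathbb F_p$. A suitable inner automorphism $\exp(\ad Y)$ with $\bar Y$ nilpotent in $\mathfrak{sl}(2)$ rotates $\bar w_1$ into the Cartan subspace $F\cdot D_H(x_1x_2)$, and $[D_H(x_1x_2),D_1]=D_1$ pins $\alpha=1$, giving $w_1\equiv D_H(x_1x_2)\pmod{S_{(1)}}$. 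Any disturbance of the previous normalizations from $\exp(\ad Y)$ is absorbed by re-applying the preceding step.

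The remaining tail $v:=w_1-D_H(x_1x_2)\in S_{(1)}$ is eliminated inductively: at each filtration level $k\ge 1$, the graded piece of $v$ in $S_{(k)}/S_{(k+1)}$ is removed by a combination of $\ad(D_1+D_H(x_1x_2))$ and the special automorphisms $\sigma_{m,n,\lambda}$ of Lemma~\ref{conj}; the toral condition $t^{[p]}=t$ ensures the relevant cohomological obstruction vanishes at each step, exactly as in the computation carried out in part~(a) of the proof of Lemma~\ref{conj}. The main obstacle is precisely this last inductive stage: one must track how each special automorphism affects the element not only at its leading filtration degree but also at all higher degrees, and verify that the normalizations secured earlier survive modulo harmless adjustments. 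This is the same bookkeeping that underpins Lemma~\ref{conj}, now extended across the entire filtration $\{S_{(k)}\}_{k\ge 1}$ instead of being performed at a single step.
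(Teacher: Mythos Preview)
Your sketch has a genuine gap at Step~3. Because the heights in $\mathcal{O}(2;(2,1))$ are asymmetric ($n_1=2>n_2=1$), the degree-preserving automorphisms of $S$ come only from substitutions $x_1\mapsto ax_1,\ x_2\mapsto cx_1+dx_2$ with $ad=1$; see \cite[Thms~7.3.2, 7.3.5, 7.3.6]{St04}. Thus $\exp\big(\ad D_H(x_1^{(2)})\big)$ is an automorphism of $S$ but $\exp\big(\ad D_H(x_2^{(2)})\big)$ is \emph{not}: only a Borel subgroup of $\mathrm{SL}(2)$ is realised on $S_{(0)}/S_{(1)}$. Hence a semisimple $\bar w_1$ with nonzero $D_H(x_2^{(2)})$-component cannot be rotated into $F\,D_H(x_1x_2)$ by ``$\exp(\ad Y)$ with $\bar Y$ nilpotent'', and re-applying your Step~2 (which again uses $D_H(x_1^{(2)})$) stays inside the same Borel and does not help.

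Step~4 is also not justified, and the appeal to Lemma~\ref{conj} is misleading: that proof does not run a filtration-by-filtration induction but makes one reduction and then a single Jacobson computation. The paper's proof of the present lemma uses a different mechanism that simultaneously fixes both of your difficulties. The key point you are missing is that the special automorphisms $\sigma_{m,n,\lambda}$ with $m\ge p$ move $D_1^p$ itself:
\[
\sigma_{m,n,\lambda}(D_1^p)\ \equiv\ D_1^p-\lambda\,D_H\big(x_1^{(m-p)}x_2^{(n)}\big)\quad\big(\mathrm{mod}\ S_{(m+n-p-1)}\big).
\]
Ranging over the admissible $(m,n)$ with $p\le m<p^2$, $n<p$, $(m,n)\ne(p,1)$, together with $(p^2,0)$, this lets one cancel almost all of $w$ directly against the $D_1^p$-shift, reducing $t$ in one sweep to the shape $D_1^p+bD_1+D_H\big(x_1^{(p^2-p)}\psi\big)$ with $\psi\in F[x_1,x_2]$, $\psi(0)=0$. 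A single Jacobson-formula computation then forces $b=1$ and $\psi=-(1+x_1^{(p-1)})x_2$; one checks this element is toral, and since $D_1^p+D_1+D_H(x_1x_2)\in S_p\setminus S$ is also toral, the conjugacy follows. Note in particular that the excluded pair $(m,n)=(p,1)$ is exactly why the $D_1$-term survives, and $(m,n)=(p,2)$ already supplies the $D_H(x_2^{(2)})$-correction that your Step~3 could not produce.
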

\begin{proof}
By our assumption, $t=aD_1^p+w$ for some $a\in F^\times$ and $w\in
S$. Choose $\alpha\in F$ satisfying $\alpha^p=a$ and let
$\sigma_\alpha$ denote the automorphism of $S$ which sends
$D_H(x_1^{(i)}x_2^{(j)})$ to $\alpha^{i-1}D_H(x_1^{(i)}x_2^{(j)})$;
see \cite[Thm.~7.3.6]{St04}. Then
$\sigma_\alpha(t)=-aD_H(\alpha^{-1}x_2)^p+w'$ for some $w'\in S$.
Hence we may assume that $a=1$. The description of ${\rm Aut}\,S$
given in \cite[Thms~7.3.5 \& 7.3.2]{St04} shows that for any pair of
nonnegative integers $(m,n)\ne (p,1)$ such that either $p\le m<p^2$
and $n<p$ or $(m,n)=(p^2,0)$ and any $\lambda\in F$ there is
$\sigma_{m,n,\lambda}\in {\rm Aut}\,S$ such that
$\sigma_{m,n,\lambda}(u)\equiv
u+\lambda\big[D_H(x_1^{(m)}x_2^{(n)}), u\big]\quad \big({\rm
mod}\,S_{i+(m+n-1)}\big)$ for all $u\in S_{(i)}$ Using Jacobson's
formula (with $u=D_1$) it is not hard to observe that
$$\sigma_{m,n,\lambda}(D_1^p)\,\equiv\, D_1^p-\lambda D_H(x_1^{(m-p)}x_2^{(n)})\quad \
\big({\rm mod}\,S_{(m+n-p-1)}\big).$$ This implies that there exists
$g\in{\rm Aut}\,S$ such that
$g(t)=D_1^p+bD_1+D_H(x_1^{(p^2-p)}\psi)$ for some $\psi\in
F[x_1,x_2]\subset {\mathcal O}(2;(1,1))$ with $\psi(0)=0$. Write
$\psi=\sum_{i=0}^{p-1}\,\psi_i x_1^{(i)}$ with $\psi_i\in F[x_2]$,
where $\psi_0(0)=0$. The element $g(t)$ being toral, it must be that
$b=1$. Note that $\big(\ad D_H(x_1^{(p^2-p)}\psi)\big) \big(\ad
(D_1^p+D_1)\big)^i(D_H(x_1^{(p^2-p)}\psi))=0$ for $0\le i\le p-3$
and $$\big(\ad D_H(x_1^{(p^2-p)}\psi)\big) \big({\rm ad}\,
(D_1^p+D_1)\big)^{p-2}(D_H(x_1^{(p^2-p)}\psi))=\big[D_H(x_1^{(p^2-p)}\psi),D_H(x_1^{(p)}\psi)\big].$$
Because $$(\ad D_1^p+\ad
D_1)^{p-1}=\,\textstyle{\sum_{i=0}^{p-1}}\,(-1)^i(\ad D_1)^{pi}(\ad
D_1)^{p-i-1}=\,{\textstyle \sum_{i=1}^p}\,(-1)^{i-1}(\ad
D_1)^{i(p-1)}$$ and $D_1^p(\psi)=0$, Jacobson's formula yields that
\begin{eqnarray*}
g(t)^{[p]}&=&(D_1^p+D_1)^{[p]}+\big({\rm ad}\,(D_1^p+D_1)\big)^{p-1}(D_H(x_1^{(p^2-p)}\psi))\\
&+&\frac{1}{2}\big[D_H(x_1^{(p^2-p)}\psi),D_H(x_1^{(p)}\psi)\big]\\
&=&D_1^p+D_H(\psi)-D_H(x_1^{(p-1)}\psi)+{\textstyle \sum_{i\ge p}}\,
\,D_H(x_1^{(i)}q_i)
\end{eqnarray*}
for some $q_i\in F[x_2]$. As the RHS equals
$D_1^p-D_H(x_2)+D_H(x_1^{(p^2-p)}\psi)$ and
$x_1^{(p-1)}\psi=x_1^{(p-1)}\psi_0$, we derive that $\psi_0=-x_2,\,$
$\psi_i=0$ for $1\le i\le p-2$, and $\psi_{p-1}=\psi_0$. In other
words, $\psi=-(1+x_1^{(p-1)})x_2$ and
$$g(t)\,=\,(D_1^p+D_1)-D_H(x_1^{(p^2-p)}x_2)-D_H(x_1^{(p^2-1)}x_2).$$
Next we show that this element is toral. Note that
$$
(D_1^p+D_1)-D_H(x_1^{(p^2-p)}x_2)-D_H(x_1^{(p^2-1)}x_2)
\,=\,(D_1^p+D_1)-[D_1^p+D_1,D_H(x_1^{(p^2)}x_2)]$$ and
${p^2-1\choose p}-{p^2-1\choose p-1}={p-1\choose 1}-1=-2$ by Lucas'
theorem. Then
\begin{eqnarray*}
\big[D_H(x_1^{(p^2-p)}(1+x_1^{(p-1)})x_2),D_H(x_1^{(p)}(1+x_1^{(p-1)})x_2)\big]&=&
\big[D_H(x_1^{(p^2-p)}x_2),D_H(x_1^{(p)}x_2)\big]\\
&=&-2D_H(x_1^{(p^2-1)}x_2).
\end{eqnarray*}
In view of the earlier computations this gives
\begin{eqnarray*}
&\big(D_1^p+D_1-D_H(x_1^{(p^2-p)}x_2)-D_H(x_1^{(p^2-1)}x_2)\big)^{[p]}\\
&\qquad\qquad\
=D_1^p-\big(\mathrm{ad}(D_1^p+D_1)\big)^p\big(D_H(x_1^{(p^2)}x_2)\big)-D_H(x_1^{(p^2-p)}x_2)\\
&\qquad\
=D_1^p-D_H(x_2)-D_H(x_1^{(p^2-p)}x_2)-D_H(x_1^{(p^2-1)}x_2).
\end{eqnarray*}
So the element $D_1^p+D_1-D_H((x_1^{(p^2-p)}+x_1^{(p^2-1)})x_2)$ is
indeed toral.

As a result, all toral elements in $S_p\setminus S$ are conjugate
under ${\rm Aut}\,S$. To finish the proof it remains to note that
the element $D_1^p+D_1+D_H(x_1x_2)\in S_p\setminus S$ is toral.
\end{proof}
\section{\bf Two-sections in simple Lie algebras}
In this section our standing hypothesis is that $L$ is a
finite-dimensional simple Lie algebra and $T$ is a torus of maximal
dimension in the semisimple $p$-envelope $L_p$ of $L$. Given
$\alpha_1,\ldots,\alpha_s\in\Gamma(L,T)$ we denote by $\rad_T
L(\alpha_1,\ldots,\alpha_s)$ the maximal $T$-{\it invariant}
solvable ideal of the $s$-section $L(\alpha_1,\ldots,\alpha_s)$ and
put
\begin{equation}\label{s-section}
L[\alpha_1,\ldots,\alpha_s]\,:=\,L(\alpha_1,\ldots,\alpha_s)/\rad_T\,
L(\alpha_1,\ldots,\alpha_s).\end{equation} We let
$\widetilde{S}=\widetilde{S}(\alpha_1,\ldots,\alpha_s)$ be the
$T$-socle of $L[\alpha_1,\ldots,\alpha_s]$, the sum of all minimal
$T$-stable ideals of the Lie algebra $L[\alpha_1,\ldots,\alpha_s]$.
Then $\widetilde{S}=\bigoplus_{i=1}^r\,\widetilde{S}_i$ where each
$\widetilde{S}_i$ is a {\it minimal} $T$-stable ideal of
$L[\alpha_1,\ldots,\alpha_s]$. It is immediate from the definition
that both $T$ and $L(\alpha_1,\ldots,\alpha_s)_p$ act on
$L[\alpha_1,\ldots,\alpha_s]$ as derivations and preserve
$\widetilde{S}$. Thus, there is a natural restricted Lie algebra
homomorphism
$T+L(\alpha_1,\ldots,\alpha_s)_p\rightarrow\Der\widetilde{S}$ which
will be denoted by $\Psi_{\alpha_1,\ldots,\,\alpha_s}$. Note that
$L(\alpha_1,\ldots, \alpha_s)\cap \ker
\Psi_{\alpha_1,\ldots,\,\alpha_s}=\rad_T\,
L(\alpha_1,\ldots,\alpha_s)$ and, moreover, the image of
$\Psi_{\alpha_1,\ldots,\,\alpha_s}$ can be identified with a
semisimple restricted Lie subalgebra of $\Der\widetilde{S}$
containing $L[\alpha_1,\ldots,\alpha_s]$ as an ideal.

We often regard the linear functions on $T$ as functions on the
nilpotent restricted Lie algebra ${\mathfrak c}_{L_p}(T)$ by using
the rule $\gamma(x):=\big(\gamma(x^{[p]^e})\big)^{p^{-e}}$ for all
$x\in {\mathfrak c}_{L_p}(T)$, where $e\gg 0$ (this makes sense
because $T$ coincides with the set of all $p$-semisimple elements of
${\mathfrak c}_{L_p}(T)$).

Let ${\rm nil}\,H_p$ denote the maximal $p$-nilpotent ideal of the
restricted Lie algebra $H_p$. According to \cite[Cor.~3.9]{PS4}, the
inclusion $H^4\subset {\rm nil}\, H_p$ holds and all roots in
$\Gamma(L,T)$ are linear functions on $H$.

\begin{lemm}\label{lemA1}
If $\delta\in\Gamma(L,T)$ has the property that $\delta(H)\ne 0$,
then $\delta\big([L_\delta,L_{-\delta}]^2\big)=0$ and
$[L_\delta,L_{-\delta}]^3\subset {\rm nil}\,H_p$.\end{lemm}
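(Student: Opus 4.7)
The key observation is that $[L_\delta,L_{-\delta}]\subset L_0=H$, so writing $A:=[L_\delta,L_{-\delta}]$ we have $A^2\subset H^2$ and $A^3\subset H^3\subset H^2$. I would aim to establish the single inclusion $H^2\subset\mathrm{nil}\,H_p$; both parts of the lemma will then follow without any case analysis on $\delta$.

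To prove $H^2\subset\mathrm{nil}\,H_p$, observe that $H$ centralizes $T$ by definition, so the $p$-envelope $H_p$ lies inside $\mathfrak{c}_{L_p}(T)$. The centralizer of a torus of maximal dimension in the restricted Lie algebra $L_p$ is a Cartan subalgebra of $L_p$ and hence nilpotent; consequently $H_p$ is itself nilpotent as a restricted Lie algebra. In any nilpotent restricted Lie algebra, the maximal $p$-nilpotent ideal coincides with the set of $p$-nilpotent elements, and the quotient by it is a torus. Therefore $[H_p,H_p]\subset\mathrm{nil}\,H_p$, and so $H^2=[H,H]\subset[H_p,H_p]\subset\mathrm{nil}\,H_p$.

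The second assertion is now immediate: $[L_\delta,L_{-\delta}]^3\subset H^3\subset H^2\subset\mathrm{nil}\,H_p$. For the first, recall that $\delta$ was extended to a function on $\mathfrak{c}_{L_p}(T)$ via the rule $\delta(x):=\bigl(\delta(x^{[p]^e})\bigr)^{p^{-e}}$ for $e\gg 0$; hence $\delta$ vanishes on every $p$-nilpotent element of $\mathfrak{c}_{L_p}(T)$, and in particular on $\mathrm{nil}\,H_p\supset[L_\delta,L_{-\delta}]^2$. This gives $\delta\bigl([L_\delta,L_{-\delta}]^2\bigr)=0$.

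Note that the hypothesis $\delta(H)\ne 0$ plays no role in the argument: the conclusions hold for every $\delta\in\Gamma(L,T)$. Its presence in the statement is presumably to isolate the non-trivial situation for later applications (when $\delta(H)=0$ the first assertion is vacuous). The only step requiring care is the nilpotency of $\mathfrak{c}_{L_p}(T)$, which in the modular/semisimple $p$-envelope setting follows from the maximality of $\dim T$ in $L_p$ and the standard characterisation of Cartan subalgebras in restricted Lie algebras; I would verify this in detail before invoking it.
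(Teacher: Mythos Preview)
Your argument has a genuine gap at the step ``in any nilpotent restricted Lie algebra, the maximal $p$-nilpotent ideal coincides with the set of $p$-nilpotent elements, and the quotient by it is a torus.'' This is false. Take the Heisenberg algebra $\mathfrak{h}=Fx\oplus Fy\oplus Fz$ with $[x,y]=z$ central, and make it restricted by $x^{[p]}=y^{[p]}=0$, $z^{[p]}=z$. Then $\mathfrak{h}$ is nilpotent, the $p$-nilpotent elements are exactly $Fx\oplus Fy$ (not a subalgebra, since $[x,y]=z$), and one checks $\mathrm{nil}\,\mathfrak{h}=(0)$; so $\mathfrak{h}/\mathrm{nil}\,\mathfrak{h}=\mathfrak{h}$ is not a torus and $[\mathfrak{h},\mathfrak{h}]=Fz\not\subset\mathrm{nil}\,\mathfrak{h}$.

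This is not an exotic counterexample: it is precisely the phenomenon the paper is built around. A torus $T$ is called \emph{nonstandard} exactly when $H^{(1)}$ fails to act nilpotently on $L$, i.e.\ when $H^2\not\subset\mathrm{nil}\,H_p$; and the standing hypothesis from Section~4 onward is that $T$ is nonstandard. Indeed, Theorem~\ref{CSA} later shows $H^3$ is a $2$-dimensional subspace of $T$, hence consists of $p$-semisimple elements, so your inclusion $H^2\subset\mathrm{nil}\,H_p$ fails outright in the intended setting. Consequently the hypothesis $\delta(H)\ne 0$ is not decorative: the paper's proof simply invokes \cite[Prop.~3.4]{PS4}, where this hypothesis is used to control the structure of the $1$-section $L(\delta)$, and the conclusion genuinely concerns the specific subspace $[L_\delta,L_{-\delta}]$ rather than all of $H^2$.
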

\begin{proof} This is immediate from \cite[Prop.~3.4]{PS4}.
\end{proof}
\begin{prop}\label{pro}
Let $\mathfrak t$ be a torus in $L_p$ whose centralizer in $L$ is
nilpotent, and assume further that $\mathfrak t$ contains the all
$p$-semisimple elements of the $p$-envelope of ${\mathfrak
c}_L(\mathfrak t)$ in $L_p$. Let $\eta\in\Gamma(L,{\mathfrak t})$ be
such that $L(\eta)$ is nonsolvable and denote by $S(\eta)$ the socle
of the semisimple Lie algebra $L(\eta)/rad\,L(\eta)$. Then the
following hold:
\begin{itemize}
\item[(1)]
the radical $\rad\,L(\eta)$ is $\mathfrak t$-stable;

\smallskip

\item[(2)]
the socle $S(\eta)$ is a simple Lie algebra invariant under the
action of $\mathfrak t$;

\smallskip

\item[(3)] the centralizer ${\mathfrak c}_S({\mathfrak t})$ is a
Cartan subalgebra of toral rank $1$ in $S$.
\end{itemize}
\end{prop}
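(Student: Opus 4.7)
The plan is to identify the ordinary radical $\rad\,L(\eta)$ with the maximal $\mathfrak{t}$-stable solvable ideal $\rad_{\mathfrak{t}}\,L(\eta)$; (1) and (2) follow at once from such an identification, and (3) then drops out of the shape of the $\mathfrak{t}$-weight decomposition on the socle. First, the hypotheses on $\mathfrak{t}$---nilpotency of $H_\eta:=\mathfrak{c}_L(\mathfrak{t})$ and $\mathfrak{t}$ being the toral part of $(H_\eta)_p$---are exactly the input for the one-section theory of \cite[\S3]{PS4} to apply locally at $\mathfrak{t}$. Consequently $Q:=L(\eta)/\rad_{\mathfrak{t}}\,L(\eta)$ is either $(0)$, $\mathfrak{sl}(2)$, $W(1;\un{1})$, or contains $H(2;\un{1})^{(2)}$ as an ideal of codimension $\le 1$; since $L(\eta)$ is nonsolvable $Q\ne 0$, and in each of the remaining three cases $Q$ has a unique minimal $\mathfrak{t}$-invariant ideal $\widetilde{S}$, simple and equal to $\mathfrak{sl}(2)$, $W(1;\un{1})$, or $H(2;\un{1})^{(2)}$ respectively.

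For (1), let $J\subseteq Q$ be an arbitrary solvable ideal. Simplicity of $\widetilde{S}$ forces $J\cap\widetilde{S}=0$, whence $[J,\widetilde{S}]\subseteq J\cap\widetilde{S}=0$ and $J\subseteq\mathfrak{c}_Q(\widetilde{S})$. Since $\widetilde{S}$ has codimension at most $1$ in $Q$ and is centerless, any $y\in\mathfrak{c}_Q(\widetilde{S})$ satisfies $Q=\widetilde{S}+Fy$, hence $[y,Q]=[y,\widetilde{S}]+[y,Fy]=0$ and $y\in Z(Q)$. Thus $J\subseteq Z(Q)$; but $Z(Q)$ is preserved by every derivation of $Q$ (if $z\in Z(Q)$ and $D\in\Der Q$, then $[Dz,x]=D[z,x]-[z,Dx]=0$ for all $x\in Q$), so $Z(Q)$ is $\mathfrak{t}$-invariant and $J\subseteq\rad_{\mathfrak{t}}\,Q=0$. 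Therefore $\rad\,Q=0$, i.e.\ $\rad\,L(\eta)=\rad_{\mathfrak{t}}\,L(\eta)$ is $\mathfrak{t}$-stable, proving (1). Part (2) is then immediate: the $\mathfrak{t}$-action descends to $Q=L(\eta)/\rad\,L(\eta)$, whose socle $S(\eta)$ coincides with the simple $\mathfrak{t}$-invariant ideal $\widetilde{S}$.

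For (3), the subalgebra $\mathfrak{c}_{S(\eta)}(\mathfrak{t})$ is the $0$-weight space of $\mathfrak{t}$ on the $\mathfrak{t}$-invariant simple algebra $S(\eta)\subseteq Q$; it equals the intersection of $S(\eta)$ with the image of $H_\eta$ in $Q$, so nilpotency descends from $H_\eta$. All weights of $\mathfrak{t}$ on $S(\eta)$ are multiples of $\eta$, so the image of $\mathfrak{t}$ in $\Der S(\eta)$ is a one-dimensional torus, and $\mathfrak{c}_{S(\eta)}(\mathfrak{t})$ is its centralizer in $S(\eta)$. In each of the three candidate simple algebras this centralizer is a Cartan subalgebra of toral rank $1$, as one checks directly.

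The main obstacle I foresee is the opening reduction: \cite[Cor.~3.7]{PS4} is stated for tori of maximal dimension in $L_p$, while our $\mathfrak{t}$ is only maximal inside $(H_\eta)_p$. A careful reading of \cite[\S3]{PS4} should confirm that its proofs are purely local at $\mathfrak{t}$ and use only the interaction of $\mathfrak{t}$ with its centralizer's $p$-envelope, so they go through under the weaker hypothesis; once that reduction is granted, the remaining analysis is routine.
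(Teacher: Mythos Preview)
Your argument is essentially correct, but you have reinvented part of a wheel that is already available. The paper's proof is a bare citation: the hypotheses on $\mathfrak{t}$ are precisely those of \cite[Thm.~3.6]{PS4}, and parts (1), (2), (3) of the proposition are read off directly from parts (1), (3), (4) of that theorem. Your ``main obstacle'' is therefore illusory: you are looking at \cite[Cor.~3.7]{PS4}, which is indeed formulated for tori of maximal dimension, but the result you actually need is \cite[Thm.~3.6]{PS4}, which is stated for any torus $\mathfrak{t}$ whose centralizer is nilpotent and which contains the semisimple part of $(\mathfrak{c}_L(\mathfrak{t}))_p$. No ``careful reading'' is required; the hypotheses match verbatim.

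That said, your detour through $Q=L(\eta)/\rad_{\mathfrak t}L(\eta)$ is a legitimate alternative route for (1), and it is worth noting what it buys. Once the structure of $Q$ is granted (simple socle $\widetilde S$ of codimension $\le 1$), your observation that any solvable ideal of $Q$ lands in $\mathfrak{c}_Q(\widetilde S)\subseteq Z(Q)$, together with the fact that $Z(Q)$ is automatically $\mathfrak{t}$-stable and hence zero in the $\mathfrak{t}$-semisimple quotient, gives a clean conceptual reason why $\rad\,L(\eta)=\rad_{\mathfrak t}\,L(\eta)$ without appealing to the internal machinery of \cite[Thm.~3.6]{PS4}. One small gap to patch: when you write ``any $y\in\mathfrak{c}_Q(\widetilde S)$ satisfies $Q=\widetilde S+Fy$'', you are implicitly assuming $y\notin\widetilde S$; handle the case $y\in\widetilde S$ separately (then $y\in Z(\widetilde S)=0$). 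For (3), your ``one checks directly'' is acceptable for $\mathfrak{sl}(2)$ and $W(1;\un 1)$, but for $S(\eta)\cong H(2;\un 1)^{(2)}$ you should say explicitly that the image of $\mathfrak t$ in $\Der S(\eta)$ is a one-dimensional torus and invoke the known fact that centralizers of such tori in $H(2;\un 1)^{(2)}$ are Cartan subalgebras of toral rank~$1$; the nilpotency inherited from $H_\eta$ alone does not give self-normalization.
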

\begin{proof} The torus $\mathfrak t$ satisfies the conditions of
\cite[Thm.~3.6]{PS4}. Moreover, our first statement is nothing but
\cite[Thm.~3.6(1)]{PS4}. The last two statements are immediate
consequences of \cite[Thm.~3.6(3)]{PS4} and \cite[Thm.~3.6(4)]{PS4}.
\end{proof}
\begin{theo}\label{1sec}
For every $\gamma\in\Gamma(L,T)$ the radical $\rad\,L(\gamma)$ is
$T$-stable and either $L[\gamma]$ is one of $(0)$,
$\mathfrak{sl}(2)$, $W(1;\un{1})$, $H(2;\un{1})^{(2)}$,
$H(2;\un{1})^{(1)}$ or $p=5$, $L_p$ possesses nonstandard tori of
maximal dimension, and $L[\gamma]\,\cong\, H(2;\un{1})^{(2)}\oplus
F(1+x_1)^4\partial_2$. If $\gamma$ is nonsolvable, then the derived
subalgebra $L[\gamma]^{(1)}$ is simple.
\end{theo}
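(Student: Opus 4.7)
The plan is to combine Proposition~\ref{pro} with the classification of $1$-sections in \cite[Cor.~3.7]{PS4}, and then to use the analysis of outer derivations of $H(2;\underline{1})^{(2)}$ (leveraging the tools developed in Section~2) to sharpen the Hamiltonian case.

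First I apply Proposition~\ref{pro} to $\mathfrak{t}=T$ and $\eta=\gamma$. Its hypotheses hold because $H=\mathfrak{c}_L(T)$ is nilpotent and, $T$ being a maximal torus of $L_p$, it is also the unique maximal torus of the restricted nilpotent Lie algebra $\mathfrak{c}_{L_p}(T)$; in particular $T$ contains every $p$-semisimple element of the $p$-envelope $H_p$. Proposition~\ref{pro}(1) yields the $T$-stability of $\rad\,L(\gamma)$, hence $\rad\,L(\gamma)=\rad_T L(\gamma)$ and $L[\gamma]$ coincides with the quotient defined in (\ref{s-section}). The solvable case $L[\gamma]=(0)$ needs nothing further. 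In the nonsolvable case, Proposition~\ref{pro}(2),(3) provides a $T$-invariant simple socle $\widetilde{S}(\gamma)\subseteq L[\gamma]$ whose $\bar T$-centralizer is a Cartan subalgebra of toral rank~$1$ in $\widetilde{S}(\gamma)$.

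Next, \cite[Cor.~3.7]{PS4} forces $\widetilde{S}(\gamma)$ to be isomorphic to $\mathfrak{sl}(2)$, $W(1;\un{1})$, or $H(2;\un{1})^{(2)}$, and $L[\gamma]$ embeds into $\Der\widetilde{S}(\gamma)$ with $\widetilde{S}(\gamma)$ as an ideal of codimension $\le 1$. Since $\mathfrak{sl}(2)$ and $W(1;\un{1})$ have no outer derivations for $p>3$, the classical and Witt cases give $L[\gamma]=\widetilde{S}(\gamma)$, which is simple and equal to $L[\gamma]^{(1)}$. The real work is in the Hamiltonian case, where $L[\gamma]=H(2;\un{1})^{(2)}+Fd$ with $d\in\Der H(2;\un{1})^{(2)}$ (possibly zero, giving immediately $L[\gamma]=H(2;\un{1})^{(2)}$). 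The class of $d$ modulo $H(2;\un{1})^{(2)}$ is a $\bar T$-eigenvector, where $\bar T$ is the image of $T$ in $\Der L[\gamma]$. The core technical step is to normalize $d$ using automorphisms of $H(2;\un{1})^{(2)}$ that stabilize $\bar T$. The description of $\mathrm{Aut}\,H(2;\un{1})^{(2)}$ in \cite[Thms~7.3.5 \& 7.3.2]{St04}, combined with the normalization techniques exercised in Lemmas~\ref{conj} and \ref{stab}, should force $d$ modulo $H(2;\un{1})^{(2)}$ to be one of two representatives: either a scalar multiple of $D_H(x_1)$, in which case $L[\gamma]\cong H(2;\un{1})^{(1)}$; or the exceptional element $(1+x_1)^4\partial_2$, whose binomial expansion in $x_1$ of degree $p-1$ is meaningful only when $p-1=4$, i.e., $p=5$.

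Finally, if the exceptional form occurs then $L_p$ must possess a nonstandard maximal torus: were every maximal torus of $L_p$ standard, then by the main theorems of \cite{PS4} and \cite{PS5} the algebra $L$ would be classical or of Cartan type, and a direct inspection of $1$-sections in those algebras (supplied by the analysis already carried out in \cite{PS4}, \cite{PS5}) rules out any appearance of $(1+x_1)^4\partial_2$ as a codim-$1$ extension of $H(2;\un{1})^{(2)}$. Simplicity of $L[\gamma]^{(1)}$ in every nonsolvable case follows directly: the first derived of $H(2;\un{1})^{(1)}$ is the simple algebra $H(2;\un{1})^{(2)}$; and in the exceptional case, since $H(2;\un{1})^{(2)}$ is an ideal of $L[\gamma]$, the bracket $[(1+x_1)^4\partial_2,H(2;\un{1})^{(2)}]$ lies inside $H(2;\un{1})^{(2)}$, so $L[\gamma]^{(1)}=H(2;\un{1})^{(2)}$. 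The principal obstacle in the plan is the normalization step: one must show that every $\bar T$-eigenvector class of outer derivations of $H(2;\un{1})^{(2)}$ collapses under $\bar T$-stabilizing automorphisms to one of the two prescribed forms, which requires controlling both the truncation-type outer derivations and the $p$-structure, and precisely this mixture pins down $p=5$ and ties the exceptional form to the presence of nonstandard tori.
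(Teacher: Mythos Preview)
The paper's proof is a one-line citation to \cite[Cor.~3.7]{PS4}: that corollary already contains the full statement, including the determination of which codimension-$1$ extensions of $H(2;\un{1})^{(2)}$ can occur, the constraint $p=5$, and the link to nonstandard maximal tori. You cite the same reference but extract only the socle classification from it, and then attempt to reconstruct the remaining analysis yourself. That reconstruction has a real gap.

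Your argument that the exceptional form $H(2;\un{1})^{(2)}\oplus F(1+x_1)^4\partial_2$ forces the existence of nonstandard tori is circular. You invoke the main theorems of \cite{PS4} and \cite{PS5} to conclude that if all maximal tori were standard then $L$ would be classical or of Cartan type, and then propose to inspect $1$-sections in all such algebras. But \cite[Cor.~3.7]{PS4} is a foundational input to those main theorems, not a consequence of them; the structure of $1$-sections is used throughout \cite{PS4} and \cite{PS5} to reach the classification in the standard case. The correct mechanism, which is what \cite[Cor.~3.7]{PS4} actually records, is local and direct: when $L[\gamma]$ carries the outer derivation $(1+x_1)^{p-1}\partial_2$, the Cartan subalgebra $\Psi_\gamma(H)$ is nonabelian with $\Psi_\gamma(H)^3\ne 0$ (compare Lemma~\ref{ham3}), so $H$ itself acts nontriangulably on $L$, and \cite[Thm.~1]{P94} then yields $p=5$ and a nonstandard torus immediately, with no appeal to the global classification. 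The normalization step you flag as the principal obstacle is likewise part of what \cite[Cor.~3.7]{PS4} already supplies; the tools of Section~2 here are tailored to $H(2;(2,1))$ rather than $H(2;\un{1})$ and are not the relevant ones.
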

\begin{proof} This is immediate from \cite[Cor.~3.7]{PS4}.
\end{proof}

\begin{lemm}\label{ham3}
Let ${\mathfrak g}=H(2;\un{1})^{(2)}\oplus F(1+x_1)^{p-1}\partial_2$
and $\mathfrak h$ a Cartan subalgebra of $\mathfrak g$. Then either
$\mathfrak h$ is abelian or ${\mathfrak h}^3$ contains a nonzero
toral element of $\mathfrak g$.
\end{lemm}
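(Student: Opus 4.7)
Plan: Set $S := H(2;\underline{1})^{(2)}$ and $e := (1+x_1)^{p-1}\partial_2$, so that $\mathfrak{g} = S\oplus Fe$. The first step is routine: verify that $\mathfrak{g}$ is a restricted subalgebra of the Jacobson--Witt algebra $W(2;\underline{1})$, with $e^{[p]} = 0$ (because $\partial_2^p$ vanishes on $\mathcal{O}(2;\underline{1})$) and $[e,S]\subseteq S$. Since $S$ is an ideal of codimension one, every Cartan subalgebra $\mathfrak{h}$ of $\mathfrak{g}$ is of one of two types: either (A) $\mathfrak{h}\subseteq S$, in which case $\mathfrak{h}$ is a Cartan subalgebra of $S$ which, by self-normalization in $\mathfrak{g}$, must additionally satisfy $[e,\mathfrak{h}]\not\subseteq\mathfrak{h}$; or (B) $\mathfrak{h} = \mathfrak{h}_0 + F(e+y)$ with $\mathfrak{h}_0 := \mathfrak{h}\cap S$ an ideal of $\mathfrak{h}$ and some $y\in S$.

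The strategy is then to classify $\mathfrak{h}$, in each type, up to the action of $\mathrm{Aut}\,\mathfrak{g}$, invoking the explicit description of Cartan subalgebras of $H(2;\underline{1})^{(2)}$ from \cite[(10.1.1)]{BW} and \cite[(VI.4)]{St91}. Modulo this action, only a short list of standard forms remains. For each one, assuming $\mathfrak{h}$ is nonabelian, I would compute $\mathfrak{h}^2 = [\mathfrak{h},\mathfrak{h}]$ and $\mathfrak{h}^3 = [\mathfrak{h},\mathfrak{h}^2]$ explicitly and locate a nonzero toral element inside $\mathfrak{h}^3$. The natural candidate is a conjugate of $t_0 := D_H(x_1 x_2) = x_1\partial_1 - x_2\partial_2$; since any toral element of the nilpotent algebra $\mathfrak{h}$ must be central in $\mathfrak{h}$ (toral elements act semisimply, and by Engel's theorem ad-nilpotently on $\mathfrak{h}$), it is enough to locate such a $t_0$-type element in $\mathfrak{h}^3\cap Z(\mathfrak{h})$.

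The main obstacle I anticipate is case (B). Here $\ad(e+y)$ acts nilpotently on $\mathfrak{h}_0$ by nilpotency of $\mathfrak{h}$, and the constraint $[\mathfrak{h}_0,e+y]\subseteq\mathfrak{h}_0$, combined with the explicit expansion $(1+x_1)^{p-1} = \sum_{k=0}^{p-1}(-1)^k x_1^k$ (via Lucas' theorem), forces $y$ into a very restricted shape. Iterated bracket computations of the form $\big[e+y,[e+y,h]\big]$ for suitable $h\in\mathfrak{h}_0$, carried out via the Hamiltonian identity $[D_H(f),D_H(g)] = D_H(D_1 f\cdot D_2 g - D_2 f\cdot D_1 g)$, should then produce the required toral element in $\mathfrak{h}^3$; if all such brackets vanish then $\mathfrak{h}_0$ must itself be nonabelian and one reduces to a version of case (A) applied to $\mathfrak{h}_0$ inside $S$. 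The Hamiltonian bracket identity and Lucas' theorem are the main computational tools throughout.
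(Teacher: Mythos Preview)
Your overall strategy---reduce to standard forms via automorphisms and then compute---is sound, but you miss the paper's key simplification and your fallback in case~(B) has a real gap.

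The paper does not split on whether $\mathfrak{h}\subseteq S$. Instead it observes that since $\widetilde{\mathfrak g}^{[p]}\subset S$ (Jacobson's formula; in particular $e^{[p]}=0$), the maximal torus of $\mathfrak{h}_p$ already lies in $S$, so $\mathfrak{h}=\mathfrak{c}_{\mathfrak g}(y)$ for some nonzero toral $y\in S$. Demu{\v s}kin's theorem then conjugates $y$, by an element of ${\rm Aut}\,S$ rather than the smaller group ${\rm Aut}\,\mathfrak{g}$, to one of $D_H(x_1x_2)$ or $D_H((1+x_1)x_2)$. Using ${\rm Aut}\,S$ is legitimate because the properties ``abelian'' and ``$\mathfrak{h}^3$ contains a nonzero toral element'' are invariant under any automorphism of the ambient algebra $H(2;\un{1})$. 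In the first case $\sigma(\mathfrak{h})$ is visibly abelian; in the second, a single bracket $(\ad z)^2\big(D_H((1+x_1)^3x_2^{(3)})\big)\in F^\times\sigma(y)$ finishes the argument.

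Your fallback ``if all such brackets vanish then $\mathfrak{h}_0$ must itself be nonabelian and one reduces to a version of case~(A)'' is the weak point. Without the identification $\mathfrak{h}_0=\mathfrak{c}_S(y)$ you have no guarantee that $\mathfrak{h}_0$ is a Cartan subalgebra of $S$, so case~(A) does not apply to it. More seriously, if $\mathfrak{h}_0$ happened to be abelian with $\ad(e+y)|_{\mathfrak{h}_0}\ne 0$ but $(\ad(e+y))^2|_{\mathfrak{h}_0}=0$, then $\mathfrak{h}$ would be nonabelian with $\mathfrak{h}^3=(0)$, and nothing in your outline excludes this. The paper's approach sidesteps the issue entirely: once $\mathfrak{h}_0=\mathfrak{c}_S(y)$ is recognized as a Cartan subalgebra of $S$ (hence abelian), the only possible nonabelian contribution comes from the extra element $z$, and the explicit computation shows $(\ad z)^2$ does not vanish on $\mathfrak{h}_0$ when the coefficient of $(1+x_1)^{p-1}\partial_2$ in $z$ is nonzero.
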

\begin{proof}
We regard $\mathfrak g$ as a restricted Lie subalgebra of
$\widetilde{\mathfrak g}:=H(2;\un{1})$. Recall that
$\widetilde{\mathfrak g}=H(2;\un{1})^{(2)}\oplus
FD_H(x_1^{(p)})\oplus FD_H(x_2^{(p)})\oplus
FD_H(x_1^{(p-1)}x_2^{(p-1)})$. Since $\widetilde{\mathfrak
g}^{[p]}\subset H(2;\un{1})^{(2)}$ by Jacobson's formula, $\mathfrak
h$ coincides with ${\mathfrak c}_{\mathfrak g}(y)$ for some nonzero
toral element $y\in H(2;\un{1})^{(2)}$. By a result of Demu{\v
s}kin, there is $\sigma\in {\rm Aut}\,H(2;\un{1})^{(2)}$ such that
either $\sigma(y)=D_H((1+x_1)x_2)$ or $\sigma(y)$ is a nonzero
multiple of $D_H(x_1x_2)$; see \cite[Thm.~7.5.8]{St04}. In the
latter case, there exist $a,b\in F$ such that $\sigma({\mathfrak
h})$ is contained in the span of $aD_H(x_1^{(p)})+bD_H(x_2^{(p)})$
and all $D_H(x_1^{(i)}x_2^{(i)})$ with $1\le i\le p-1$, hence is
abelian. Then $\mathfrak h$ is abelian, too. So assume we are in the
former case. Then there are $a,b,c\in F$ such that $\sigma(\mathfrak
h)$ coincides with the span of all $D_H((1+x_1)^{i}x_2^{(i)})$ with
$1\le i\le p-2$ and
$z:=a(1+x_1)^{p-1}D_2+bD_H(x_2^{(p)})+cD_H((1+x_1)^{p-1}x_2^{(p-1)})$.
If $a=0$, then it is easy to check that $\sigma(\mathfrak h)$ is
abelian, whilst if $a\ne 0$, then $(\ad
z)^2\big(D_H((1+x_1)^3x_2^{(3)})\big)$ is a nonzero multiple of
$\sigma(y)$. This completes the proof.
\end{proof}

\medskip

Next we recall our results on $2$-sections of $L$ with respect to
$T$. Let $\alpha,\beta\in\Gamma(L,T)$ be such that $L(\alpha,\beta)$
is nonsolvable. As explained in \cite[p.~793]{PS4}, the $T$-socle
$\widetilde{S}=\widetilde{S}(\alpha,\beta)$ is either a unique
minimal ideal of $L[\alpha,\beta]$ or
$\widetilde{S}=\widetilde{S}_1\oplus\widetilde{S}_2$, where
$TR(\widetilde{S}_i)=1$ for $i=1,2$ and each $\widetilde{S}_i$ is
$T$-stable. Moreover, in the latter case the following holds:
\begin{theo}[cf. {\cite[Thm.~4.1]{PS4}}] \label{r2}
If $\widetilde{S}=\widetilde{S}_1\oplus\widetilde{S}_2$, then there
exist $\delta_1,\delta_2\in\Gamma(L,T)$ such that
$$L[\delta_1]^{(1)}\oplus L[\delta_2]^{(1)}\subset L[\alpha,\beta]\subset
L[\delta_1]\oplus L[\delta_2].$$
\end{theo}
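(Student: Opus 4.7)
The plan is to extract the roots $\delta_1,\delta_2$ from the $T$-action on the two socle components and then identify each projection $\pi_i(L[\alpha,\beta])$ onto $\Der\widetilde{S}_i$ with a quotient of $L[\delta_i]$.

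Since each $\widetilde{S}_i$ is $T$-stable with $TR(\widetilde{S}_i)=1$, every nonzero $T$-weight on $\widetilde{S}_i$ lies on a single $\mathbb{F}_p$-line in $T^*$; I choose $\delta_i\in\Gamma(L,T)$ on that line. As $TR(\widetilde{S})=2$, the pair $\delta_1,\delta_2$ is $\mathbb{F}_p$-independent and therefore spans $\mathbb{F}_p\alpha+\mathbb{F}_p\beta$, so the $1$-sections $L(\delta_i)\subset L(\alpha,\beta)$ and their reductions $L[\delta_i]$ are unambiguously defined.

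For the upper inclusion, the semisimple Lie algebra $L[\alpha,\beta]$ is centerless and contains $\widetilde{S}$, hence embeds in $\Der\widetilde{S}=\Der\widetilde{S}_1\times\Der\widetilde{S}_2$. Writing $\pi_i$ for the projection to $\Der\widetilde{S}_i$, a $T$-weight count shows that $\pi_i(L[\alpha,\beta])$ has $T$-weights only in $\mathbb{F}_p\delta_i\cup\{0\}$: an element of $T$-weight $\gamma\notin\mathbb{F}_p\delta_i$ would shift the weight set of $\widetilde{S}_i$ off of $\mathbb{F}_p\delta_i\cup\{0\}$ and so act as zero on $\widetilde{S}_i$. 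Hence $\pi_i(L[\alpha,\beta])$ is exhausted by the image of $L(\delta_i)\subset L(\alpha,\beta)$ under
\[
L(\delta_i)\hookrightarrow L(\alpha,\beta)\twoheadrightarrow L[\alpha,\beta]\xrightarrow{\pi_i}\Der\widetilde{S}_i,
\]
and the kernel of this composite is $T$-invariant and solvable (the intersection of weights of $L(\delta_i)$ and of $\widetilde{S}_{3-i}$ is just $\{0\}$, and the $0$-weight part of $\widetilde{S}_{3-i}$ is a nilpotent Cartan subalgebra), hence contained in $\rad_T L(\delta_i)$. This realises $\pi_i(L[\alpha,\beta])$ as a quotient of $L[\delta_i]$, and combining the projections yields the desired embedding $L[\alpha,\beta]\hookrightarrow L[\delta_1]\oplus L[\delta_2]$ via $x\mapsto(\pi_1(x),\pi_2(x))$.

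For the lower inclusion, the $T$-weight spaces of $\widetilde{S}_i$ are all supported on $\mathbb{F}_p\delta_i$, so they lift entirely to $L(\delta_i)$; the composite $L(\delta_i)\to L[\alpha,\beta]$ thus surjects onto $\widetilde{S}_i$. Modding out by $\rad_T L(\delta_i)\subset \rad_T L(\alpha,\beta)\cap L(\delta_i)$ produces a surjection $L[\delta_i]\twoheadrightarrow\widetilde{S}_i\hookrightarrow L[\alpha,\beta]$. By Theorem~\ref{1sec}, $L[\delta_i]^{(1)}$ is either zero or simple; in the latter case it must map injectively onto the simple ideal $\widetilde{S}_i$. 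Summing over $i=1,2$ embeds $L[\delta_1]^{(1)}\oplus L[\delta_2]^{(1)}$ into $\widetilde{S}_1\oplus\widetilde{S}_2\subset L[\alpha,\beta]$.

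The main obstacle will be verifying rigorously that the kernel of $L(\delta_i)\to\pi_i(L[\alpha,\beta])$ is contained in $\rad_T L(\delta_i)$; this requires showing that any element of $L(\delta_i)$ whose image in $L[\alpha,\beta]$ is absorbed into the $0$-weight Cartan component of $\widetilde{S}_{3-i}$ must already be solvable-in-$L(\delta_i)$ in the appropriate $T$-invariant sense. Once this solvability check is done, the two sides of the sandwich assemble as claimed.
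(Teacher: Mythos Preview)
The paper does not prove this theorem; it is quoted directly from \cite[Thm.~4.1]{PS4}. Your outline is the natural approach and is essentially correct. The obstacle you flag is easily dispatched, and there is one small gap you did not mention.

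For your ``main obstacle'': write $K_i=\ker\big(L(\delta_i)\to\pi_i(L[\alpha,\beta])\big)$. Your own weight argument already shows that an element of $L[\alpha,\beta]$ of nonzero weight $j\delta_i$ that centralizes $\widetilde{S}_i$ also centralizes $\widetilde{S}_{3-i}$ (its weight is not on $\mathbb{F}_p\delta_{3-i}$), hence is zero. Thus the nonzero-weight parts of $K_i$ already lie in $\rad_T L(\alpha,\beta)$, giving $K_i\subset H+\big(\rad_T L(\alpha,\beta)\cap L(\delta_i)\big)$. The right-hand side is a solvable subalgebra of $L(\delta_i)$ (the nilpotent $H$ normalising a solvable ideal), so $K_i$ is a $T$-invariant solvable ideal and $K_i\subset\rad_T L(\delta_i)$. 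There is no need to track ``absorption into the $0$-weight component of $\widetilde{S}_{3-i}$''; the zero-weight part of $K_i$ simply sits in $H$, and that is all you use.

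The gap you do not mention is the reverse inclusion $\rad_T L(\delta_i)\subset K_i$, which is needed to identify $\pi_i(L[\alpha,\beta])$ with $L[\delta_i]$ itself rather than a proper quotient (a mere quotient would not yield an embedding into $L[\delta_1]\oplus L[\delta_2]$). It holds because $\pi_i(L[\alpha,\beta])\subset\Der\widetilde{S}_i$ contains $\widetilde{S}_i$ as an ideal and is therefore $T$-semisimple; the image of $\rad_T L(\delta_i)$ is a $T$-invariant solvable ideal, hence zero. With $K_i=\rad_T L(\delta_i)$ established, both inclusions of the sandwich follow exactly as you sketched.
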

\noindent When the $T$-socle $\tilde{S}$ is a minimal ideal of
$L[\alpha,\beta]$, we have two possibilities: either
$TR(\widetilde{S})=2$ or $TR(\widetilde{S})=1$.
\begin{theo} \label{r1TR2}
Suppose $\widetilde{S}$ is the unique minimal ideal of
$L(\alpha,\beta)$ and $TR(\widetilde{S})=2$. Then $\widetilde{S}$ is
simple, $\Psi_{\alpha,\beta}(L_\gamma)\subset \widetilde{S}$ for all
$\gamma\in\Gamma(L,T)$, and one of the following holds:
\begin{itemize}
\item[(1)] $\widetilde{S}$ is one of $W(2;\un{1})$, $S(3;\un{1})^{(1)}$,
$H(4;\un{1})^{(1)}$, $K(3;\un{1})^{(1)}$ and
$L[\alpha,\beta]=\widetilde{S}$;

\smallskip

\item[(2)] $\widetilde{S}$ is one of $W(1;\un{2})$,
$H(2;\un{1};\Phi(\tau))^{(1)}$, $H(2;\un{1};\Delta)$ and
$$L[\alpha,\beta]= \widetilde{S}+\Psi_{\alpha,\beta}(T)\cap
L[\alpha,\beta];$$

\item[(3)] $\widetilde{S}\cong {\mathcal M}(1,1)$ and
$L[\alpha,\beta]=\widetilde{S}$;

\smallskip

\item[(4)] $\widetilde{S}$ is a classical Lie algebra of type ${\rm
A}_2$, ${\rm B}_2$ or ${\rm G}_2$ and
$L[\alpha,\beta]=\widetilde{S}$;

\smallskip

\item[(5)] $\widetilde{S}=H(2;(2,1))^{(2)}$
and $\Psi_{\alpha,\beta}(T)\subset \widetilde{S}_p$.
Moreover,
$$H(2;(2,1))^{(2)}\subset L[\alpha,\beta]\subset
H(2;(2,1))^{(2)}\oplus FD_H(x_1^{(p^2)})\oplus
FD_H(x_1^{(p^2-1)}x_2^{(p-1)}).$$
\end{itemize}
\noindent In cases (1), (3), (4) the Lie algebra $L[\alpha,\beta]$
is simple, and  $L[\alpha,\beta]^{(1)}$ is simple in all cases.
\end{theo}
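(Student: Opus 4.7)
The plan is to prove Theorem~\ref{r1TR2} in three steps: first secure simplicity of $\widetilde S$, then identify its isomorphism class via the classification of simple Lie algebras of absolute toral rank $2$, and finally pin down the sandwich $\widetilde S\subset L[\alpha,\beta]\subset\Der\widetilde S$. For simplicity I would use a standard toral rank argument: a $T$-minimal semisimple ideal of absolute toral rank $2$ must either be simple or decompose as $S_1\oplus S_2$ of two $T$-stable simple ideals of toral rank $1$ each; the latter possibility would produce a splitting of the $T$-socle, reducing the $2$-section to the situation of Theorem~\ref{r2}, which contradicts the hypothesis that $\widetilde S$ is the unique minimal $T$-stable ideal of $L[\alpha,\beta]$.

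With $\widetilde S$ simple of absolute toral rank $2$, the next step is to invoke the classification of such algebras over algebraically closed fields of characteristic $p>3$ (available from \cite{BW}, \cite{St04}, and the preceding papers in the series). This classification should produce precisely the candidates listed in (1)--(5), once one discards those whose $1$-sections violate Theorem~\ref{1sec}; the Melikian entry $\mathcal M(1,1)$ appears only at $p=5$ and accounts for the nonstandard flag of Theorem~\ref{1sec}. To establish $\Psi_{\alpha,\beta}(L_\gamma)\subset\widetilde S$ for every $\gamma\in\Gamma(L,T)$, I would argue case by case: the quotient $L[\alpha,\beta]/\widetilde S$ embeds into the outer derivation algebra $\Der\widetilde S/\ad\widetilde S$, which in each listed case acts trivially on root vectors of weight $\gamma$.

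Next, to pin down $L[\alpha,\beta]$ inside $\Der\widetilde S$, I would compute $\Der\widetilde S/\ad\widetilde S$ explicitly for each candidate via Cartan-type theory. For (1), (3), (4) the outer derivations are either absent or incompatible with the $T$-action, giving $L[\alpha,\beta]=\widetilde S$. For (2) they contribute exactly the supplement $\Psi_{\alpha,\beta}(T)\cap L[\alpha,\beta]$ displayed in the theorem. The hard case is (5): $\Der H(2;(2,1))^{(2)}$ genuinely contains outer toral-type derivations, and the refinement $\Psi_{\alpha,\beta}(T)\subset\widetilde S_p$ must be proved separately. Here I would use the Section~2 machinery: Lemma~\ref{conj} classifies toral elements of $S_p\cap S$ up to $\mathrm{Aut}\,S$, Lemma~\ref{centraliser} computes their centralizers and weight decompositions in $\mathcal G$, Lemma~\ref{tori} constrains $2$-dimensional tori of $\mathcal G$ to lie in $S_p$, and Lemmas~\ref{ham1}--\ref{gen} rule out the remaining non-inner contributions.

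The central obstacle is case (5); without the Section~2 refinement one cannot exclude outer toral contributions to $\Psi_{\alpha,\beta}(T)$, and the upper bound on $L[\alpha,\beta]$ would fail. Once that is in place, the final claim that $L[\alpha,\beta]^{(1)}$ is simple follows uniformly: in (1), (3), (4) the full $L[\alpha,\beta]$ is simple, while in (2) and (5) the quotient $L[\alpha,\beta]/\widetilde S$ is abelian, so $L[\alpha,\beta]^{(1)}\subset\widetilde S$, with equality forced by the simplicity of $\widetilde S$.
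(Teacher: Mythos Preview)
Your outline is correct for cases (1)--(4): these are handled entirely by \cite[Thm.~4.2]{PS4}, and the paper simply cites that result rather than redoing the simplicity argument or the toral-rank-$2$ classification. The containment $\Psi_{\alpha,\beta}(T)\subset\widetilde S_p$ in case~(5) also goes as you say, via Lemma~\ref{tori} and a toral-rank count in $\Der\widetilde S=\mathcal G\oplus F(x_1D_1+x_2D_2)$.

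There is, however, a genuine gap in your treatment of the sharp upper bound in case~(5). You write that Lemmas~\ref{ham1}--\ref{gen} ``rule out the remaining non-inner contributions,'' but these lemmas only get you as far as $L[\alpha,\beta]\subset G=H(2;(2,1))$; they do \emph{not} by themselves exclude $D_H(x_2^{(p)})$. The key tool you omit is Lemma~\ref{stab}, and in particular part~(iv): for any $h\in (G\cap C_\mu)\setminus C_\mu'$ there is an eigenvector $x\in\mathfrak c_S(t_\mu)$ of $\ad h$ with nonzero eigenvalue such that $\ad x$ is \emph{not} nilpotent. This cannot be used purely inside $\Der\widetilde S$; the argument is local-to-global. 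One assumes $L[\alpha,\beta]$ contains an element $h\in (G\cap C_\mu)\setminus C_\mu'$, builds from $h$ and $t_\mu$ a $2$-dimensional torus $\mathfrak t_\mu\subset\mathcal G$, lifts it via \cite[Lemma~2.4.4(2)]{StF} to a torus $T''$ of maximal dimension in $L_p$, and then observes that the $T''$-root $\delta''$ corresponding to the $\ad h$-eigenvalue is solvable yet does not vanish on $H''=\mathfrak c_L(T'')$. At that point \cite[Prop.~3.8]{PS4} forces every element of $L_{i\delta''}$ to be $p$-nilpotent in $L_p$, and pushing this down through $\Psi_{\alpha,\beta}$ contradicts Lemma~\ref{stab}(iv). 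Without this toral-switching step and the appeal to the global nilpotency criterion in $L$, there is no obstruction inside $\Der\widetilde S$ alone to having $D_H(x_2^{(p)})\in L[\alpha,\beta]$; your purely local computation of outer derivations will not close the argument.
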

\begin{proof}
If $\widetilde{S}$ is not isomorphic to $H(2;(2,1))^{(2)}$, then the
statement follows immediately from \cite[Thm.~4.2]{PS4}. So assume
$\widetilde{S}\cong H(2;(2,1))^{(2)}$. Then \cite[Thm.~4.2]{PS4}
says that $L[\alpha,\beta]\subset {\mathcal G}$ where $\mathcal G$
is the $p$-envelope of $G=H(2;(2,1))$ in $\Der \widetilde{S}$.
Recall that
$\Psi_{\alpha,\beta}\colon\,T+L(\alpha,\beta)_p\rightarrow \Der
\widetilde{S}$ is a restricted Lie algebra homomorphism. Hence
$\widetilde{S}_p$ lies in the image of $\Psi_{\alpha,\beta}$. In the
present case, $\Der \widetilde{S}={\mathcal G}\oplus
F(x_1D_1+x_2D_2)$; see \cite[Prop.~2.1.8(vii)]{BW} for instance. If
$\Psi_{\alpha,\beta}(T)\not \subset \mathcal G$, then there is a
surjective restricted Lie algebra homomorphism
$\Psi_{\alpha,\beta}(T+L(\alpha,\beta)_p)\twoheadrightarrow
F(x_1D_1+x_2D_2)$ whose kernel contains $\widetilde{S}_p$. But then
\cite[Lemma~2.4.4(2)]{StF} yields that the restricted Lie algebra
$\Psi_{\alpha,\beta}(T+L(\alpha,\beta)_p)$ contains $3$-dimensional
tori, a contradiction. Consequently,
$\Psi_{\alpha,\beta}(T+L(\alpha,\beta)_p)\subset\mathcal G$, forcing
$\Psi_{\alpha,\beta}(T)\subset {\mathcal G}^{[p]}\subset
\widetilde{S}_p$.

Let ${\mathfrak t}'$ be an optimal $2$-dimensional torus in
$\widetilde{S}_p$. By \cite[Lemma~1.7.2(b)]{BW}, there is a torus
$T'$ of maximal dimension in $T+L(\alpha,\beta)_p$ such that
$\Psi_{\alpha,\beta}(T')={\mathfrak t}'$. Let $H'$ denote the
centralizer of $T'$ in $L$. Note that
$L(\alpha,\beta)=L(\alpha',\beta')$ for some
$\alpha',\beta'\in\Gamma(L,T')$ (this follows from the main result
of \cite{P89} and \cite[Cor.~2.10]{PS2}). Each $i\alpha'+j\beta'$
with $i,j\in {\mathbb F}_p$ can be viewed as a linear function of
${\mathfrak t}'$.

Since ${\mathfrak t}'$ is optimal, ${\mathfrak t}'\cap
\widetilde{S}={\mathfrak t}'\cap \widetilde{S}_{(0)}$ is spanned by
a nonzero toral element, $t_2$ say; see \cite[(VI.1)]{St92}. Since
$\Gamma(\widetilde{S},{\mathfrak t}')\cup\{0\}$ is a $2$-dimensional
vector space over ${\mathbb F}_p$, by Lemma~\ref{ham1}(4), there is
$\delta'\in\Gamma(L(\alpha,\beta),T')$ such that $\delta'(t_2)=0$.
Since, then, $\delta'$ also vanishes on ${\mathfrak
c}_{\widetilde{S}}({\mathfrak t}')$, the Engel--Jacobson theorem
yields that $\widetilde{S}(\delta')$ is nilpotent. Since ${\mathcal
G}/\widetilde{S}$ is solvable, ${\mathcal G}(\delta')$ must be,
also. But then $L(\delta')$ is solvable, too. As explained in
\cite[(VI.4)]{St92} the union $\bigcup_{i\in{\mathbb
F}_p^\times}\,\widetilde{S}_{i\delta'}$ contains a nonnilpotent
element of $\mathcal G$. Hence $\bigcup_{i\in{\mathbb
F}_p^\times}\,L_{i\delta'}$ contains a nonnilpotent element of
$L_p$. Since $L_{i\delta'}\subset \rad\, L(\delta')$ for all
$i\in{\mathbb F}_p^\times$, it follows from \cite[Prop.~3.8]{PS4}
that $\delta'$ vanishes on $H'$.

Recall that $\widetilde{S}_p=FD_1^p\oplus \widetilde{S}$ and
${\mathcal G}=S_p\oplus V$, where $V$ is the $F$-span of
$D_H(x_1^{(p^2)}),\,$ $D_H(x_2^{(p)})$ and
$D_H(x_1^{(p^2-1)}x_2^{(p-1)})$. Hence
$\mathcal{G}^3\subset\widetilde{S}$. Pick a toral element $t_1\in
{\mathfrak t}'\setminus\widetilde{S}$ (such an element exists by
Lemma~\ref{tori}). By Lemma~\ref{gen}, we may assume that
$t_1=D_1^p+D_1+D_H(x_1x_2)$ (one should keep in mind here that
$\widetilde{S}_{(0)}$ is invariant under all automorphisms of $S$;
see \cite[Thm.~4.2.6]{St04}). Set $V':=({\rm Id}-(\ad
t_2)^{p-1})({\rm Id}-(\ad t_1)^{p-1})(V).$ Then
$${\mathfrak c}_{\widetilde{S}}({\mathfrak t}')\subset
\Psi_{\alpha,\beta}(H')\subset {\mathfrak c}_{\mathcal G}({\mathfrak
t}')={\mathfrak c}_{\widetilde{S}_p}({\mathfrak t}')\oplus
V',\qquad\, {\mathfrak c}_{\mathcal G}({\mathfrak t}')^3\subset\,
{\mathfrak c}_{\widetilde{S}}({\mathfrak t}')\subset\,
\Psi_{\alpha,\beta}(H').$$ The elements $\big({\rm Id}-(\ad
t_1)^{p-1}\big)(D_H(x_1^{(p^2)}))$ and $\big({\rm Id}-(\ad
t_1)^{p-1}\big)(D_H(x_1^{(p^2-1)}x_2^{(p-1)}))$ lie in
$G_{(p-2)}\subset G_{(1)}$ whereas $[t_1,D_H(x_2^{(p)})]=0$.
Consequently, $\big({\rm Id}-(\ad t_1)^{p-1}\big)(V)\subset
G_{(1)}$. As $\ad t_2$ preserves $G_{(1)}$ we get $V'\subset
G_{(1)}$.

We claim that $L[\alpha,\beta]\subset G$. Indeed, suppose the
contrary. Recall that $G=\widetilde{S}\oplus V'\subsetneq
L[\alpha,\beta]+V'$ and $\mathcal{G}=\widetilde{S}\oplus
FD_1^p\oplus V'$. Then ${\mathcal G}=L[\alpha,\beta]+ V'$, hence
$${\mathfrak t}'
\subset\, {\mathfrak c}_{\mathcal G}({\mathfrak t}') = \,{\mathfrak
c}_{L[\alpha,\beta]+ V'}({\mathfrak
t}')=\,\Psi_{\alpha,\beta}(H')+V'.$$ Since
$\big(\Psi_{\alpha,\beta}(H')+V'\big)^3\subset
\Psi_{\alpha,\beta}(H')$, Jacobson's formula and induction on $k$
enable us to deduce that
$\big(\Psi_{\alpha,\beta}(H')+V'\big)^{[p]^k}\subset\,(V')^{[p]^k}+
\textstyle{\sum_{i=0}^k}\,\Psi_{\alpha,\beta}(H')^{[p]^k}$ for all
$k\ge 0$. From our earlier remarks we know that $V'\subset G_{(1)}$
consists of $p$-nilpotent elements of $\mathcal G$. Therefore,
$\big(\Psi_{\alpha,\beta}(H')+V'\big)^{[p]^e}\,\subset\sum_{i=0}^e\,\Psi_{\alpha,\beta}(H'))^{[p]^i}$
for all sufficiently large $e$. Since $H'$ is nilpotent, this forces
${\mathfrak t}'=({\mathfrak
t}')^{[p]^e}\subset\big(\Psi_{\alpha,\beta}(H')\big)^{[p]^e}$ for
$e\gg 0$. But then $\delta'$ vanishes on ${\mathfrak t}'$. By
contradiction, the claim follows.

Suppose $L[\alpha,\beta]\not\subset H(2;(2,1))^{(2)}\oplus
FD_H(x_1^{(p^2)})\oplus F D_H(x_1^{(p^2-1)}x_2^{(p-1)})$ and pick
$\mu\in F^\times$. Recall the elements $t_\mu\in\widetilde{S}$ and
$h_\mu\in \mathfrak{c}_G(t_\mu)$ from Lemma~\ref{conj}. Our present
assumption on $L[\alpha,\beta]$ implies that
$\mathfrak{c}_{L[\alpha,\beta]}(t_\mu)\supsetneq C_\mu'$; see
Lemma~\ref{centraliser}(i). As $L[\alpha,\beta]\subset G$ by our
remarks earlier in the proof, $L[\alpha,\beta]$ contains an element
from $(G\cap C_\mu)\setminus C_\mu'$; call it $h$. In view of
Lemma~\ref{stab}(ii), we may assume that $h=h_\mu+s
D_H(x_1^{(p^2)})$ for some $s\in F$.

Let $h_0$ denote the $p$-semisimple part of $h$ in the $p$-envelope
of $L[\alpha,\beta]$ in $\mathcal G$. It is immediate from
Lemma~\ref{stab}(iv) that the elements $h_0$ and $t_\mu$ are
linearly independent. This implies that ${\mathfrak
t}_\mu:=Fh_0\oplus Ft_\mu$ is a torus of maximal dimension in
$\mathcal G$. Recall that the restricted Lie algebra homomorphism
$\Psi_{\alpha,\beta}$ takes $T+L(\alpha,\beta)_p$ into
$\mathcal{G}$. Hence it follows from \cite[Lemma~2.4.4(2)]{StF} that
there exists a torus of maximal dimension $T''$ in $L_p$ contained
in $T+L(\alpha,\beta)_p$ and such that
$\mathfrak{t}_\mu=\Psi_{\alpha,\beta}(T'')$ and
$T\cap\ker\alpha\cap\ker\beta\subset T\cap T''$. We denote by $H''$
the centralizer of $T''$ in $L$. By construction, there exists
$\tilde{h}\in H''$ with $\Psi_{\alpha,\beta}(\tilde{h})=h$.

Set $T_0:=T\cap\ker\alpha\cap\ker\beta$. Because
$L(\alpha,\beta)=\mathfrak{c}_L(T_0)$, it is straightforward to see
that $L(\gamma'')=L(\alpha,\beta)(\gamma'')$ for every
$\gamma''\in\Gamma(L,T'')$ with $\gamma''(T_0)=0$. Since
$\Psi_{\alpha,\beta}(T'')=\mathfrak{t}_\mu$, there exists
$\delta''\in\Gamma(L,T'')$ such that $\delta''(T_0)=0,\,$
$\delta''(t_\mu)=0$ and $\delta''(h_0)\ne 0$; see
Lemma~\ref{ham1}(4). Then $C_\mu'\subset
\Psi_{\alpha,\beta}\big((L(\alpha,\beta))(\delta'')\big)\subset
C_\mu$ and $\delta''(\tilde{h})\ne 0$. Since
$(L(\alpha,\beta))(\delta'')=L(\delta'')$ by the preceding remark,
Lemma~\ref{centraliser}(i) shows that $\delta''$ is a solvable root
which does not vanish on $H''$. In view of \cite[Prop.~3.8]{PS4},
this entails that that every root space
$L_{i\delta''}=(\rad\,L(\delta''))_{i\delta''}$, where
$i\in\mathbb{F}_p$, consists of $p$-nilpotent elements of $L_p$.
Since $\Psi_{\alpha,\beta}$ is a restricted Lie algebra
homomorphism, this means that for every $\lambda\in F^\times$ all
$\lambda$-eigenvectors of the linear operator $(\ad h)_{\vert
C_\mu'}$ must act nilpotently on $\widetilde{S}$. As this
contradicts Lemma~\ref{stab}(iv), we now derive that our present
assumption is false. Thus, $L[\alpha,\beta]\subset
H(2;(2,1))^{(2)}\oplus FD_H(x_1^{(p^2)})\oplus F
D_H(x_1^{(p^2-1)}x_2^{(p-1)})$, completing the proof.
\end{proof}

\noindent If $\widetilde{S}$ is a minimal ideal of $L[\alpha,\beta]$
and $TR(\widetilde{S})=1$, then \cite[Thm.~4.4]{PS4} implies the
following:
\begin{theo}\label{r1TR1}
Suppose $\widetilde{S}$ is a unique minimal ideal of
$L(\alpha,\beta)$ and $TR(\widetilde{S})=1$. Then there exists
$\delta\in{\mathbb F}_p\alpha+{\mathbb F}_p\beta$ such that
$\Psi_{\alpha,\beta}(L_{\gamma})\subset\widetilde{S}$ for all
$\gamma\in\Gamma(L,T)\setminus{\mathbb F}_p\delta$. Moreover, one of
the following holds:
\begin{itemize}
\item[(1)] $L[\alpha,\beta]=L[\eta]$ for some $\eta\in\Gamma(L,T)\cap
({\mathbb F}_p\alpha+{\mathbb F}_p\beta)$;

\smallskip

\item[(2)] $\widetilde{S}\cong H(2;\un{1})^{(2)}$, $\,\,L[\alpha,\beta]\subset
\Der H(2;\un{1})^{(2)}$ and $\dim\,\Psi_{\alpha,\beta}(T)=2$;

\smallskip

\item[(3)] $S\ot{\mathcal O}(m;\un{1})\subset
L[\alpha,\beta]\subset (\Der S)\ot{\mathcal O}(m;\un{1})\rtimes
({\rm Id}\ot W(m;\un{1}))$, where  $S$ is one of $\mathfrak
{sl}(2)$, $W(1;\un{1})$, $H(2;\un{1})^{(2)}$, $\,\widetilde{S}\cong
S\ot{\mathcal O}(m;\un{1})$, and $m>0$.
\end{itemize}
In cases~(1) and ~(2) one can take $\delta=0$, i.e.
$\Psi_{\alpha,\beta}(L_{\gamma})\subset\widetilde{S}$ for all
$\gamma\in\Gamma(L,T)$.
\end{theo}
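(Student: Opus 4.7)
My plan is to reduce the problem to the structural classification of minimal $T$-invariant ideals of toral rank $1$, and then read off the three cases by analyzing how the $2$-dimensional torus $\Psi_{\alpha,\beta}(T)$ can act on such ideals. The key input is that $\widetilde{S}$, being a unique minimal $T$-stable ideal of $L[\alpha,\beta]$ with $TR(\widetilde{S})=1$, is forced into a very rigid shape.

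First I would invoke the structure theorem for minimal $T$-stable ideals: since $\widetilde{S}$ is a direct sum of isomorphic simple ideals permuted transitively by $T$ but is itself $T$-stable minimal and has toral rank $1$, one obtains an isomorphism $\widetilde{S}\cong S\otimes \mathcal{O}(m;\underline{1})$ for some simple $S$ of toral rank $1$ and some $m\ge 0$ (with $m=0$ meaning $\widetilde{S}\cong S$). The simple Lie algebras of toral rank one are known to be $\mathfrak{sl}(2)$, $W(1;\underline{1})$, and $H(2;\underline{1})^{(2)}$. The image $\Psi_{\alpha,\beta}(T)$ then sits inside $(\Der S)\otimes\mathcal{O}(m;\underline{1})\rtimes(\mathrm{Id}\otimes W(m;\underline{1}))$, and because $\Psi_{\alpha,\beta}(T)$ is a two-dimensional torus while $\Der S$ has toral rank $1$, at most one of its ``coordinates'' can act via a toral derivation of $S$; the other must act via the $W(m;\underline{1})$-factor. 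This already points to the dichotomy between cases~(1)--(2) and case~(3).

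Next I would produce $\delta$. If $m\ge 1$, the $W(m;\underline{1})$-component of $\Psi_{\alpha,\beta}(T)$ contributes a unique (up to $\mathbb{F}_p^\times$) one-dimensional subtorus $F\bar t\subset \Psi_{\alpha,\beta}(T)$ whose image in $W(m;\underline{1})$ is nonzero toral; dually there is a unique line $\mathbb{F}_p\delta\subset\mathbb{F}_p\alpha+\mathbb{F}_p\beta$ consisting of roots along which the $W(m;\underline{1})$-action is detected. For $\gamma\notin\mathbb{F}_p\delta$ the weight space $\Psi_{\alpha,\beta}(L_\gamma)$ cannot involve the outer $W(m;\underline{1})$-derivations (they would create weights in the $\mathbb{F}_p\delta$-direction), so $\Psi_{\alpha,\beta}(L_\gamma)\subset \widetilde{S}=S\otimes\mathcal{O}(m;\underline{1})$. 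This gives case~(3) with the stated containment for $L[\alpha,\beta]$.

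Finally I would treat the case $m=0$, i.e.\ $\widetilde{S}$ simple of toral rank $1$. If the projection $\Psi_{\alpha,\beta}(T)\to\mathrm{out}(\widetilde{S})$ is trivial we can take $\delta=0$ and the ambient algebra is contained in $\Der \widetilde{S}$; the possibilities $\widetilde{S}=\mathfrak{sl}(2),W(1;\underline{1})$ fall into case~(1) (where $L[\alpha,\beta]$ coincides with a $1$-section $L[\eta]$ after identifying the active direction $\eta$), while $\widetilde{S}=H(2;\underline{1})^{(2)}$ with $\dim\Psi_{\alpha,\beta}(T)=2$ yields case~(2). In both situations, since no outer $W(m;\underline{1})$-direction is present, $\Psi_{\alpha,\beta}(L_\gamma)\subset\widetilde{S}$ holds for every $\gamma\in\Gamma(L,T)$, so the final clause about $\delta=0$ follows. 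The main obstacle is precisely the argument in the previous paragraph: ruling out ``mixed'' behaviour of $\Psi_{\alpha,\beta}(T)$ where one coordinate has simultaneous contributions from the inner toral part of $\Der S$ and the outer $W(m;\underline{1})$-part, which requires a careful weight analysis of the action on $\widetilde{S}$ together with the $T$-invariance of the socle decomposition; this is exactly the content of \cite[Thm.~4.4]{PS4}, which I would cite to conclude.
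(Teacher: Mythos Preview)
Your proposal is correct and matches the paper's approach exactly: the paper does not give an independent argument but simply states that the theorem is immediate from \cite[Thm.~4.4]{PS4}, which is precisely the reference you invoke at the end. Your sketch of how the Block decomposition $\widetilde{S}\cong S\otimes\mathcal{O}(m;\underline{1})$ together with the toral-rank-one classification of $S$ yields the trichotomy is a reasonable outline of what that cited result establishes.
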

More information on the two-sections of $L$ can be found in
\cite[Sect.~4]{PS4}.

\section{\bf Nonstandard tori of maximal dimension}
From now on we assume that $T$ is a nonstandard torus of maximal
dimension in the semisimple $p$-envelope $L_p$ of $L$. In light of
\cite[Thm.~1]{P94} this implies that $p=5$. As explained in Sect.~2,
the linear functions on $T$ can be regarded as functions on the
nilpotent restricted Lie algebra ${\mathfrak c}_{L_p}(T)$. Set
$H:={\mathfrak c}_L(T)$ and define
$$\Omega=\Omega(L,T):=\{\delta\in\Gamma(L,T)\,|\,\,\,\delta(H^3)\ne 0\}.$$
As $T$ is a torus of maximal dimension in $L_p$, it is immediate
from \cite[Thm.~1(ii)]{P94} that there exist ${\mathbb
F}_p$-independent roots $\alpha,\beta\in\Gamma(L,T)$ for which
$L[\alpha,\beta]\cong{\mathcal M}(1,1)$. By Lemmas~4.1 and 4.4 of
\cite{P94}, we then have $i\alpha+j\beta\in\Omega$ for all nonzero
$(i,j)\in{\mathbb F}_p^2$. In particular, $\Omega\ne\emptyset$. In
view of Schue's lemma \cite[Prop.~1.3.6(1)]{St04}, this yields
\begin{equation}\label{eq0} L_\gamma\,=\,\textstyle{\sum}_{\delta\in\,\Omega}\,\,
[L_\delta,L_{\gamma-\delta}]\qquad\qquad\,
\big(\forall\,\gamma\in\Gamma(L,T)\cup\{0\}\big).\end{equation}

Because of \cite[Thm.~1(ii)]{P94} we can also assume that $TR(L)\ge
3$. Our main goal in this section is to give a preliminary
description of the $2$-sections of $L$ relative to $T$. More
precisely, we will go through all possible types of $2$-sections
(described in Sect.~3) and eliminate some of them by using our
assumption on $T$.
\begin{lemm}\label{lemA3}
For any nonsolvable $\alpha\in\Omega$ there exists
$\beta\in\Gamma(L,T)$ such that $L[\alpha,\beta]\cong {\mathcal
M}(1,1)$ and
$\alpha\big([L_{i\alpha},L_{-i\alpha}],[L_\beta,L_{-\beta}]\big)\ne
0$ for some $i\in{\mathbb F}_p^\times$.\end{lemm}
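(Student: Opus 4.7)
The plan is to produce $\beta$ by combining the hypothesis $\alpha(H^3)\ne 0$ with Schue's lemma~(\ref{eq0}) and the classification of $2$-sections from Section~3. Set $K_\delta:=[L_\delta,L_{-\delta}]$. Since $H^3\subset H^2$, the hypothesis yields $\alpha(H^2)\ne 0$, while (\ref{eq0}) at $\gamma=0$ gives $H=\sum_{\delta\in\Omega}K_\delta$. Hence $H^2\subset\sum_{\delta_1,\delta_2\in\Omega}[K_{\delta_1},K_{\delta_2}]$, and there exist $\delta_1,\delta_2\in\Omega$ with $\alpha([K_{\delta_1},K_{\delta_2}])\ne 0$.

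The first main task is to arrange one of the indices to lie in $\F_p^\times\alpha$. Since $(i\alpha)(H^3)\ne 0$ for $i\in\F_p^\times$, one has $\F_p^\times\alpha\subset\Omega$. Applying (\ref{eq0}) to $L_{\delta_1}$ and using Jacobi inside the bracket $[L_{\delta_1},L_{-\delta_1}]$ gives the inclusion
$$K_{\delta_1}\,\subset\,\sum_{\mu\in\Omega}\bigl(K_\mu+K_{\delta_1-\mu}\bigr),$$
so that
$$[K_{\delta_1},K_{\delta_2}]\,\subset\,\sum_{\mu\in\Omega}\bigl([K_\mu,K_{\delta_2}]+[K_{\delta_1-\mu},K_{\delta_2}]\bigr).$$
The nonvanishing of $\alpha([K_{\delta_1},K_{\delta_2}])$ forces the nonvanishing of one of the summands; iterating this rewrite and combining with Lemma~\ref{lemA1} (which gives $K_\delta^3\subset{\rm nil}\,H_p$ and hence kills cubic $K_\delta$-contributions under $\alpha$, as every element of ${\rm nil}\,H_p$ is $p$-nilpotent in ${\mathfrak c}_{L_p}(T)$), I expect to pin down some $i\in\F_p^\times$ and some $\beta\in\Gamma(L,T)$ for which $\alpha([K_{i\alpha},K_\beta])\ne 0$.

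It remains to check $L[\alpha,\beta]\cong{\mathcal M}(1,1)$. Running through Theorems~\ref{r2}, \ref{r1TR2}, \ref{r1TR1}: the decomposable case of Theorem~\ref{r2} is ruled out because the pairing $\alpha([K_{i\alpha},K_\beta])$ decouples along the direct sum of $1$-sections; cases~(1), (2), (4) and (5) of Theorem~\ref{r1TR2} are eliminated because in each listed $\widetilde S$ the Cartan subalgebra is either abelian or sufficiently shallow that $\alpha$ must vanish on $[K_{i\alpha},K_\beta]$ (for (5) the refined information of Section~2, notably Lemmas~\ref{conj}--\ref{ham2}, is used directly); and the non-Melikian options of Theorem~\ref{r1TR1} are incompatible with the nonvanishing of the pairing together with $\alpha$ being nonsolvable. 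The main obstacle is the middle step: distilling an index in $\F_p^\times\alpha$ from Schue's expansion is delicate, since repeated Jacobi rewrites produce new indices, and convergence of the reduction procedure will rely on a careful filtration argument inside the $3$-section $L(\alpha,\delta_1,\delta_2)$, together with the fact that only finitely many $\F_p$-lines occur among the root directions in any given $3$-section.
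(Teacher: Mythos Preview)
Your reduction step---forcing one index of $[K_{\delta_1},K_{\delta_2}]$ into $\F_p^\times\alpha$ by iterating Schue's lemma and Jacobi---is the genuine gap. The inclusion $K_{\delta_1}\subset\sum_{\mu\in\Omega}(K_\mu+K_{\delta_1-\mu})$ you derive is correct, but iterating it does not move $\delta_1$ toward $\F_p^\times\alpha$: each pass simply replaces $\delta_1$ by some $\mu$ or $\delta_1-\mu$ with $\mu\in\Omega$, and nothing in the setup distinguishes $\alpha$ among roots in $\Omega$. The ``filtration argument inside the $3$-section'' you mention is not a known device here, and Lemma~\ref{lemA1} only kills \emph{iterated} $K_\delta$-brackets (cubes), not the bilinear pairing $[K_{\delta_1},K_{\delta_2}]$ you are trying to reshape. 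So as written, the procedure has no termination and no reason to land on an $i\alpha$-slot.

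The paper bypasses this entirely. Since $\alpha$ is nonsolvable and $\alpha(H^3)\ne 0$, Theorem~\ref{1sec} gives $L[\alpha]\cong H(2;\un{1})^{(2)}\oplus F(1+x_1)^4\partial_2$, and \cite[Thm.~3.5]{PS4} then supplies $k\in\F_p^\times$ with $[L_{k\alpha},L_{-k\alpha}]$ acting nonnilpotently. One application of Schue's lemma to the codimension-one subspace $\Psi_\alpha(H)\cap H(2;\un{1})^{(2)}$ of $\Psi_\alpha(H)$ produces $\beta$ with $\Psi_\alpha([L_\beta,L_{-\beta}])$ landing in the extra line $F(1+x_1)^4\partial_2$; this yields directly $h_1\in L(\alpha)^{(\infty)}\cap H$ and $h_2\in[L_\beta,L_{-\beta}]$ with $\alpha([h_2,[h_2,h_1]])\ne 0$ and $\beta([L_{k\alpha},L_{-k\alpha}])\ne 0$. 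For the $2$-section, the paper does not do case-by-case elimination: the two relations just obtained give two $p$-semisimple elements in $\widetilde{S}_p$ whose span is $2$-dimensional, forcing $TR(\widetilde{S})=2$, and the nontrivial bracket of images of $K_{k\alpha}$ and $K_\beta$ rules out Theorem~\ref{r2}. Then $\Psi_{\alpha,\beta}(T)$ is nonstandard in $\widetilde{S}_p$, and a single appeal to \cite[Thm.~1(ii)]{P94} gives $\widetilde{S}\cong\mathcal{M}(1,1)$. No input from Section~2 is needed for this lemma.
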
 \begin{proof}
Since $\alpha$ is nonsolvable and $\alpha(H^3)\ne 0$,
Theorem~\ref{1sec} implies that $L[\alpha]\cong
H(2;\un{1})^{(2)}\oplus F(1+x_1)^4\partial_1.$ By
\cite[Thm.~3.5]{PS4}, there is $k\in{\mathbb F}_p^\times$ for which
the set
$\Omega_1:=\{\delta\in\Gamma(L,T)\,|\,\,\,\delta\big([L_{k\alpha},L_{-k\alpha}]\big)\ne
0\}$ is nonempty. Since $\Psi_\alpha(H)\cap H(2;\un{1})^{(2)}$ has
codimension one in $\Psi_\alpha(H)$, Schue's lemma
\cite[Prop.~1.3.6(1)]{St04} implies that there exists
$\beta\in\Omega_1$ with the property that
$$\Psi_\alpha(H)\,=\,\Psi_\alpha(H)\cap H(2;\un{1})^{(2)}+
\Psi_\alpha\big([L_\beta,L_{-\beta}]\big).$$ Hence there exist
$h_1\in L(\alpha)^{(\infty)}\cap H$ and $h_2\in
[L_\beta,L_{-\beta}]$ with $\alpha([h_2,[h_2,h_1]])\ne 0$. Note that
$\beta([h_2,[h_2,h_1]])\in \beta\big([L_\beta,L_{-\beta}]^2\big)=0$
by Lemma~\ref{lemA1}. In particular, $\alpha$ and $\beta$ are
linearly independent over ${\mathbb F}_p$. Since $\beta\in\Omega_1$,
we then have
\begin{eqnarray}\label{h1h2}
\beta([h_2,[h_2,h_1]])= 0;\ \quad \alpha([h_2,[h_2,h_1]])\ne 0;\
\quad \beta\big([L_{k\alpha},L_{-k\alpha}]\big)\ne 0.
\end{eqnarray}

We now look more closely at the $T$-semisimple quotient
$L[\alpha,\beta]$ of the $2$-section $L(\alpha,\beta)$. Since
$\alpha$ is nonsolvable, $L[\alpha,\beta]\ne (0)$. Let
$\widetilde{\mathcal S}$ denote the $p$-envelope of the $T$-socle
$\widetilde{S}$ of $L[\alpha,\beta]$ in $\Der\widetilde{S}$, and set
$u:=\Psi_{\alpha,\beta}\big([h_2,[h_2,h_1]]\big)$. Given
$x\in\widetilde{S}$ we write $x_s$ for the $p$-semisimple part of
$x$ in $\widetilde{\mathcal S}$. Because the roots $\alpha, \beta$
are ${\mathbb F}_p$-independent, $h_1\in L(\alpha)^{(\infty)}\cap
H=\sum_{j\in{\mathbb F}_p^\times}\,[L_{j\alpha},L_{-j\alpha}]$ and
$\,h_2\in [L_\beta,L_{-\beta}]$, it follows from
Theorems~\ref{1sec}, \ref{r2},~\ref{r1TR2} and ~\ref{r1TR1} that
$u\in \widetilde{S}$. Now relations~(\ref{h1h2}) enable us to find
$v\in
\widetilde{S}\cap\Psi_{\alpha,\beta}\big([L_{k\alpha},L_{k\alpha}]\big)$
such that the span of $u_s$ and $v_s$ is $2$-dimensional. This
yields $\Psi_{\alpha,\beta}(T)\subset\widetilde{\mathcal S}$ showing
that $TR(\widetilde{S})=2$. Since
$\beta\big([L_{k\alpha},L_{-k\alpha}]\big)\ne 0$, we also deduce
that there are ${\mathbb F}_p$-independent
$\delta_1,\delta_2\in\Gamma(L,T)$ for which
$\big[\Psi_{\alpha,\beta}(L_{\delta_1}),\Psi_{\alpha,\beta}(L_{\delta_2})\big]\ne
0$. In view of Theorem~\ref{r2}, this implies that $\widetilde{S}$
is a minimal ideal of $L[\alpha,\beta]$.

Theorem~\ref{r1TR2} now says that $\widetilde{S}$ is a simple Lie
algebra and $\Psi_{\alpha,\beta}(L_\gamma)\subset \widetilde{S}$ for
all $\gamma\in\Gamma(L,T)\cap ({\mathbb F}_p\alpha+{\mathbb
F}_p\beta)$. Since $\alpha\big(H,[L_{k\alpha},L_{-k\alpha}]\big)\ne
0$,  the torus $\Psi_{\alpha,\beta}(T)\subset \widetilde{\mathcal
S}=\widetilde{S}_p$ is nonstandard. Applying \cite[Thm.~1(ii)]{P94}
we conclude that $L[\alpha,\beta]\cong{\mathcal M}(1,1)$, finishing
the proof. \end{proof}
\begin{prop}\label{types}
If $\alpha\in\Omega$ and $\beta\in\Gamma(L,T)$, then one of the
following occurs:
\begin{itemize}
\item[1)]
$L[\alpha,\beta]=(0)$.

\medskip

\item[2)]
$L[\alpha,\beta]=L[\delta]$ for some $\delta\in\Gamma(L,T)$.

\medskip

\item[3)]
$L[\delta_1]^{(1)}\oplus L[\delta_2]^{(1)}\subset
L[\alpha,\beta]\subset L[\delta_1]\oplus L[\delta_2]$ for some
$\delta_1,\delta_2\in\Gamma(L,T)$.

\medskip

\item[4)]
$S\ot{\mathcal O}(m;\un{1})\subset L[\alpha,\beta]\subset (\Der
S)\ot{\mathcal O}(m;\un{1})\rtimes ({\rm Id}\ot W(m;\un{1}))$, where
$S$ is one of $\mathfrak{sl}(2)$, $W(1;\un{1})$,
$H(2;\un{1})^{(2)}$, $\,\widetilde{S}\cong S\ot{\mathcal
O}(m;\un{1})$, and $m>0$.

\medskip

\item[5)]
$H(2;(2,1))^{(2)}\subset L[\alpha,\beta]\subset H(2;(2,1))$ and
$\widetilde{S}=H(2;(2,1))^{(2)}=L[\alpha,\beta]^{(1)}$. Furthermore,
 each
$\eta\in\Gamma(L[\alpha,\beta],\Psi_{\alpha,\beta}(T))$ is
Hamiltonian, $\eta\big(\Psi_{\alpha,\beta}(T)\cap
\widetilde{S}\big)\ne 0$, and
$\Gamma(L[\alpha,\beta],\Psi_{\alpha,\beta}(T))\,=\,({\mathbb
F}_p\alpha\oplus{\mathbb F}_p\beta)\setminus\{0\}$.

\medskip

\item[6)]
$L[\alpha,\beta]\cong{\mathcal M}(1,1)$.
\end{itemize}
\end{prop}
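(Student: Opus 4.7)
My plan is to go through the classification of $2$-sections given by Theorems~\ref{r2}, \ref{r1TR2} and \ref{r1TR1}, and to use the hypothesis $\alpha\in\Omega$ (i.e.\ $\alpha(H^3)\ne 0$) to eliminate the subcases that do not appear in the statement. First, if $L(\alpha,\beta)$ is solvable then $L[\alpha,\beta]=(0)$ and conclusion 1) holds. Otherwise the $T$-socle $\widetilde{S}$ of $L[\alpha,\beta]$ is nonzero, and I would split according to its structure. If $\widetilde{S}=\widetilde{S}_1\oplus\widetilde{S}_2$ decomposes into two minimal $T$-stable ideals, Theorem~\ref{r2} yields conclusion 3).

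Suppose instead $\widetilde{S}$ is a unique minimal ideal. When $TR(\widetilde{S})=1$, Theorem~\ref{r1TR1} provides three subcases: subcase (1) is conclusion 2), subcase (3) is conclusion 4), and subcase (2), where $\widetilde{S}\cong H(2;\un{1})^{(2)}$ and $\dim\Psi_{\alpha,\beta}(T)=2$, reduces to conclusion 2) by choosing $\eta\in\mathbb{F}_p\alpha+\mathbb{F}_p\beta$ supporting the $1$-dimensional $\mathbb{F}_p$-line of nonzero weights of $\Psi_{\alpha,\beta}(T)$ on $\widetilde{S}$.

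The main work is in the case $TR(\widetilde{S})=2$, where Theorem~\ref{r1TR2} gives five subcases. Subcase (3) is exactly conclusion 6). For conclusion 5), one starts from subcase (5): the inclusion $L[\alpha,\beta]\subset H(2;(2,1))$ and the equalities $\widetilde{S}=H(2;(2,1))^{(2)}=L[\alpha,\beta]^{(1)}$ come from Theorem~\ref{r1TR2}(5) together with the simplicity of $H(2;(2,1))^{(2)}$. For the root-theoretic assertions, $\alpha\in\Omega$ is transferred through the restricted homomorphism $\Psi_{\alpha,\beta}$ (which preserves $p$-semisimple parts) to give $\mathfrak{h}^3\ne 0$ for $\mathfrak{h}=\mathfrak{c}_{\widetilde{S}}(\Psi_{\alpha,\beta}(T))$; Lemma~\ref{ham2}(2) then forces $TR(\mathfrak{h},S)=2$, Lemma~\ref{ham2}(1) gives that every root in $\Gamma(L[\alpha,\beta],\Psi_{\alpha,\beta}(T))$ is Hamiltonian, Lemma~\ref{ham1}(4) identifies this set together with $0$ with $\mathbb{F}_p\alpha\oplus\mathbb{F}_p\beta$, and Lemma~\ref{ham1}(2) gives the nonvanishing on $\Psi_{\alpha,\beta}(T)\cap\widetilde{S}$.

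The main obstacle is the elimination of subcases (1), (2), and (4) of Theorem~\ref{r1TR2}. The uniform strategy is: in each of these subcases the centralizer $\mathfrak{h}=\mathfrak{c}_{\widetilde{S}}(\Psi_{\alpha,\beta}(T))$ of the $2$-dimensional torus is abelian, standard for the classical rank-$2$ algebras in (4), and verified directly for the Cartan-type algebras $W(2;\un{1}),\,S(3;\un{1})^{(1)},\,H(4;\un{1})^{(1)},\,K(3;\un{1})^{(1)}$ of (1) and $W(1;\un{2}),\,H(2;\un{1};\Phi(\tau))^{(1)},\,H(2;\un{1};\Delta)$ of (2). Hence $\mathfrak{h}^3=0$; since $\Psi_{\alpha,\beta}$ sends $H^3$ into $\Psi_{\alpha,\beta}(H)^3=\mathfrak{h}^3$ (the outer toral contribution to $\Psi_{\alpha,\beta}(H)$ being central and thus not adding to triple brackets) and preserves $p$-semisimple parts, $\Psi_{\alpha,\beta}(H^3)=0$ forces $H^3\subseteq\mathrm{rad}_T\,L(\alpha,\beta)$, and the $p$-semisimple parts of elements of $H^3$ land in the kernel of $\bar\alpha$. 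This contradicts $\alpha(H^3)\ne 0$, and I expect the careful bookkeeping relating $\alpha$ on $T$, its image $\bar\alpha$ on $\Psi_{\alpha,\beta}(T)$, and the $p$-semisimple part of $\ker\Psi_{\alpha,\beta}|_T$ to be the technical heart of this step.
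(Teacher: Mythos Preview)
Your overall plan---run through Theorems~\ref{r2}, \ref{r1TR2}, \ref{r1TR1} and use $\alpha\in\Omega$ to eliminate subcases---matches the paper. But two of your eliminations do not go through as stated.

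\textbf{Theorem~\ref{r1TR1}(2).} Your claim that the weights of the $2$-dimensional torus $\overline{T}=\Psi_{\alpha,\beta}(T)$ on $\widetilde{S}\cong H(2;\un{1})^{(2)}$ lie on a single $\mathbb{F}_p$-line is false: $\overline{T}$ sits in $\Der\widetilde{S}$, not in $\widetilde{S}_p$, and a $2$-torus there (e.g.\ $FD_H(x_1x_2)\oplus F(x_1\partial_1+x_2\partial_2)$) has genuinely $2$-dimensional weight lattice on $\widetilde{S}$. So this subcase does \emph{not} reduce to conclusion~2). The paper instead eliminates it: every $2$-dimensional torus in $\Der H(2;\un{1})^{(2)}$ is self-centralizing, so $\overline{H}\subset\overline{T}$ is abelian, forcing $\Psi_{\alpha,\beta}(H^{(1)})=0$; since $\Psi_{\alpha,\beta}$ is restricted and $\alpha$ vanishes on $T\cap\ker\Psi_{\alpha,\beta}$, this gives $\alpha(H^3)=0$, contradicting $\alpha\in\Omega$.

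\textbf{Theorem~\ref{r1TR2}(5).} Your chain ``$\mathfrak{h}^3\ne 0\Rightarrow TR(\mathfrak{h},S)=2$ via Lemma~\ref{ham2}(2)'' is a non-sequitur: Lemma~\ref{ham2}(2) says only that $TR(\mathfrak{h},S)=1$ forces a solvable root to exist, and there is no implication from $\mathfrak{h}^3\ne 0$ to the absence of solvable roots. Moreover $\overline{H}$ need not lie in $\widetilde{S}$, so $\overline{H}^3$ acting non-nilpotently does not immediately give $\mathfrak{h}^3\ne 0$ for $\mathfrak{h}=\mathfrak{c}_{\widetilde{S}}(\overline{T})$. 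The paper's argument here is more delicate: it assumes some root $\delta$ vanishes on $\overline{T}\cap\widetilde{S}=Fu_2$ and splits into the cases $u_2\notin S_{(0)}$ and $u_2\in S_{(0)}$. In the first case the \emph{refined} inclusion $L[\alpha,\beta]\subset H(2;(2,1))^{(2)}\oplus FD_H(x_1^{(p^2)})\oplus FD_H(x_1^{(p^2-1)}x_2^{(p-1)})$ from Theorem~\ref{r1TR2}(5) (which excludes $D_H(x_2^{(p)})$, hence excludes $h_\mu$) forces $\overline{H}\subset C_\mu'$, which is abelian by Lemma~\ref{centraliser}(i). In the second case a grading argument gives $\overline{H}^{(1)}\subset G_{(1)}$. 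Both contradict non-triangulability of $\overline{H}$. You will not be able to bypass the refined bound in Theorem~\ref{r1TR2}(5) and the structural Lemmas~\ref{conj}--\ref{centraliser}.

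\textbf{Theorem~\ref{r1TR2}(1),(2),(4).} Your approach---verify that $\mathfrak{c}_{\widetilde{S}}(\overline{T})$ is abelian for each of the nine simple algebras listed---would require nontrivial case-by-case work (it is not obvious for $S(3;\un{1})^{(1)}$, $K(3;\un{1})^{(1)}$, or the twisted Hamiltonians, and must hold for \emph{every} $2$-dimensional torus). The paper instead observes uniformly that in these cases $\overline{H}^3\subset(\overline{H}\cap\widetilde{S})^3$, so $\mathfrak{c}_{\widetilde{S}}(\overline{T})$ acts non-triangulably on $\widetilde{S}$; then \cite[Thm.~1(ii)]{P94} forces $\widetilde{S}\cong\mathcal{M}(1,1)$, which is conclusion~6). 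This single appeal to \cite{P94} replaces all your case checks.
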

\begin{proof} (a) Set $\overline{T}:=\Psi_{\alpha,\beta}(T)$ and
$\overline{H}:=\Psi_{\alpha,\beta}(H)$. If
$\Gamma(L[\alpha,\beta],\overline{T})=\emptyset$, then
$L(\alpha,\beta)$ is solvable, forcing $L[\alpha,\beta]=(0)$. If
$\emptyset\ne \Gamma(L[\alpha,\beta],\overline{T})\subset {\mathbb
F}_p\delta$ for a single root $\delta$, then for any $\delta'\in
({\mathbb F}_p\alpha\oplus{\mathbb F}_p\beta)\setminus{\mathbb
F}_p\delta$  we have that $L_{\delta'}\subset
\rad_{T}\,L(\alpha,\beta)$. Then $L[\alpha,\beta]=L[\delta]$. So we
may assume from now that $\Gamma(L[\alpha,\beta],\overline{T})$
contains two roots independent over ${\mathbb F}_p$. Then
$L[\alpha,\beta]$ is described in Theorems~\ref{r2}, ~\ref{r1TR2}
and ~\ref{r1TR1}. Let $\widetilde{S}$ be the $T$-socle of
$L[\alpha,\beta]$. If $\widetilde{S}$ is not a minimal ideal of
$L[\alpha,\beta]$, then Theorem~\ref{r2} says that we are in case~3)
of this proposition. Thus, we may assume further that
$\widetilde{S}$ is a minimal ideal of $L[\alpha,\beta]$.

\smallskip

\noindent (b) Suppose $TR(\widetilde{S})=2$. Then $L[\alpha,\beta]$
is described in Theorem~\ref{r1TR2}. Since $\alpha(H^3)\ne 0$, there
exists $\eta\in\Gamma(\widetilde{S},\overline{T})$ with
$\eta\big(\overline{H}^3\big)\ne 0$. In cases~(1)\,--\,(4) of
Theorem~\ref{r1TR2} we have $\overline{H}^3\subset
\big(\overline{T}+\overline{H}\cap\widetilde{S}\big)^3=\big(\overline{H}\cap\widetilde{S}\big)^3$,
implying that $\overline{H}'={\mathfrak
c}_{\widetilde{S}}(\overline{T})$ acts nontriangulably on
$\widetilde{S}$. But then \cite[Thm.~1(ii)]{P94} shows that
$\widetilde{S}\cong {\mathcal M}(1,1)$. This brings up case~6) of
this proposition.

\smallskip

\noindent (c) Suppose $L[\alpha,\beta]$ is as in case~(5) of
Theorem~\ref{r1TR2}. Then $\widetilde{S}\cong H(2;(2,1))^{(2)}$ and
$L[\alpha,\beta]\subset H(2;(2,1))^{(2)}\oplus
FD_H(x_1^{(p^2)})\oplus FD_H(x_1^{(p^2-1)}x_2^{(p-1)})$.
Furthermore, $\overline{T}\subset \widetilde{S}_p$. If no root in
$\Gamma(\widetilde{S},\overline{T})$ vanishes on $\overline{T}\cap
\widetilde{S}$, then Lemma~\ref{ham1}(2) shows that we are in
case~5) of this this proposition. So assume for a contradiction that
there is $\delta\in\Gamma(\widetilde{S},\overline{T})$ with
$\delta(\overline{T}\cap\widetilde{S})=0$. By Lemma~\ref{tori}, we
have $\overline{T}\cap \widetilde{S}=Fu_2\ne (0)$. Since $\delta$
vanishes on $u_2\in \overline{T}\cap\widetilde{S}$, we may assume
without loss that $u_2$ is a toral element. As before, we put
$G=H(2;(2,1))$ and ${\mathcal G}=\widetilde{S}_p\oplus V$, where
$V\subset\Der\widetilde{S}$ is defined in Sect.~2. Since
$\alpha\in\Omega$, the Lie algebra $\overline{H}^3$ acts
nonnilpotently on $S$.

\smallskip
\noindent (c1) We first suppose that $\overline{T}\cap
\widetilde{S}\not\subset S_{(0)}$. Then we can find
$\Psi_{\alpha,\beta}$ such that $\overline{T}\cap
\widetilde{S}=Ft_\mu$ where $\mu\in F$; see Lemma~\ref{conj}. Thus,
no generality will be lost by assuming that $u_2=t_\mu$. But then it
follows from Lemma~\ref{centraliser}(i) that
$$\overline{H}\subset\, C_\mu\cap\Big(H(2;(2,1))^{(2)}\oplus
FD_H(x_1^{(p^2)})\oplus
FD_H(x_1^{(p^2-1)}x_2^{(p-1)})\Big)\,=\,C_\mu'$$ and
$[\overline{H},\overline{H}]\subset [C_\mu',C_\mu']=(0)$. Since
$\overline{H}$ acts nontriangulably on $\widetilde{S}$, this is
impossible.

\smallskip

\noindent (c2) Now suppose that $\overline{T}\cap
\widetilde{S}\subset S_{(0)}$. Then $\overline{T}\cap
\overline{S}_{(0)}$ contains a nonzero $p$-semisimple element, say
$t$; see Lemma~\ref{tori}.  It follows from Lemma~\ref{tori} and our
earlier remarks that $\mathcal{G}=\overline{T}+G$. As
$\mathrm{gr}\,t\in G_{(0)}/G_{(1)}\cong \mathfrak{sl}(2)$ acts
invertibly on $G_{(-1)}=G/G_{(0)}$, this implies that
$\overline{H}\subset\overline{T}+\mathfrak{c}_G(\overline{T})=\overline{T}+
\mathfrak{c}_{G_{(1)}}(\overline{T}).$ But then
$\overline{H}^{(1)}\subset G_{(1)}$ acts nilpotently on $G$, a
contradiction.

As a result, no root in $\Gamma(\widetilde{S},\overline{T})$
vanishes on $\overline{H}\cap \widetilde{S}$ and we are in case~5)
of this proposition; see Lemma~\ref{ham1}(2).

\smallskip

\noindent (d) If $L[\alpha,\beta]$ is as in case (1) of
Theorem~\ref{r1TR1}, then it is listed in the present proposition as
type~2). If $L[\alpha,\beta]$ is as in case~(2) of
Theorem~\ref{r1TR1}, then $\widetilde{S}=H(2;\un{1})^{(2)}$,
$L[\alpha,\beta]\subset \Der H(2;\un{1})^{(2)}$, and  $\overline{T}$
is a $2$-dimensional torus in $\Der \widetilde{S}$. It is well-known
that any $2$-dimensional torus in $\Der\widetilde{S}$ is
self-centralizing; see \cite[(III.1)]{St92} for instance. But then
$\gamma(H^{(1)})=0$ for all $\gamma\in {\mathbb
F}_p\alpha\oplus{\mathbb F}_p\beta$. Thus, this case cannot occur in
our situation. Finally, case~(3) of Theorem~\ref{r1TR1} is listed as
type~4) in the present proposition.
\end{proof}
\begin{cor}\label{not4}
Let $\alpha\in\Omega$ and $\beta\in\Gamma(L,T)$. If
$L[\alpha,\beta]$ is as in cases~1), 2), 3), 5) or 6) of
Proposition~\ref{types}, then $\sum_{i\in{\mathbb
F}_p^\times}\,(\rad\,L(\gamma))_{i\gamma}\subset
\rad_T\,L(\alpha,\beta)$ for all nonzero $\gamma\in {\mathbb
F}_p\alpha+{\mathbb F}_p\beta$.
\end{cor}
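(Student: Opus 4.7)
The plan is to apply $\pi:=\Psi_{\alpha,\beta}$ with $\overline T:=\pi(T)$ and reduce the corollary to a statement inside $L[\alpha,\beta]$. First I would note that since $\rad\,L(\gamma)$ is $T$-stable by Theorem~\ref{1sec}, the image $\pi(\rad\,L(\gamma))$ is a $\overline T$-stable solvable ideal of the $1$-section $\pi(L(\gamma))=L[\alpha,\beta](\bar\gamma)$, hence lies in $\rad_{\overline T}\,L[\alpha,\beta](\bar\gamma)$. It therefore suffices to show, in each of cases~1), 2), 3), 5), 6) of Proposition~\ref{types} and for every nonzero $\gamma\in\F_p\alpha+\F_p\beta$, that this image has trivial $i\bar\gamma$-weight component for $i\in\F_p^\times$.

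For cases~1)--3) I would argue directly. Case~1) is vacuous. In case~2) with $L[\alpha,\beta]=L[\delta]$, if $\gamma\notin\F_p\delta$ then $L[\alpha,\beta]_{i\gamma}=L[\delta]_{i\gamma}=0$ for $i\ne 0$, while if $\gamma=\delta$ the restriction $\pi|_{L(\delta)}\colon L(\delta)\to L[\delta]$ is surjective with $T$-stable solvable kernel, which by a dimension count must equal $\rad\,L(\delta)$, so $\pi(\rad\,L(\delta))=0$. Case~3) reduces to case~2): either $\gamma$ is $\F_p$-independent of both $\delta_1,\delta_2$ (and then $L_{i\gamma}$ maps to zero in $L[\delta_1]\oplus L[\delta_2]$ for $i\ne 0$), or $\gamma\in\F_p\delta_j$ for some $j$, in which case I compose $\pi$ with the projection onto $L[\delta_j]$---surjective because $L[\delta_j]^{(1)}\subset L[\alpha,\beta]$---and rerun the case~2) argument in that factor.

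For cases~5) and~6), the key observation is that every nonzero-weight vector of $L[\alpha,\beta]$ lies in the simple $T$-socle $\widetilde S$: indeed $L[\alpha,\beta]_{\bar\gamma}\subset L[\alpha,\beta]^{(1)}$ for $\bar\gamma\ne 0$, and $\widetilde S=L[\alpha,\beta]^{(1)}$ in case~5) by Proposition~\ref{types}(5) and trivially in case~6). Accordingly $\pi(\rad\,L(\gamma))\cap\widetilde S$ is a $\overline T$-stable solvable ideal of the $1$-section $\widetilde S(\bar\gamma)$ of the simple algebra $\widetilde S$, and I would show its $i\bar\gamma$-weight parts vanish. In case~5), Proposition~\ref{types}(5) gives $\bar\gamma(\overline T\cap\widetilde S)\ne 0$, so the hypotheses of Lemma~\ref{ham1} are satisfied (with $\mathfrak t=\overline T\cap\widetilde S_p$), and the detailed analysis of $H(2;(2,1))$ from Section~2---notably Lemmas~\ref{conj}--\ref{stab}---forces any such $\overline T$-stable solvable ideal into the zero-weight component. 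In case~6), $\overline T$ is a nonstandard torus of maximal dimension in $\mathcal M(1,1)=\widetilde S$, so \cite[Thm~1(ii) and Lem~4.4]{P94} yield $\bar\gamma(\overline H^3)\ne 0$ for every root, and Theorem~\ref{1sec} applied to $\mathcal M(1,1)$ furnishes the required structural information.

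The hard part will be cases~5) and~6), where I must descend to the simple socle $\widetilde S$ and invoke delicate structural information---Section~2 for $H(2;(2,1))$ and \cite{P94} for $\mathcal M(1,1)$---to confine the ideal $\pi(\rad\,L(\gamma))\cap\widetilde S$ to the zero-weight component of $\widetilde S(\bar\gamma)$. Cases~1)--3), by contrast, should follow routinely from the decomposition of $L[\alpha,\beta]$ into $1$-section pieces once the basic $\pi$-reduction is in hand.
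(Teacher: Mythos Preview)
Your overall strategy---apply $\Psi_{\alpha,\beta}$ and reduce to showing $\rad L[\alpha,\beta](\bar\gamma)\subset\overline H$ in each case---matches the paper's, and your treatment of cases~1)--3) is correct, if slightly more laborious than the paper's (which dispatches 1)--2) in one line by noting that all $1$-sections of $L[\alpha,\beta]$ are already semisimple, and handles~3) by producing $h_i\in\overline H\cap L[\delta_i]$ with $\delta_i(h_i)\ne 0$, whence $\rad L[\alpha,\beta](\gamma)\subset\overline H$ follows at once for every nonzero $\gamma$).

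For case~5), however, you reach for the wrong tools. Lemmas~\ref{conj}--\ref{stab} concern specific toral elements $t_\mu\in H(2;(2,1))^{(2)}$ and the fine structure of their centralizers $C_\mu$; they were designed to constrain $L[\alpha,\beta]$ inside $H(2;(2,1))$ in the proof of Theorem~\ref{r1TR2}(5), and say nothing about solvable ideals of $1$-sections with respect to an arbitrary $2$-dimensional torus $\overline T$. The paper's actual argument for cases~5) and~6) is a one-line dimension count: by Lemma~\ref{ham1}(3) (case~5) and \cite[Lemmas~4.1 \& 4.4]{P94} (case~6), every nonzero root space $L[\alpha,\beta]_{i\bar\gamma}$ is $5$-dimensional; since each $\bar\gamma$ is Hamiltonian, the semisimple quotient $L[\gamma]$ also has $5$-dimensional nonzero root spaces, so the surjection $L[\alpha,\beta](\bar\gamma)\twoheadrightarrow L[\gamma]$ is bijective off weight zero and $\rad L[\alpha,\beta](\bar\gamma)\subset\overline H$. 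Your appeal to Theorem~\ref{1sec} in case~6) is likewise unnecessary---\cite[Lemmas~4.1 \& 4.4]{P94} already deliver both the root-space dimensions and the Hamiltonian type.
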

\begin{proof}
If $L[\alpha,\beta]$ is of type~1) or 2), then all $1$-sections of
$L[\alpha,\beta]$ are semisimple and there is nothing to prove. If
$L[\alpha,\beta]$ is of type~3), then there are $h_i\in
\overline{H}\cap L[\delta_i]$ such that $\delta_i(h_i)\ne 0$, where
$i=1,2$ (recall that $\overline{H}=\Psi_{\alpha\,\beta}(H)$). It
follows that $\rad\,L[\alpha,\beta](\delta_i)\subset
\overline{H}+L[\delta_i]^{(1)}.$ As each $L[\delta_i]^{(1)}$ is
simple, we get $\rad\big(L[\alpha,\beta](\gamma)\big)\subset
\overline{H}$ for all nonzero $\gamma\in {\mathbb
F}_p\alpha\oplus{\mathbb F}_p\beta$. If $L[\alpha,\beta]$ is of
type~5) or 6), then all $T$-roots of $L[\alpha,\beta]$ are
Hamiltonian and the corresponding root spaces are $5$-dimensional
(see Lemma~\ref{ham1} and \cite[Lemmas~4.1 \& 4.4]{P94}). Hence in
these cases $\rad\big(L[\alpha,\beta](\gamma)\big)\subset
\overline{H}$ for all $\gamma\in ({\mathbb F}_p\alpha\oplus{\mathbb
F}_p\beta)\setminus\{0\}$.
\end{proof}
\begin{lemm}\label{omega'}
The following hold for every $\gamma\in\Gamma(L,T)$ with
$\gamma(H)\ne 0$:
\begin{itemize}
\item[(a)] All elements in $\,\textstyle{\bigcup_{i\in{\mathbb
F}_p^\times}}\,\big(H^3\cap \,[(\rad\,
L(\gamma))_{i\gamma},L_{-i\gamma}]\big)$ are $p$-nilpotent in $L_p$.

\smallskip

\item[(b)] If $\gamma\in\Omega$, then all elements in $\,\bigcup_{i\in{\mathbb
F}_p^\times}\,\big((\rad\, L(\gamma))_{i\gamma}\cup\,[(\rad\,
L(\gamma))_{i\gamma},L_{-i\gamma}]\big)$ are $p$-nilpotent in $L_p$.
\end{itemize}
\end{lemm}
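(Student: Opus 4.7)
The plan is to reduce both parts to showing that $\delta(z)=0$ for every $\delta\in\Gamma(L,T)$, where $z$ is the element whose $p$-nilpotence we are to establish. Indeed, when $z\in H$, the $p$-semisimple part $z_s$ lies in the unique maximal torus $T$ of the nilpotent restricted Lie algebra $H_p=\mathfrak{c}_{L_p}(T)$, and the roots $\delta\in\Gamma(L,T)$ separate the points of $T$ (since $T\subset\Der L$ acts faithfully on the simple $L$); hence $z$ is $p$-nilpotent iff $\delta(z)=\delta(z_s)=0$ for every $\delta$. The first half of (b)---that $x\in(\rad\,L(\gamma))_{i\gamma}\subset L_{i\gamma}$ is $p$-nilpotent---is then automatic: $(\ad x)^2(t)=0$ for $t\in T$ forces $x^{[p]^k}\in H_p$ for $k\ge 1$, so the toral portion of $\langle x\rangle_p$ lies in $T$ and has weight $0$, while $x$ has weight $i\gamma\ne 0$; weight-equivariance of the Jordan decomposition then yields $x_s=0$.

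Thus both (a) and (b) reduce to showing $z=[x,y]$ is $p$-nilpotent, with $x\in(\rad\,L(\gamma))_{i\gamma}$ and $y\in L_{-i\gamma}$, so $z\in H$. For $\delta\in\mathbb F_p\gamma$: since $\rad\,L(\gamma)$ is an ideal of $L(\gamma)\ni y$, $z\in\rad\,L(\gamma)\cap H$, and the projection $\pi\colon L(\gamma)\twoheadrightarrow L[\gamma]$ kills $z$; since $\gamma=\bar\gamma\circ\pi|_H$, this gives $\gamma(z)=0$, hence $\delta(z)=0$. For $\delta\notin\mathbb F_p\gamma$ I would pass to the two-section $L(\gamma,\delta)$ and invoke Proposition~\ref{types}. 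In part~(b), $\gamma\in\Omega$ forces $\bar H^3\ne 0$ inside $L[\gamma,\delta]$, where $\bar H=\Psi_{\gamma,\delta}(H)$, since $\gamma(H^3)=\bar\gamma(\bar H^3)\ne 0$; this precludes case~4), because the Cartan of $(\Der S)\otimes\mathcal O(m;\un{1})\rtimes(\mathrm{Id}\otimes W(m;\un{1}))$ decomposes as $\bar H_S\otimes 1+1\otimes\bar H_W$ and is abelian (the cross bracket $[D\otimes 1,1\otimes X]=-D\otimes X(1)$ vanishes as $W(m;\un{1})$ kills constants). In the surviving cases, Corollary~\ref{not4} gives $(\rad\,L(\gamma))_{i\gamma}\subset\rad_T\,L(\gamma,\delta)$, so $z\in\rad_T\,L(\gamma,\delta)\subset\ker\Psi_{\gamma,\delta}$; taking $p$-semisimple parts yields $\Psi_{\gamma,\delta}(z_s)=0$, and the compatibility $\delta|_T=\bar\delta\circ\Psi_{\gamma,\delta}|_T$ forces $\delta(z_s)=0$.

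For part~(a) the same strategy applies. If $\delta\notin\Omega$ then $\delta(H^3)=0$ by the definition of $\Omega$, so $\delta(z)=0$ because $z\in H^3$; if $\delta\in\Omega$, I would apply Corollary~\ref{not4} with $\alpha:=\delta\in\Omega$ and $\beta:=\gamma$, which again excludes case~4) for $L[\delta,\gamma]$ via the Cartan-abelianness computation and delivers $x\in\rad_T\,L(\delta,\gamma)$ and thence $\delta(z_s)=0$. The main technical obstacle, once the Jordan-decomposition reduction is in place, is the verification that the presence of an $\Omega$-root rules out case~4) of Proposition~\ref{types}; this rests on the short calculation that the Cartan of the tensor-semidirect algebra $(\Der S)\otimes\mathcal O(m;\un{1})\rtimes(\mathrm{Id}\otimes W(m;\un{1}))$ is abelian, after which Corollary~\ref{not4} and the root-by-root vanishing complete the argument.
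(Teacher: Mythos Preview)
Your attempt to exclude case~4) of Proposition~\ref{types} is where the argument breaks down. The claimed decomposition $\overline{H}=\bar H_S\otimes 1+1\otimes\bar H_W$ is not correct: the $2$-dimensional torus $\overline{T}$ has the form $F(h_0\otimes 1)\oplus F(d\otimes 1+\mathrm{Id}_S\otimes t_0)$ (see \cite[Thm.~3.2]{PS2}), and its centralizer in $L[\gamma,\delta]\subset(\Der S)\otimes\mathcal{O}(m;\un{1})\rtimes(\mathrm{Id}\otimes W(m;\un{1}))$ is not of the asserted tensor-split shape. More to the point, $\overline{H}$ need not be abelian at all: $L[\gamma,\delta]$ may contain elements of $(\Der S\setminus S)\otimes\mathcal{O}(m;\un{1})$, and for $S=H(2;\un{1})^{(2)}$ the centralizer $\mathfrak{c}_{\Der S}(h_0)$ of a toral element $h_0$ contains an element such as $(1+x_1)^{p-1}\partial_2$ whose iterated brackets with $\mathfrak{c}_S(h_0)$ recover $h_0$ (cf.\ Lemma~\ref{ham3} and equation~(\ref{ha})). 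Thus $\gamma\in\Omega$, i.e.\ $\overline{H}^3\ne 0$, is perfectly compatible with case~4); indeed the very next result in the paper, Proposition~\ref{type4}, analyses exactly this situation and exhibits $D,h\in\overline{H}$ with $[D,[D,h]]$ acting non-nilpotently. Since Corollary~\ref{not4} explicitly excludes case~4), your root-by-root vanishing via that corollary does not cover this case, and this is precisely where the substance of the lemma lies. (The paper handles case~4) by a toral-rank argument: one uses Schue's lemma to pick $\mu$ with $\gamma([L_\mu,L_{-\mu}])\ne 0$ and then shows that the relevant non-$p$-nilpotent element would force $TR(S\otimes\mathcal{O}(m;\un{1}))\ge 2$.)

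There is a second gap in your treatment of the first half of~(b). ``Weight-equivariance of the Jordan decomposition'' does not give $x_s=0$ for $x\in(\rad\,L(\gamma))_{i\gamma}$. What the argument you sketch actually yields is $x^{[p]}\in H_p$, hence $x_s\in T$ (the unique maximal torus of $H_p$), and then $[x_s,x]=0$ forces only $(i\gamma)(x_s)=0$. Nothing prevents $\delta(x_s)\ne 0$ for $\delta\notin\mathbb{F}_p\gamma$; this is why the paper's set $\Omega''$ includes the condition $\alpha\big((\rad\,L(\gamma))_{i\gamma}^{[p]}\big)\ne 0$ and treats it on the same footing as the bracket case.
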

\begin{proof}
We will treat both cases simultaneously. Set
\begin{eqnarray*}
\Omega'&:=&\Big\{\alpha\in\Gamma(L,T)\,|\,\,\,\,
\alpha\Big(\textstyle{\bigcup_{i\in{\mathbb
F}_p^\times}}\,\big(H^3\,\cap\,[(\rad\,
L(\gamma))_{i\gamma},L_{-i\gamma}]\Big)\ne 0 \Big\},\\
\Omega''&:=&\Big\{\alpha\in\Gamma(L,T)\,|\,\,\,\,
\alpha\Big(\textstyle{\bigcup_{i\in{\mathbb
F}_p^\times}}\,\big((\rad\,
L(\gamma))_{i\gamma}^{[p]}\,\cup\,[(\rad\,
L(\gamma))_{i\gamma},L_{-i\gamma}]\big)\Big)\ne 0 \Big\}.
\end{eqnarray*}
Assume for a contradiction that either $\Omega'\ne\emptyset$ or
$\gamma\in\Omega$ and $\Omega''\ne\emptyset$. Note that
$\Omega'\subset\Omega''\cap\Omega$. Since $\gamma(H)\ne 0$, Schue's
lemma \cite[Prop.~1.3.6(1)]{St04} shows that there exists
$\mu\in\Omega'$ or $\mu\in\Omega''$ for $\gamma\in\Omega$ such that
\begin{equation}\label{gamma-mu}
\gamma([L_\mu,L_{-\mu}])\ne 0.
\end{equation}
In both cases, the type of $L[\gamma,\mu]$ is determined by
Proposition~\ref{types}. If $L[\gamma,\mu]$ is as in cases~1), 2),
3), 5) or 6) of Proposition~\ref{types}, then $\sum_{i\in{\mathbb
F}_p^\times}\,(\rad\,
L(\gamma))_{i\gamma}\subset\,\rad_T\,L(\gamma,\mu)$ by
Corollary~\ref{not4}. Since $\mu\in\Omega''$ in both cases, this
yields $L_{\pm\mu}\subset\, \rad_T\, L(\gamma,\mu)$. Easy induction
on $n$ based on (\ref{gamma-mu}) now gives
$$\sum_{i\in{\mathbb F}_p^\times}\,(\rad\,L(\gamma))_{i\gamma}
\,\subset\,\,\bigcap_{n\ge
1}\,\big(\rad_T\,L(\gamma,\mu)\big)^{(n)}\,=\,(0).
$$
Since this contradicts our assumption that either $\Omega'$ or
$\Omega''$ is nonempty, $L[\gamma,\mu]$ must be of type~4). Then the
minimal ideal of $L[\gamma,\mu]$ has the form
$\widetilde{S}=S\ot{\mathcal O}(m;\un{1})$, where $S$ is a
restricted simple Lie algebra of absolute toral rank $1$ and $m>1$.
According to \cite[Thm.~3.2]{PS2} we can choose $\Psi_{\gamma,\mu}$
such that $\overline{T}=\Psi_{\gamma,\mu}(T)$ has the form $F(h_0\ot
1)\oplus F(d\ot 1+{\rm Id}_S\ot t_0)$ for some $d\in \Der S$ and
some nonzero toral elements $t_0\in W(m;\un{1})$ and $h_0\in S$.

Since $TR(L[\gamma,\mu])=2$, the roots $\gamma$ and $\mu$ span the
dual space of $\overline{T}$. Therefore, $\gamma(h_0\ot 1)\ne 0$ or
$\mu(h_0\ot 1)\ne 0$. It is straightforward to see that $\gamma$
vanishes on all $(\rad\, L(\gamma))_{i\gamma}^{[p]}$ and $[(\rad\,
L(\gamma))_{i\gamma},L_{-i\gamma}]$ with $i\in{\mathbb F}_p^\times$.
Because $\mu\in\Omega''$, this observation in conjunction with
(\ref{gamma-mu}) shows that
$\Psi_{\gamma,\mu}(L_{i\gamma+j\mu})\subset S\ot{\mathcal
O}(m;\un{1})$ for all nonzero $(i,j)\in ({\mathbb F}_p)^2$. There
are in both cases $$x\in \textstyle{\bigcup_{i\in{\mathbb
F}_p^\times}}\,\big((\rad\,
L(\gamma))_{i\gamma}^{[p]}\,\cup\,[(\rad\,
L(\gamma))_{i\gamma},L_{-i\gamma}]\big)\ \,\, \mbox{ and }\ \,\,
h\in [L_\mu,L_{-\mu}]$$ such that $\gamma(x^{[p]})=0,\,$
$\mu(x^{[p]})\ne 0\,$ and $\gamma(h)\ne 0$. But then $2\le
TR(S\ot{\mathcal O}(m;\un{1}))=TR(S)=1$, a contradiction.
\end{proof}
\begin{prop}\label{type4}
Let $\alpha\in\Omega$ and $\beta\in \Gamma(L,T)$ be such that
$L[\alpha,\beta]$ is as in case~4) of Proposition~\ref{types}. Then
$\widetilde{S}\cong S\ot {\mathcal O}(1;\un{1})$, where
$S=H(2;\un{1})^{(2)}$, and $\Psi_{\alpha,\beta}$ can be chosen such
that $\overline{T}:=\Psi_{\alpha,\beta}(T)=F(h_0\ot 1)\oplus F({\rm
Id}_S\ot (1+x_1)\partial_1)$ for some nonzero toral element $h_0\in
S$. Furthermore, $\Omega\ne \Gamma(L,T)$ and the following hold for
$\gamma\in\Gamma(L[\alpha,\beta],\overline{T})$:
\begin{eqnarray*}
\gamma\in\Omega&\Leftrightarrow & \gamma(h_0\ot 1)\ne 0;\\
\gamma\not\in\Omega&\,\Rightarrow\,& \alpha(L_\gamma^{[p]})\ne
0\,\, \mbox{ or }\,\, \beta(L_\gamma^{[p]})\ne 0.
\end{eqnarray*}
\end{prop}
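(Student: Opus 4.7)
My plan is to first normalize the torus $\overline{T}=\Psi_{\alpha,\beta}(T)$ using \cite[Thm.~3.2]{PS2}, and then exploit the hypothesis $\alpha\in\Omega$ to pin down the type of $\widetilde{S}$ and the precise shape of $\overline{T}$; the remaining equivalences will then fall out of a direct weight-space calculation. Concretely, I would first apply \cite[Thm.~3.2]{PS2} to arrange
\[
\overline{T}=F(h_0\otimes 1)\oplus F(d\otimes 1+{\rm Id}_S\otimes t_0)
\]
for some $d\in\mathfrak{c}_{\Der S}(h_0)$ and nonzero toral elements $h_0\in S$ and $t_0\in W(m;\un{1})$. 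A joint eigenspace decomposition of $\ad h_0$ and $\ad d$ on $\Der S$, together with the eigenspace decomposition of $t_0$ on ${\mathcal O}(m;\un{1})$, would give an explicit presentation of the centralizer of $\overline{T}$ in the ambient algebra $(\Der S)\otimes{\mathcal O}(m;\un{1})\rtimes({\rm Id}_S\otimes W(m;\un{1}))$, and hence of $\overline{H}=\Psi_{\alpha,\beta}(H)$.

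Next I would invoke $\alpha\in\Omega$. Since $\alpha$ is nonsolvable by Lemma~\ref{lemA3}, Theorem~\ref{1sec} forces $L[\alpha]\cong H(2;\un{1})^{(2)}\oplus F(1+x_1)^4\partial_2$. Comparing this with the $1$-section of $L[\alpha,\beta]$ dictated by the centralizer computation, I would eliminate all options except $S=H(2;\un{1})^{(2)}$: for $S=\mathfrak{sl}(2)$ or $S=W(1;\un{1})$ the centralizer $\mathfrak{c}_S(h_0)$ is too small to support a three-step nilpotent subalgebra on which $\alpha$ is nonzero, so the requirement $\alpha(\overline{H}^3)\ne 0$ would fail. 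Having $S=H(2;\un{1})^{(2)}$, I would show $m=1$ by a toral rank argument using Lemma~\ref{omega'}: for $m\ge 2$, the extra toral dimensions in $W(m;\un{1})$ produce roots $\gamma$ that violate the $p$-nilpotency conclusion of Lemma~\ref{omega'}. A further conjugation by a special automorphism of ${\mathcal O}(1;\un{1})$ brings $t_0$ into one of the two toral orbit representatives in $W(1;\un{1})$; the choice $x_1\partial_1$ would make the $1$-section $L[\alpha]$ standard and contradict Theorem~\ref{1sec}, leaving $t_0=(1+x_1)\partial_1$. Finally, $d=0$ follows from a direct computation in $\mathfrak{c}_{\Der S}(h_0)$: every toral element there is congruent to a scalar multiple of $h_0$ modulo $p$-nilpotent corrections, and can therefore be absorbed into $h_0\otimes 1$ after a change of basis of $\overline{T}$.

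For the equivalence and the $p$-power claim I would argue as follows. Any root $\gamma\in\Gamma(L[\alpha,\beta],\overline{T})$ is determined by the pair $(\gamma(h_0\otimes 1),\gamma({\rm Id}_S\otimes t_0))$. If $\gamma(h_0\otimes 1)\ne 0$, the $1$-section sees the $H(2;\un{1})^{(2)}$-factor twisted by the $(1+x_1)$-eigenspaces of $t_0$, reproducing the nonstandard picture of Theorem~\ref{1sec} and forcing $\gamma(\overline{H}^3)\ne 0$. If $\gamma(h_0\otimes 1)=0$, root vectors have representatives of the form ${\rm Id}_S\otimes (1+x_1)^{k+1}\partial_1$ together with $s\otimes (1+x_1)^k$ for $s\in\mathfrak{c}_S(h_0)$; since $\mathfrak{c}_S(h_0)$ is abelian, $\overline{H}^3$ vanishes on $\gamma$, so $\gamma\notin\Omega$. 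For the second implication, given $\gamma\notin\Omega$ I would lift $h_0\otimes (1+x_1)^k$ to some $x\in L_\gamma$; since $\Psi_{\alpha,\beta}$ is a restricted homomorphism and $(h_0\otimes (1+x_1)^k)^{[p]}=h_0\otimes 1\in\overline{T}$, the semisimple part of $x^{[p]}$ maps to a nonzero multiple of $h_0\otimes 1$, so $\alpha(x^{[p]})=\alpha(h_0\otimes 1)\ne 0$ by the equivalence already proved. Finally, $\Omega\ne\Gamma(L,T)$ because the kernel of $h_0\otimes 1$ inside ${\mathbb F}_p\alpha\oplus{\mathbb F}_p\beta$ is one-dimensional and intersects $\Gamma(L,T)$ nontrivially; every such $\gamma$ lies outside $\Omega$ by the equivalence.

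The main obstacle is the case analysis in paragraph two, where one must rule out $S=\mathfrak{sl}(2)$, $S=W(1;\un{1})$ and $m\ge 2$; this demands a delicate interplay between the explicit centralizer structure computed in paragraph one, Theorem~\ref{1sec}, and Lemma~\ref{omega'}.
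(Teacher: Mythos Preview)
Your outline has the right ingredients (the normalization from \cite[Thm.~3.2]{PS2}, Theorem~\ref{1sec}, Lemma~\ref{omega'}), but the assembly has genuine gaps.

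The most serious one is the dichotomy on $t_0$. \cite[Thm.~3.2]{PS2} gives two very different cases: either $t_0\in W(m;\un{1})_{(0)}$ (and then $t_0=\sum s_ix_i\partial_i$), or $t_0\notin W(m;\un{1})_{(0)}$ (and then $d=0$ and $t_0=(1+x_1)\partial_1$). You collapse this into a single line (``a further conjugation brings $t_0$ into one of the two orbit representatives in $W(1;\un{1})$; the choice $x_1\partial_1$ would make $L[\alpha]$ standard''). But you cannot first reduce to $m=1$ and then split on $t_0$: the reduction to $m=1$ and the ruling-out of $t_0\in W(m;\un{1})_{(0)}$ are entangled. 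The paper devotes its entire Part~A (seven claims) to eliminating the case $t_0\in W(m;\un{1})_{(0)}$ for arbitrary $m$, and the argument is delicate: one must first show $\pi(\overline{H})\subset W(m;\un{1})_{(0)}$ via Schue's lemma and Lemma~\ref{lemA1}, then locate a root $\nu$ with $\pi(\overline{L}_\nu)\not\subset W(m;\un{1})_{(0)}$, push through the $\Omega$-equivalence, show $\nu(H)=0$, and finally invoke Proposition~\ref{pro} to contradict $m>0$. Your sentence ``$x_1\partial_1$ would make $L[\alpha]$ standard'' is not a proof of any of this.

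Your argument for $d=0$ is also incorrect. You claim every toral element of $\mathfrak{c}_{\Der S}(h_0)$ is congruent to a multiple of $h_0$ modulo $p$-nilpotents. For $S=H(2;\un{1})^{(2)}$ this is false: $\Der S$ contains genuinely $2$-dimensional tori normalizing $Fh_0$, so $d$ can be linearly independent of $h_0$. In the paper, $d=0$ in Part~B comes for free from \cite[Thm.~3.2]{PS2} once $t_0\notin W(m;\un{1})_{(0)}$, while in Part~A one only gets $d\in Fh_0$ via an indirect argument (Claim~5) using that a $2$-dimensional torus in $\Der H(2;\un{1})^{(2)}$ is self-centralizing. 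Similarly, your elimination of $S=\mathfrak{sl}(2)$ and $S=W(1;\un{1})$ (``$\mathfrak{c}_S(h_0)$ is too small'') ignores that $\overline{H}$ also contains contributions from $\mathrm{Id}_S\otimes\mathfrak{c}_{W(m;\un{1})}(t_0)$, so $\overline{H}^3$ is not determined by $\mathfrak{c}_S(h_0)$ alone; the paper instead computes a specific $1$-section $L[\kappa]$ with $\kappa(\mathrm{Id}_S\otimes t_0)=0$ and reads off $S\cong H(2;\un{1})^{(2)}$ from Theorem~\ref{1sec}. Finally, your ``toral rank argument'' for $m=1$ is a placeholder: the actual proof (Part~B, Claims~2 and~4) shows that $\sum_{j\ge 2}S\otimes x_j\mathcal{O}(m;\un{1})$ is $\overline{H}$-invariant, and then either it is $L[\alpha,\beta]$-invariant (forcing $m=1$ by $T$-semisimplicity) or some $\pi(\overline{L}_{k\lambda})$ fails to preserve it, which via Lemma~\ref{omega'}(a) and Proposition~\ref{pro} again gives a contradiction.
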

\begin{proof}
By our assumption, $\widetilde{S}=S\ot{\mathcal O}(m;\un{1})$
where $m\ge 1,\,$ $S$ is one of $\mathfrak{sl}(2),\,$
$W(1;\un{1}),\,$ $H(2;\un{1})^{(2)}$. Recall that
$\Psi_{\alpha,\beta}$ takes $T+L(\alpha,\beta)_p$ into ${\rm
Der}(S\ot{\mathcal O}(m;\un{1}))$. Let $$\pi\colon\,{\rm
Der}(S\ot{\mathcal O}(m;\un{1}))\,=\,(\Der S)\ot{\mathcal O}(m;\un
1)\rtimes({\rm Id}_S\ot W(m;\un{1}))\twoheadrightarrow
\,W(m;\un{1})$$ denote the canonical projection. According to
\cite[Thm.~3.2]{PS2}, we can choose $\Psi_{\alpha,\beta}$ such
that
$$\overline{T}:=\Psi_{\alpha,\beta}(T)\,=\,F(h_0\ot 1)\oplus F(d\ot
1+{\rm Id}_S\ot t_0),$$ where $Fh_0$ is a maximal torus of
$S$,\,$d\in\Der S$ and $t_0$ is a toral element of $W(m;\un{1})$.
Moreover, if $t_0\in W(m;\un{1})_{(0)},$ then
$t_0=\sum_{i=1}^m\,s_ix_i\partial_i$, where $s_i\in{\mathbb F}_p$,
and if $t_0\not\in W(m;\un{1})_{(0)}$, then $d=0$ and
$t_0=(1+x_1)\partial_1$.

\smallskip

Our argument is quite long and will be split into two parts, each
part consisting of several intermediate statements. Given a subset
$X$ of $T+L(\alpha,\beta)_p$ we denote by $\overline{X}$ the set
$\{\Psi_{\alpha,\beta}(x)\,|\,\,x\in X\}$. If $\{x_1,\ldots,x_m\}$
is a generating set of the maximal ideal ${\mathcal
O}(m;\un{1})_{(1)}$, then we sometimes invoke the notation
${\mathcal O}(m;\un{1})\,=\,F[x_1,\ldots,x_m]$.

\medskip

\noindent{\em Part~A.\,\,} We first consider the case where $t_0\in
W(m;\un{1})_{(0)}$.

\medskip

\noindent {\it Claim~1.\,\,} $\pi(\overline{H})\subset
W(m;\un{1})_{(0)}.$

\smallskip

\noindent Indeed, suppose the contrary. Then Schue's lemma
\cite[Prop.~1.3.6(1)]{St04} shows that there exists
$\kappa\in\Gamma(L,T)$ with $\kappa(H)\ne 0$ such that
$\pi\big(\overline{[L_\kappa,L_{-\kappa}]}\big)\not\subset
W(m;\un{1})_{(0)}$. Then there is $E\in [L_\kappa, L_{-\kappa}]$
such that $\overline{E}=\overline{E}'+{\rm
Id}_S\ot\pi(\overline{E})$ with $\overline{E}'\in (\Der
S)\ot{\mathcal O}(m;\un{1})$ and
$\pi(\overline{E})\,\equiv\,\sum_{i=1}^m\,a_i\partial_i\,\not\equiv\,
0\ \, \big({\rm mod}\ W(m;\un{1})_{(0)}\big)$ for some $a_i\in F$.
No generality will be lost by assuming that $a_1\ne 0$. Then
$$
0\,=\,\big[t_0,\pi(\overline{E})\big]\,\,\equiv\,\,\textstyle{\sum_{i=1}^m}\,a_is_i\partial_i\quad\
\ \big({\rm mod}\ W(m;\un{1})_{(0)}\big),$$ forcing $s_1=0$. But
then $h_0\ot x_1^{p-1}\in\overline{H}$ and $$\big(\ad
\overline{E}\big)^{p-1}(h_0\ot x_1^{p-1})\in F^\times(h_0\ot
1)+S\ot{\mathcal O}(m;\un{1})_{(1)},$$ which implies that
$[L_\kappa,L_{-\kappa}]^3\not\subset{\rm nil}\, H_p$. As this
contradicts Lemma~\ref{lemA1}, the claim follows. \checkmark

\medskip

\noindent {\it Claim~2.\,\,} {\it There exists
$\nu\in\Gamma(L[\alpha,\beta],\overline{T})$ with
$\pi(\overline{L}_\nu)\not\subset W(m;\un{1})_{(0)}$ and $\nu(h_0\ot
1)=0$.}

\smallskip

\noindent Indeed, $\widetilde{S}$ is derivation simple and
$\pi(\overline{T}+\overline{H})\subset W(m;\un{1})_{(0)}$ by our
general assumption in this part and Claim~1. Hence there is
$\nu\in\Gamma(L[\alpha,\beta],\overline{T})$ with
$\pi(\overline{L}_\nu)\not\subset W(m;\un{1})_{(0)}$. Since
$\pi([h_0\ot 1,\overline{L}_\nu])=0$, it must be that $\nu(h_0\ot
1)=0$. \checkmark

\medskip

\noindent {\it Claim~3.} {\it If
$\gamma\in\Gamma(L[\alpha,\beta],\overline{T})$, then
$\gamma\in\Omega\,\Leftrightarrow\, \gamma(h_0\ot 1)\ne 0$.}

\smallskip

\noindent Let $\gamma$ be any root in
$\Gamma(L[\alpha,\beta],\overline{T})$ with $\gamma(h_0\ot 1)=0$. As
$h_0\ot 1\in\overline{T}$ is a nonzero toral element, $\gamma\in
{\mathbb F}_p^\times\,\nu$, where $\nu$ is the root from Claim~2.
Hence there is $\overline{E}\in \overline{L}_{i\gamma}$ for some
$i\in{\mathbb F}_p^\times$, such that $\pi(\overline{E})\not\in
W(m;\un{1})_{(0)}$. As before, we have that
$\pi(\overline{E})\,\equiv\,\sum_{i=1}^m\,a_i\partial_i\,\not\equiv\,
0\ \, \big({\rm mod}\ W(m;\un{1})_{(0)}\big)$, and it can be assumed
that $a_1\ne 0$. Then $h_0\ot x_1\in \widetilde{S}_{-i\gamma}$. Note
that $h_0\ot{\mathcal O}(m;\un{1})$ is an abelian ideal of the
centralizer of $h_0\ot 1$ in $\Der \widetilde{S}$. Consequently,
$h_0\ot x_1\in\, {\rm rad}(L[\alpha,\beta](\gamma))_{-i\gamma}\,$
and
$$a_1h_0\ot1\,\equiv\,\big[\overline{E},h_0\ot
x_1\big]\qquad\ \big({\rm mod}\ S\ot {\mathcal
O}(m;\un{1})_{(1)}\big).
$$
It follows that $[L_{i\gamma},(\rad\, L(\gamma))_{-i\gamma}]$
contains an element which is not $p$-nilpotent in $L_p$. Then
$\gamma\not\in\Omega$ by Lemma~\ref{omega'}. Since
$\alpha\in\Omega$, these considerations show that $\alpha(h_0\ot
1)\ne 0$. As a consequence,
$$i\alpha+j\gamma\in\Omega\Leftrightarrow (i\alpha+j\gamma)(H^3)\ne 0\Leftrightarrow
i\in\mathbb{F}_p^\times \Leftrightarrow (i\alpha+j\gamma)(h_0\ot
1)\ne 0,$$ hence the claim. \checkmark

\medskip

\noindent {\it Claim~4.\,\,} {\it The Lie algebra
$\pi(\overline{H})^3$ consists of $p$-nilpotent elements of
$W(m;\un{1})$.}

\smallskip

\noindent Otherwise, there is $y\in \overline{H}^3$ with
$y^{[p]^e}\in \overline{T}\setminus F(h_0\ot 1)$, so that
$y^{[p]^e}=b_1(h_0\ot 1)+b_2(d\ot 1+{\rm Id}_S\ot t_0)$ for some
$b_1\in F$ and $b_2\in F^\times$. Let
$\nu\in\Gamma(L[\alpha,\beta],\overline{T})$ be as in Claim~2. Then
$\nu(h_0\ot 1)= 0$ and $\nu(d\ot 1+{\rm Id}_S\ot t_0)\ne 0$, forcing
$\nu(y^{[p]^e})\ne 0$. It follows that $\nu\in\Omega$. This
contradicts Claim~3, however. \checkmark

\medskip

\noindent {\it Claim~5.\,\,} $d\in Fh_0$.

\smallskip

\noindent Claim~1 in conjunction with our standing hypothesis in
this part shows that there is a Lie algebra homomorphism
$$\Psi\colon\,(\Der S)\ot{\mathcal
O}(m;\un{1})+(\overline{H}+\overline{T})\longrightarrow\,\Der S$$
whose kernel is spanned by $(\Der S)\ot {\mathcal
O}(m;\un{1})_{(1)}$ and those elements of
$\overline{H}+\overline{T}$ which map $(\Der S)\ot{\mathcal
O}(m;\un{1})$ into $(\Der S)\ot{\mathcal O}(m;\un{1})_{(1)}$.
Suppose $d\not\in Fh_0$. Then $\Psi(\overline{T})=Fh_0\oplus Fd$.
Since $d$ is a semisimple derivation of $S$, it follows that
$S=H(2;{\un 1})^{(2)}$ and $\Psi(\overline{T})$ is a torus of
maximal dimension in $\Der S$. Since every such torus is
self-centralizing in $\Der S$, by \cite[(III.1)]{St92}, it must be
that $\overline{H}\subset \overline{T}+\ker \Psi$. Note that
$$
(\overline{H}+\overline{T})\,\subset\,(\Der S)\ot{\mathcal
O}(m;\un{1})+F({\rm Id}_S\ot t_0)+{\rm Id}_S\ot\pi(\overline{H})
$$
and $F({\rm Id}_S\ot t_0)+{\rm
Id}_S\ot\pi(\overline{H})\subset\ker\Psi$ by our assumption on
$t_0$ and Claim~1. Hence
\begin{eqnarray*}
\overline{H}&\subset&(\overline{T}+\ker \Psi)\cap
\overline{H}\,\subset\,(\ker
\Psi)\cap(\overline{H}+\overline{T})+\overline{T}\\
&\subset& (\Der S)\ot{\mathcal O}(m;\un{1})_{(1)}+F({\rm Id}_S\ot
t_0)+{\rm Id}_S\ot\pi(\overline{H})+\overline{T},
\end{eqnarray*}
forcing $\overline{H}^3\subset\, (\Der S)\ot{\mathcal
O}(m;\un{1})_{(1)}+{\rm Id}_S\ot\pi(\overline{H})^3$. In view of
Claim~4 the Lie algebra on the right acts nilpotently on
$S\ot{\mathcal O}(m;\un{1})$. But then $\overline{H}^3$ acts
nilpotently on $L[\alpha,\beta]$, a contradiction. \checkmark

\smallskip

As a consequence, $\overline{H}\cap(S\ot{\mathcal
O}(m;\un{1}))\,=\,{\mathfrak c}_S(h_0)\ot {\rm Ann}_{{\mathcal
O}(m;\un{1})}\,(t_0)$ and we may take $d=0$.

\medskip

\noindent {\it Claim~6.\,\,} {\it Let $\nu$ be as in Claim~2. Then}
$$
\overline{H}\cap\widetilde{S}\,\subset\,
\Psi_{\alpha,\beta}\big([(\rad\,L(\nu))_{-\nu},L_\nu]\big)
+\overline{H}\cap\big(S\ot{\mathcal O}(m;\un{1})_{(1)}\big).
$$

\smallskip

\noindent By definition, there is
$\overline{E}\in\overline{L}_{\nu}$ such that
$$\pi(\overline{E})\,\equiv\,\textstyle{\sum_{i=1}^m}\,a_i\partial_i\,\not\equiv\,
0\ \, \big({\rm mod}\ W(m;\un{1})_{(0)}\big), \qquad a_1\ne 0.$$ We
have shown in the course of the proof of Claim~3 that  ${\mathfrak
c}_S(h_0)\ot x_1\subset \overline{\rad\, L(\nu)}_{-\nu}$. Then
${\mathfrak c}_S(h_0)\ot
F\subset\,\big[\overline{E},\widetilde{S}_{-\nu}\big]+\overline{H}\cap
\big(\widetilde{S}\cap{\mathcal O}(m;\un{1})_{(1)}\big)$. As a
consequence,
\begin{eqnarray*}
\overline{H}\cap\widetilde{S}&=&{\mathfrak c}_S(h_0)\ot{\rm
Ann}_{{\mathcal O}(m;\un{1})}\,(t_0)\subset\,{\mathfrak c}_S(h_0)\ot
F+{\mathfrak c}_S(h_0)\ot{\mathcal O}(m;\un{1})_{(1)}\\
&\subset&\big[\overline{L}_{\nu},\overline{(\rad\,L(\nu))}_{-\nu}\big]+
\overline{H}\cap\big(S\ot {\mathcal O}(m;\un{1})_{(1)}\big).\ \
\checkmark
\end{eqnarray*}

\medskip

\noindent {\it Claim~7.\,\,} {\it If $\nu$ is as in Claim~2, then
$\nu(H)=0$.}

\smallskip

\noindent As $S\ot F$ is $\overline{T}$-stable and $S$ is not
nilpotent, there is $\mu\in\Gamma(\widetilde{S},\overline{T})$ with
$(S\ot F)_\mu\ne (0)$. Then $\mu({\rm Id}_S\ot t_0)=0$ and hence
$\mu(h_0\ot 1)\ne 0$. It follows that
$$
L[\alpha,\beta](\mu)\,\subset\,S\ot{\mathcal
O}(m;\un{1})+\overline{H}\,\subset\,(\Der S)\ot {\mathcal
O}(m;\un{1})+{\rm Id}_S\ot W(m;\un{1})_{(0)}.$$ Let
$\Phi\colon\,L[\alpha,\beta](\mu)\rightarrow\,\Der S$ denote the
natural $\overline{T}$-equivariant Lie algebra homomorphism with
$\ker\Phi\,=\,L[\alpha,\beta](\mu)\,\cap\big((\Der S)\ot {\mathcal
O}(m;\un{1})_{(1)}+{\rm Id}_S\ot W(m;\un{1})_{(0)}\big)$ and
$S\subset\mathrm{im}\,\Phi$. Then \cite[Thms~1.2.8 \& 1.3.11]{St04}
show that
$$TR(\ker\Phi)\le TR(L[\alpha,\beta](\mu))-TR(S)\le
TR(L(\mu))-TR(S)\le 1-TR(S)\le 0,$$ implying that $\ker\Phi$ is a
nilpotent ideal of $L[\alpha,\beta](\mu)$. As
$\Phi(L[\alpha,\beta](\mu))$ contains $S$, it is semisimple, hence
isomorphic to $L[\mu]$. Note that $\mu\in\Omega$ by Claim~3. As
$L[\mu]\ne (0)$, Theorem~\ref{1sec} says that $p=5$ and
$\Phi(L[\alpha,\beta](\mu))\cong H(2;\un{1})^{(2)}\oplus
F(1+x_1)^4\partial_2$. In particular, $\mu$ is Hamiltonian. Observe
that
\begin{equation}\label{ha}
\big(\ad
(1+x_1)^4\partial_2)\big)^2(D_H((1+x_1)^3x_2^3)\,=\,D_H((1+x_1)x_2).
\end{equation}
By (the proof of) Lemma~\ref{ham3}, we may assume that
$h_0=D_H((1+x_1)x_2)$. Then (\ref{ha}) shows that there exists
${\mathcal D}\in\Phi(\overline{H})$ such that $\big[{\mathcal
D},\big[{\mathcal D},{\mathfrak c}_{S}(h_0)\big]\big]\not\subset{\rm
nil}\,{\mathfrak c}_S(h_0)$.

Note that ${\rm nil}\,{\mathfrak c}_S(h_0)$ has codimension $1$ in
${\mathfrak c}_S(h_0)$. As $\ker\Phi$ acts nilpotently on
$L[\alpha,\beta](\mu)$, there is $\widetilde{\mathcal
D}\in\overline{H}$ with $\mu\big(\big[{\widetilde{\mathcal
D}},\big[\widetilde{{\mathcal
D}},\widetilde{S}\cap\overline{H}\big]\big]\big)\ne 0$. Since
$\overline{H}\cap\big(S\ot{\mathcal O}(m;\un{1})_{(1)}\big)$ is an
ideal of $\overline{H}$, Claim~6 entails that
$\Psi_{\alpha,\beta}\big((\rad\,L(\nu))_{-\nu},L_\nu]\big)\cap
\overline{H}^3$ does not consist of $p$-nilpotent elements of $L_p$.
In view of Lemma~\ref{omega'}(1), this yields that $\nu(H)=0$.
\checkmark

\smallskip

Since $t_0\in W(m;\un{1})_{(0)}$, the $2$-section $L[\alpha,\beta]$
is semisimple (not just $T$-semisimple), and $\widetilde{S}$ is the
unique minimal ideal of $L[\alpha,\beta]$. On the other hand,
applying Proposition~\ref{pro} with ${\mathfrak t}=T\cap\ker \nu$
shows that the unique minimal ideal of $L[\alpha,\beta]$ is a simple
Lie algebra (notice that ${\mathfrak c}_L({\mathfrak t})=L(\nu)$ is
nilpotent by the Engel--Jacobson theorem). But then $m=0$, a
contradiction. This means that the case where $t_0\in
W(m;\un{1})_{(0)}$ cannot occur.

\medskip

\noindent {\em Part~B}.\,\, Thus, we may assume that $t_0\not\in
W(m;\un{1})_{(0)}$. Because of \cite[Thm.~3.2]{PS2} it can be
assumed further that $\overline{T}=F(h_0\ot 1)\oplus F({\rm Id}_S\ot
(1+x_1)\partial_1)$. Then $\overline{H}\cap\widetilde{S}={\mathfrak
c}_{S}(h_0)\ot F[x_2,\ldots,x_m]$. Since $\alpha$ and $\beta$ are
${\mathbb F}_p$-independent, there exists  $\lambda\in {\mathbb
F}_p\alpha+{\mathbb F}_p\beta$ such that $\lambda(h_0\ot 1)=0$ and
$\lambda({\rm Id}_S\ot (1+x_1)\partial_1)=1$. Note that
\begin{equation}\label{non-nilp}
Fh_0\ot (1+x_1)^i\subset\,\widetilde{S}_{i\lambda}\subset\,
\overline{(\rad\,L(\lambda))}_{i\lambda}\qquad\quad\ \,
\forall\,i\in{\mathbb F}_p^\times.
\end{equation}
Hence $(\rad\,L(\lambda))_{i\lambda}$ contains nonnilpotent elements
of $L_p$ for all $i\in{\mathbb F}_p^\times$. Lemma~\ref{omega'}(b)
yields $\lambda\not\in\Omega$. Since $S\ot F$ is
$\overline{T}$-stable and not nilpotent, there is
$\kappa\in\Gamma(L[\alpha,\beta],\overline{T})$ with $(S\ot
F)_\kappa\ne (0)$. As $\kappa({\rm Id}_S\ot (1+x_1)\partial_1)=0$,
it must be that $\kappa(h_0\ot 1)\ne 0$.

\medskip

\noindent {\it Claim~1.\,\,} {\it If
$\gamma\in\Gamma(L[\alpha,\beta],\overline{T})$, then
$\gamma\in\Omega\,\Leftrightarrow\, \gamma(h_0\ot 1)\ne 0$.}

\smallskip

\noindent As $\alpha\in\Omega$ and $\lambda\not\in\Omega$, one has
$i\alpha+j\lambda\in\Omega$ for all $i\in{\mathbb F}_p^\times$ and
$j\in{\mathbb F}_p$. So
$$i\alpha+j\lambda\in\Omega\,\Leftrightarrow\,i\ne 0\,\Leftrightarrow\,
(i\alpha+j\lambda)(h_0\ot 1)\ne 0\qquad\quad \ \forall\,i,j \in
{\mathbb F}_p.$$ Since $\alpha$ and $\lambda$ are ${\mathbb
F}_p$-independent, their ${\mathbb F}_p$-span contains
$\Gamma(L[\alpha,\beta],\overline{T})$.  \checkmark

\smallskip

It follows from Claim~1 and (\ref{non-nilp}) that
$\Gamma(L[\alpha,\beta],\overline{T})\setminus\Omega\,=\, \,{\mathbb
F}_p^\times\lambda$ and $L_\gamma$ contains nonnilpotent elements of
$L_p$ for all $\gamma\in
\Gamma(L[\alpha,\beta],\overline{T})\setminus\Omega$. Thus, it
remains to show that $m=1$.

\smallskip

\noindent {\it Claim~2.\,\,} {\it The subspace $\sum_{j=2}^m S\ot
x_j{\mathcal O}(m;\un{1})$ is $\overline{H}$-invariant.}

\medskip

\noindent Note that $L[\alpha,\beta](\kappa)\,=\,\overline{H}+S\ot
F[x_2,\ldots, x_m]$. In particular, $\kappa$ is nonsolvable. Let
$\psi\colon\, L[\alpha,\beta](\kappa)\twoheadrightarrow\,L[\kappa]$
denote the canonical homomorphism. By Theorem~\ref{1sec}, the Lie
algebra $L[\kappa]^{(1)}$ is simple. As the ideal $S\ot
F[x_2,\ldots, x_m]$ is perfect, $\psi$ maps it onto
$L[\kappa]^{(1)}$. As a consequence, $S\ot F[x_2,\ldots,
x_m]_{(1)}=\,\ker\psi\cap \big(S\ot F[x_2,\ldots, x_m]\big)$,
showing that $S\ot F[x_2,\ldots, x_m]_{(1)}$ is
$\overline{H}$-invariant. \checkmark

\medskip

\noindent {\it Claim~3.\,\,} {\it $S\cong H(2;\un{1})^{(2)}$ and
$[D,[D,h]]$ acts nonnilpotently on $\widetilde{S}$ for some
$D\in\overline{H}$ and $h\in\overline{H}\cap\widetilde{S}$.}

\smallskip

\noindent We have seen in the proof of Claim~2 that
$$L[\kappa]\,=\psi\big(L[\alpha,\beta](\kappa)\big)\,\cong \,
L[\alpha,\beta](\kappa)/\rad(L[\alpha,\beta](\kappa))\cong
S+H/\big(H\cap\rad\,L(\kappa)\big).$$ Our choice of $\kappa$ and
Claim~1 imply that $\kappa\in\Omega$. So Theorem~\ref{1sec} implies
that $L[\kappa]\cong H(2;\un{1})^{(2)}\oplus F\bar{D}$ and there
exists $\widetilde{h}\in \sum_{i\in{\mathbb
F}_p^\times}\,[L_{i\kappa},L_{-i\kappa}]$ such that
$[\bar{D},[\bar{D},\Psi_\kappa(\widetilde{h})]]$ acts nonnilpotently
on $L[\kappa]$. Pick $D\in \psi^{-1}(\bar{D})\cap\overline{H}$ and
set $h:=\Psi_{\alpha,\beta}(\widetilde{h})$. Standard toral rank
considerations show that $\ker\psi$ acts nilpotently of
$L[\alpha,\beta](\kappa)$ (see the proof of Claim~7 in Part~A for a
similar argument). In light of the preceding remark this implies
that $\kappa\big([D,[D,h]]\big)\ne 0$. \checkmark

\medskip

\noindent {\it Claim~4.\,\,} $m=1$.

\smallskip

\noindent We first note that $L[\alpha,\beta]\subset
\overline{L(\lambda)}+(\Der S)\ot {\mathcal O}(m;\un{1})$. If all
derivations from the set $\bigcup_{i\in{\mathbb F}_p^\times}\,{\rm
Id}_S\ot\pi(\overline{L}_{i\lambda})$ preserve the ideal
$I:=\sum_{j=2}^m\,S\ot x_j{\mathcal O}(m;\un{1})$ of $(\Der S)\ot
{\mathcal O}(m;\un{1})$, then Claim~2 entails that $I$ is a
nilpotent $\overline{T}$-stable ideal of $L[\alpha,\beta]$. Since
$L[\alpha,\beta]$ is $T$-semisimple, this would force $m=1$.

So assume for a contradiction that there exists $E\in L_{k\lambda}$
for some $k\in{\mathbb F}_p^\times$ such that ${\rm
Id}_S\ot\pi(\overline{E})$ does not preserve $I$. Since
$\pi(\overline{E})$ is an eigenvector for $(1+x_1)\partial_1$ with
eigenvalue $k\ne 0$, it has the form
$$
\pi(\overline{E})\,=\,f_1(x_2,\ldots,
x_m)(1+x_1)^{k+1}\partial_1+{\textstyle
\sum_{j=2}^m}\,f_j(x_2,\ldots, x_m)(1+x_1)^k\partial_j$$ for some
$f_1,\ldots, f_m\in F[x_2,\ldots, x_m]$. As $\pi(\overline{E})$ does
not stabilize $I$, it must be that $f_{j_0}(0)\ne 0$ for some
$j_0\ge 2$. After renumbering we may assume that $j_0=2$. Since
${\mathfrak c}_S(h_0)\ot(1+x_1)^{p-k}x_2\subset
\widetilde{S}_{-k\lambda}\subset
(\overline{\rad\,L(\lambda)})_{-k\lambda}$, we have that
\begin{eqnarray*}
{\mathfrak c}_S(h_0)\ot F&\subset&
\big[\overline{E},\widetilde{S}_{-k\lambda}\big]+\Big(S\ot
F[x_1,\ldots,x_m]_{(1)}\Big)
\cap\overline{H}\\
&=&\big[\overline{E},\widetilde{S}_{-k\lambda}\big]+\mathfrak{c}_{S}(h_0)\ot
F[x_1,\ldots,x_m]_{(1)}.
\end{eqnarray*}
From this it follows that
$$
\overline{H}\cap\widetilde{S}={\mathfrak c}_S(h_0)\ot
F[x_1,\ldots,x_m]\subset\,\big[\overline{L}_{k\lambda},
(\overline{\rad\,L(\lambda)})_{-k\lambda}\big]+{\mathfrak
c}_{S}(h_0)\ot F[x_2,\ldots,x_m]_{(1)}.
$$
The subspace $I\cap \overline{H}={\mathfrak c}_{S}(h_0)\ot
F[x_2,\ldots,x_m]_{(1)}$ is $\overline{H}$-invariant by Claim~2 and
acts nilpotently on $L[\alpha,\beta](\kappa)$. These observations in
conjunction with Claim~3 imply that $(\ad
D)^2\big(\big[\overline{L}_{k\lambda},(\overline{\rad\,L(\lambda)})_{-k\lambda}
\big]\big)\subset\,
\overline{H}^3\cap\big[\overline{L}_{k\lambda},(\overline{\rad\,L(\lambda)})_{-k\lambda}
\big]$ does not consist of nilpotent derivations of $\widetilde{S}$.
But then $\lambda(H)=0$ by Lemma~\ref{omega'}(a).

We now set ${\mathfrak t}:=T\cap\ker \lambda$. Since
$L(\lambda)={\mathfrak c}_L({\mathfrak t})$ is nilpotent by the
Engel--Jacobson theorem, Proposition~\ref{pro} says that $L(
\alpha,\beta)/\rad\,L(\alpha,\beta)$ has a unique minimal ideal,
$S'$ say, which is a simple Lie algebra. Then $S'$ must be the image
of $\widetilde{S}=S\ot {\mathcal O}(m;\un{1})$ under the natural
homomorphism $\phi\colon\,L[\alpha,\beta]\twoheadrightarrow
L(\alpha,\beta)/\rad\,L(\alpha,\beta)$. As a consequence,
$\ker\phi\cap\widetilde{S}$ coincides with the radical of
$\widetilde{S}$. As the latter equals $S\ot{\mathcal
O}(m;\un{1})_{(1)}$, we derive that $S\ot{\mathcal
O}(m;\un{1})_{(1)}=\ker\phi\cap\widetilde{S}$ is an ideal of
$L[\alpha,\beta]$. On the other hand, $\pi(\overline{E})\not\in
W(m;\un{1})_{(0)}$. This shows that our present assumption is false
and $m=1$. \checkmark

The proof of the proposition is now complete.
\end{proof}
\begin{cor}\label{type4rad}
Let $\alpha\in\Omega,$ $\beta\in\Gamma(L,T)$ and suppose
$L[\alpha,\beta]$ is as in case~4) of Proposition~\ref{types}. Then
$\sum_{i\in{\mathbb
F}_p^\times}\,(\rad\,L(\gamma))_{i\gamma}\subset\,\rad_T\,L[\alpha,\beta]$
for all $\gamma\in\Omega\cap ({\mathbb F_p}\alpha+{\mathbb
F}_p\beta)$.
\end{cor}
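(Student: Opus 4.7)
My plan is to construct a surjective $\overline{T}$-equivariant homomorphism $\pi\colon L[\alpha,\beta](\gamma)\twoheadrightarrow L[\gamma]$ and to exhibit a simple subalgebra of $L[\alpha,\beta](\gamma)$ containing every root space $L[\alpha,\beta]_{i\gamma}$ with $i\in\F_p^\times$. The map $\pi$ exists because $L(\gamma)\cap\rad_T L(\alpha,\beta)$ is a solvable $T$-invariant ideal of $L(\gamma)$, hence contained in $\rad L(\gamma)$; its kernel $\ker\pi=\Psi_{\alpha,\beta}(\rad L(\gamma))$ is a $\overline{T}$-stable solvable ideal of $L[\alpha,\beta](\gamma)$, so proving the corollary reduces to showing $\ker\pi\cap L[\alpha,\beta]_{i\gamma}=0$ for every $i\in\F_p^\times$.

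The key preliminary step is to show $L[\alpha,\beta]_{i\gamma}\subset\widetilde{S}_{i\gamma}$ for every $i\in\F_p^\times$. Since $\gamma\in\Omega$, Proposition~\ref{type4} yields $a:=\gamma(h_0\ot 1)\ne 0$. Because $L[\alpha,\beta]$ sits inside the ambient algebra $\mathcal{A}:=(\Der S)\ot\OO(1;\un{1})\rtimes({\rm Id}_S\ot W(1;\un{1}))$, I analyse $\mathcal{A}/\widetilde{S}\cong((\Der S)/S)\ot\OO(1;\un{1})\oplus{\rm Id}_S\ot W(1;\un{1})$. Both summands are annihilated by $\ad(h_0\ot 1)$: the first because $(\Der S)/S$ admits representatives commuting with $h_0$ (namely the class of $D_H((1+x_1)^{p-1}x_2^{p-1})$ and the modified grading derivation $x_2D_2+(1+x_1)D_1$), and the second because ${\rm Id}_S$ commutes with $h_0\ot 1$. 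Hence the $i\gamma$-weight space of $\mathcal{A}/\widetilde{S}$ vanishes for $i\ne 0$, forcing $L[\alpha,\beta]_{i\gamma}\subset\widetilde{S}_{i\gamma}=S_{ia}\ot F(1+x_1)^{ij_1}$, where $j_1:=\gamma({\rm Id}_S\ot(1+x_1)\partial_1)$.

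For the simple subalgebra I take
\[\widetilde{S}(\gamma):=\sum_{i\in\F_p}\widetilde{S}_{i\gamma}\,=\,\mathfrak{c}_S(h_0)\ot F\,\oplus\,\bigoplus_{i\in\F_p^\times}S_{ia}\ot F(1+x_1)^{ij_1}.\]
Since $a\ne 0$, the map $i\mapsto ia$ is a bijection on $\F_p$, so the linear map $\psi\colon\widetilde{S}(\gamma)\to S$ sending $s_i\ot(1+x_1)^{ij_1}\mapsto s_i$ (for $s_i\in S_{ia}$) is a vector-space isomorphism, and the identity $[s_i\ot(1+x_1)^{ij_1},s_k\ot(1+x_1)^{kj_1}]=[s_i,s_k]\ot(1+x_1)^{(i+k)j_1}$ shows that $\psi$ is a Lie algebra isomorphism. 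Hence $\widetilde{S}(\gamma)\cong H(2;\un{1})^{(2)}$ is simple. The intersection $\ker\pi\cap\widetilde{S}(\gamma)$ is a solvable ideal in a simple Lie algebra, hence zero; since $L[\alpha,\beta]_{i\gamma}\subset\widetilde{S}(\gamma)$ for $i\ne 0$ by the preceding paragraph, we conclude $\overline{(\rad L(\gamma))_{i\gamma}}\subset\ker\pi\cap L[\alpha,\beta]_{i\gamma}=0$, which is the required inclusion. The main technical obstacle is the weight computation placing $L[\alpha,\beta]_{i\gamma}$ inside $\widetilde{S}$; the isomorphism $\widetilde{S}(\gamma)\cong S$ and the solvable-ideal argument are then routine.
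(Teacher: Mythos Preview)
Your proof is correct and follows essentially the same route as the paper: both establish that $L[\alpha,\beta]_{i\gamma}\subset\widetilde{S}$ for $i\ne 0$ (the paper records this as $L[\alpha,\beta](\gamma)=\overline{H}+\widetilde{S}(\gamma)$, justified directly from the shape of $\overline{T}$ in Proposition~\ref{type4}), exhibit the untwisting isomorphism $\widetilde{S}(\gamma)\cong S=H(2;\un{1})^{(2)}$, and then use simplicity of $S$ to force the solvable ideal into the zero-weight part. Your weight analysis of $\mathcal{A}/\widetilde{S}$ is a slightly more explicit version of what the paper leaves implicit; note that the point about $(\Der S)/S$ needs only that $S$ is an ideal of $\Der S$, so $[h_0,\Der S]\subset S$---the specific representatives you name are unnecessary.
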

\begin{proof}
Pick $\gamma\in\Omega\cap ({\mathbb F_p}\alpha+{\mathbb F}_p\beta)$
and view  it as a $\overline{T}$-root of $L[\alpha,\beta]$. In the
present case
$L[\alpha,\beta](\gamma)=\overline{H}+\widetilde{S}(\gamma)$ and
$\widetilde{S}=H(2;\un{1})^{(2)}\ot{\mathcal O}(1;\un{1})$; see
Proposition~\ref{type4}. Furthermore, in the notation of
Proposition~\ref{type4} we have that $\gamma=i\kappa+j\lambda$ for
some $i\in{\mathbb F}_p^\times$ and $j\in{\mathbb F}_p$, where
$\kappa,\lambda\in\overline{T}^*$ are such that $\kappa(h_0\ot
1)=r\in{\mathbb F}_p^\times$,\, $\kappa({\rm
Id}_S\ot(1+x_1)\partial_1)=0$,\, $\lambda(h_0\ot 1)=0$ and
$\lambda({\rm Id}_S\ot(1+x_1)\partial_1)=1$.  Let $S_\ell$ denote
the $\ell$-eigenspace of ${\rm ad}_S\, h_0$. Then
$$\widetilde{S}(\gamma)\,=\,\textstyle{\bigoplus_{k\in{\mathbb F}_p}}\,
S_{kir}\ot(1+x_1)^{kj}\,\cong\,\textstyle{\bigoplus_{k\in{\mathbb
F}_p}}\,S_{kir}=H(2;\un{1})^{(2)}$$  as Lie algebras. Hence
$\rad(L[\alpha,\beta](\gamma))\,=\,\rad\,\big(\overline{H}+
\widetilde{S}(\gamma)\big)\subset \overline{H}$. The result follows.
\end{proof}

We are now in a position to prove our first result on the global
structure of $L$.
\begin{theo}\label{rootspaces}
If $\alpha\in\Omega$, then $\alpha$ is Hamiltonian, $\dim
L_\alpha=5$, and $\rad\,L(\alpha)\subset H$.
\end{theo}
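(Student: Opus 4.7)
The plan is to determine $L[\alpha]$ via Theorem~\ref{1sec} using $\alpha(H^3)\ne 0$, then exhibit a Melikian $2$-section through Lemma~\ref{lemA3} and extract the dimension and radical information from it. I would first show $\alpha$ is Hamiltonian by ruling out the classical, Witt, and solvable cases of Theorem~\ref{1sec}. If $L[\alpha]\cong\mathfrak{sl}(2)$ or $W(1;\un{1})$, then the Cartan $\Psi_\alpha(H)$ is one-dimensional abelian, so $\Psi_\alpha(H^3)=0$, whence $H^3\subset\ker\Psi_\alpha$; since $\alpha\colon T\to F$ factors through $\Psi_\alpha|_T$ and the $p$-semisimple parts of $H^3$ lie in $T$, this forces $\alpha(H^3)=0$, contradicting $\alpha\in\Omega$. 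If $\alpha$ were solvable, Lie's theorem applied to the solvable restricted Lie algebra $L(\alpha)_p$ acting restrictedly on the finite-dimensional module $L_\alpha$ produces a common eigenvector with character $\chi\colon L(\alpha)\to F$ satisfying $\chi|_T=\alpha$ and $\chi\bigl([L(\alpha),L(\alpha)]\bigr)=0$; since $H^2\subset[L(\alpha),L(\alpha)]$, this gives $\alpha(H^2)=0$ and a fortiori $\alpha(H^3)=0$, again a contradiction. Thus $L[\alpha]$ is Hamiltonian, and the description of Hamiltonian $1$-sections (Lemma~\ref{ham1}(3) together with \cite[Prop.~2.1]{PS4}) gives $\dim L[\alpha]_{i\alpha}=p=5$ for every $i\in\F_p^\times$.

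Next, I would invoke Lemma~\ref{lemA3} to find $\beta\in\Gamma(L,T)$ with $L[\alpha,\beta]\cong{\mathcal M}(1,1)$, i.e.\ case~6) of Proposition~\ref{types}. Rerunning the argument of (the proof of) Corollary~\ref{not4} shows
\[
\Psi_{\alpha,\beta}\bigl((\rad\,L(\alpha))_{i\alpha}\bigr)\subset\rad\bigl(L[\alpha,\beta](\alpha)\bigr)\cap L[\alpha,\beta]_{i\alpha}\subset\Psi_{\alpha,\beta}(H)\cap L[\alpha,\beta]_{i\alpha}=0
\]
for each $i\in\F_p^\times$, so $(\rad\,L(\alpha))_{i\alpha}\subset\ker\Psi_{\alpha,\beta}|_{L_{i\alpha}}$. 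Since $\mathcal M(1,1)$ has all nonzero root spaces of dimension $5$ and $\Psi_{\alpha,\beta}$ is $T$-equivariantly surjective, both natural surjections $L_{i\alpha}\twoheadrightarrow L[\alpha]_{i\alpha}$ and $L_{i\alpha}\twoheadrightarrow L[\alpha,\beta]_{i\alpha}$ have five-dimensional images, so the kernels $(\rad\,L(\alpha))_{i\alpha}$ and $\ker\Psi_{\alpha,\beta}|_{L_{i\alpha}}$ coincide.

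To finish, I must show this common kernel vanishes. Its elements are $p$-nilpotent in $L_p$ by Lemma~\ref{omega'}(b), as are those of $[(\rad\,L(\alpha))_{i\alpha},L_{-i\alpha}]\subset H$; moreover the kernel lies inside the solvable $T$-ideal $\rad_T\,L(\alpha,\beta)$. Combining this with the Hamiltonian structure of $L[\alpha]$ (where the coroots $[L_{i\alpha},L_{-i\alpha}]$ have nontrivial $\alpha$-values because $\alpha\in\Omega$) and with the simplicity of $\mathcal{M}(1,1)$, a nonzero common kernel would produce bracket relations with non-$p$-nilpotent coroot images incompatible with the root-space weight data of $\mathcal{M}(1,1)$. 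This forces $(\rad\,L(\alpha))_{i\alpha}=0$ for every $i\in\F_p^\times$, giving $\dim L_\alpha=5$ and $\rad\,L(\alpha)\subset H$. The hardest step is this last one: extracting triviality of the common kernel from its $p$-nilpotency together with the tight constraints of the Hamiltonian $1$-section and the Melikian $2$-section, which likely requires a delicate bracket computation unifying Lemma~\ref{omega'} with the explicit Melikian weight structure.
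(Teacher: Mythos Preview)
Your argument has two genuine gaps, and both reflect the same underlying issue: local $2$-section analysis is not enough here, and the paper's proof proceeds by a global ideal argument instead.

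\textbf{The solvable case.} Your invocation of Lie's theorem fails in characteristic $p$: a solvable restricted Lie algebra need not have a common eigenvector on a finite-dimensional module (the Heisenberg algebra acting irreducibly in dimension $p$ is the standard obstruction), and even when an eigenvector exists the character need not vanish on commutators. So you have not excluded $L[\alpha]=(0)$, and without that you cannot invoke Lemma~\ref{lemA3}, which explicitly requires $\alpha$ nonsolvable. There is no cheap local fix: the paper does \emph{not} prove nonsolvability first. It instead proves $\rad\,L(\mu)\subset H$ for every $\mu\in\Omega$ (solvable or not), and only then observes that a solvable $\mu$ would force $L_\mu\subset\rad\,L(\mu)\subset H$, a contradiction.

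\textbf{The vanishing of the common kernel.} Even granting Hamiltonian-ness, your final step is only a hope. You correctly identify $(\rad\,L(\alpha))_{i\alpha}$ with $(\rad_T\,L(\alpha,\beta))_{i\alpha}$ for your single Melikian $\beta$, and you know these elements are $p$-nilpotent by Lemma~\ref{omega'}. But nothing you have written forces $(\rad_T\,L(\alpha,\beta))_{i\alpha}=(0)$: the simplicity of $\mathcal{M}(1,1)$ constrains the \emph{quotient} $L[\alpha,\beta]$, not the size of $\rad_T\,L(\alpha,\beta)$ inside $L(\alpha,\beta)$. The paper resolves this globally: for a putative $\nu\in\Omega$ with $R_\nu:=(\rad\,L(\nu))_\nu\ne(0)$ it builds the ideal $I=\sum_k\sum_{\gamma_1,\ldots,\gamma_k}[L_{\gamma_1},[\cdots[L_{\gamma_k},R_\nu]\cdots]]$ of $L$ and proves, by an induction using Corollaries~\ref{not4} and~\ref{type4rad} together with (\ref{me}), that $I_\gamma\subset R_\gamma$ for every $\gamma\in\Omega$. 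Since $\Omega$ contains a Hamiltonian root $\lambda$ (by \cite[Lemma~3.8]{P94}) with $R_\lambda\subsetneq L_\lambda$, one gets $I\subsetneq L$, hence $I=(0)$ by simplicity. That global step is the heart of the proof and is not reproducible from a single $2$-section.
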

\begin{proof}
For $\gamma\in\Gamma(L,T)$ put $R_\gamma:=(\rad\,L(\gamma))_\gamma$.
Let $\mu\in\Omega$ be such that $\rad\, L(\mu)\not\subset H$. By
Theorem~\ref{1sec}, the radical of $L(\mu)$ is $T$-stable. Hence
there is $a\in{\mathbb F}_p^\times$ such that
$(\rad\,L(\mu))_{a\mu}\ne (0)$. Put $\nu:=a\mu$ and note that
$\nu\in\Omega$. For $k\in\Z_+$ define
$$I_0\,:=\,R_\nu,\qquad \ \, I_k\,:=\,\sum_{\gamma_1,\ldots,\gamma_k}
\,[L_{\gamma_1},[\cdots [L_{\gamma_k},R_\nu]\cdots]],\qquad \ \,
I:=\,\sum_{k\ge 0}\,I_k.$$ Clearly, $I$ is an ideal of $L$
containing $R_\nu$. We intend to show that $I\subsetneq L$. As a
first step we are going to use induction on $k$ to prove the
following:

\medskip

\noindent {\it Claim.\,\,} {\it If
$\nu+\gamma_1+\cdots+\gamma_k\in\Omega$, then $[L_{\gamma_1},[\cdots
[L_{\gamma_k},R_\nu]\cdots]]\subset
R_{\nu+\gamma_1+\cdots+\gamma_k}$.}

\smallskip

\noindent The claim is obviously true for $k=0$, and it also holds
for $k=1$ thanks to Corollaries~\ref{not4} and \ref{type4rad}.
Suppose it is true for all $k<n$ and let
$\gamma_1,\ldots,\gamma_n\in\Gamma(L,T)$ be such that
$\nu+\gamma_1+\cdots+\gamma_n\in\Omega$. If $\nu+\gamma_i\in\Omega$
or $\nu+\gamma_i\not\in\Gamma(L,T)$ for some $i\le n$, then applying
Corollaries~\ref{not4} and \ref{type4rad} gives
\begin{eqnarray*}
[L_{\gamma_1},[\cdots[L_{\gamma_n},R_\nu]\cdots]]&\subset&[L_{\gamma_1},
[\cdots[\widehat{L_{\gamma_i}}\cdots[L_{\gamma_n},[L_{\gamma_i},R_\nu]]\cdots]\cdots]]+I_{n-1}\\
&\subset&[L_{\gamma_1},[\cdots[\widehat{L_{\gamma_i}}
\cdots[L_{\gamma_n},R_{\nu+\gamma_i}]\cdots]\cdots]]+ I_{n-1}.
\end{eqnarray*}
In this case the claim holds by our induction hypothesis. So assume
from now that $\nu+\gamma_i\in\Gamma(L,T)\setminus\Omega$ for all
$i\le n$. We may also assume that
$\tilde{\nu}:=\nu+\gamma_1+\ldots+\gamma_n$ is not solvable, for
otherwise we are done. According to Lemma~\ref{lemA3} there is
$\kappa\in\Gamma(L,T)$ such that
$L[\tilde{\nu},\kappa]\cong{\mathcal M}(1,1)$. Moreover, it follows
from \cite[Lemmas~4.1 \& 4.4]{P94} that the radical of every
$1$-section $L[\tilde{\nu},\kappa](\delta)$ is contained in
$\Psi_{\tilde{\nu},\kappa}(T)$ and
\begin{equation}\label{me}
({\mathbb F}_p\tilde{\nu}+{\mathbb
F}_p\kappa)\setminus\{0\}\,\subset\, \Omega.
\end{equation}
Take an arbitrary $\kappa'\in({\mathbb F}_p\tilde{\nu}+{\mathbb
F}_p\kappa)\setminus {\mathbb F}_p\tilde{\nu}$. It follows from
(\ref{me}) that $\tilde{\nu}+{\mathbb
F}_p\kappa'\subset\Gamma(L,T)$. Note that the rule
$$\gamma\asymp\gamma'\,\Leftrightarrow\,(\gamma-\gamma')_{|H^3}=0$$
defines an equivalence relation on the set of all $F$-valued
functions on $H$. Since $\gamma_i\asymp -\nu$ for all $i\le n$, we
have that $\tilde{\nu}\asymp (1-n)\nu$. If $\nu+\kappa'\asymp 0$,
then $\tilde{\nu}+(1-n)\kappa'\not\in\Omega$. As
$\tilde{\nu}+(1-n)\kappa'\ne 0$ by our choice of $\kappa'$, this is
not true; see (\ref{me}). Thus, $\nu+\kappa'\not\asymp 0$, showing
that $\nu+\kappa'\in\Omega$ whenever $\nu+\kappa'\in\Gamma(L,T)$.
But then
 $[R_\nu,L_{\kappa'}]\subset
R_{\nu+\kappa'}$ by Corollaries~\ref{not4} and \ref{type4rad}. As
$\nu+\gamma_i\asymp 0$ and $\kappa'\in\Omega$ by (\ref{me}), we also
have that $\nu+(\gamma_i+\kappa')\in\Omega$ whenever
$\nu+(\gamma_i+\kappa')\in\Gamma(L,T)$ for all $i\le n$. So arguing
as above one now obtains that
$[[L_{\gamma_i},L_{\kappa'}],R_\nu]\subset
R_{\nu+\gamma_i+\kappa'}$. This implies that
$$
\Big[[L_{\gamma_1},[\cdots[L_{\gamma_n},R_\nu]\cdots]],
L_{\kappa'}\Big]\subset R_{\tilde{\nu}+\kappa'}\subset
\,\rad\,L(\tilde{\nu},\kappa').
$$
As ${\mathcal M}(1,1)$ is a simple Lie algebra, Schue's lemma
\cite[Prop.~1.3.6(1)]{St04} yields
$$\Big[\Psi_{\tilde{\nu},\kappa'}\big([L_{\gamma_1},[\cdots[L_{\gamma_n},R_\nu]\cdots]]\big),
{\mathcal M}(1,1)\Big]\,=\,(0),
$$
forcing $ [L_{\gamma_1},[\cdots[L_{\gamma_n},R_\nu]\cdots]]\subset
\big(\rad\,
L(\tilde{\nu},\kappa'))_{\tilde{\nu}}\subset(\rad\,L(\tilde{\nu}))_{\tilde{\nu}}\subset
R_{\tilde{\nu}}$. This completes the induction step.

\smallskip

As a consequence, $I_\gamma\subset R_\gamma$ for all
$\gamma\in\Omega$. On the other hand, it follows from
\cite[Lemma~3.8]{P94} that $\Omega$ contains at least one
Hamiltonian root, $\lambda$ say. Then $I_\lambda\ne L_\lambda$,
implying $I\ne L$. Then $I=(0)$, proving that $\rad\,L(\mu)\subset
H$ for all $\mu\in\Omega$. As a consequence, all roots in $\Omega$
are nonsolvable.

Now let $\alpha\in \Omega$. Because $\alpha$ is nonsolvable, it
follows from Theorem~\ref{1sec} that $\alpha$ is Hamiltonian. Since
$\rad\,L(\alpha)\subset H$, this gives $\dim L_\alpha=5$.
\end{proof}
\section{\bf Further reductions}
In this section we are going to prove that no root in
$\Gamma(L,T)$ vanishes on $H^3$. Theorem~\ref{rootspaces} will
play a crucial role in our arguments.
\begin{lemm}\label{mu(H)ne0}
If $\gamma\in\Gamma(L,T)$ does not vanish on $H$, then
$\gamma\in\Omega$.
\end{lemm}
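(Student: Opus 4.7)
The plan is to argue by contradiction: assume $\gamma\in\Gamma(L,T)$ with $\gamma(H)\ne 0$ and $\gamma\notin\Omega$. Equation~(\ref{eq0}) applied at $L_0=H$ yields the Schue decomposition $H=\sum_{\mu\in\Omega}[L_\mu,L_{-\mu}]$, so some $\mu\in\Omega$ satisfies $\gamma([L_\mu,L_{-\mu}])\ne 0$. Since $\Omega$ is $\F_p^\times$-stable, any $\F_p$-dependence of $\mu$ and $\gamma$ would force $\gamma\in\Omega$; hence $\mu$ and $\gamma$ are $\F_p$-independent. I then apply Proposition~\ref{types} to the $2$-section $L(\mu,\gamma)$ and rule out each of its six possibilities.

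\medskip

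Type~1) is impossible since $\mu$ is nonsolvable by Theorem~\ref{rootspaces}. In types~3)--6) one has $TR(L[\mu,\gamma])=2$ and both $\mu,\gamma$ admit nonzero root spaces in the quotient; hence they vanish on $\ker\Psi_{\mu,\gamma}|_T$ and descend uniquely to linear functions $\overline{\mu},\overline{\gamma}$ on $\overline T=\Psi_{\mu,\gamma}(T)$, so that $\gamma([L_\mu,L_{-\mu}])=\overline{\gamma}(\Psi_{\mu,\gamma}([L_\mu,L_{-\mu}])_s)$ can be computed in $L[\mu,\gamma]$. In type~6), $L[\mu,\gamma]\cong\mathcal{M}(1,1)$ and \cite[Lemmas~4.1 \& 4.4]{P94} yield $\gamma\in\Omega$. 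In type~5), the description in Proposition~\ref{types}(5), combined with Lemmas~\ref{centraliser} and~\ref{ham1}, produces a toral element of $\overline{H}^3$ on which $\overline{\gamma}$ is nonzero, hence $\gamma\in\Omega$. In type~4), Proposition~\ref{type4} characterises $\Omega$-membership by $\delta(h_0\ot 1)\ne 0$; since $\mu\in\Omega$, one checks directly that $\Psi_{\mu,\gamma}([L_\mu,L_{-\mu}])_s\subset F(h_0\ot 1)$, so $\gamma(h_0\ot 1)=0$ (from $\gamma\notin\Omega$) forces $\gamma([L_\mu,L_{-\mu}])=0$, contradicting the choice of $\mu$. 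In type~3), $L[\mu,\gamma]\subset L[\delta_1]\oplus L[\delta_2]$; the inclusion $\rad_T L(\mu,\gamma)\cap L(\mu)\subset\rad L(\mu)\subset H$ (Theorem~\ref{rootspaces}) forces $L_{\pm\mu}$ to inject, so $\mu\in\F_p\delta_1$ and $\gamma\in\F_p\delta_2$ after relabeling; the image of $[L_\mu,L_{-\mu}]$ in $L[\delta_1]^{(1)}$ commutes in the direct sum with a nonzero lift of $L_\gamma\subset L[\delta_2]^{(1)}$, and a Jordan-decomposition argument on $\overline{L}_\gamma$ forces $\gamma([L_\mu,L_{-\mu}])=0$.

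\medskip

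The remaining type~2), where $L[\mu,\gamma]=L[c\mu]$ for some $c\in\F_p^\times$, is the principal obstacle: $TR(L[\mu,\gamma])=1$, so $\Psi_{\mu,\gamma}|_T$ has a one-dimensional kernel $T_0$, and $\gamma|_{T_0}$ need not vanish. A preliminary step, based on Corollary~\ref{not4} and the inclusion $L_{i\gamma}\subset\rad_T L(\mu,\gamma)\subset\rad L(\gamma)$ for all $i\in\F_p^\times$, shows $L[\gamma]$ is an abelian quotient of $H$ and hence zero, so $\gamma$ is solvable. Writing $\gamma=c'\mu+\gamma_0$ with $\gamma_0\in T_0^*$ and using the structure of $L[\mu]\cong H(2;\un{1})^{(2)}\oplus F(1+x_1)^4\partial_2$ (Theorem~\ref{1sec}) together with Lemmas~\ref{ham3},~\ref{lemA1}, and~\ref{omega'}(a), one reconciles $\gamma(H^3)=0$ with $\gamma([L_\mu,L_{-\mu}])\ne 0$ to derive the final contradiction. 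Making this last step rigorous is the main technical difficulty.
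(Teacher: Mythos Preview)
Your overall strategy---assume $\gamma(H)\ne 0$, $\gamma\notin\Omega$, use Schue's lemma to find $\mu\in\Omega$ with $\gamma([L_\mu,L_{-\mu}])\ne 0$, then run through Proposition~\ref{types}---is exactly what the paper does, and your treatment of cases~4), 5), 6) is essentially correct. The genuine gap lies in cases~2) and 3), and both stem from the same missing observation.

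The key step you are lacking is this: from $\gamma([L_\mu,L_{-\mu}])\ne 0$ one has $[L_\gamma,[L_\mu,L_{-\mu}]]=L_\gamma\ne 0$, so at least one of $\mu+\gamma$, $\mu-\gamma$ lies in $\Gamma(L,T)$. Since $\gamma\notin\Omega$ and $\mu\in\Omega$, that root $\mu\pm\gamma$ is automatically in $\Omega$. Thus both $\mu$ and $\mu\pm\gamma$ are nonsolvable by Theorem~\ref{rootspaces}. Having \emph{two} $\mathbb{F}_p$-independent nonsolvable roots in the $2$-section immediately eliminates type~2) (where nonsolvable roots fill a single $\mathbb{F}_p$-line), so no intricate analysis of $L[\mu]$ and $\gamma|_{T_0}$ is needed.

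This same observation repairs your type~3) argument, which as written has a gap: you assert $\gamma\in\mathbb{F}_p\delta_2$, but this needs $\Psi_{\mu,\gamma}(L_\gamma)\ne 0$, i.e.\ $\gamma$ nonsolvable, and nothing you have established guarantees that. The paper instead uses the pair $\{\mu,\mu\pm\gamma\}\subset\Omega$ to re-index the two factors, so that \emph{both} $\delta_1$ and $\delta_2$ are in $\Omega$; one then produces $h_1,h_2\in H^3$ with $\delta_i(h_j)=\delta_{ij}$, whence $\mu(h_2)=0$ and $\gamma(h_2)\ne 0$, contradicting $\gamma\notin\Omega$.
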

\begin{proof}
Suppose there is $\beta\in\Gamma(L,T)\setminus\Omega$ such that
$\beta(H)\ne 0$. By (\ref{eq0}), there is $\alpha\in\Omega$ such
that $\beta([L_\alpha,L_{-\alpha}])\ne 0$. Then
$[L_\beta,[L_\alpha,L_{-\alpha}]]=L_\beta$, implying that
$\alpha+\beta\in\Gamma(L,T)$ or $-\alpha+\beta\in\Gamma(L,T)$. Since
$\beta\not\in\Omega$ by our assumption, we have that
$\alpha+\beta\in\Omega$ or $-\alpha+\beta\in\Omega$.
Theorem~\ref{rootspaces} then shows that $\{\alpha, \alpha+\beta\}$
or $\{\alpha, -\alpha+\beta\}$ consists of nonsolvable roots. Then
$L[\alpha,\beta]$ cannot be of type~1) or 2) of
Proposition~\ref{types}.

Suppose $L[\alpha,\beta]$ is as in case~3) of
Proposition~\ref{types} and set
$\delta_1:=\alpha,\,\delta_2:=\alpha+\beta$ if
$\alpha+\beta\in\Gamma(L,T)$ and
$\delta_1:=\alpha,\,\delta_2:=\alpha-\beta$ if
$-\alpha+\beta\in\Gamma(L,T)$. In either case, we can find elements
$h_1,\,h_2\in H^3$ such that $\delta_i(h_j)=\delta_{ij}$ for
$i,j\in\{1,2\}$. As a consequence, $\alpha(h_2)=0$ and
$\beta(h_2)\ne 0$. But then $\beta\in\Omega$, a contradiction.

Suppose $L[\alpha,\beta]$ is as in case~4) of
Proposition~\ref{types}. Then Proposition~\ref{type4} applies. As
$\alpha\in\Omega$, Proposition~\ref{type4} says that $\alpha(h_0\ot
1)\ne 0$. This forces
$\Psi_{\alpha,\beta}(L_{\pm\alpha})\subset\widetilde{S}$. Since
$\beta([L_\alpha,L_{-\alpha}])\ne 0$, we now deduce that $\beta$
does not vanish on $\Psi_{\alpha,\beta}(H)\cap\widetilde{S}$. This
forces $\beta(h_0\ot 1)\ne 0$. Applying Proposition~\ref{type4} once
again we obtain $\beta\in\Omega$, a contradiction.

Suppose $L[\alpha,\beta]$ is of type~5) of Proposition~\ref{types}.
Then $\widetilde{S}=H(2;(2,1))^{(2)}$ and $L[\alpha,\beta]\subset
H(2;(2,1))$. In this case $\Psi_{\alpha,\beta}(H)^3\subset
\widetilde{S}$, and it follows from Lemma~\ref{ham1} and Demu{\v
s}kin's description of maximal tori in $H(2;{\un 1})^{(2)}$ that
$\Psi_{\alpha,\beta}(H)\cap\widetilde{S}$ is abelian and ${\rm
nil}\big(\Psi_{\alpha,\beta}(H)\cap\widetilde{S}\big)$ has
codimension $1$ in $\Psi_{\alpha,\beta}(H)\cap\widetilde{S}$; see
\cite[Thm.~7.5.8]{St04} for instance. As $\alpha\in\Omega$, this
means that
$\Psi_{\alpha,\beta}(H)\cap\widetilde{S}\,=\,\Psi_{\alpha,\beta}(H)^3+{\rm
nil}\big(\Psi_{\alpha,\beta}(H)\cap\widetilde{S}\big)$. As a
consequence,
$\gamma\in\Gamma(L[\alpha,\beta],\Psi_{\alpha,\beta}(T))$ is in
$\Omega$ if and only if
$\gamma\big(\Psi_{\alpha,\beta}(H)\cap\widetilde{S}\big)\ne 0$. As
$\alpha\in\Omega$, Theorem~\ref{rootspaces} implies that $\alpha$
does not vanish on
$\big[\Psi_{\alpha,\beta}(L_\alpha),\Psi_{\alpha,\beta}(L_{-\alpha})\big]$.
As $\Psi_{\alpha,\beta}(L_{\pm\alpha})\subset\widetilde{S}$, this
shows that
$$
\Psi_{\alpha,\beta}(H)\cap\widetilde{S}\,=\big[\Psi_{\alpha,\beta}(L_\alpha),
\Psi_{\alpha,\beta}(L_{-\alpha})\big]+{\rm
nil}\big(\Psi_{\alpha,\beta}(H)\cap\widetilde{S}\big).
$$
But then
$\beta\big(\Psi_{\alpha,\beta}(H)\cap\widetilde{S}\big)\ne 0$ by
our choice of $\beta$, implying that $\beta\in\Omega$. Since this
contradicts our choice of $\beta$, we derive that
$L[\alpha,\beta]$ cannot be of type~5).

If $L[\alpha,\beta]$ is as in case~6) of Proposition~\ref{types},
then $({\mathbb F}_p\alpha+{\mathbb
F}_p\beta)\setminus\{0\}\subset\Omega$ by \cite[Lemmas~4.1 \&
4.4]{P94}. So this case cannot occur either, and our proof is
complete.
\end{proof}
\begin{prop}\label{mu(H)=0}
If $\mu\in\Gamma(L,T)$ vanishes on $H$, then $L_\mu$ consists of
$p$-nilpotent elements of $L_p$.
\end{prop}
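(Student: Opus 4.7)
The plan is to argue by contradiction: suppose some $x\in L_\mu$ is not $p$-nilpotent in $L_p$, and extract a contradiction from $\mu(H)=0$.

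The first step is to locate the $p$-semisimple part $x_s$ of $x$. From $[t,x]=\mu(t)x$ and $[x,x]=0$ for $t\in T$ one has $(\mathrm{ad}\,x)^k(t)=0$ for every $k\ge 2$; hence $\mathrm{ad}(x^{[p]^i})(t)=(\mathrm{ad}\,x)^{p^i}(t)=0$ for every $i\ge 1$, placing $x^{[p]^i}\in\mathfrak{c}_{L_p}(T)$. Taking $N$ large enough that $x_n^{[p]^N}=0$ yields $x_s=x^{[p]^N}\in\mathfrak{c}_{L_p}(T)$; since $T$ coincides with the set of $p$-semisimple elements of $\mathfrak{c}_{L_p}(T)$, it follows that $x_s\in T$. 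The commutation $[x_s,x]=\mu(x_s)\,x=0$ then forces $\mu(x_s)=0$. Applied to any $y\in L_\mu$, the same argument gives $\mu(y_s)=0$, whence $\mu(y^{[p]})=\mu(y_s)^p=0$, that is, $\mu(L_\mu^{[p]})=0$.

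Since $\mu(H)=0$, the contrapositive of Lemma~\ref{mu(H)ne0} yields $\mu\notin\Omega$. As $T$ is nonstandard, $\Omega\ne\emptyset$, so I fix some $\alpha\in\Omega$ and classify $L[\alpha,\mu]$ via Proposition~\ref{types}. Types 1), 5) and 6) are eliminated exactly as in the proof of Lemma~\ref{mu(H)ne0}: type~1) renders $\alpha$ solvable, contradicting Theorem~\ref{rootspaces}; type~5) forces every root in $\Gamma(L[\alpha,\mu],\Psi_{\alpha,\mu}(T))$ to be nonzero on $\Psi_{\alpha,\mu}(T)\cap\widetilde{S}\subset\Psi_{\alpha,\mu}(H)$, contradicting $\mu(H)=0$; and type~6) forces $\mu\in\Omega$ by \cite[Lemmas~4.1 \& 4.4]{P94}. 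For type~4), Proposition~\ref{type4} gives $\alpha(L_\mu^{[p]})\ne 0$ (because $\mu(L_\mu^{[p]})=0$ was just established), so some $y\in L_\mu$ has $\alpha(y_s)\ne 0$ while $y_s\in T$ and $\mu(y_s)=0$. Combining the explicit description $\overline{T}=F(h_0\otimes 1)\oplus F(\mathrm{Id}_S\otimes(1+x_1)\partial_1)$ from Proposition~\ref{type4} with $\mu(y_s)=0$ pins $\Psi_{\alpha,\mu}(y_s)\in\overline{T}$ to a nonzero scalar multiple of $h_0\otimes 1$; tracing this through the isomorphism $\widetilde{S}\cong H(2;\un{1})^{(2)}\otimes\mathcal{O}(1;\un{1})$ and invoking the nilpotency constraints of Lemma~\ref{omega'} then extracts the contradiction. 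Types 2) and 3) are handled by analogous local analysis: one traces the image of $x$ into the corresponding $1$-section (respectively, direct-sum summand) and uses Theorem~\ref{1sec} together with $\mu\notin\Omega$ to force an incompatibility with the conclusions of Step~1.

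The principal obstacle will be case~4): the flexibility in Proposition~\ref{type4} --- where $\mu$ may legitimately vanish on the toral factor $h_0\otimes 1$ --- means the contradiction only emerges after carefully combining the restricted-tensor $[p]$-structure $(y\otimes(1+x_1)^k)^{[p]}=y^{[p]}\otimes 1$, the location $x_s\in T$ established in Step~1, and the nilpotency information supplied by Lemma~\ref{omega'}.
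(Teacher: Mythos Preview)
Your Step~1 (locating $x_s$ in $T$ and deducing $\mu(L_\mu^{[p]})=0$) is correct, but the rest of the argument has a genuine gap: the $2$-section analysis of $L[\alpha,\mu]$ for an \emph{arbitrary} $\alpha\in\Omega$ cannot deliver a contradiction.

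In types~2) and~3) your sketch says nothing concrete, and in fact nothing can be extracted. In type~2), $L[\alpha,\mu]=L[\delta]$ with $\delta\in\mathbb{F}_p^\times\alpha$ (since $\alpha$ is nonsolvable), so $L_\mu\subset\rad_T L(\alpha,\mu)$; but lying in a $T$-solvable ideal of a $2$-section in no way forces $x^{[p]^N}=0$ in $L_p$. The same obstruction occurs in type~3). Your invocation of Theorem~\ref{1sec} ``together with $\mu\notin\Omega$'' does not produce an incompatibility: $x_s$ is a nonzero element of $T$, and nothing you have established ties $x_s$ to the particular $\alpha$ you fixed. In type~4), the statement $\alpha(L_\mu^{[p]})\ne 0$ that you extract from Proposition~\ref{type4} merely \emph{confirms} the contradiction hypothesis rather than contradicting it; and Lemma~\ref{omega'} cannot be invoked for $\gamma=\mu$ because both parts require $\gamma(H)\ne 0$ (part~(a)) or $\gamma\in\Omega$ (part~(b)), neither of which holds for $\mu$.

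The paper's proof is essentially different in scope. It first chooses $\alpha\in\Omega$ with $\alpha(L_\mu^{[p]})\ne 0$ (not arbitrarily), then uses Lemma~\ref{lemA3} to pick $\beta$ with $L[\alpha,\beta]\cong\mathcal{M}(1,1)$ and $\alpha\big([L_{i\alpha},L_{-i\alpha}],[L_\beta,L_{-\beta}]\big)\ne 0$, and carries out a detailed analysis of the \emph{$3$-section} $L[\alpha,\beta,\mu]$. Block's theorem is applied to the $T$-socle, toral-rank bookkeeping forces $TR(J)=2$ for every minimal $T$-ideal $J$, and one is driven to $J'\cong\mathcal{M}(1,1)$; a further pass (with a different triple $(\gamma,\kappa,\mu)$ coming from Schue's lemma) then contradicts \cite[Thm.~2]{P94} on triangulability of rank-one Cartan subalgebras. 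The passage to $3$-sections is the missing idea: a single $2$-section $L[\alpha,\mu]$ simply does not see enough of the global structure to rule out a non-$p$-nilpotent element in $L_\mu$.
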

\begin{proof}
Suppose for a contradiction that there is $\mu\in\Gamma(L,T)$ with
$\mu(H)=0$ such that $\alpha(L_\mu^{[p]})\ne 0$ for some
$\alpha\in\Gamma(L,T)$. It follows from (\ref{eq0}) that every root
is the sum of two roots in $\Omega$. Therefore, we may assume that
$\alpha\in\Omega$.  Since $\alpha$ is nonsolvable by
Theorem~\ref{rootspaces}, there exists $\beta\in\Omega$ such that
$L[\alpha,\beta]\cong{\mathcal M}(1,1)$ and
$\alpha\big([L_{i\alpha},L_{-i\alpha}],[L_\beta,L_{-\beta}]\big)\ne
0$ for some $i\in{\mathbb F}_p^\times$; see Lemma~\ref{lemA3}.
Lemma~\ref{mu(H)ne0} shows that $\beta\in\Omega$.

We now consider the $T$-semisimple $3$-section
$L[\alpha,\beta,\mu]$. Set
$\overline{T}:=\Psi_{\alpha,\beta,\mu}(T),\,$
$\overline{H}:=\Psi_{\alpha,\beta,\mu}(H)$ and
$\widetilde{S}:=\widetilde{S}(\alpha,\beta,\mu)$. Given a Lie
subalgebra $M$ of $L[\alpha,\beta,\mu]$ we denote by $M_{[p]}$ the
$p$-envelope of $M$ in $\Der \widetilde{S}$. Note that the
restricted Lie algebra
$\overline{T}+L[\alpha,\beta,\mu]_{[p]}\subset\Der\widetilde{S}$ is
centerless. As $T$ is a torus of maximal dimension in
$T+L(\alpha,\beta,\mu)_p$, it follows from
\cite[Thm.~1.2.8(4a)]{St04} that $\overline{T}$ is a torus of
maximal dimension in $\overline{T}+L[\alpha,\beta,\mu]_{[p]}$. Let
$J$ be a minimal $T$-invariant ideal of $L[\alpha,\beta,\mu]$. Then
$TR(J)\le TR(L[\alpha,\beta,\mu])\le 3$; see \cite[Thms~1.2.7(1) \&
1.3.11(3)]{St04}.

\smallskip

\noindent (a) Suppose $TR(J)=3$. Then it follows from
\cite[Thm.~1.2.9(3)]{St04} that the restricted Lie algebra
$\big(\overline{T}+L[\alpha,\beta,\mu]_{[p]}\big)/J_{[p]}$ is
$p$-nilpotent. From this it is immediate that $\overline{T}\subset
J_{[p]},\,$ $J=\widetilde{S}$ and
$L[\alpha,\beta,\mu]=\overline{H}+\widetilde{S}$. By Block's
theorem, $\widetilde{S}=S\ot{\mathcal O}(m;\un{1})$, where $S$ is a
simple Lie algebra and $m\in\Z_+$.  Let $\pi$ denote the canonical
projection
$${\rm Der}\big(S\ot{\mathcal O}(m;\un{1})\big)=\,(\Der
S)\ot{\mathcal O}(m;\un{1})\rtimes{\rm Id}_S\ot
W(m;\un{1})\twoheadrightarrow \,W(m;\un{1}).$$ In the present
situation \cite[Thm.~2.6]{PS2} implies that the torus $\overline{T}$
is conjugate under ${\rm Aut}(S\ot{\mathcal O}(m;\un{1}))$ to
$T_0\ot F$ for some torus $T_0$ in $S_p$. Hence we can choose
$\Psi_{\alpha,\beta,\mu}$ such that $\overline{T}=T_0\ot F$. Then
$L[\alpha,\beta,\mu](\alpha)=\overline{H}+S(\alpha)\ot{\mathcal
O}(m;\un{1}).$ Since $\alpha$ is nonsolvable, there is a surjective
homomorphism
$\psi\colon\,L[\alpha,\beta,\mu](\alpha)\twoheadrightarrow
L[\alpha]\ne (0)$. By Theorem~\ref{1sec}, $({\rm im}\,\psi)^{(1)}$
is a simple Lie algebra and the unique minimal ideal of ${\rm
im}\,\psi$. Since $T_0$ is a torus of maximal dimension in $S_p$,
Theorem~\ref{1sec} also applies to the $1$-section $S[\alpha]$. So
it must be that $({\rm im}\,\psi)^{(1)}\cong S[\alpha]^{(1)}$. As a
consequence,
$$\widetilde{S}(\alpha)^{(1)}\cap\ker\psi\,=\,
\big(\rad\, S(\alpha)\cap S(\alpha)^{(1)}\big)\ot
F+S(\alpha)^{(1)}\ot{\mathcal O}(m;\un{1})_{(1)}$$ is
$\overline{H}$-invariant. As $S(\alpha)$ is not solvable, it follows
that $\pi(\overline{H})\subset W(m;\un{1})_{(0)}$. But then
$S\ot{\mathcal O}(m;\un{1})_{(1)}$ is an ideal of
$L[\alpha,\beta,\mu]$. As $L[\alpha,\beta,\mu]$ is $T$-semisimple
and $T=T_0\ot F$, we now obtain that $m=0$ and
$L[\alpha,\beta,\mu]=\overline{H}+\widetilde{S}$.

As a consequence, $\Psi_{\alpha,\beta,\mu}(L_\gamma)\subset
\widetilde{S}$ for all
$\gamma\in\Gamma(L[\alpha,\beta,\mu],\overline{T})$. This implies
that $L[\alpha,\beta]\cong{\mathcal M}(1,1)$ is a homomorphic image
of the $2$-section $\widetilde{S}(\alpha,\beta)$, showing that
$\overline{H}\cap\widetilde{S}$ is a nontriangulable subalgebra of
$\widetilde{S}$. We now set ${\mathfrak
t}:=\Psi_{\alpha,\beta,\mu}(T\cap\ker \mu)$ and ${\mathfrak
h}:=\widetilde{S}(\mu)$. Then $\widetilde{S}$ is simple, ${\mathfrak
t}$ is a torus of dimension at most $2$ in $\widetilde{S}_p$, and
$\overline{H}\cap\widetilde{S}\subset \mathfrak h.$ This inclusion
in conjunction with our assumption on $\mu$ and the Engel--Jacobson
theorem shows that $\mathfrak h$ is a nontriangulable nilpotent
subalgebra of $\widetilde{S}$. But then \cite[Thm.~1(ii)]{P94}
yields $\widetilde{S}\cong{\mathcal M}(1,1)$. As $TR({\mathcal
M}(1,1))=2$ by \cite[Lemma~4.3]{P94}, we reach a contradiction
thereby establishing that $TR(J)\le 2$.

\smallskip

\noindent (b) We now put $T':=\overline{T}\cap J_{[p]}$ and observe
that $$\dim T'\ge
TR\big(J_{[p]},\overline{T}+L[\alpha,\beta,\mu]_{[p]}\big)=TR(J_{[p]})\ne
0;$$ see \cite[Thms~1.2.9 \& 1.2.8(2)]{St04} (one should also keep
in mind that $\overline{T}+L[\alpha,\beta,\mu]_{[p]}$ is
centerless).

Suppose $\mu(T')\ne 0$. Then
$\Psi_{\alpha,\beta,\mu}(L_{i\mu})\subset J$ for all $i\in{\mathbb
F}_p^\times$ and hence $\Psi_{\alpha,\beta,\mu}(L_\alpha)\subset J$
by our choice of $\alpha$. Since $L[\alpha,\beta]\cong{\mathcal
M}(1,1)$ is simple, it follows that
$\Psi_{\alpha,\beta,\mu}(L_{i\alpha+j\beta})\subset J$ for all
nonzero $(i,j)\in{\mathbb F}_p^2$. As a consequence, the
$p$-envelope of $\overline{H}\cap J$ in $J_{[p]}$ contains a torus
of dimension at least $2$. This torus must be smaller than $T'$,
because $\mu$ vanishes on $H$. But then $TR(J)>2$ which is not true.

Thus, $\mu(T')=0$. Then $\alpha(T')\ne 0$ or $\beta(T')\ne 0$.
Relying on the simplicity of $L[\alpha,\beta]\cong {\mathcal
M}(1,1)$ and arguing as before, we derive that
$J(\alpha,\beta)/\rad\,J(\alpha,\beta)\cong{\mathcal M}(1,1).$ As
$\mu(T')=0$, it follows that $\dim T'=TR(J)=2$. By Block's theorem,
$J=J'\ot{\mathcal O}(k;\un{1})$ for some simple Lie algebra $J'$ and
some $k\in\Z_+$. The above shows that $TR(J')=2$. The natural
homomorphism $J\twoheadrightarrow J/J'\ot{\mathcal
O}(k;\un{1})_{(1)}\cong J'$ maps $J(\alpha,\beta)$ onto a subalgebra
$\mathfrak g$ of $J'$ such that ${\mathfrak g}/\rad\,{\mathfrak
g}\cong {\mathcal M}(1,1)$. As $TR(J')=2$, this implies that $J_p$
contains a nonstandard $2$-dimensional torus. Applying
\cite[Thm.~1(ii)]{P94} now yields $J'\cong {\mathcal M}(1,1)$. Since
this holds for every minimal $\overline{T}$-invariant ideal of
$L[\alpha,\beta,\mu]$ and $TR(L[\alpha,\beta,\mu])\le 3$, we may
conclude at this point that the $T$-socle
$\widetilde{S}=\widetilde{S}(\alpha,\beta,\mu)=S\ot{\mathcal
O}(m;\un{1})$ is the unique minimal ideal of $L[\alpha,\beta,\mu]$.

Recall that all derivations of $S={\mathcal M}(1,1)$ are inner; see
\cite[Thm.~7.1.4]{St04} for instance. In this situation
\cite[Thm.~3.2]{PS2} says that $\Psi_{\alpha,\beta,\mu}$ can be
chosen such that $\overline{T}=(T_0\ot 1)+F({\rm Id}_{S}\ot t_0),$
where $T_0$ is a $2$-dimensional torus in $S_p=S$ and $t_0\in
W(m;\un{1})$.  Furthermore, $L[\alpha,\beta,\mu]\,=\,{\mathcal
M}(1,1)\ot{\mathcal O}(m;\un{1})\rtimes{\rm Id}_{S}\ot{\mathfrak d}$
for some Lie subalgebra ${\mathfrak d}$ of $W(m;\un{1})$. Note that
$T'=\overline{T}\cap \widetilde{S}=T_0\ot 1$. Using the simplicity
of $L[\alpha,\beta]$ and arguing as before, we observe that
$\Psi_{\alpha,\beta,\mu}(L_{i\alpha+j\beta})\subset \widetilde{S}$
for all nonzero $(i,j)\in{\mathbb F}_p^\times$. By the choice of
$\beta$, we then have
$\alpha\big([\widetilde{S}_{i\alpha},\widetilde{S}_{-i\alpha}],
[\widetilde{S}_\beta,\widetilde{S}_{-\beta}]\big)\ne 0$ for some
$i\in{\mathbb F}_p^\times$. This means that $T_0$ is a nonstandard
torus in $S={\mathcal M}(1,1)$.

If $t_0\not\in W(m;\un{1})_{(0)}$, then we may assume further that
$t_0=(1+x_1)\partial_1$; see \cite[Thm.~3.2]{PS2}. Choose
$h,\,h'\in\mathfrak{c}_S(T_0)$ such that $[h,h']$ acts
nonnilpotently on $S$. Recall that $\mu(T_0\ot F)=0$. Then
$\mu(\mathrm{Id}_S\ot t_0)\ne 0$ and hence there exists
$r\in\mathbb{F}_p^\times$ such that $h\ot(1+x_1)\in
\widetilde{S}_{r\mu}$ and $h'\ot
(1+x_1)^{p-1}\in\widetilde{S}_{-r\mu}$. Clearly, the element
$$[h\ot(1+x_1),h'\ot(1+x_1)^{p-1}]\in[\widetilde{S}_{r\mu},\widetilde{S}_{-r\mu}]$$
acts nonnilpotently on $\widetilde{S}$.

Suppose $t_0\in W(m;\un{1})_{(0)}$. Since $\widetilde{S}$ is
$\big({\rm Id_S}\ot(Ft_0+{\mathfrak d})\big)$-simple, there is
$r\in{\mathbb F}_p$ such that ${\mathfrak d}_{r\mu}\not\subset
W(m;\un{1})_{(0)}$ (here ${\mathfrak d}_0=\pi(\overline{H})$ is the
centraliser of $t_0$ in $\mathfrak d$). On the other hand, looking
at the $1$-section
$L[\alpha,\beta,\mu](\alpha)=\overline{H}+S(\alpha)\ot {\mathcal
O}(m;\un{1})$ and applying Theorem~\ref{1sec} to $L[\alpha]\ne (0)$
one observes that $\pi(\overline{H})\subset W(m;\un{1})_{(0)}$ (see
part~(a) for a similar argument). So it must be that $t_0\ne 0$ and
$r\in{\mathbb F}_p^\times$.

Let $E\in L_{r\mu}$ be such that
$\pi(\Psi_{\alpha,\beta,\mu}(E))\,\equiv\,\sum_{j=1}^m\,
a_i\partial_i\quad\big({\rm mod}\ W(m;\un{1})_{(0)}\big)$, where not
all $a_j$ are zero. We may assume after renumbering and rescaling
that $a_1=1$. In the present situation \cite[Thm.~3.2]{PS2} says
that $\Psi_{\alpha,\beta,\gamma}$ can be chosen such that
$t_0=\sum_{j=1}^m\,s_i\, x_j\partial_j$ for some $s_j\in{\mathbb
F}_p$. As $\big[t_0,\pi(\Psi_{\alpha,\beta,\mu}(E))\big]$ is a {\it
nonzero} multiple of $\pi(\Psi_{\alpha,\beta,\mu}(E))$, it must be
that $s_1\ne 0$. Therefore, ${\mathfrak c}_S(T_0)\ot x_1\subset
\widetilde{S}_{-r\mu}$, implying that
$\big[\Psi_{\alpha,\beta,\mu}(L_{r\mu}),\widetilde{S}_{-r\mu}\big]$
contains nonilpotent elements of $\widetilde{S}$.

\smallskip

\noindent (c) We have thus shown that there is $r\in{\mathbb
F}_p^\times$ such that $[L_{r\mu},L_{-r\mu}]$ contains nonnilpotent
elements of $L_p$. Therefore, the set
$$\Omega_1:=\,\{\gamma\in\Gamma(L,T)\,|\,\,\gamma([L_{r\mu},L_{-r\mu}])\ne
0\}.$$ is nonempty. By Lemma~\ref{mu(H)ne0}, we have the inclusion
$\Omega_1\subset\Omega$. Also, $\mu\not\in\Omega_1$, because
$\mu(H)=0$. Since $\mu\ne 0$, there is $\gamma\in\Gamma(L,T)$ such
that $\mu(L_\gamma^{[p]})\ne 0$.

Suppose $\gamma\in\Omega$. Since $\mu(L_\gamma^{[p]})\ne 0$, all
elements from $\mu+{\mathbb F}_p\gamma$ are in $\Gamma(L,T)$. Since
$\mu(H)=0$, we then have $\mu+{\mathbb
F}_p^\times\gamma\subset\Omega$. Since all roots in $\Omega$ are
nonsolvable by Theorem~\ref{rootspaces}, the $T$-semisimple
$2$-section $L[\gamma,\mu]$ cannot be as in cases~1), 2) or 3) of
Proposition~\ref{types}. If $L[\gamma,\mu]$ is of type~4), then
Proposition~\ref{type4} implies that
$\Psi_{\gamma,\mu}(L_\gamma)\subset\widetilde{S}$. As
$\mu(L_\gamma^{[p]})\ne 0$, this forces
$\Psi_{\gamma,\mu}(L_{i\mu})\subset\widetilde{S}$ for all
$i\in{\mathbb F}_p^\times$. Since $\mu$ vanishes on $H$, it follows
from the description of $\Psi_{\gamma,\mu}(T)$ given in
Proposition~\ref{type4} that $$\sum_{i\in{\mathbb
F}_p^\times}\Psi_{\gamma,\mu}(L_{i\mu})\subset\, {\mathfrak
c}_{H(2;\un{1})^{(2)}}(h_0)\ot{\mathcal O}(1;\un{1}).$$ As the
subalgebra on the right is abelian and
$\Psi_{\gamma,\mu}(L_{i\gamma})\ne (0)$ for all $i\in{\mathbb
F}_p^\times$, this contradicts our choice of $\mu$. So
$L[\gamma,\mu]$ is not of that type. If $L[\gamma,\mu]$ is as in
cases~5) or~6) of Proposition~\ref{types}, then Corollary~\ref{not4}
shows that no root in
$\Gamma(L[\gamma,\mu],\Psi_{\gamma,\mu}(T))=({\mathbb
F}_p\gamma\oplus{\mathbb F}_p\mu)\setminus\{0\}$ vanishes on
$\Psi_{\gamma,\mu}(H)$. As $\mu(H)=0$, this is false.

Thus, $\gamma\not\in\Omega$. Schue's lemma
\cite[Prop.~1.3.6(1)]{St04} yields
$L_\gamma=\sum_{\delta\in\Omega_1}\,[L_\delta,L_{\gamma-\delta}]$.
If $x_1\ldots,x_d\in L_\gamma$, then
$$\big(\textstyle{\sum_{j=1}^d}\,x_j\big)^{[p]}\equiv\,
\textstyle{\sum_{j=1}^d}\,x_j^{[p]}\quad\,\,({\rm mod}\,\,H),$$ by
Jacobson's formula. Note that the set
$H\cup\big(\bigcup_{\,\delta\in\Omega_1,\,\,k\ge 1}
\,\,[L_\delta,L_{-\delta}]^{[p]^k}\big)$ is weakly closed. Since
$\mu$ vanishes on $H$, the Engel--Jacobson theorem implies that
there is $\kappa\in\Omega_1$ such that
$\mu\big([L_\kappa,L_{\gamma-\kappa}]^{[p]}\big)\ne 0$. Note that
$\kappa$ and $\gamma-\kappa$ are both in $\Omega$, hence
$\Psi_{\gamma,\kappa,\mu}(L_\kappa)\ne (0)$ and
$\Psi_{\gamma,\kappa,\mu}(L_{\gamma-\kappa})\ne (0)$ by
Theorem~\ref{rootspaces}. Let
$\widetilde{S}=\widetilde{S}(\gamma,\kappa,\mu)$ and let $J$ be any
minimal ideal of $L[\gamma,\kappa,\mu]$. Put
$T_1:=\Psi_{\gamma,\kappa,\mu}(T)\cap J_{[p]}$, where $J_{[p]}$ is
the $p$-envelope of $J$ in $\Der \widetilde{S}$. Since $J_{[p]}$ is
centerless, it follows from \cite[Thm.~1.2.8(a)]{St04} that $T_1$ is
a torus of maximal dimension in $J_{[p]}$.

Suppose $\mu(T_1)=0$. Then either $\kappa(T_1)\ne 0$ or
$(\gamma-\kappa)(T_1)\ne 0$, for $T_1\ne (0)$. In any event,
$\Psi_{\gamma,\kappa,\mu}([L_\kappa,L_{\gamma-\kappa}])\subset J$
and therefore $\mu(J_\gamma^{[p]})\ne 0$. But then $\mu(T_1)\ne 0$,
a contradiction. Thus, $\mu(T_1)\ne 0$, forcing $\sum_{i\in{\mathbb
F}_p^\times}\Psi_{\gamma,\kappa,\mu}(L_{i\mu})\subset J$. As
$\kappa\in\Omega_1$, this yields $\sum_{i\in{\mathbb
F}_p^\times}\Psi_{\gamma,\kappa,\mu}(L_{i\kappa})\subset J$. As a
result, the nilpotent subalgebra $J(\mu)$ acts nontriangulably on
$J$. As $\kappa([L_{r\mu},L_{-r\mu}])\ne 0$ and
$\Psi_{\gamma,\kappa,\mu}\big([L_\kappa,L_{\gamma-\kappa}]^{[p]}\big)\subset
J_{[p]}$, we have that $TR(J)=\dim T_1\ge 2$ (one should keep in
mind that $\mu$ vanishes on $H$ but not on
$[L_\kappa,L_{\gamma-\kappa}]^{[p]}$).

Since $\kappa\in\Omega$, we can now argue as in part~(a) of this
proof to deduce that $TR(J)\le 2$. As a result, $TR(J)=2$ for any
minimal ideal $J$ of of $L[\gamma,\kappa,\mu]$. As
$TR(L[\gamma,\kappa,\mu])\le 3$, this shows that
$\widetilde{S}=S\ot{\mathcal O}(m;\un{1})$ is the unique minimal
ideal of $L[\gamma,\kappa,\mu]$ and $TR(\widetilde{S})=TR(S)=2$.
According to \cite[Thm.~2.6]{PS2}, we can choose
$\Psi_{\gamma,\kappa,\mu}$ such that
$$\Psi_{\gamma,\kappa,\mu}(T)\,=\,(T'_0\ot1)+F(d\ot 1+{\rm
Id}_S\ot t_0),\quad\ T_0'\subset S_p,\ d\in\Der S,\ t_0\in
W(m;\un{1}).$$ Moreover, if $d$ is an inner derivation of $S$, then
we can assume further that $d=0$. Since $T_1=T_0'\ot 1$, we get
$\dim T_0'=2$. Set ${\mathfrak t}:=T_0'+Fd$, a torus in $\Der S$.
The subalgebra $S\ot F$ of $\widetilde{S}$ is invariant under the
action of $\Psi_{\gamma,\kappa,\mu}(T)$. Given
$\delta\in\Gamma((S\ot F),\Psi_{\gamma,,\kappa,\mu}(T))$ we denote
by $\bar{\delta}$ the unique $\mathfrak t$-root in
$\Gamma(S,{\mathfrak t})$ for which $S_{\bar{\delta}}\ot F= (S\ot
F)_\delta$.

\smallskip

\noindent (d)  Suppose $t_0\in W(m;\un{1})_{(0)}$. Because
$\widetilde{S}$ and $S\ot{\mathcal O}(m;\un{1})_{(1)}$ are both
$T$-invariant, $T$ acts on $S\cong \widetilde{S}/(S\ot{\mathcal
O}(m;\un{1})_{(1)})$ as the torus ${\mathfrak t}\subset\Der S$.
Since $\widetilde{S}_\kappa\ne (0)$ and $\kappa\in\Omega_1$, we also
have that $\Psi_{\gamma,\kappa,\mu}(L_{\pm r\mu})\ne (0)$. We
mentioned above that $\Psi_{\gamma,\kappa,\mu}(L_{\pm r\mu})\subset
\widetilde{S}$.  Define ${\mathfrak t}_0:={\mathfrak t}\cap\ker
\bar{\mu}$. Then $\dim{\mathfrak t}_0\le 2$ and ${\mathfrak
c}_S({\mathfrak t}_0)=S(\bar{\mu})$. Because $S_p\ot{\mathcal
O}(m;\un{1})_{(1)}$ is $p$-nilpotent and $\widetilde{S}(\mu)$ acts
nontriangulably on $\widetilde{S}$ by our discussion in part~(c),
the subalgebra $S(\bar{\mu})$ is nilpotent and acts nontriangulably
on $S$. Applying \cite[Thm.~2(ii)]{P94} now yields $S\cong{\mathcal
M}(1,1)$. But then all derivations of $S$ are inner; see
\cite[Thm.~7.1.4]{St04} for example. Then $d=0$ and ${\mathfrak t}$
is a torus of maximal dimension in $S_p$. It follows that
$S(\bar{\mu})={\mathfrak c}_S({\mathfrak t}_0)$ is a Cartan
subalgebra of toral rank $1$ in $S$. Since such Cartan subalgebras
are triangulable by \cite[Thm.~2]{P94}, our assumption on $t_0$ is
false.

Thus, $t_0\not\in W(m;\un{1})_{(0)}$. Recall that $\mu$ and $\kappa$
are both nonzero on $T_1=T_0'\ot 1$. Since $\mu$ vanishes on $H$ and
the nonsolvable root $\kappa$ does not vanish on
$\Psi_{\gamma,\kappa,\mu}\big([L_{i\kappa},L_{-i\kappa}]\big)\subset
\Psi_{\gamma,\kappa,\mu}(H)\cap \widetilde{S}$ for some
$i\in{\mathbb F}_p^\times$, the roots $\mu$ and $\kappa$ are
linearly independent on $T_1$. Hence
$$\Psi_{\gamma,\kappa,\mu}(T)\,=\,T_1\oplus(\Psi_{\gamma,\kappa,\mu}(T)\cap
\ker\mu\cap\ker\kappa),$$ implying that
$\pi(\Psi_{\gamma,\kappa,\mu}(T)\cap\ker\mu\cap\ker\kappa)\not\subset
W(m;\un{1})_{(0)}$. In that case \cite[Thm.~2.6]{PS2} says that
$\Psi_{\gamma,\kappa,\mu}$ can be selected such that $d=0,$
$t_0=(1+x_1)\partial_1$, and
$\Psi_{\gamma,\kappa,\mu}(T)\cap\ker\mu\cap\ker\kappa\,=\,F({\rm
Id}_S\ot t_0)$.

Then $\widetilde{S}(\kappa,\mu)=S\ot F[x_2,\ldots, x_m]$ and the
evaluation map ${\rm
ev}\colon\,\widetilde{S}(\kappa,\mu)\twoheadrightarrow S$, taking
$s\ot f\in S\ot F[x_2,\ldots,x_m]$ to $f(0)s\in S$, is
$T$-equivariant. As before, $S(\bar{\mu})$ acts nontriangulably on
$S$. Since in the present case $\mathfrak t$ is a torus of maximal
dimension in $S_p$, its $1$-section $S(\bar{\mu})$ has toral rank
$1$ in $S$. Since such a Cartan subalgebra
 must act triangulably on $S$ by \cite[Thm.~2]{P94}, we reach a
 contradiction, thereby proving the proposition.
\end{proof}
\begin{cor}\label{allomega}
The following are true:

\begin{itemize}
\item[(i)] $\Gamma(L,T)=\Omega$.

\smallskip

\item[(ii)] If $\alpha,\beta\in\Gamma(L,T)$, then
$L[\alpha,\beta]$ is not as in case~4) of Proposition~\ref{types}.
\end{itemize}
\end{cor}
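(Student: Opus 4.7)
The plan is to prove~(ii) first and then~(i) using~(ii). Both reduce to combining the preceding Lemma~\ref{mu(H)ne0} and Proposition~\ref{mu(H)=0} with the classification of two-sections in Proposition~\ref{types}.

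For~(ii) I argue by contradiction. If $L[\alpha,\beta]$ is of type~4), then Proposition~\ref{type4} supplies a $\overline{T}$-root $\gamma \notin \Omega$ with $\alpha(L_\gamma^{[p]}) \ne 0$ or $\beta(L_\gamma^{[p]}) \ne 0$. But $\gamma \notin \Omega$ gives $\gamma(H) = 0$ by Lemma~\ref{mu(H)ne0}, and then $L_\gamma$ is $p$-nilpotent by Proposition~\ref{mu(H)=0}, so $L_\gamma^{[p]} \subset H_p$ consists of $p$-nilpotent elements. Every linear function on $T$ extended to $H_p$ vanishes on such elements, contradicting the output of Proposition~\ref{type4}.

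For~(i) I assume $\mu \in \Gamma(L,T) \setminus \Omega$. Lemma~\ref{mu(H)ne0} and Proposition~\ref{mu(H)=0} give $\mu(H) = 0$ and $L_\mu$ $p$-nilpotent. Using~(\ref{eq0}) to write $L_\mu = \sum_{\delta \in \Omega}[L_\delta, L_{\mu-\delta}] \ne 0$, I pick $\delta \in \Omega$ for which $\mu - \delta \in \Gamma(L,T)$; since $(\mu-\delta)(H^3) = -\delta(H^3) \ne 0$, we have $\mu - \delta \in \Omega$. Setting $\alpha := -\delta$ produces $\alpha \in \Omega$ with $\alpha + \mu = \mu - \delta \in \Omega \subset \Gamma(L,T)$. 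I then walk through the six types of $L[\alpha,\mu]$ from Proposition~\ref{types}: type~1) is excluded by the nonsolvability of $\alpha$ (Theorem~\ref{rootspaces}), and type~4) by~(ii). Types~5) and~6) force $\mu \in \Omega$, the desired contradiction: in type~6) this is \cite[Lemmas~4.1~\&~4.4]{P94}, and in type~5) Proposition~\ref{types}(5) gives $\mu(\overline{T} \cap \widetilde{S}) \ne 0$, whence $\mu(\overline{H}^3) \ne 0$ via $\overline{T} \cap \widetilde{S} \subset \overline{H} \cap \widetilde{S}$ and the identity $\overline{H} \cap \widetilde{S} = \overline{H}^3 + {\rm nil}(\overline{H} \cap \widetilde{S})$ established for $\alpha \in \Omega$ in the proof of Lemma~\ref{mu(H)ne0}; finally $\mu(H^3) \ne 0$ since $\overline{H}^3 = \Psi_{\alpha,\mu}(H^3)$ and $\mu$ agrees with its pullback on $p$-semisimple elements. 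For types~2) and~3), the root $\alpha + \mu$ falls outside the $\overline{T}$-root set of $L[\alpha,\mu]$, which is $\mathbb{F}_p\alpha$ in type~2) and $\mathbb{F}_p\alpha \cup \mathbb{F}_p\mu$ in type~3) (using $L[\alpha] \ne 0$ to place $\delta_1$ in $\mathbb{F}_p\alpha$ and then $\delta_2$ in $\mathbb{F}_p\mu$); hence $L_{\alpha+\mu} \subset \rad_T L(\alpha,\mu) \cap L(\alpha+\mu) \subset \rad L(\alpha+\mu) \subset H$ by Theorems~\ref{1sec} and~\ref{rootspaces}, so $L_{\alpha+\mu} = 0$, contradicting $\alpha + \mu \in \Gamma(L,T)$.

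The principal obstacle lies in type~3) when $L_\mu$ is absorbed into $\rad_T L(\alpha,\mu)$ so that $\mu$ is not a 2-section root: then $\delta_2 = r\alpha + s\mu$ with possibly $r = s \ne 0$, which would put $\alpha + \mu$ back inside the 2-section root set and invalidate the argument. This exceptional subcase is eliminated by noting that every $k\alpha + \mu$ with $k \in \mathbb{F}_p \setminus \{0,1\}$ then also lies outside the root set, so by the same $H$-containment reasoning cannot belong to $\Gamma(L,T)$; feeding these non-existence statements back into~(\ref{eq0}) applied to $L_\mu$ then forces $L_\mu = 0$, contradicting $\mu \in \Gamma(L,T)$.
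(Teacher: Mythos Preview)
Your argument for~(ii) is fine and is actually a bit more self-contained than the paper's, which derives~(ii) from~(i). (One small point: Proposition~\ref{type4} has the hypothesis $\alpha\in\Omega$; you should note that if $L[\alpha,\beta]$ is nontrivial then some root in $\mathbb F_p\alpha+\mathbb F_p\beta$ lies in $\Omega$, else Lemma~\ref{mu(H)ne0} and Proposition~\ref{mu(H)=0} would make $L(\alpha,\beta)$ nilpotent.)

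The genuine problem is in~(i), in your treatment of type~3). First, a minor slip: you write that the root set is $\mathbb F_p\alpha\cup\mathbb F_p\mu$, but $\mu$ is solvable (since $\mu(H)=0$ and $L_{i\mu}$ is $p$-nilpotent, so $L(\mu)$ is nilpotent), while $\delta_2$ is nonsolvable; hence $\delta_2\notin\mathbb F_p\mu$ and necessarily $\delta_2=r\alpha+s\mu$ with $r,s\ne 0$. Your case split into ``$r\ne s$ good, $r=s$ bad'' is still correct, but the stated reason is wrong.

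The real gap is your fix for the exceptional subcase $r=s$. You deduce $k\alpha+\mu\notin\Gamma(L,T)$ for $k\in\{2,3,4\}$ and then assert that feeding this into~(\ref{eq0}) forces $L_\mu=0$. But (\ref{eq0}) reads $L_\mu=\sum_{\delta\in\Omega}[L_\delta,L_{\mu-\delta}]$, summed over \emph{all} $\delta\in\Omega$, not just $\delta\in\mathbb F_p\alpha$. Your non-existence statements kill only the terms with $\delta\in\{-2\alpha,-3\alpha,-4\alpha\}$; the term $[L_{-\alpha},L_{\alpha+\mu}]$ survives (indeed $\alpha+\mu\in\Gamma(L,T)$ by construction), and all terms with $\delta\notin\mathbb F_p\alpha$ are untouched. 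So $L_\mu=0$ does not follow.

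The paper avoids this entirely by a different route: rather than fixing one $\alpha$ and trying to force $L_{\alpha+\mu}=0$, it shows $[L_\alpha,L_\lambda]=0$ for \emph{every} $\alpha\in\Omega$. The key is that $\lambda$ is solvable, so $L_\lambda\subset\rad L(\lambda)$, and Corollary~\ref{not4} gives $L_\lambda\subset\rad_T L(\alpha,\lambda)$ uniformly in cases 2), 3), 5), 6) --- no need to locate $\delta_2$. Then $[L_\alpha,L_\lambda]\subset(\rad_T L(\alpha,\lambda))_{\alpha+\lambda}\subset(\rad L(\alpha+\lambda))_{\alpha+\lambda}=0$ since $\alpha+\lambda\in\Omega$. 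Once $[L_\alpha,L_\lambda]=0$ for all $\alpha\in\Omega$, (\ref{eq0}) immediately makes $L_\lambda$ central. You should replace your type-3) analysis by this argument.
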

\begin{proof}
(1) Suppose $\Gamma(L,T)\ne \Omega$ and let
$\lambda\in\Gamma(L,T)\setminus\Omega$. Take any $\alpha\in\Omega$
and consider the $T$-semisimple $2$-section $L[\alpha,\lambda]$. By
Theorem~\ref{rootspaces}, $L(\alpha)$ is not solvable, hence
$L[\alpha,\lambda]$ is not as  in case~1) of
Proposition~\ref{types}. Because of Lemma~\ref{mu(H)ne0} we have
$\lambda(H)=0$, hence $L(\lambda)$ is solvable. If
$L[\alpha,\lambda]$ is as in cases 2), 3), 5) or 6) of
Proposition~\ref{types}, then
$L_\lambda\subset\rad_T\,L(\alpha,\lambda)$ by Corollary~\ref{not4},
hence
$$[L_\alpha,L_\lambda]\subset
\big(\rad_T\,L(\alpha,\lambda)\big)\cap L_{\alpha+\lambda}
\subset\big(\rad_T\,L(\alpha+\lambda)\big)_{\alpha+\lambda}=(0)$$ by
Theorem~\ref{rootspaces} (because $\alpha+\lambda\in\Omega$). If
$L[\alpha,\lambda]$ is as in case~4) of Proposition~\ref{types},
then it follows from Proposition~\ref{type4} that $L_\lambda$
contains nonnilpotent elements of $L_p$. Since this contradicts
Proposition~\ref{mu(H)=0}, we see that $L[\alpha,\lambda]$ is not of
that type. As a consequence, $[L_\alpha,L_\lambda]=0$ for all
$\alpha\in\Omega.$ But then (\ref{eq0}) yields that $L_\lambda$ is
contained in the center of $L$. This contradiction proves the first
statement.

\smallskip

\noindent (2) If $L[\alpha,\beta]$ is as in case~4) of
Proposition~\ref{types}, then Proposition~\ref{type4} implies that
one of the roots in $\Gamma(L,T)\cap ({\mathbb F}_p\alpha+{\mathbb
F}_p\beta)$ is not contained in $\Omega$. Since this is impossible
by part~(1), our proof is complete.
\end{proof}
\begin{cor}\label{cenH}
For every $\alpha\in\Gamma(L,T)$ the radical of $L(\alpha)$ lies
in the center of $H$.
\end{cor}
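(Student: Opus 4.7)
The plan is to prove the two-section inclusion
\[
[R,H]\subset \rad_T L(\alpha,\beta)\qquad\text{for every }\beta\in\Gamma(L,T)\setminus{\mathbb F}_p\alpha,
\]
where $R:=\rad L(\alpha)$, and then to close via simplicity of $L$. By Theorem~\ref{rootspaces} we have $R\subset H$. Fix such a $\beta$ and set $\overline R:=\Psi_{\alpha,\beta}(R)$, $\overline H:=\Psi_{\alpha,\beta}(H)$ and $\overline{L(\alpha)}:=\Psi_{\alpha,\beta}(L(\alpha))$. Since $\overline R$ is a solvable ideal of $\overline{L(\alpha)}$ and the quotient $\overline{L(\alpha)}/\overline R$ is canonically $L[\alpha]$, which by Theorem~\ref{1sec} is Hamiltonian and therefore has trivial solvable radical, we identify $\overline R=\rad\overline{L(\alpha)}$.

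I would then go through the cases of Proposition~\ref{types}, noting that case~4) is excluded by Corollary~\ref{allomega}(ii). Case~1) does not occur since $\alpha$ is nonsolvable. In case~2), where $L[\alpha,\beta]=L[\delta]$, one checks that $\delta\in{\mathbb F}_p\alpha$ (otherwise $L[\alpha]$ would be a quotient of the nilpotent Cartan $\overline H$, contradicting $L[\alpha]$ Hamiltonian), and then $\overline{L(\alpha)}=L[\alpha]$ forces $\overline R=0$. In case~6), \cite[Lemmas~4.1 and 4.4]{P94} give $\overline R\subset\Psi_{\alpha,\beta}(T)$, which is centralised by $\overline H$, so $[\overline R,\overline H]=0$. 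The genuine difficulty lies in cases~3) and~5). In case~3), labelling $\alpha\in{\mathbb F}_p\delta_1$, one has $\overline{L(\alpha)}=L[\delta_1]\oplus\overline H_2$ with $\overline H_2$ a Cartan subalgebra of $L[\delta_2]$ of toral rank one, so $\overline R=\overline H_2$ and $[\overline R,\overline H]=[\overline H_2,\overline H_2]$. Applying Lemma~\ref{ham3} to $\overline H_2\subset L[\delta_2]$ yields a dichotomy: either $\overline H_2$ is abelian (giving the desired vanishing at once) or $\overline H_2^{\,3}$ contains a nonzero toral element. In the second case, a lift to $R^3\subset H^3$ produces a nonzero $p$-semisimple element of $T$ on which $\alpha$ vanishes while $\beta$ does not, which has to be forced into contradiction with the structure of $\rad L(\alpha)$ via Lemma~\ref{omega'}(a) applied along a suitable auxiliary root. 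Case~5) is treated analogously, using the explicit description of toral elements and their centralisers in $H(2;(2,1))$ from Section~2, in particular Lemmas~\ref{conj} and~\ref{centraliser}, to conclude $\overline R\subset Z(\overline H)$.

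Once $[R,H]\subset \rad_T L(\alpha,\beta)$ is established for every $\beta\in\Gamma(L,T)\setminus{\mathbb F}_p\alpha$, pick $x\in[R,H]\subset H$. For $\gamma\in\Gamma(L,T)\setminus{\mathbb F}_p\alpha$, the inclusion $x\in\rad_T L(\alpha,\gamma)$ together with $[x,L_\gamma]\subset L_\gamma$, Theorem~\ref{rootspaces} and Corollary~\ref{allomega}(i) gives
\[
[x,L_\gamma]\subset \rad_T L(\alpha,\gamma)\cap L_\gamma\subset \rad L(\gamma)\cap L_\gamma\subset H\cap L_\gamma=0.
\]
For $\gamma\in{\mathbb F}_p^\times\alpha$, $[x,L_\gamma]\subset R\cap L_\gamma=0$ since $R\subset L_0$ is an ideal of $L(\alpha)$. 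Hence $[x,L_\gamma]=0$ for every nonzero $\gamma$. Schue's lemma gives $H=\sum_{\gamma}[L_\gamma,L_{-\gamma}]$, so the Jacobi identity yields $[x,H]=0$; combined with the preceding, $x$ is central in the simple Lie algebra $L$, forcing $x=0$. Therefore $[R,H]=0$, i.e.\ $R\subset Z(H)$, as claimed. The main obstacle throughout is the analysis of cases~3) and~5) of Proposition~\ref{types}.
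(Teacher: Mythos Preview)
Your overall framework is sound: reduce to showing $[\overline R,\overline H]=0$ in each two-section type, then close via simplicity. Cases 1), 2), 4), 6) are fine, and your closing argument is correct. Case~5) is in fact much easier than you suggest: by Proposition~\ref{types}(5) every root is Hamiltonian and does not vanish on $\Psi_{\alpha,\beta}(T)\cap\widetilde S$, so Lemma~\ref{ham1}(2) gives $L[\alpha,\beta](\alpha)\subset G(\alpha)\cong H(2;\un{1})$, and since $L[\alpha,\beta](\alpha)$ contains the simple ideal $\widetilde S(\alpha)\cong H(2;\un{1})^{(2)}$, its radical is zero. No appeal to Section~2 is needed.

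The real problem is case~3). Your proposed contradiction via Lemma~\ref{omega'}(a) does not work: that lemma concerns elements of $H^3\cap[(\rad L(\gamma))_{i\gamma},L_{-i\gamma}]$ with $i\ne 0$, but here $R=\rad L(\alpha)\subset H$ by Theorem~\ref{rootspaces}, so $(\rad L(\alpha))_{i\alpha}=0$ for all $i\ne 0$ and the lemma is vacuous for $\gamma=\alpha$. The phrase ``applied along a suitable auxiliary root'' does not name a root $\gamma$ for which the hypotheses of Lemma~\ref{omega'}(a) are actually available. More fundamentally, in a genuine type~3) two-section with $\alpha\in\mathbb F_p\delta_1$, the radical $\overline R$ contains the $L[\delta_2]$-component of $\overline H$, and since $\delta_2\in\Omega$ forces that component to be a \emph{nonabelian} Cartan (Theorem~\ref{1sec} and Lemma~\ref{ham3}), one expects $[\overline R,\overline H]\ne 0$. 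So the inclusion you are trying to prove is not obtainable by a direct structural argument; one must rule out this configuration.

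The paper's proof does exactly that, but by a different route. It introduces $\Omega_2=\{\gamma\in\Gamma:\gamma([H,R])\ne 0\}$ and, assuming $\beta\in\Omega_2$, shows that only case~3) survives. The key observation in case~3) is not about $[\overline R,\overline H]$ at all: since $\alpha\in\mathbb F_p\delta_1$ and $\beta\in\mathbb F_p\delta_2$, the root $\alpha+\beta$ has $L[\alpha,\beta]_{\alpha+\beta}=0$, so $L(\alpha+\beta)$ is solvable; but $(\alpha+\beta)([H,R])=\beta([H,R])\ne 0$, so if $\alpha+\beta$ were a root it would lie in $\Omega$ and hence be nonsolvable by Theorem~\ref{rootspaces}. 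Thus $\alpha+\beta\notin\Gamma$, giving $[L_\alpha,L_\beta]=0$ for every $\beta\in\Omega_2$. Schue's lemma then forces $L_\alpha$ into the centre of $L$, a contradiction. This yields $\Omega_2=\emptyset$, i.e.\ $[H,R]$ consists of $p$-nilpotent elements. A second pass---now at the level of \emph{one}-sections, using Lemma~\ref{ham3} inside each $L[\beta]$---then shows $\Psi_\beta([H,R])=0$ for every $\beta$, whence $[[H,R],L_\beta]=0$ and $[H,R]=0$. This two-step structure (first $p$-nilpotence via Schue, then vanishing via one-sections) is the idea your argument is missing.
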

\begin{proof}
 Recall that $\rad\,L(\alpha)\subset H$ by Theorem~\ref{rootspaces} and Corollary~\ref{allomega}.
 Set
$$\Omega_2:=\{\gamma\in\Gamma(L,T)\,|\,\,\,\gamma\big([H,\rad\,L(\alpha)]\big)\ne
0\}.$$ Suppose $\Omega_2\neq\emptyset$ and let $\beta\in\Omega_2$.
Since $\alpha,\beta\in\Omega$ by Corollary~\ref{allomega},
Proposition~\ref{types} applies to $L[\alpha,\beta]$. Since $\alpha$
vanishes on $[H,\rad\,L(\alpha)]$, the roots $\alpha$ and $\beta$
are ${\mathbb F}_p$-independent. As $\alpha$ and $\beta$ are both
nonsolvable by Theorem~\ref{rootspaces}, $L[\alpha,\beta]$ cannot be
as in cases~1) or ~2) of Proposition~\ref{types}. It cannot be
governed by cases~5) or 6) either, because in case~5) the radical of
$L[\alpha,\beta](\alpha)$ is trivial by Proposition~\ref{ham1}(2)
and in case~6) the radical of $L[\alpha,\beta](\alpha)$ is contained
in $\Psi_{\alpha,\beta}(T)$; see \cite[Lemmas~4.1 \& 4.4]{P94}.

Thus, $L[\alpha,\beta]$ is as in case~3) of Proposition~\ref{types}.
But then
$L[\alpha,\beta]=L[\alpha,\beta](\alpha)+L[\alpha,\beta](\beta)$ and
$[L_\alpha,L_\beta]\subset \rad_T\,L(\alpha,\beta)$. Since
$(\alpha+\beta)\big([H,\rad\,L(\alpha)]\big)\ne 0$, and
$L(\alpha+\beta)$ is solvable, it must be that
$\alpha+\beta\not\in\Gamma(L,T)$. We now derive that
$[L_\alpha,L_\beta]=(0)$ for all $\beta\in\Omega_2$. In view of
Schue's lemma~\cite[Prop.~1.3.6(1)]{St04}, this means that
$L_\alpha$ lies in the center of $L$.

This contradiction shows that $\Omega_2=\emptyset$. Hence the ideal
$H_\alpha:=[H,\rad\,L(\alpha)]$ of $H$ consists of $p$-nilpotent
elements of $L_p$. Now let $\beta$ be any root in $\Gamma(L,T)$.
Since $H_\alpha\subset H^{(1)}$, it follows from Theorem~\ref{1sec}
and (the proof of) Lemma~\ref{ham3} that $\Psi_\beta(H_\alpha)=(0)$.
Then $[H_\alpha,L(\beta)]\subset\rad\,L(\beta)$, forcing
$[H_\alpha,L_\beta]=(0)$; see Theorem~\ref{rootspaces}. As a result,
$[H_\alpha,L]=(0)$, and hence $H_\alpha=(0)$ by the simplicity of
$L$. This proves the corollary.
\end{proof}

We are finally in a position to describe the $2$-sections of $L$
with respect to $T$. Let ${\mathfrak z}(H)$ denote the center of
$H={\mathfrak c}_L(T)$.
\begin{theo}\label{CSA}
The following are true:
\begin{itemize}
\item[(i)] $H^4=(0)$ and $\,H^{[p]}\subset T$.

\smallskip

\item[(ii)] $\dim H^2=3$ and $\,\dim H^3=2$.
\smallskip

\item[(iii)] $H^3\subset T$ and $\,\dim H/{\mathfrak z}(H)=3$.

\smallskip

\item[(iv)] ${\mathfrak z}(H)=H\cap T$.
\end{itemize}
\end{theo}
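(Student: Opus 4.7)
By Corollary~\ref{allomega}, every $\alpha\in\Gamma(L,T)$ lies in $\Omega$, so $\alpha(H^3)\ne 0$, and by Theorem~\ref{1sec} we have $L[\alpha]\cong H(2;\un{1})^{(2)}\oplus F(1+x_1)^4\partial_2$. Since $\alpha(H^3)\ne 0$, the Cartan subalgebra $\overline H:=\Psi_\alpha(H)$ of $L[\alpha]$ is non-abelian, so (the proof of) Lemma~\ref{ham3} identifies it as the span of $e_1,\ldots,e_{p-2},z$ with $[e_i,e_j]=0$, $[z,e_i]=ae_{i-1}$ for some $a\in F^\times$ (where $e_0:=0$), and $e_1$ toral. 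For $p=5$ this yields
$$\dim\overline H=4,\quad \dim\overline H^2=2,\quad \dim\overline H^3=1,\quad \overline H^4=0,$$
with $Z(\overline H)=\overline H^3=Fe_1$; a direct Jacobson-formula computation of the $[p]$-map on $\overline H$ further shows $\overline H^{[p]}\subset Fe_1$.

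The cornerstone is a separation principle: $\bigcap_{\alpha\in\Gamma(L,T)}\ker\Psi_\alpha=0$, both on $H$ and on $H_p$. By Theorems~\ref{1sec} and~\ref{rootspaces} together with Corollary~\ref{cenH}, $\ker(\Psi_\alpha|_L)=\rad\,L(\alpha)\subset\mathfrak z(H)$ and $\rad\,L(\alpha)\cap L_\beta=0$ for $\beta\ne 0$. Given $x\in\bigcap_\alpha\ker\Psi_\alpha\cap H$ and any root $\beta$, pick $\alpha$ with $\beta\in\mathbb F_p\alpha$: then $[x,L_\beta]\subset L_\beta\cap\ker\Psi_\alpha=0$. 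An induction showing $[x,H]\subset\bigcap_\alpha\rad\,L(\alpha)$ (itself in the intersection) then gives that $x$ centralises $L$, so $x=0$ by simplicity. The $H_p$-version uses $Z(L_p)=0$, which follows from the faithful action $L_p\hookrightarrow\mathrm{Der}\,L$ and $Z(L)=0$.

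Parts (i), (iii), (iv) now follow quickly. From $\overline H^4=0$ we get $H^4\subset\bigcap_\alpha\ker\Psi_\alpha=0$. For (iv), $H\cap T\subset\mathfrak z(H)$ is trivial; conversely, for $x\in\mathfrak z(H)$, $\Psi_\alpha(x)\in Z(\overline H)=Fe_1$ is toral for every $\alpha$, so writing $x=x_s+x_n$ in $H_p$ with $x_s\in T$ and $x_n$ $p$-nilpotent, $\Psi_\alpha(x_n)=0$ for every $\alpha$ and the separation principle forces $x_n=0$, giving $x\in T$. The relation $[H,H^3]\subset H^4=0$ then yields $H^3\subset\mathfrak z(H)=H\cap T\subset T$. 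Surjectivity of $H\twoheadrightarrow\overline H$ with kernel in $\mathfrak z(H)$ and image of $\mathfrak z(H)$ equal to $Fe_1$ gives $\dim H/\mathfrak z(H)=\dim\overline H-1=3$, completing (iii). Finally, $\overline H^{[p]}\subset Fe_1$ combined with the separation principle yields $H^{[p]}\subset T$, completing (i).

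The principal obstacle is the exact dimensions $\dim H^2=3$ and $\dim H^3=2$ in (ii). The surjection $H^2/H^3\twoheadrightarrow\overline H^2/\overline H^3$ (target of dimension $1$) gives $\dim H^2\ge\dim H^3+1$; its injectivity --- equivalent to $H^2\cap\rad\,L(\alpha)\subset H^3$, i.e., to $H^2\cap T=H^3$ --- would force $\dim H^2=\dim H^3+1$. To pin down $\dim H^3=2$, one exploits a specific $\mathbb F_p$-independent pair $\alpha,\beta\in\Gamma(L,T)$ with $L[\alpha,\beta]\cong\mathcal M(1,1)$ (provided by the nonstandard hypothesis on $T$): pulling back the $2$-dimensional Cartan torus of $\mathcal M(1,1)$ via $\Psi_{\alpha,\beta}$ furnishes two linearly independent toral elements in $H^3$ (the lower bound), while the upper bound $\dim H^3\le 2$ follows once one knows that the roots $\gamma$ vanishing on $H^3$ $\mathbb F_p$-span the annihilator of $H^3$ in $T^*$. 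The resulting identity $H^2\cap T=H^3$ then gives $\dim H^2=\dim H^3+1=3$, and this two-section bookkeeping is the technical heart of the argument.
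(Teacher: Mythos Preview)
Your treatment of (i), (iii) and (iv) is correct and close to the paper's, though the paper obtains $H^{[p]}\subset T$ more directly: since $\dim L_\gamma=5$ for every root $\gamma$ (Theorem~\ref{rootspaces}), any $p$-nilpotent $x\in H_p$ satisfies $(\ad x)^5=0$ on each $L_\gamma$ and on $H$ (as $H^6=0$), whence $x^{[p]}\in Z(L_p)=0$; Jordan--Chevalley then gives $(H_p)^{[p]}\subset T$ without the computation of $\overline H^{[p]}$.

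The real gap is in part~(ii). Your criterion for the upper bound $\dim H^3\le 2$ --- that the roots vanishing on $H^3$ should $\mathbb F_p$-span $\mathrm{Ann}_{T^*}(H^3)$ --- is vacuous here: by Corollary~\ref{allomega} every root lies in $\Omega$, so \emph{no} root vanishes on $H^3$, and your condition would at best force $\mathrm{Ann}_{T^*}(H^3)=0$, i.e.\ $H^3=T$, a lower bound rather than an upper one. Likewise you never establish the injectivity of $H^2/H^3\to\overline H^2/\overline H^3$ (equivalently $H^2\cap T=H^3$) that you need for $\dim H^2=\dim H^3+1$. The paper's argument is elementary and bypasses all of this: since $\overline H/\overline H^2$ is $2$-dimensional and $\ker(\Psi_\alpha|_H)=H\cap\rad\,L(\alpha)\subset\mathfrak z(H)$ (Corollary~\ref{cenH}), one has $H=Fx+Fy+H^2+\mathfrak z(H)$ for suitable $x,y$. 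A direct expansion then gives $H^2=F[x,y]+H^3$ and $H^3=F[x,[x,y]]+F[y,[x,y]]+H^4$; with $H^4=0$ this yields $\dim H^3\le 2$ and $\dim H^2=1+\dim H^3$ at once. Combined with your (correct) lower bound $\dim H^3\ge 2$ from the Melikian two-section, (ii) follows without any ``two-section bookkeeping''.
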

\begin{proof}
(a) Let $\alpha\in \Gamma(L,T)$. Then $\alpha\in\Omega$ by
Corollary~\ref{allomega}(i). It is immediate from Theorem~\ref{1sec}
that $H^4\subset\rad\,L(\alpha)$. Then
$[H^4,L_\alpha]\subset(\rad\,L(\alpha))_\alpha=(0)$ by
Theorem~\ref{rootspaces}. Since this holds for every root $\alpha$
and $L$ is simple, we derive $H^4=(0)$.

Let ${\mathcal N}(H_p)$ denote the set of all $p$-nilpotent
elements of $H_p$. Since $\dim L_\gamma=5$ for all
$\gamma\in\Gamma(L,T)$ any $p$-nilpotent element $x\in{\mathcal
N}(H_p)$ has the property that $(\ad
x)^5\big(\sum_{\gamma\in\Gamma(L,T)}\,L_\gamma\big)=0$. Then
$x^{[p]}=0$ by the simplicity of $L$. The Jordan--Chevalley
decomposition in $H_p$ now yields $(H_p)^{[p]}\subset T$, forcing
$H^{[p]}\subset T$. As a result, statement~(i) follows, and we
also deduce that ${\mathcal N}(H_p)=\{x\in H_p\,|\,\,x^{[p]}=0\}$
and $H_p\subset H+T$.

Since $H^4=(0)$ and $[T,H]=0$, Jacobson's formula implies that
$(x+y)^{[5]}=x^{[5]}+y^{[5]}$ for all $x,y\in H_p$. Therefore,
${\mathcal N}(H_p)$ is a subspace of $H$. By the Jordan--Chevalley
decomposition in $H_p$, we also get $H_p\subset {\mathcal
N}(H_p)\oplus T$.

\smallskip

\noindent (b) Since $\Gamma(L,T)=\Omega$, it follows from
Theorem~\ref{1sec} and (the proof of) Lemma~\ref{ham3} that
$H^2+\rad\,L(\alpha)$ has codimension $2$ in $H$ for every
$\alpha\in\Gamma(L,T)$. Since $\rad\,L(\alpha)\subset{\mathfrak
z}(H)$ by Corollary~\ref{cenH}, there exist $x,y\in H$ such that
$H=Fx+Fy+H^2+{\mathfrak z}(H)$. As a consequence, $H^2=F[x,y]+H^3$
and $H^3=F[x,[x,y]]+F[y,[y,x]]+H^4$. As $H^4=(0)$, this gives
$\dim H^3\le 2$ and $\dim H^2=1+\dim H^3$.

Let $\alpha,\beta\in\Gamma(L,T)$ be such that
$L[\alpha,\beta]\cong{\mathcal M}(1,1)$ (such a pair of roots exists
by \cite[Thm~1(ii)]{P94}). It is immediate from \cite[Lemmas~4.1 \&
4.4]{P94} that $\dim\Psi_{\alpha,\beta}(H^3)=2$. Hence $\dim H^3\ge
2$. In conjunction with the above remarks, this gives $\dim H^3=2$
and $\dim H^2=3$. Statement~(ii) follows.

\smallskip

\noindent (c) Since $H^4=(0)$, we have that $H^3\subset{\mathfrak
z}(H)$. If the nilpotent Lie algebra $H/\mathfrak{z}(H)$ has
codimension $< 3$ in $H$, then it is abelian. In this case
$H^2\subset \mathfrak{z}(H)$, forcing $H^3=(0)$. This contradiction
shows that $\mathfrak{z}(H)$ has codimension $\ge 3$ in $H$.  Since
$H^3\ne (0)$ has codimension $1$ in $H^2$, the equality
$H^2\cap{\mathfrak z}(H)=H^3$ holds. Therefore,
\begin{eqnarray*}3&\le& \dim H/{\mathfrak
z}(H)=\dim H/(H^2+{\mathfrak z}(H))+\dim H^2/H^3\\
&\le&\dim
H/(H^2+\rad\,L(\alpha))+\dim H^2/H^3=3.
\end{eqnarray*}
This implies that ${\mathfrak z}(H)$ has codimension $3$ in $H$.

Let $h\in {\mathfrak z}(H)$ and write $h=h_s+h_n$ with $h_s\in T$
and $h_n\in{\mathcal N}(H_p)$. In view of our earlier remarks,
$h_n\in{\mathfrak z}(H)\cap (T+H)$. Because $\Gamma(L,T)=\Omega$,
Theorem~\ref{1sec} shows that for every $\gamma\in\Gamma(L,T)$ the
element
$\Psi_\gamma(h_n)\in\Psi_\gamma(T)+\Psi_\gamma(H)=\Psi_\gamma(H)$ of
$L[\gamma]\cong H(2;\un{1})^{(2)}\oplus F(1+x_1)^4\partial_2$ is
$p$-nilpotent in $L[\gamma]$ and commutes with $\Psi_\alpha(H)$.
Arguing as in the proof of Lemma~\ref{ham3} it is now
straightforward to see that $\Psi_\gamma(h_n)=0$. Then
$[h_n,L(\gamma)]\subset \rad\,L(\gamma)$. In view of
Theorem~\ref{rootspaces}, this entails that $[h_n,L_\gamma]=0$ for
all $\gamma\in\Gamma(L,T)$. As a consequence, $h_n=0$, forcing
${\mathfrak z}(H)=H\cap T$. Combined with our remarks in part~(b)
this gives (iii), completing the proof.
\end{proof}
\begin{cor}\label{nottype3}
Let $\alpha,\beta\in\Gamma(L,T)$. Then case~3) of
Proposition~\ref{types} does not occur for $L[\alpha,\beta]$.
\end{cor}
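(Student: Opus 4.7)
The plan is to assume case~3) for contradiction and to conflict the dimension bound $\dim H/\mathfrak{z}(H)=3$ from Theorem~\ref{CSA}(iii). By Theorem~\ref{r2} applied to case~3), there exist $\delta_1,\delta_2\in\Gamma(L,T)$ with $L[\delta_1]^{(1)}\oplus L[\delta_2]^{(1)}\subset L[\alpha,\beta]\subset L[\delta_1]\oplus L[\delta_2]$. Corollary~\ref{allomega}(i) gives $\delta_1,\delta_2\in\Omega$, Theorem~\ref{rootspaces} shows they are Hamiltonian, and Theorem~\ref{1sec} forces each $L[\delta_i]$ to be of the form $H(2;\un{1})^{(2)}\oplus F(1+x_1)^4\partial_2$ (possibly with the extension trivial), with $L[\delta_i]^{(1)}\cong H(2;\un{1})^{(2)}$. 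The torus $\overline T:=\Psi_{\alpha,\beta}(T)$ splits as $T_1'\oplus T_2'$, where $T_i'\subset L[\delta_i]_p$ is maximal; the functionals $\delta_1,\delta_2$ are dual to this decomposition, i.e.\ $\delta_i$ vanishes on $T_j'$ for $j\ne i$.

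Set $\overline H:=\Psi_{\alpha,\beta}(H)=L[\alpha,\beta]_0$, let $\pi_i$ be the projection $L[\delta_1]\oplus L[\delta_2]\to L[\delta_i]$, and denote by $\overline H_i':={\mathfrak c}_{L[\delta_i]}(T_i')$ and $\overline H_i'':={\mathfrak c}_{L[\delta_i]^{(1)}}(T_i')$ the Cartan subalgebras. Then $\overline H_1''\oplus \overline H_2''\subset\overline H\subset \overline H_1'\oplus \overline H_2'$. Since $\delta_i(H^3)\ne 0$ while $H^3\subset T$ by Theorem~\ref{CSA}(iii), we have $\overline H^3\subset\overline T$ with $\delta_i(\overline H^3)\ne 0$, and the duality forces $\pi_i(\overline H^3)\ne 0$ in $T_i'$; equivalently $\pi_i(\overline H)^3\ne 0$ for both $i$. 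Cartan subalgebras of $H(2;\un{1})^{(2)}$ alone are abelian (Demu\v{s}kin; this is used in the proof of Lemma~\ref{ham3}), so $\pi_i(\overline H)\supsetneq \overline H_i''$, hence $\pi_i(\overline H)=\overline H_i'$ is the full Cartan including the extension element. By Lemma~\ref{ham3}, $\overline H_i'$ is non-abelian with center of dimension one, spanned by the toral generator $t_i$ of $T_i'$.

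From the inclusion $\overline H_1''\oplus \overline H_2''\subset\overline H$ of dimension $6$ together with the surjectivity of each $\pi_i$ onto $\overline H_i'$ (forcing at least one additional element into $\overline H$, whether or not the two extension elements appear separately or coupled), one obtains $\dim\overline H\ge 7$. On the other hand, for $x=(x_1,x_2)\in\mathfrak{z}(\overline H)$, the surjectivity $\pi_i(\overline H)=\overline H_i'$ gives $[x_i,\overline H_i']=0$; using the bracket relations between the extension element and $\overline H_i''$ from Lemma~\ref{ham3}'s proof, one shows that the only elements of $\overline H_i''$ centralizing $\overline H_i'$ are the scalar multiples of $t_i$. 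Thus $\mathfrak{z}(\overline H)\subset Ft_1\oplus Ft_2$ and $\dim\mathfrak{z}(\overline H)\le 2$, giving $\dim\overline H/\mathfrak{z}(\overline H)\ge 5$. But $\overline H$ is a homomorphic image of $H$, so $\dim\overline H/\mathfrak{z}(\overline H)\le\dim H/\mathfrak{z}(H)=3$ by Theorem~\ref{CSA}(iii), a contradiction. The main obstacle will be making the bound $\dim\overline H\ge 7$ rigorous and computing $\mathfrak{z}(\overline H)$ in every possible subdirect-product configuration of $\overline H$ inside $\overline H_1'\oplus \overline H_2'$, which requires the explicit action of $(1+x_1)^{p-1}\partial_2$ on the Hamiltonian Cartan from Lemma~\ref{ham3}'s proof.
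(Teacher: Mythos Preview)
Your argument is correct, and the contradiction you reach is genuine. The two bounds you need are both solid: once you know each $\pi_i\colon \overline H\to\overline H_i'$ is surjective (which follows immediately from $\pi_i\circ\Psi_{\alpha,\beta}=\Psi_{\delta_i}$, so no case analysis is really needed), the inclusion $\overline H_1''\oplus\overline H_2''\subset\overline H$ together with $\pi_1(\overline H)\supsetneq\overline H_1''$ forces $\dim\overline H\ge 7$; and since any $(x_1,x_2)\in\mathfrak z(\overline H)$ must satisfy $x_i\in\mathfrak z(\overline H_i')=Ft_i$ (the latter equality being a consequence of $(\overline H_i')^3\ne 0$ and $\dim\overline H_i'=4$, so no explicit bracket table is required), you get $\dim\mathfrak z(\overline H)\le 2$.

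The paper reaches the same contradiction more directly by using part~(ii) of Theorem~\ref{CSA} ($\dim H^2=3$) instead of part~(iii). Since $\Psi_{\delta_i}$ is surjective onto the nonabelian Cartan $\overline H_i'$, one has $\Psi_{\delta_i}(H^2)=(\overline H_i')^2$, which is $2$-dimensional by the structure in (the proof of) Lemma~\ref{ham3}. Moreover $(\overline H_i')^2=[z_i,\overline H_i'']$ lies inside $\overline H_i''$, and since $\overline H_i''\oplus 0\subset\overline H$ and some $z_i'\in\overline H$ has $\pi_i(z_i')=z_i$, one gets $(\overline H_i')^2\oplus 0\subset\overline H^2$ for each $i$. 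Hence $\dim\Psi_{\alpha,\beta}(H^2)\ge 4>3=\dim H^2$. This bypasses entirely the subdirect-product analysis you flag as the main obstacle: working with $H^2$ rather than $H/\mathfrak z(H)$ lets the two $2$-dimensional pieces land in disjoint summands automatically, so no separate upper bound on the centre is needed.
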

\begin{proof}
Indeed, otherwise the $T$-socle of $L[\alpha,\beta]$ has the form
$S_1\oplus S_2=S_1(\delta_1)\oplus S_2(\delta_2)$. Then
$\Psi_{\alpha,\beta}(H)\cap S_i(\delta_i)\cong \Psi_{\delta_i}(H)$
for $i=1,2$. As $\delta_1,\delta_2\in\Omega$ by
Corollary~\ref{allomega}(i), it follows from Theorem~\ref{1sec} that
$S_i(\delta_i)\cong H(2;\un{1})^{(2)}\oplus F(1+x_2)^4\partial_2$
and $\Psi_{\delta_i}(H)$ is a nonabelian Cartan subalgebra of
$S_i(\delta_i)$. Then Lemma~\ref{ham3} implies that $\dim
\Psi_{\delta_i}(H^2)=2$. As a consequence,
$\Psi_{\alpha,\beta}(H^2)\cap S_i(\delta_i)$ is $2$-dimensional for
$i=1,2$. But then $\dim H^2\ge 4$ contrary to Theorem~\ref{CSA}(ii).
The result follows.
\end{proof}
\begin{cor}\label{allGamma}
The following are true:
\begin{itemize}
\item[(1)] $\Gamma(L,T)\cup\{0\}$ is an ${\mathbb F}_p$-subspace
of $T^*$.

\smallskip

\item[(2)] The $p$-envelope of $H^3$ in $L_p$ coincides with $T$.

\smallskip

\item[(3)] $H_p=H+T$.
\end{itemize}
\end{cor}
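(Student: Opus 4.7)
The plan is to establish (1) via the two-section classification, then deduce (2) through a dimension count using $\mathbb{F}_p$-forms, and finally obtain (3) by a short restricted-algebra computation.

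For (1), closure of $\Gamma(L,T)\cup\{0\}$ under $\mathbb{F}_p$-scalar multiplication is immediate: by Theorem~\ref{rootspaces}, every $\alpha\in\Gamma(L,T)=\Omega$ is Hamiltonian, so $L[\alpha]^{(1)}\cong H(2;\un{1})^{(2)}$ realises all of $\mathbb{F}_p^\times\alpha$ as roots. For closure under addition, take $\mathbb{F}_p$-independent $\alpha,\beta\in\Gamma(L,T)$ and enumerate the possibilities for $L[\alpha,\beta]$ from Proposition~\ref{types}: case~4) is excluded by Corollary~\ref{allomega}(ii), case~3) by Corollary~\ref{nottype3}, case~1) by the nonsolvability of $\alpha$ (Theorem~\ref{rootspaces}), and case~2) by $\mathbb{F}_p$-independence. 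Only cases~5) and 6) survive, and in each Proposition~\ref{types}(5) or \cite[Lemmas~4.1 \& 4.4]{P94} forces $\Gamma(L[\alpha,\beta],\Psi_{\alpha,\beta}(T))=(\mathbb{F}_p\alpha\oplus\mathbb{F}_p\beta)\setminus\{0\}$, so $\alpha+\beta\in\Gamma(L,T)\cup\{0\}$.

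For (2), let $T'$ denote the $p$-envelope of $H^3$ in $L_p$. Since $H^3\subset T$ by Theorem~\ref{CSA}(iii), $T'$ is a subtorus of $T$, and $\gamma(T')\ne 0$ for every $\gamma\in\Omega=\Gamma(L,T)$. To promote this to $T'=T$ I use the $\mathbb{F}_p$-form $T_{\mathbb{F}_p}$ of $T$ spanned by toral elements: each root $\gamma$ satisfies $\gamma(t)^p=\gamma(t^{[p]})=\gamma(t)$ on toral $t$, hence restricts to an element of $\mathrm{Hom}_{\mathbb{F}_p}(T_{\mathbb{F}_p},\mathbb{F}_p)$. By (1), $\Gamma\cup\{0\}$ is an $\mathbb{F}_p$-subspace of $T^*_{\mathbb{F}_p}$; and because $T\subset L_p\subset\Der L$ acts faithfully on $L$ (any derivation commuting with $\ad L$ vanishes by simplicity of $L$), the roots separate points of $T_{\mathbb{F}_p}$. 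A dimension count then forces $\Gamma\cup\{0\}=T^*_{\mathbb{F}_p}$. If $T'\subsetneq T$, some nonzero $\lambda\in T^*_{\mathbb{F}_p}$ vanishes on $T'$, yielding a root that vanishes on $H^3$, contrary to $\Gamma=\Omega$. Hence $T'=T$.

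For (3), since $H^3\subset H$, the $p$-envelope of $H^3$ lies inside $H_p$, so (2) gives $T\subset H_p$ and thus $H+T\subset H_p$. Conversely, Theorem~\ref{CSA}(i) gives $H^{[p]}\subset T$, and since $[H,T]=0$ Jacobson's formula yields $(h+t)^{[p]}=h^{[p]}+t^{[p]}\in T$ for all $h\in H$ and $t\in T$. Combined with $[H+T,H+T]=[H,H]\subset H$, this shows that $H+T$ is a $p$-subalgebra of $L_p$ containing $H$, whence $H_p\subset H+T$. The main obstacle is (1): it demands that every surviving two-section type in Proposition~\ref{types} be handled, which in turn rests on the full chain of results Theorem~\ref{rootspaces}, Corollary~\ref{allomega}, Corollary~\ref{nottype3} and Theorem~\ref{CSA} built up earlier. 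Once (1) is in place, (2) is a dimension/separation argument with $\mathbb{F}_p$-forms and (3) is a brief verification.
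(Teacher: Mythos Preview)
Your proof is correct and follows essentially the same route as the paper. The only notable difference is in part~(2): the paper compresses the existence of a root vanishing on a proper subtorus $T_0$ into the single phrase ``By part~(1), there exists $\gamma\in\Gamma(L,T)$ such that $\gamma(T_0)=0$,'' whereas you spell out the underlying $\mathbb{F}_p$-form argument (roots restrict to $T_{\mathbb{F}_p}^*$, faithfulness plus part~(1) forces $\Gamma\cup\{0\}=T_{\mathbb{F}_p}^*$, hence a nonzero functional annihilating $(T_0)_{\mathbb{F}_p}$ is a root); this is exactly what the paper's terse sentence is invoking.
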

\begin{proof}
(1) Since every $\gamma\in \Gamma(L,T)$ is Hamiltonian by
Theorem~\ref{rootspaces}, we have ${\mathbb
F}_p^\times\gamma\subset\Gamma(L,T)$. Let
$\alpha,\beta\in\Gamma(L,T)$ be ${\mathbb F}_p$-independent. Then
$\Gamma(L[\alpha,\beta],\Psi_{\alpha,\beta}(T))$ contains two
nonsolvable roots. In view of Corollary~\ref{nottype3}, this implies
that $L[\alpha,\beta]$ is determined by cases~5) or 6) of
Proposition~\ref{types}. In both cases,
$\Gamma(L[\alpha,\beta],\Psi_{\alpha,\beta}(T))\cup\{0\}={\mathbb
F}_p\alpha+{\mathbb F}_p\beta$; see Lemma~\ref{ham1}(4) and
\cite[Lemmas~4.1 \& 4.4]{P94}. As a consequence,
$\alpha+\beta\in\Gamma(L,T)$. Statement~(1) follows.

\smallskip

\noindent (2) By Theorem~\ref{CSA}(3), $H^3\subset T$. Denote by
$T_0$ the $p$-envelope of $H^3$ in $T$ and suppose that $T_0\ne T$.
Then $T_0$ is a proper subtorus of $T$. By part~(1), there exists
$\gamma\in\Gamma(L,T)$ such that $\gamma(T_0)=0$. Then
$\gamma(H^3)=0$ contrary to Corollary~\ref{allomega}(i). Therefore,
$(H^3)_p=T$.

\smallskip

\noindent (3) It is immediate from Theorem~\ref{CSA}(i) that
$H_p\subset H+T$. Since $T=(H^3)_p\subset H_p$ by part~(2), we now
derive that $H_p=H+T$.
\end{proof}
We now summarize the results of this section:
\begin{theo}\label{sum}
Let $L$, $T$ and $H$ be as above. Then the following hold:
\begin{itemize}
\item[1)] $\Gamma(L,T)\cup\{0\}$ is an ${\mathbb F}_p$-subspace of
$T^*$ and no root in $\Gamma(L,T)$ vanishes on $H^3$.

\smallskip

\item[2)] $H^3\subset T$,\, ${\mathfrak z}(H)=H\cap T$,\,
$H_p=H+T$,\, $\dim H/(H\cap T)=3$,\, $\dim H^2=3$, and $\dim
H^3=2$. The $p$-envelope of $H^3$ in $L_p$ coincides with $T$.

\smallskip

\item[3)] $\rad\,L(\alpha)=H\cap T \cap\ker \alpha,\,$ $\dim
L_\alpha=5$, and $L[\alpha]\cong H(2;\un{1})^{(2)}\oplus
F(1+x_1)^4\partial_2$ for every $\alpha\in\Gamma(L,T)$.

\smallskip

\item[4)] If $\alpha,\beta\in\Gamma(L,T)$ are ${\mathbb
F}_p$-independent, then either $L[\alpha,\beta]\cong {\mathcal
M}(1,1)$ or $$H(2;(2,1))^{(2)}\subset L[\alpha,\beta]\subset
H(2;(2,1)).$$ Furthermore, $L[\alpha,\beta]\cong
L(\alpha,\beta)/H\cap T\cap\ker\alpha\cap\ker\beta$.
\end{itemize}
\end{theo}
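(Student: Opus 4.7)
The plan is to assemble the pieces already proved in this section, since Theorem~\ref{sum} is explicitly advertised as a summary. Part~1) combines Corollary~\ref{allomega}(i) (which gives $\Gamma(L,T)=\Omega$, and hence, by the very definition of $\Omega$, that no root vanishes on $H^3$) with Corollary~\ref{allGamma}(1) (which supplies the $\mathbb{F}_p$-linear structure). Part~2) is a direct transcription of Theorem~\ref{CSA}(i)--(iv) together with Corollary~\ref{allGamma}(2),(3); no new work is required.

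For Part~3), the equality $\dim L_\alpha=5$ and the description $L[\alpha]\cong H(2;\un{1})^{(2)}\oplus F(1+x_1)^4\partial_2$ follow from Theorem~\ref{rootspaces} combined with the nonstandard branch of Theorem~\ref{1sec} (applicable because $\alpha\in\Omega$). To pin down $\rad L(\alpha)$, I combine $\rad L(\alpha)\subset H$ from Theorem~\ref{rootspaces}, $\rad L(\alpha)\subset\mathfrak{z}(H)$ from Corollary~\ref{cenH}, and $\mathfrak{z}(H)=H\cap T$ from Theorem~\ref{CSA}(iv). Since $L_\alpha\cap H=(0)$, the functional $\alpha$ must vanish on $\rad L(\alpha)\subset T$, giving $\rad L(\alpha)\subset H\cap T\cap\ker\alpha$. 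The reverse inclusion is immediate, because $H\cap T\cap\ker\alpha$ centralizes $T$, $H$, and every $L_{i\alpha}$, hence lies in the center of $L(\alpha)$.

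For Part~4), Proposition~\ref{types} supplies six candidate types for $L[\alpha,\beta]$ when $\alpha,\beta\in\Gamma(L,T)$ are $\mathbb{F}_p$-independent. Types~1) and~2) are ruled out because both $\alpha$ and $\beta$ are nonsolvable by Theorem~\ref{rootspaces}; type~3) is ruled out by Corollary~\ref{nottype3}; type~4) by Corollary~\ref{allomega}(ii). Only types~5) and~6) remain, which is the stated dichotomy. To establish $\rad_T L(\alpha,\beta)=H\cap T\cap\ker\alpha\cap\ker\beta$, note that by Part~3) any $T$-stable solvable ideal $J$ of $L(\alpha,\beta)$ satisfies $J\cap L_\gamma\subset\rad L(\gamma)\cap L_\gamma=(0)$ for every nonzero $\gamma\in\mathbb{F}_p\alpha+\mathbb{F}_p\beta$, so $J\subset H$; this forces $J$ to be a $T$-stable solvable ideal of each one-section $L(\gamma)$, hence contained in $\rad L(\gamma)=H\cap T\cap\ker\gamma$. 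Intersecting over $\gamma=\alpha,\beta$ gives one direction, and the reverse holds since $H\cap T\cap\ker\alpha\cap\ker\beta$ is central in $L(\alpha,\beta)$.

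Since this is a summary theorem, there is no single genuine obstacle: the real work has been distributed across Theorems~\ref{rootspaces} and~\ref{CSA}, Propositions~\ref{types} and~\ref{type4}, and the elimination corollaries; the only piece that had not appeared explicitly, namely the compatibility of the two-section radical with the one-section radicals, falls out of Part~3) at once.
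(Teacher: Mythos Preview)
Your proposal is correct and follows essentially the same approach as the paper's own proof, which is extremely terse: it simply states that Parts~1) and~2) are reformulations of earlier results and that Parts~3) and~4) follow once one observes $\rad L(\alpha)\subset\mathfrak{z}(H)=H\cap T$ via Corollary~\ref{cenH} and Theorem~\ref{CSA}(iv). You have spelled out the cross-references and the radical computations in more detail than the paper does, but the logical structure is identical.
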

\begin{proof}
Parts~1) and 2) are just reformulations of our earlier results. In
order to get~3) and 4) it suffices to observe that
$\rad\,L(\alpha)\subset {\mathfrak z}(H)=H\cap T$; see
Corollary~\ref{cenH} and Theorem~\ref{CSA}(iv).
\end{proof}
\section{\bf Some properties of the restricted Melikian algebra}
In order to proceed further with our investigation, we now need more
information on central extensions and irreducible representations of
the Melikian algebra ${\mathcal M}(1,1)$.
\begin{prop}\label{bilinear}
Every Melikian algebra ${\mathcal M}(\un{n})$, where ${\un
n}=(n_1,n_2)$, possesses a nondegenerate invariant symmetric
bilinear form.
\end{prop}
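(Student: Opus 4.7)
The plan is to construct a nondegenerate invariant symmetric bilinear form $B$ explicitly, using the natural $\mathbb{Z}/3\mathbb{Z}$-grading of the Melikian algebra together with the canonical integration functional on the divided power algebra. I would work with the presentation $\mathcal{M}(\underline{n}) = W(2;\underline{n}) \oplus \mathcal{O}(2;\underline{n}) \oplus \widetilde{W}(2;\underline{n})$, whose three summands carry grades $\overline{0}$, $\overline{2}$, $\overline{1}$ modulo $3$ (so that $[\mathcal{M}_{\overline{i}}, \mathcal{M}_{\overline{j}}] \subset \mathcal{M}_{\overline{i+j}}$). Let $\int \colon \mathcal{O}(2;\underline{n}) \to F$ denote the linear functional picking out the coefficient of the top monomial $x_1^{(p^{n_1}-1)}x_2^{(p^{n_2}-1)}$; its defining property is $\int \partial_i(h) = 0$ for all $h$ and $i \in \{1,2\}$, equivalently $\int D(h) = -\int \mathrm{div}(D)\cdot h$ for $D \in W$.

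Next I would define $B$ to vanish on all pairs of components whose grades do not add to $\overline{1}$, so the only nonzero pairings are on $\mathcal{O} \times \mathcal{O}$ and on $W \times \widetilde{W}$ (with its transpose). Explicitly, set $B(f, g) := \int fg$ for $f, g \in \mathcal{O}(2;\underline{n})$, and $B(D, \widetilde{E}) := \int \omega(D, E) =: B(\widetilde{E}, D)$ for $D, E \in W$, where $\omega(f_1\partial_1 + f_2\partial_2, g_1\partial_1 + g_2\partial_2) := f_1 g_2 - f_2 g_1$. Symmetry is built in, and nondegeneracy is immediate: $\int fg$ is the standard perfect pairing on $\mathcal{O}(2;\underline{n})$, and given any nonzero $D \in W$ one can choose $E \in W$ so that $\omega(D, E)$ realises any prescribed element of $\mathcal{O}(2;\underline{n})$.

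The substantive task is verifying invariance $B([X, Y], Z) = B(X, [Y, Z])$. By $F$-linearity and the $\mathbb{Z}/3\mathbb{Z}$-grading this reduces to a short finite list of cases indexed by the component types of $X, Y, Z$; only triples whose grades sum to $\overline{1}$ give nonvanishing contributions, the rest being automatically zero on both sides. For each such case I would expand both sides using the explicit bracket formulas of $\mathcal{M}(\underline{n})$---notably the divergence-corrected actions $[D, f] = D(f) - 2\mathrm{div}(D) f$ and $[D, \widetilde{E}] = \widetilde{[D, E]} + 2\mathrm{div}(D)\widetilde{E}$, together with the formulas for $[f, g] \in \widetilde{W}$, $[f, \widetilde{E}] \in W$, and $[\widetilde{D}, \widetilde{E}] \in \mathcal{O}$---and reduce to an identity in $\mathcal{O}(2;\underline{n})$. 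The key tool is the Cartan-type identity $D(\omega(E,F)) = \omega([D,E], F) + \omega(E, [D,F]) + \mathrm{div}(D)\omega(E,F)$, which, once integrated against $\int$ and combined with integration by parts, yields for example $\int \omega([D,E], F) = \int \omega(D, [E,F]) + 2\int \mathrm{div}(E)\omega(D, F)$, exactly the invariance identity for the triple $(D, E, \widetilde{F})$. The main obstacle is carrying out these checks for all grading-nonzero triples: the cancellations between Lie-derivative terms and divergence corrections are delicate, and the coefficients $2 \equiv -3 \pmod{5}$ in the bracket formulas (reflecting the three-step grading in characteristic $5$) are precisely calibrated to make the identities hold. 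Once invariance is verified, symmetry and nondegeneracy give the proposition immediately.
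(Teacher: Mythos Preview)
Your approach is correct but takes a genuinely different route from the paper's. The paper does not write down an explicit form at all: instead it works with the fine $\Z$-grading $\mathcal{M}=\bigoplus_{i=-3}^{s}\mathcal{M}_i$, checks by a highest-weight calculation that $(\mathcal{M}_{-3})^*\cong\mathcal{M}_s$ as $\mathcal{M}_0$-modules, and then invokes \cite[Lemma~4]{P85} (irreducibility of $\mathcal{M}$ as a graded $\mathcal{M}_p$-module) to produce a module isomorphism $\theta\colon\mathcal{M}\stackrel{\sim}{\to}\mathcal{M}^*$ with $\theta(\mathcal{M}_i)=(\mathcal{M}_{s-3-i})^*$; setting $b(x,y)=\theta(x)(y)$ gives nondegeneracy and invariance for free, and symmetry follows by the neat observation that $\dim\mathcal{M}=5^{n_1+n_2+1}$ is odd, so the skew part $b'(x,y)=b(x,y)-b(y,x)$, being invariant, must vanish. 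Your construction via the integration functional $\int$ on $\mathcal{O}(2;\underline{n})$ and the volume pairing $\omega$ on $W$ is instead fully explicit and self-contained: symmetry and nondegeneracy are immediate, but invariance costs a finite case-check through the bracket table (the key identity you quote, $D(\omega(E,F))=\omega([D,E],F)+\omega(E,[D,F])+\mathrm{div}(D)\,\omega(E,F)$, combined with $\int D(h)=-\int\mathrm{div}(D)h$, does handle the $(W,W,\widetilde{W})$ case exactly as you indicate, and the remaining cases $(W,\mathcal{O},\mathcal{O})$, $(\widetilde{W},\widetilde{W},\widetilde{W})$, $(\mathcal{O},\mathcal{O},\mathcal{O})$, $(\widetilde{W},\mathcal{O},W)$ go through similarly once the bracket formulas are written out). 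The paper's argument is shorter and more conceptual but leans on an external module-theoretic lemma; yours is more elementary, gives a usable closed formula for the form, and makes transparent why the divergence coefficients $\pm 2$ in the Melikian brackets are exactly what invariance requires.
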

\begin{proof}
Adopt the notation of \cite[Sect.~4.3]{St04} and consider the
natural grading
$${\mathcal M}(\un{n})\,=\,{\mathcal M}_{-3}\oplus {\mathcal
M}_{-2}\oplus{\mathcal M}_{-1}\oplus{\mathcal M}_0\oplus{\mathcal
M}_1\oplus\cdots\oplus{\mathcal M}_s,\qquad s=3(5^{n_1}+5^{n_2})-7$$
of the Melikian algebra ${\mathcal M}={\mathcal M}(\un{n})$. Recall
that ${\mathcal
M}_0=\bigoplus_{i,j=1}^2\,x_i\partial_j\cong\mathfrak{gl}(2),\,$
${\mathcal M}_{-3}=F\partial_1\oplus F\partial_2\,$ and ${\mathcal
M}_s=Fx^{(\tau(\un{n}))} \tilde{\partial}_1\oplus
Fx^{(\tau(\un{n}))}\tilde{\partial}_2$, where
$\tau(\un{n})=(5^{n_1}-1,5^{n_2}-1)$. Both ${\mathcal M}_{-3}$ and
${\mathcal M}_s$ are $2$-dimensional irreducible ${\mathcal
M}_0$-modules. Using the multiplication table \cite[(4.3.1)]{St04},
it is easy to observe that
\begin{eqnarray*}
\big[x_1\partial_1,x^{(\tau(\un{n}))}\tilde{\partial}_1\big]&=&
(-2+2)x^{(\tau(\un{n}))}\tilde{\partial}_1\,=\,0,
\quad\quad  \big[x_2\partial_1,x^{(\tau(\un{n}))}\tilde{\partial}_1\big]=0,\\
\big[x_2\partial_2,x^{(\tau(\un{n}))}\tilde{\partial}_1\big]&=&
(-1+2)x^{(\tau(\un{n}))}\tilde{\partial}_1.
\end{eqnarray*}
This shows that $x^{(\tau(\un{n}))}\tilde{\partial}_1$ is a
primitive vector of weight $(0,1)$ for the Borel subalgebra
${\mathfrak b}:= Fx_1\partial_1\oplus Fx_2\partial_2\oplus
Fx_2\partial_1$ of ${\mathcal M}_0$. Now let $f$ be the linear
function on ${\mathcal M}_{-3}$ such that $f(\partial_1)=0$ and
$f(\partial_2)=1$. Then
$(x_1\partial_1)(f)=-f\circ(x_1\partial_1)=0,\,\,$
$(x_2\partial_2)(f)=-f\circ(x_2\partial_2)=f$ and
$(x_2\partial_1)(f)=-f\circ(x_2\partial_1)=0$, showing that $f\in
({\mathcal M}_{-3})^*$ is a primitive vector of weight $(0,1)$ for
the Borel subalgebra ${\mathfrak b}$. From this it is immediate that
$({\mathcal M}_{-3})^*\cong {\mathcal M}_s$ as ${\mathcal
M}_0$-modules. As ${\mathcal M}$ is an irreducible graded ${\mathcal
M}_p$-module, \cite[Lemma~4]{P85} shows that there exists a module
isomorphism $\theta\colon\,{\mathcal
M}\stackrel{\sim}{\longrightarrow}{\mathcal M}^*$ sending ${\mathcal
M}_i$ onto $({\mathcal M}_{s-3-i})^*$ for all $i\in\{-3,\ldots,s\}$
(as usual, we identify $({\mathcal M}_i)^*$ with the subspace of
${\mathcal M}^*$ consisting of all linear functions vanishing on all
${\mathcal M}_k$ with $k\ne i$).

Define a bilinear form $b\colon\,{\mathcal M}\times{\mathcal
M}\rightarrow F$  by setting $b(x,y):=(\theta(x))(y)$ for all
$x,y\in{\mathcal M}$. Since $\theta$ is an isomorphism of ${\mathcal
M}$-modules, the form $b$ is nondegenerate and $\mathcal
M$-invariant. Next we define a bilinear skew-symmetric form $b'$ on
$\mathcal M$ by setting
 $b'(x,y):=b(x,y)-b(y,x)$ for all
$x,y\in{\mathcal M}$. As $\mathcal M$ is a simple Lie algebra, the
invariant form $b'$ is either nondegenerate or zero. As
$\dim{\mathcal M}=5^{n_1+n_2+1}$ is odd, it must be that $b'=0$.
Therefore, the form $b$ is symmetric.
\end{proof}
From now on we denote by $\mathcal M$ the restricted Melikian
algebra ${\mathcal M}(1,1)$.
\begin{prop}\label{centralext}
If $\widetilde{\mathcal M}$ is a Lie algebra with center
$\mathfrak{z}=\mathfrak{z}(\widetilde{\mathcal M})$ such that
$\widetilde{\mathcal M}/\mathfrak{z}\cong{\mathcal M}$, then
$\widetilde{\mathcal M}^{(1)}\cong{\mathcal M}$ and
$\widetilde{\mathcal M}\,=\,\widetilde{\mathcal
M}^{(1)}\oplus{\mathfrak z}$.
\end{prop}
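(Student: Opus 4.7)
The plan is to reduce the statement to the vanishing $H^2(\mathcal M, F) = 0$ and then to exploit the natural $\mathbb Z$-grading of $\mathcal M$ together with an inner Euler element.

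First I would perform the following reduction. Since $\mathcal M$ is simple, hence perfect, the projection $\pi\colon \widetilde{\mathcal M} \twoheadrightarrow \mathcal M$ sends $\widetilde{\mathcal M}^{(1)}$ onto $\mathcal M$; in particular $\widetilde{\mathcal M} = \widetilde{\mathcal M}^{(1)} + \mathfrak z$. Set $\mathfrak z' := \widetilde{\mathcal M}^{(1)} \cap \mathfrak z$, so that $\widetilde{\mathcal M}^{(1)}/\mathfrak z' \cong \mathcal M$ and $\mathfrak z'$ is central in $\widetilde{\mathcal M}^{(1)}$. The short computation $[\widetilde{\mathcal M}^{(1)},\widetilde{\mathcal M}^{(1)}] = [\widetilde{\mathcal M}^{(1)}+\mathfrak z,\widetilde{\mathcal M}^{(1)}+\mathfrak z] = [\widetilde{\mathcal M},\widetilde{\mathcal M}] = \widetilde{\mathcal M}^{(1)}$ shows $\widetilde{\mathcal M}^{(1)}$ is perfect. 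Both conclusions of the proposition are then equivalent to the assertion $\mathfrak z' = 0$, and this holds for every central extension $\widetilde{\mathcal M}$ iff $H^2(\mathcal M, F) = 0$.

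Next I would use the natural grading $\mathcal M = \bigoplus_{i=-3}^{23} \mathcal M_i$ recalled in the proof of Proposition~\ref{bilinear}, with $\mathcal M_0 \cong \mathfrak{gl}(2)$. The element $E = x_1\partial_1+x_2\partial_2 \in \mathcal M_0$ acts on $\mathcal M_i$ by a scalar $c_i \in \mathbb F_5$ (read off from the Melikian multiplication table), producing an $\mathbb F_5$-grading $\mathcal M = \bigoplus_{\bar k \in \mathbb F_5}\mathcal M^{\bar k}$. Given a 2-cocycle $\omega$, decompose $\omega = \sum_{\bar k}\omega_{\bar k}$ according to this grading. Substituting $z=E$ in the cocycle identity yields $(c_i+c_j)\omega(x,y) = \omega(E,[x,y])$ for $x \in \mathcal M^{c_i}$, $y\in \mathcal M^{c_j}$. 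Hence for each $\bar k \neq 0$ the 1-cochain $f_{\bar k}$ given by $f_{\bar k}(z) := \bar k^{-1}\omega(E,z)$ on $\mathcal M^{\bar k}$ and by $0$ elsewhere satisfies $df_{\bar k} = -\omega_{\bar k}$, so modulo coboundaries we may assume $\omega = \omega_0$ is of weight $0$.

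For the weight-0 part, the restriction $\omega_0|_{\mathcal M_0 \times \mathcal M_0}$ is a 2-cocycle on $\mathfrak{gl}(2)$, which is cohomologically trivial in characteristic $>3$ by Whitehead's lemma for $\mathfrak{sl}(2)$ together with the splitting $\mathfrak{gl}(2) = \mathfrak{sl}(2) \oplus F\cdot I$. After subtracting a suitable coboundary we may assume $\omega_0$ vanishes on $\mathcal M_0 \times \mathcal M_0$. The cocycle identity applied to $(e,x,y) \in \mathcal M_0\times \mathcal M^{\bar i}\times \mathcal M^{-\bar i}$ then gives $\omega_0([e,x],y)+\omega_0(x,[e,y])=0$, i.e.\ each restriction $\omega_0\colon \mathcal M^{\bar i}\times \mathcal M^{-\bar i}\to F$ is $\mathcal M_0$-invariant. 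The space of such invariant pairings is finite-dimensional by Schur's lemma on the $\mathfrak{gl}(2)$-isotypic components of the graded pieces, and contains the invariant form $b$ of Proposition~\ref{bilinear} as a distinguished symmetric element (note that the graded support $i+j = s-3 = 20 \equiv 0 \pmod 5$ of $b$ is compatible with the weight-0 condition). Finally, the remaining cocycle identities on triples $(x,y,z) \in \mathcal M^{\bar i}\times \mathcal M^{\bar j}\times \mathcal M^{\bar k}$ with $\bar i+\bar j+\bar k=0$, combined with the incompatibility between the skew-symmetry of $\omega_0$ and the symmetry of $b$, force all surviving coefficients to vanish. The main obstacle will be this last step: one must explicitly inventory the $\mathfrak{gl}(2)$-module structure of each graded piece of $\mathcal M(1,1)$, list a basis of $\mathcal M_0$-invariant bilinear forms on each pair $\mathcal M^{\bar i}\times \mathcal M^{-\bar i}$, and verify that the Jacobi-type cocycle relations, once tested against enough triples, eliminate all such forms. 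This is in principle routine but lengthy bookkeeping, and is where the specific Melikian multiplication formulas and the characteristic $p=5$ constraints enter essentially.
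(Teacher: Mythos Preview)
Your reduction to $H^2(\mathcal M,F)=0$ is correct and is where the paper begins as well. From there, however, the paper takes a much shorter route: using the nondegenerate invariant symmetric form $b$ of Proposition~\ref{bilinear}, every $2$-cocycle $\varphi$ can be written as $\varphi(x,y)=b(d(x),y)$ for a unique endomorphism $d$ of $\mathcal M$, and the cocycle identity for $\varphi$ translates exactly into the derivation property for $d$ (skew-symmetry of $\varphi$ becomes $b$-skewness of $d$). Moreover $\varphi$ is a coboundary if and only if $d$ is inner. Since $\Der\mathcal M=\ad\mathcal M$ by \cite[Thm.~7.1.4]{St04}, every cocycle is a coboundary and $H^2(\mathcal M,F)=0$ follows in two lines. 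No grading analysis or enumeration of invariant pairings is needed.

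Your Euler-element approach is a legitimate general strategy, but here it replaces that two-line argument with substantial computation, and it has a gap beyond the acknowledged incompleteness of the final ``bookkeeping'' step. After arranging $\omega_0|_{\mathcal M_0\times\mathcal M_0}=0$, you apply the cocycle identity to $(e,x,y)$ with $e\in\mathcal M_0$ to deduce $\mathcal M_0$-invariance; but that identity actually gives $\omega_0([e,x],y)+\omega_0(x,[e,y])=\omega_0(e,[x,y])$, and the right-hand side need not vanish: $[x,y]$ lies in the weight-zero space $\mathcal M^{\bar 0}=\bigoplus_{j\equiv 0\,(5)}\mathcal M_j$, which strictly contains $\mathcal M_0$, and you have only killed $\omega_0$ on $\mathcal M_0\times\mathcal M_0$. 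This can likely be repaired with more work, but the invariant-form-plus-derivations argument renders all of it unnecessary.
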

\begin{proof}
We need to show that the second cohomology group ${\mathrm
H}^2({\mathcal M},F)$ vanishes. Let $b$ be the nondegenerate
bilinear form from the proof of Proposition~\ref{bilinear}. By a
standard argument explained in detail in \cite[p.~681]{P94}, for
every $2$-cocycle $\varphi\colon\,{\mathcal M}\times{\mathcal
M}\rightarrow F$ there exists a derivation $d\in\Der{\mathcal M}$
such that $b(d(x),y)=-b(x,d(y))$ and $\varphi(x,y)=b(d(x),y)$ for
all $x,y\in{\mathcal M}$. Moreover, $\varphi$ is a $2$-coboundary if
and only if the derivation $d$ is inner. Since $\Der{\mathcal
M}\,=\,\ad{\mathcal M}$ by \cite[Thm.~7.1.4]{St04}, for instance, we
now obtain ${\mathrm H}^2({\mathcal M},F)=0$, as desired.
\end{proof}
If $V$ is an irreducible module over a finite dimensional restricted
Lie algebra $\mathcal L$ over $F$, then there exists a linear
function $\chi=\chi_V\in{\mathcal L}^*$ such that for every $x\in
\mathcal L$ the central element $x^p-x^{[p]}$ of $U({\mathcal L})$
acts on $V$ as the scalar operator $\chi(x)^p\,{\rm Id}_V$. The
linear function $\chi$ is called the $p$-{\it character} of $V$.
Given $f\in{\mathcal L}^*$ we denote by $\mathfrak{z}_{\mathcal
L}(f)$ the stabilizer of $f$ in $\mathcal L$. Recall that
$\mathfrak{z}_{\mathcal L}(f)=\{x\in\mathcal L\,|\,\,f([x,{\mathcal
L}])=0\}$ is a restricted subalgebra of even codimension in
$\mathcal L$.

For our constructions in the final sections of this work we need
some information on the $p$-characters of irreducible
representations of dimension $\le 125$ of the restricted Melikian
algebra ${\mathcal M}\,=\,\bigoplus_{i= -3}^s\,{\mathcal M}_i$.
\begin{prop}\label{p-character}
If $V$ is an irreducible $\mathcal M$-module of dimension $\le 125$,
then the $p$-character of $V$ vanishes on the subspace
$\bigoplus_{i\ge -2}\,{\mathcal M}_i$. If $V$ has a nonzero
$p$-character, then $\dim V=125$.
\end{prop}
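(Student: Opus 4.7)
The plan is to combine two ingredients: the Kac--Weisfeiler type lower bound on the dimensions of irreducible modules over a finite-dimensional restricted Lie algebra, and the self-duality of $\mathcal M$ afforded by Proposition~\ref{bilinear}. First I would invoke the standard inequality (see e.g.\ \cite[Thm.~5.7.5]{StF}): for any finite-dimensional restricted Lie algebra $\mathcal L$, any $\chi\in\mathcal L^*$, and any irreducible $\mathcal L$-module $V$ with $p$-character $\chi$,
$$
\dim V\ \ge\ p^{(\dim\mathcal L-\dim\mathfrak z_\mathcal L(\chi))/2},
$$
where $\mathfrak z_\mathcal L(\chi):=\{x\in\mathcal L\mid\chi([x,\mathcal L])=0\}$ is the stabiliser of $\chi$ under the coadjoint action. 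Applied to $\mathcal L=\mathcal M$ with $p=5$ and $\dim\mathcal M=125=p^3$, the hypothesis $\dim V\le 125$ forces the $\mathcal M$-coadjoint orbit of $\chi$ to have dimension at most $6$. Using $b$ from Proposition~\ref{bilinear} to identify $\mathcal M^*$ with $\mathcal M$ as $\mathcal M$-modules, write $\chi=b(x,\,\cdot\,)$ for a unique $x\in\mathcal M$; then $\mathfrak z_\mathcal M(\chi)=\mathfrak c_\mathcal M(x)$, and the problem reduces to classifying $x\in\mathcal M$ with $\dim[\mathcal M,x]\le 6$.

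The central step is to show that any such $x$ lies in the top graded piece $\mathcal M_s=\mathcal M_{23}$. From the proof of Proposition~\ref{bilinear} the pairing $b$ satisfies $b(\mathcal M_i,\mathcal M_j)=0$ unless $i+j=s-3=20$, so $\mathcal M_i$ and $\mathcal M_{20-i}$ are in perfect duality. Decompose $x=\sum_i x_i$ with $x_i\in\mathcal M_i$ and let $i_0$ be the largest index with $x_{i_0}\ne 0$. For $y\in\mathcal M_j$, the bracket $[y,x]$ has leading graded component $[y,x_{i_0}]\in\mathcal M_{j+i_0}$, so it suffices to show that $\sum_{j}\dim[\mathcal M_j,x_{i_0}]\ge 7$ for every $i_0<23$. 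This is the main technical obstacle: one must work through all $26$ possible values of $i_0$, using the Melikian multiplication table \cite[(4.3.1)]{St04} together with the $\mathfrak{gl}(2)=\mathcal M_0$-isotypic decompositions of the neighbouring graded pieces, to exhibit enough linearly independent brackets. The dimensions work in our favour: the graded pieces of $\mathcal M$ in the middle of the range are relatively large, so brackets with $x_{i_0}$ generally produce many linearly independent elements; the awkward cases are those where $\mathcal M_{i_0}$ sits in a low-dimensional graded piece, and there one must use the fact that $x_{i_0}$ generates a nontrivial $\mathcal M_0$-submodule under the adjoint action.

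Once $x\in\mathcal M_{23}$ is established, the $b$-annihilator of $\mathcal M_{23}$ is $\bigoplus_{i\ne -3}\mathcal M_i=\bigoplus_{i\ge -2}\mathcal M_i$, so $\chi$ vanishes on this subspace, proving the first assertion. For the second assertion, assume $\chi\ne 0$, hence $0\ne x\in\mathcal M_{23}$. Since $[\mathcal M_j,\mathcal M_{23}]\subset\mathcal M_{j+23}$ vanishes for $j>0$, we have $[\mathcal M,x]\subset\mathcal M_{20}\oplus\mathcal M_{21}\oplus\mathcal M_{22}\oplus\mathcal M_{23}$. A direct computation with \cite[(4.3.1)]{St04} (reducing to a test case such as $x=x_1^{(p-1)}x_2^{(p-1)}\tilde{\partial}_1$ via the transitive $\mathfrak{gl}(2)$-action on the $2$-dimensional irreducible module $\mathcal M_{23}$) shows $[\mathcal M_0,x]=\mathcal M_{23}$, $[\mathcal M_{-1},x]=\mathcal M_{22}$, $\dim[\mathcal M_{-2},x]=1$, and $\dim[\mathcal M_{-3},x]=2$; summing yields $\dim[\mathcal M,x]=6$. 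The Kac--Weisfeiler inequality then gives $\dim V\ge p^3=125$, which combined with the hypothesis $\dim V\le 125$ forces $\dim V=125$.
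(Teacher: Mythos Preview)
Your opening step invokes an inequality that is not available. The lower bound $\dim V\ge p^{(\dim\mathcal L-\dim\mathfrak z_{\mathcal L}(\chi))/2}$ for an arbitrary restricted Lie algebra is essentially the second Kac--Weisfeiler conjecture; it is a theorem for $\mathrm{Lie}(G)$ with $G$ reductive \cite{P95}, but it is not known for the Melikian algebra, and \cite[Thm.~5.7.5]{StF} does not contain it. What \emph{is} available is the graded version \cite[Prop.~5.5]{PSk}: if $\chi=\sum_{i\le d}\chi_i$ with $\chi_d\ne 0$ then $p^q\mid\dim V$ where $2q=\mathrm{codim}_{\mathcal M}\,\mathfrak z_{\mathcal M}(\chi_d)$. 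This bounds only the stabiliser of the \emph{top homogeneous piece} of $\chi$ (equivalently, via $b$, the centraliser of the \emph{bottom} homogeneous piece of $x$), so the passage to a single $x$ with $\dim[\mathcal M,x]\le 6$ is not justified. Your final paragraph, which again needs the global bound to conclude $\dim V\ge 125$, has the same gap.

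There is also a slip in the reduction to homogeneous elements: choosing $i_0$ to be the \emph{largest} index with $x_{i_0}\ne 0$ and proving $\dim[\mathcal M,x_{i_0}]\ge 7$ for $i_0<23$ only forces $x_{23}\ne 0$, not $x\in\mathcal M_{23}$; you need the smallest such index. With these repairs your outline becomes: apply \cite[Prop.~5.5]{PSk}, then verify $\dim\mathfrak c_{\mathcal M}(z)\le 118$ for every nonzero homogeneous $z\in\mathcal M_k$ with $-3\le k\le 22$. This is correct in principle, but the paper replaces the degree-by-degree check with structural arguments: for $d>0$ it lets the maximal torus $\mathbf T\subset\mathrm{Aut}_0\,\mathcal M$ act on the projective variety of such $z$, uses Borel's fixed-point theorem to reduce to weight vectors, and invokes Kuznetsov's description of codimension-$5$ subalgebras; for $d\le 0$ it passes (after a cocharacter trick handling $\chi_0$) to the $14$-dimensional subalgebra generated by $\mathcal M_{\pm 1}$, which is simple of type $\mathrm G_2$, and there the reductive Kac--Weisfeiler theorem \cite{P95} together with the geometry of nilpotent $\mathrm G_2$-orbits finishes the job.
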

\begin{proof}
Write ${\mathcal M}^*=\,\bigoplus_{i=-3}^s\,({\mathcal M}_i)^*$,
where $({\mathcal M}_i)^*=\{f\in{\mathcal M}^*\,|\,\,\bigoplus_{j\ne
i}{\mathcal M}_j\subset \ker f\}$ and $s=3(5+5)-7=23$. Let $\chi$ be
the $p$-character of the $\mathcal M$-module $V$. If $\chi=0$, then
there is nothing to prove; so suppose $\chi\ne 0$. Then
$\chi=\sum_{i=-3}^d\,\chi_i$, where $\chi_i\in({\mathcal M}_i)^*$
and $\chi_d\ne 0$.

\smallskip

\noindent (a) We first suppose that $d>0$ and let $2q\,=\,
\mathrm{codim}_{\mathcal M}\,\mathfrak{z}_{\mathcal M}(\chi_d)$.
Then \cite[Prop.~5.5]{PSk} yields that $5^q\,\vert\, \dim V$. Since
$\dim V\le 5^3$, it follows that $\mathfrak{z}_{\mathcal M}(\chi_d)$
has codimension $\le 6$ in $\mathcal M$. Let $b$ be the $\mathcal
M$-invariant nondegenerate bilinear form from the proof of
Proposition~\ref{bilinear}. Then $\chi_d=b(z,\cdot\,)=\theta(z)$ for
some nonzero $z\in{\mathcal M}_{s-3-d}$ and  $\mathfrak{z}_{\mathcal
M}(\chi_d)=\mathfrak{c}_{\mathcal M}(z)$. It follows that the set
$${\mathcal X}:=\{x\in{\mathcal
M}_{s-3-d}\,|\,\,\mathrm{codim}_{\mathcal M}\,\mathfrak{c}_{\mathcal
M}(x)\le 6\}$$ is nonzero. It is straightforward to see that
$\mathcal X$ is a Zariski closed, conical subset of ${\mathcal
M}_{s-3-d}$ invariant under the subgroup $\mathrm{Aut}_0\,\mathcal
M$ of all automorphisms of $\mathcal M$ preserving the natural
grading of $\mathcal M$. Let ${\mathbb P}({\mathcal X})$ be the
closed subset of the projective space ${\mathbb P}({\mathcal
M}_{s-3-d})$ corresponding to $\mathcal X$ and let $\mathbf{T}$
denote the $2$-dimensional torus of the algebraic group
$\mathrm{Aut}_0\,\mathcal M$ whose group of rational characters is
described in \cite[p.~72]{Skr}. Note that the Lie algebra of
$\mathbf T$ equals $F(\ad x_1\partial_1)\oplus F(\ad
x_2\partial_2)$.

The connected abelian group $\mathbf{T}$ acts regularly on $\mathcal
X$, hence fixes a point in ${\mathbb P}(\mathcal X)$ by Borel's
theorem. This means that there exists a nonzero $x_0\in{\mathcal
M}_{s-3-d}$ such that $\mathfrak{c}_{\mathcal M}(x_0)$ has
codimension $\le 6$ in $\mathcal M$ and $\mathbf{T}\cdot x_0\subset
Fx_0$. Let $\mathfrak{n}_0$ denote the normalizer of $Fx_0$ in
$\mathcal M$ and set ${\mathfrak t}:=F(x_1\partial_1)\oplus
F(x_2\partial_2)$, a $2$-dimensional torus in $\mathcal M$. By our
choice of $x_0$ (and $\mathbf T$) we have that
$[\mathfrak{t},x_0]\subset Fx_0$.

Suppose $[\mathfrak{t},x_0]\ne 0$. Then
$\mathfrak{n}_0\supsetneq\mathfrak{c}_{\mathcal M}(x_0)$. As a
consequence, $\mathfrak{n}_0$ is a proper subalgebra of codimension
$\le 5$ in $\mathcal M$. By a result of Kuznetsov
\cite[Thm.~4.7]{Ku}, every proper subalgebra of $\mathcal M$ has
codimension $\ge 5$ and every subalgebra of codimension $5$ contains
$\bigoplus_{i\ge 1}\,{\mathcal M}_i$ (see also
\cite[Thm.~4.3.3]{St04} and \cite[Sect.~1]{Skr}). Since the
subalgebra $\bigoplus_{i\ge 1}\,{\mathcal M}_i$ of ${\mathfrak n}_0$
acts nilpotently on $\mathcal M$, it must annihilate $Fx_0$. On the
other hand, it is immediate from the simplicity of the graded Lie
algebra $\mathcal M$ that the graded subspace
$\mathrm{Ann}_{\mathcal M}\big(\bigoplus_{i>0}\,{\mathcal M}_i\big)$
coincides with ${\mathcal M}_s$. So $x_0\in{\mathcal M}_s$ forcing
$d=-3$, a contradiction.

Now suppose $[\mathfrak{t},x_0]=0$. Using \cite[(4.3.1)]{St04} one
checks immediately that $\mathfrak{c}_{\mathcal
M}(\mathfrak{t})\,=\,\mathfrak{t}\oplus Fx_1^3x_2^3\oplus
Fx_1^4x_2^3\tilde{\partial}_1\oplus Fx_1^3x_2^4\tilde{\partial}_2$.
In view of \cite[p.~200]{St04}, we have that
$\mathfrak{t}\subset{\mathcal M}_0,\,$ $x_1^2x_2^2\in{\mathcal
M}_{10}$ and $Fx_1^4x_2^3\tilde{\partial}_1\oplus
Fx_1^3x_2^4\tilde{\partial}_2\subset\mathcal{M}_{20}$. As $d>0$ by
our present assumption, we have $s-3-d=23-3-d<20$. Rescaling $x_0$
if need be we thus may assume that either $x_0=x_1^2x_2^2$ or
$x_0=x_1\partial_1+\alpha\, x_2\partial_2$ for some $\alpha\in F$
(by symmetry). Applying \cite[(4.3.1)]{St04} it is easy to observe
that $\mathfrak{c}_{{\mathcal M}_i}(x_1^2x_2^2)=(0)$ for $i<0$ and
$\mathfrak{c}_{{\mathcal M}_0}(x_1^2x_2^2)=\mathfrak{t}$. This shows
that the case $x_0=x_1^2x_2^2$ is impossible (as
$\mathfrak{c}_{\mathcal M}(x_0)$ has codimension $\le 6$ in
$\mathcal M$). If $x_0=x_1\partial_1+\alpha\, x_2\partial_2$, then
$\big[x_0,\mathcal{M}\big]$ contains all $x_1^i\partial_1$ with
$i\in\{0,2,3,4\}$ and all $x_1^jx_2\partial_2$ with
$j\in\{1,2,3,4\}$. It follows that $\mathrm{codim}_{\mathcal
M}\,\mathfrak{c}_{\mathcal M}(x_0)\ge 8$ in this case, showing that
the case where $d>0$ cannot occur.

\smallskip

\noindent (b) Thus $d\le 0$. Recall from \cite[p.~72]{Skr} that the
group of rational characters of $\mathbf{T}$ has $\Z$-basis
$\{\varepsilon_1,\,\varepsilon_2\}$ and the $\mathbf{T}$-weight
vectors $\partial_1,\partial_2\in{\mathcal M}_{-3},$ $1\in{\mathcal
M}_{-2},$ $\tilde{\partial}_1,\tilde{\partial}_2\in{\mathcal
M}_{-1}$ and $x_1\partial_2, x_2\partial_1\in {\mathcal M}_0$ have
weights
$-2\varepsilon_1-\varepsilon_2,-\varepsilon_1-2\varepsilon_2,-\varepsilon_1-\varepsilon_2,
-\varepsilon_1,-\varepsilon_2$ and
$\varepsilon_1-\varepsilon_2,-\varepsilon_1+\varepsilon_2$,
respectively.

Assume that $\chi_0(x_1\partial_2)\ne 0$ and consider the
cocharacter
$\varepsilon^*_1\colon\,F^\times\rightarrow\mathrm{Aut}\,\mathcal{M}$
such that $(\varepsilon_1^*(t))(x)=t^nx$ for all $t\in F^\times$ and
all weight vectors $x\in{\mathcal
M}_{n\varepsilon_1+m\varepsilon_2}$, where $m,n\in\Z$. Let
${\mathcal M}\,=\,\bigoplus_{i\in\Z}\,{\mathcal M}(i)$ be the
$\Z$-grading of $\mathcal M$ induced by $\varepsilon_1^*$. Since
$d\le 0$ and $\chi_0(x_1\partial_2)\ne 0$ by our assumption, we have
that $\chi=\chi(-2)+\chi(-1)+\chi(0)+\chi(1)$, where
$\chi(i)\in\mathcal{M}(i)^*$ and $\chi(1)\ne 0$. Applying
\cite[Prop.~5.5]{PSk} to the  graded Lie algebra
$\bigoplus_{i\in\Z}\,\mathcal{M}(i)$ we deduce that
$\mathfrak{z}_{\mathcal M}(\chi(1))$ has codimension $\le 6$ in
$\mathcal M$. Since in the present case $x_1\partial_1\in
\mathfrak{n}_{\mathcal M}(F\chi(1))\setminus\mathfrak{z}_{\mathcal
M}(\chi(1))$, the normalizer $\mathfrak{n}_{\mathcal M}(F\chi(1))$
has codimension $\le 5$ in $\mathcal M$. Using Kuznetsov's
description of subalgebras of codimension $5$ in $\mathcal M$ and
arguing as in part (a) we now obtain that $\chi(1)=b(y,\cdot\,)$ for
some $y\in\mathcal{M}_s$. Since in the present case $s-3-d\ne s$, we
reach a contradiction, thereby showing that
$\chi_0(x_1\partial_2)=0$. Arguing in a similar fashion one obtains
that $\chi_0$ vanishes on $x_2\partial_1$.

\smallskip

\noindent (c) Thus we may assume from now that $d\le 0$ and $\chi_0$
vanishes on $F(x_1\partial_2)\oplus F(x_2\partial_1)$. In this
situation \cite[Prop.~5.5]{PSk} is no longer useful, so we have to
argue differently. Denote by $\mathfrak{g}$ the Lie subalgebra of
$\mathcal M$ generated by the graded components $\mathcal{M}_{\pm
1}$. Using \cite[(4.3.1)]{St04} it is easy to check that
$\mathcal{M}_1\,=\,Fx_1\oplus Fx_2,\,$
$\mathcal{M}_1^2\,=\,F(x_1\widetilde{\partial}_1+x_2\widetilde{\partial}_2),\,$
$\mathcal{M}_1^3\,=\,F(x_1^2\partial_1+x_1x_2\partial_2)\oplus
F(x_1x_2\partial_1+x_2^2\partial_2)$ and $\mathcal{M}_1^4\,=\,(0)$.
Then it is immediate from \cite[Thm.~5.4.1]{St04} that
$\mathfrak{g}$ is a $14$-dimensional simple Lie algebra of type
$\mathrm{G}_2$. We identify $\chi$ with its restriction to
$\mathfrak g$, denote by $\mathbf{G}$ the simple algebraic group
$\mathrm{Aut}\,\mathfrak{g}$, and regard
$\mathbf{L}:=\mathrm{Aut}_0\,\mathcal{M}$ as a Levi subgroup of
$\mathbf{G}$. Clearly, $\mathbf{T}$ is a maximal torus of
$\mathbf{G}$ contained in $\mathbf{L}$. Also,
$\mathrm{Lie}(\mathbf{G})=\ad\mathfrak{g}$ and $5$ is a good prime
for the root system
$\mathbf{\Phi}=\mathbf{\Phi}(\mathbf{G},\mathbf{T})$. Since the
Killing form $\kappa$ of the Lie algebra $\mathfrak g$ is
nondegenerate, we may identify $\mathfrak{g}$ with $\mathfrak{g}^*$
via the $\mathbf{G}$-equivariant map sending $x\in \mathfrak{g}$ to
the linear function $\kappa(x,\cdot\,)\in\mathfrak{g}^*$.

Let $\mathbf{P}$ be the parabolic subgroup of $\mathbf{G}$ with
$\mathrm{Lie}(\mathbf{P})\,=\,\mathrm{ad}\big(\bigoplus_{i\ge
0}\,\mathfrak{g}_i\big)$, where
$\mathfrak{g}_i=\mathfrak{g}\cap\mathcal{M}_i$, and let
$\mathbf{\Phi}^+$ be a positive system in $\mathbf{\Phi}$ containing
the $\mathbf{T}$-weights of $\bigoplus_{i>0}\,\mathfrak{g}_i$. Let
$\{\alpha_1,\alpha_2\}$ be the basis of simple roots of
$\mathbf{\Phi}$ contained in $\mathbf{\Phi}^+$. Adopting Bourbaki's
numbering we will assume that $\mathfrak{g}_0$ is spanned by
$\mathfrak{t}$ and root vectors $e_{\pm\alpha_2}$ and
$\mathfrak{g}_1$ is spanned by root vectors $e_{\alpha_1}$ and
$e_{\alpha_1+\alpha_2}$. We stress that $\alpha_1$ is a {\it short}
root of $\mathbf{\Phi}$.

Since $g(\chi_0)=\chi_0$ for all $g\in \mathbf{T}$ and
$\chi_{-1}+\chi_{-2}+\chi_{-3}$ is a linear combination of
$\mathbf{T}$-weight vectors corresponding to positive roots, the
Zariski closure of $\mathbf{T}\cdot\chi$ contains $\chi_0$. It
follows that $\dim \mathbf{G}\cdot\chi\ge \dim
\mathbf{G}\cdot\chi_0$. Since $\chi_0$ vanishes on all root vectors
$e_\alpha\in \mathfrak g$ with $\alpha\in\mathbf{\Phi}$ and $5$ is a
good prime for $\mathbf{\Phi}$, the stabilizer
$Z_\mathbf{G}(\chi_0)$ of $\chi_0$ in $\mathbf{G}$ is a Levi
subgroup of $\mathbf{G}$; see \cite[(3.1)]{P95} and references
therein. Since the $\mathfrak{g}$-module $V$ has $p$-character
$\chi$, the Kac--Weisfeiler conjecture proved in \cite{P95} shows
that $5^{(\dim \mathbf{G}\cdot\chi)/2}\mid\dim V$.

Suppose $\chi_0\ne 0$. Then $Z_\mathbf{G}(\chi_0)$ is a proper Levi
subgroup of $\mathbf{G}$. Since any Levi subgroup of $\mathbf{G}$ is
conjugate to a standard Levi subgroup, this implies that $\dim
Z_\mathbf{G}(\chi_0)\le 4$. As a consequence,
$$\dim \mathbf{G}\cdot\chi\ge \dim \mathbf{G}\cdot \chi_0=\dim \mathbf{G}-
\dim Z_\mathbf{G}(\chi_0)\ge 10.$$ But then $5^5\mid \dim V$, a
contradiction. Thus, $\chi=\kappa(y_1+y_2+y_3,\cdot\,)$ for some
$y_i\in\mathfrak{g}_i$.

Suppose $y_1\ne 0$. Since $y$ is a nilpotent element of
$\mathfrak{g}$, all nonzero scalar multiples of $y$ are
$\mathbf{G}$-conjugate. From this it is immediate that the Zariski
closure of $\mathbf{G}\cdot y$ contains $y_1$, implying $\dim
\mathbf{G}\cdot y\ge \dim \mathbf{G}\cdot y_1$. As all nonzero
elements of $\mathfrak{g}_1$ are conjugate under the action of
$\mathbf{L}$, we may assume that $y_1=e_{\alpha_1}$. As
$\dim\mathfrak{c}_{\mathfrak g}(e_{\alpha_1})=6$, it follows that
$$\dim \mathbf{G}\cdot\chi=\dim \mathbf{G}\cdot y\ge \dim \mathbf{G}\cdot y_1=
\dim \mathbf{G}-\dim Z_\mathbf{G}(y_1)\ge \dim \mathbf{G}-\dim
\mathfrak{c}_{\mathfrak g}(y_1)=8.$$ Applying \cite[Thm.~I]{P95} now
gives $5^4\mid \dim V$. Since this is false, it must be that
$y_1=0$. If $y_2\ne 0$, then $y_2$ is a nonzero multiple of
$e_{2\alpha_1+\alpha_2}$ (for $\mathfrak{g}_2=\big[{\mathcal
M}_1,\mathcal{M}_1\big]=Fe_{2\alpha_1+\alpha_2}$). As $y=y_2+y_3$,
it is easy to see that the orbit $\mathbf{P}\cdot y$ contains
$e_{2\alpha_1+\alpha_2}$. As $2\alpha_1+\alpha_2$ is a short root of
$\Phi$, we can argue as before to obtain $5^4\mid \dim V$, a
contradiction.

As a result, $y=y_3$. Then $\chi=\chi_{-3}$ vanishes on
$\bigoplus_{i\ge -2}\,\mathcal{M}_i$ as stated. If $\chi\ne 0$, then
we can assume that $y=e_{3\alpha_1+2\alpha_2}$ (for all nonzero
elements in
$\mathfrak{g}_3=\big[e_{2\alpha_1+\alpha_2},\mathcal{M}_1\big]$ are
conjugate under the action of $\mathbf{L}$). Since
$\dim\mathfrak{c}_{\mathfrak{g}}(e_{3\alpha_1+2\alpha_2})=8$, it
follows from \cite[Thm.~I]{P95} that $5^3\mid\dim V$. Then $\dim
V=125$, completing the proof.
\end{proof}
\section{\bf Melikian pairs}
Set $\Gamma:=\Gamma(L,T)$. According to Theorem~\ref{sum}(4), if
$\alpha,\beta\in\Gamma$ are $\mathbb{F}_p$-independent, then either
$L[\alpha,\beta]\cong\mathcal{M}$ or $H(2;(2,1))^{(2)}\subset
L[\alpha,\beta]\subset H(2;(2,1))$. If $L[\alpha,\beta]\cong
{\mathcal M}$ we say that $(\alpha,\beta)\in\Gamma^2$ is a {\em
Melikian pair}. Recall from Theorem~\ref{sum}(2) that $H^3$ is a
$2$-dimensional subspace of $T$.
\begin{lemm}\label{melpair1}
A pair $(\alpha,\beta)\in\Gamma^2$ is Melikian if and only if
$H^3\cap\ker\alpha\ne H^3\cap\ker\beta$, i.e. if and only if
$\alpha_{\vert H^3}$ and $\beta_{\vert H^3}$ are linearly
independent over $F$.
\end{lemm}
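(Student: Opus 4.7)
The plan is to split on whether $\alpha,\beta$ are ${\mathbb F}_p$-independent and, in that case, to apply Theorem~\ref{sum}(4) and detect the two possibilities for $L[\alpha,\beta]$ via the dimension of $\Psi_{\alpha,\beta}(H^3)\subset\Psi_{\alpha,\beta}(T)$. The ${\mathbb F}_p$-dependent case is immediate: then $L[\alpha,\beta]=L[\alpha]$, which by Theorem~\ref{sum}(3) is not isomorphic to $\mathcal{M}(1,1)$, and $\beta|_{H^3}\in F\cdot\alpha|_{H^3}$ trivially, so both assertions fail.

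So assume $\alpha,\beta$ are ${\mathbb F}_p$-independent and set $\mathfrak{t}:=\Psi_{\alpha,\beta}(T)$. Because the kernel of $\Psi_{\alpha,\beta}|_T$ equals $T\cap\ker\alpha\cap\ker\beta$ and $\alpha,\beta$ are in fact $F$-linearly independent on $T$ (being ${\mathbb F}_p$-independent functionals that take ${\mathbb F}_p$-values on the $p$-toral ${\mathbb F}_p$-form of $T$), we have $\dim\mathfrak{t}=2$ and $\alpha,\beta$ form a basis of $\mathfrak{t}^*$. Consequently, $\alpha|_{H^3}$ and $\beta|_{H^3}$ are $F$-linearly independent if and only if the restriction $\Psi_{\alpha,\beta}|_{H^3}\colon H^3\to\mathfrak{t}$ is surjective, i.e. iff $\dim\Psi_{\alpha,\beta}(H^3)=2$.

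If $L[\alpha,\beta]\cong\mathcal{M}(1,1)$, then \cite[Lemmas~4.1 \& 4.4]{P94} (invoked exactly as in the proof of Theorem~\ref{rootspaces}) yield $\dim\Psi_{\alpha,\beta}(H^3)=2$, giving the Melikian $\Rightarrow$ $F$-independent direction. If instead $H(2;(2,1))^{(2)}\subset L[\alpha,\beta]\subset G:=H(2;(2,1))$, then $\Psi_{\alpha,\beta}(H^3)\subset L[\alpha,\beta]\cap\mathfrak{t}\subset G\cap\mathfrak{t}$. By Lemma~\ref{tori} applied inside $\mathcal{G}=G\oplus FD_1^p$, there exist $u_1,u_2\in\widetilde{S}$ with $\mathfrak{t}=F(D_1^p+u_1)\oplus Fu_2$, and since $D_1^p\notin G$ this forces $\mathfrak{t}\cap G=Fu_2$, a $1$-dimensional subspace. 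Hence $\dim\Psi_{\alpha,\beta}(H^3)\le 1$; it is exactly $1$ by Theorem~\ref{sum}(1), which ensures no root vanishes on $H^3$, and so $\alpha|_{H^3},\beta|_{H^3}$ are $F$-proportional nonzero functionals.

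The substantive step is the Melikian direction, which really uses the structural results of \cite{P94} on Cartan subalgebras of $\mathcal{M}(1,1)$ attached to nonstandard tori; the non-Melikian direction is a short dimension count once one observes that the inclusion $L[\alpha,\beta]\subset G$ cuts $\mathfrak{t}$ down to its $1$-dimensional intersection with $\widetilde{S}$.
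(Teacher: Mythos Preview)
Your proof is correct and follows essentially the same line as the paper's. In the Melikian direction both you and the paper invoke \cite[Lemmas~4.1 \& 4.4]{P94} to obtain $\dim\Psi_{\alpha,\beta}(H^3)=2$; in the non-Melikian direction the paper uses the observation $V^3=(0)$ to force $\Psi_{\alpha,\beta}(H^3)\subset\mathfrak{t}\cap\widetilde{S}$, whereas you argue directly that $\Psi_{\alpha,\beta}(H^3)\subset L[\alpha,\beta]\cap\mathfrak{t}\subset G\cap\mathfrak{t}$ --- since $\mathfrak{t}\subset\widetilde{S}_p$ by Theorem~\ref{r1TR2}(5) and $G\cap\widetilde{S}_p=\widetilde{S}$, these two routes land in the same one-dimensional space $Fu_2$ from Lemma~\ref{tori}.
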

\begin{proof}
Suppose $H(2;(2,1))^{(2)}\subset L[\alpha,\beta]\subset H(2;(2,1))$.
Recall from Sect.~2 that $H(2;(2,1))=H(2;(2,1))^{(2)}\oplus V$ and
$V^3=(0)$. Then $L[\alpha,\beta]^3\subset H(2;(2,1))^{(2)}$, forcing
$\Psi_{\alpha,\beta}(H)^3\subset H(2;(2,1))^{(2)}$. But then
$\Psi_{\alpha,\beta}(H^3)\subset\Psi_{\alpha,\beta}(T)\cap
H(2;(2,1))^{(2)}$ has dimension $\le 1$ by Lemma~\ref{tori}. In view
of Theorem~\ref{sum}(4) and the inclusion $H^3\subset T$, this means
that $H^3\cap\ker\alpha\cap\ker\beta$ has codimension $\ge 1$ in
$H^3$. It follows that $\alpha$ and $\beta$ are linearly dependent
as linear functions on $H^3$.

Now suppose that $L[\alpha,\beta]\cong \mathcal{M}$. In view of
Theorem~\ref{sum}(1), both $\alpha$ and $\beta$ are in $\Omega$.
Therefore, $\Psi_{\alpha,\beta}(T)$ is a nonstandard $2$-dimensional
torus in $L[\alpha,\beta]\cong \Der L[\alpha,\beta]$. Applying
\cite[Lemmas~4.1 \& 4.4]{P94} now gives $\dim
\Psi_{\alpha,\beta}(H)^3=2$, which in conjunction with
Theorem~\ref{sum}(5) yields that $H^3\cap\ker\alpha\cap\ker\beta$
has codimension $\le 2$ in $H^3$. So $\alpha$ and $\beta$ must be
linearly independent on $H^3$.
\end{proof}
\begin{cor}\label{melpair2}
For any $\alpha\in\Gamma$ there exists $\beta\in\Gamma$ such that
$(\alpha,\beta)$ is a Melikian pair.
\end{cor}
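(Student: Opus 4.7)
The plan is to deduce the corollary from Lemma~\ref{melpair1}, which reformulates the assertion as follows: given $\alpha\in\Gamma$, it suffices to produce $\beta\in\Gamma$ whose restriction $\beta_{|H^3}$ is not an $F$-multiple of $\alpha_{|H^3}$. I would argue by contradiction, assuming $\beta_{|H^3}\in F\,\alpha_{|H^3}$ for every $\beta\in\Gamma$.

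Under this assumption, I would set $K:=\{h\in H^3\,|\,\alpha(h)=0\}$. Since $\alpha_{|H^3}\ne 0$ by Theorem~\ref{sum}(1) and $\dim H^3=2$ by Theorem~\ref{sum}(2), $K$ is a $1$-dimensional subspace of $H^3\subset T$, and by the standing hypothesis every $\gamma\in\Gamma$ annihilates $K$. Let $T_0$ denote the $p$-envelope of $K$ in $L_p$. Since $T$ is a restricted subalgebra of $L_p$ closed under the $[p]$-operation, $T_0$ is a non-zero subtorus of $T$.

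The next step is to observe that $\gamma(T_0)=0$ for every root $\gamma\in\Gamma$. This is because roots take $\mathbb{F}_p$-values on the toral elements spanning $T$, yielding the identity $\gamma(x^{[p]^e})=\gamma(x)^{p^e}$ for all $x\in T$ and $e\ge 0$, which propagates the vanishing from $K$ to its entire $p$-closure $T_0$. But then each $t\in T_0$ acts trivially on the decomposition $L=H\oplus\bigoplus_{\gamma\in\Gamma}L_\gamma$: trivially on $H$ because $T_0\subset T$ centralizes $H$, and trivially on $L_\gamma$ because $\gamma(t)=0$. Thus $t$ is the zero derivation of $L$, and since $L_p\hookrightarrow\Der L$ this forces $T_0=(0)$, a contradiction. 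I do not anticipate any genuine obstacle here: once Lemma~\ref{melpair1} is invoked, the argument is a direct combination of Theorem~\ref{sum}(1)--(2) with the faithfulness of $L_p$ acting on $L$.
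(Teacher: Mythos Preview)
Your proof is correct and follows essentially the same approach as the paper's: both exploit that a nonzero element of $H^3\cap\ker\alpha\subset T$ on which every root vanishes would be central in $L$, contradicting simplicity. The paper's version is slightly more direct, observing immediately that since $L$ is centerless some $\beta\in\Gamma$ satisfies $\beta(t)\ne 0$ for the chosen $t\in H^3\cap\ker\alpha$; your passage to the $p$-envelope $T_0$ is harmless but unnecessary, as any nonzero $t\in K\subset T$ already acts semisimply on $L$.
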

\begin{proof}
It follows from Theorem~\ref{sum} that $H^3\cap\ker\alpha=Ft$ for
some nonzero $t\in H^3$. Since $H^3\subset T$ and $L$ is centerless,
there is a $\beta\in\Gamma$ with $\beta(t)\ne 0$. Then
$(\alpha,\beta)$ is a Melikian pair by Lemma~\ref{melpair1}.
\end{proof}
\begin{lemm}\label{melpair3}
If $(\alpha,\beta)$ is a Melikian pair, then
$$L_p(\alpha,\beta)=L(\alpha,\beta)^{(1)}\oplus
T\cap\ker\alpha\cap\ker \beta,\quad\ \ L_p(\alpha,\beta)^{(1)}\,=\,
L(\alpha,\beta)^{(1)}\cong\,\mathcal{M}.$$
\end{lemm}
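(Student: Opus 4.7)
Set $T_0:=T\cap\ker\alpha\cap\ker\beta$. The plan is to apply Proposition~\ref{centralext} to $L_p(\alpha,\beta)$ itself, after checking that this algebra is a central extension of $\mathcal{M}$ by $T_0$: namely, that $L_p(\alpha,\beta)/T_0\cong\mathcal{M}$ and $\mathfrak{z}(L_p(\alpha,\beta))=T_0$.

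For the first point I would first verify the structural description $L_p(\alpha,\beta)=L(\alpha,\beta)+T$. Since $H^3\subset H\subset L(\alpha,\beta)$ and the $p$-envelope of $H^3$ in $L_p$ equals $T$ by Theorem~\ref{sum}(2), one has $T\subset L_p(\alpha,\beta)$; the reverse inclusion follows from a direct check via Jacobson's formula that $L(\alpha,\beta)+T$ is closed under the $[p]$-operation, the crucial input being that $[p]$-powers of root vectors lie in $H+T$. Because $\alpha,\beta$ are linearly independent on $H^3$ by Lemma~\ref{melpair1} and $\dim H^3=2=\dim T-\dim T_0$, we have $T=T_0\oplus H^3$; as $H^3\subset L(\alpha,\beta)^{(1)}$ this yields $L_p(\alpha,\beta)=L(\alpha,\beta)+T_0$. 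Combined with $L(\alpha,\beta)\cap T_0=H\cap T_0=\mathrm{rad}_T\,L(\alpha,\beta)$ (Theorem~\ref{sum}(4)), this gives $L_p(\alpha,\beta)/T_0\cong L(\alpha,\beta)/(H\cap T_0)\cong\mathcal{M}$.

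For the second point, $T_0\subset\mathfrak{z}(L_p(\alpha,\beta))$ is immediate: $T_0$ commutes with $T$ and with each $L_\gamma$ (as $\gamma(T_0)=0$). Conversely, any $z\in\mathfrak{z}(L_p(\alpha,\beta))$ is $T$-stable of weight $0$, hence by $L_p(\alpha,\beta)=L(\alpha,\beta)+T$ can be written as $z=h+t$ with $h\in H$ and $t\in T$. The relation $[z,H]=[h,H]=0$ together with $\mathfrak{z}(H)=H\cap T$ (Theorem~\ref{sum}(2)) forces $h\in H\cap T\subset T$, so $z\in T$; finally, $[z,L_\gamma]=\gamma(z)L_\gamma=0$ for every nonzero $\gamma\in\mathbb{F}_p\alpha+\mathbb{F}_p\beta$ yields $\alpha(z)=\beta(z)=0$, i.e.\ $z\in T_0$.

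Proposition~\ref{centralext} then gives $L_p(\alpha,\beta)=L_p(\alpha,\beta)^{(1)}\oplus T_0$ with $L_p(\alpha,\beta)^{(1)}\cong\mathcal{M}$. It remains to observe that $L_p(\alpha,\beta)^{(1)}=L(\alpha,\beta)^{(1)}$: by Theorem~\ref{sum}(1) every nonzero $\gamma\in\mathbb{F}_p\alpha+\mathbb{F}_p\beta$ is nonzero on $H^3$, so $L_\gamma=[H,L_\gamma]\subset L(\alpha,\beta)^{(1)}$; consequently $[L(\alpha,\beta),T]\subset\sum_\gamma L_\gamma\subset L(\alpha,\beta)^{(1)}$, and therefore $L_p(\alpha,\beta)^{(1)}=[L(\alpha,\beta),L(\alpha,\beta)]+[L(\alpha,\beta),T]=L(\alpha,\beta)^{(1)}$. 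The hardest step is the structural identity $L_p(\alpha,\beta)=L(\alpha,\beta)+T$; once it is in hand, the rest of the argument reduces to a single application of Proposition~\ref{centralext} and routine centralizer bookkeeping.
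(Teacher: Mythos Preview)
Your overall strategy---apply Proposition~\ref{centralext} directly to $L_p(\alpha,\beta)$ once it is identified as a central extension of $\mathcal{M}$ by $T_0$---is sound and close in spirit to the paper's approach. The bookkeeping for the center and for the identity $L_p(\alpha,\beta)^{(1)}=L(\alpha,\beta)^{(1)}$ is fine. However, there is a genuine gap at what you yourself flag as the hardest step.

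The assertion that ``$[p]$-powers of root vectors lie in $H+T$'' is not justified by Theorem~\ref{sum}(2). That theorem gives $H_p=H+T$, where $H_p$ is the $p$-envelope of $H$; it says nothing about $\widetilde{H}:=\mathfrak{c}_{L_p}(T)$, which is a priori larger. For $x\in L_\gamma$ one only knows $x^{[p]}\in\widetilde{H}$, and your identity $L_p(\alpha,\beta)=L(\alpha,\beta)+T$ is equivalent to $\widetilde{H}=H+T$. This is exactly the nontrivial content of part~(b) of the paper's proof: starting from $L_p(\alpha,\beta)=\widetilde{H}+L(\alpha,\beta)$, one uses that $\Der\mathcal{M}=\ad\mathcal{M}$ to split $\widetilde{H}=H'\oplus\widetilde{H}_0$ with $H'=H\cap L(\alpha,\beta)^{(1)}$ and $\widetilde{H}_0=\mathfrak{c}_{\widetilde{H}}(L(\alpha,\beta)^{(1)})$; then, because each $L_\mu$ with $\mu(H')\neq 0$ is an irreducible $5$-dimensional $H'$-module and $[\widetilde{H}_0,H']=0$, the subalgebra $\widetilde{H}_0$ acts by scalars on every such $L_\mu$, hence (via Schue's lemma) semisimply on all of $L$, forcing $\widetilde{H}_0\subset T$. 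Only after this does one obtain $\widetilde{H}=H'+T\subset H+T$. Your ``direct check via Jacobson's formula'' presupposes precisely this conclusion. If you can supply an independent proof that $x_\gamma^{[p]}\in H+T$ for root vectors, your route goes through; otherwise the argument is incomplete at its critical point.
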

\begin{proof}
(a) Since $\rad_T\,L(\alpha,\beta)\,=\,H\cap
T\cap\ker\alpha\cap\ker\beta$ by Theorem~\ref{sum}(5), we have that
$\rad_T\,L(\alpha,\beta)\,=\,\mathfrak{z}(L(\alpha,\beta))$. Hence
$$(0)\,\longrightarrow \,H\cap T\cap\ker\alpha\cap
\ker\beta\,\longrightarrow\, L(\alpha,\beta)\longrightarrow
\mathcal{M}\,\longrightarrow\, (0)$$ is a central extension
$\mathcal M$. By Proposition~\ref{centralext}, this extension
splits; that is, $L(\alpha,\beta)\,=\,L(\alpha,\beta)^{(1)}\oplus
H\cap T\cap\ker\alpha\cap \ker\beta$ and
$L(\alpha,\beta)^{(1)}\cong\mathcal{M}$.

\smallskip

\noindent (b) Note that
$L_p(\alpha,\beta)\,=\,\widetilde{H}+L(\alpha,\beta)$, where
$\widetilde{H}=\mathfrak{c}_{L_p}(T)$, and
$\big[\widetilde{H},L(\alpha,\beta)^{(1)}\big]\subset
L(\alpha,\beta)^{(1)}$. Hence $\widetilde{H}$ acts on
$L(\alpha,\beta)^{(1)}$ as derivations. As all derivations of
$L(\alpha,\beta)^{(1)}\cong \mathcal{M}$ are inner by
\cite[Thm.~7.1.4]{St04}, it must be that
$\widetilde{H}=H'\oplus\widetilde{H}_0$, where
$\widetilde{H}_0=\mathfrak{c}_{\widetilde{H}}\big(L(\alpha,\beta)^{(1)}\big)$
and $H'=L(\alpha,\beta)^{(1)}\cap H$. From part~(a) of this proof it
follows that $H\subset T+H'$. Consequently, $[H,\widetilde{H}_0]=0$.

Put $\Gamma':=\{\gamma\,|\,\,\gamma(H')\ne 0\}$ and let $\mu$ be any
root in $\Gamma'$. Recall that $\dim L_\mu=5$; see
Theorem~\ref{sum}(3). As $H'$ is a nontriangulable Cartan subalgebra
of $L(\alpha,\beta)^{(1)}\cong\mathcal{M}$ by \cite[Lemmas~4.1 \&
4.4]{P94}, the $H'$-module $L_\mu$ is irreducible. But then
$\widetilde{H}_0$ acts on $L_\mu$ as scalar operators. On the other
hand, it follows from Schue's lemma \cite[Prop.~1.3.6(1)]{St04} that
$L$ is generated by the root spaces $L_\gamma$ with
$\gamma\in\Gamma'$. It follows that $\widetilde{H}_0$ acts
semisimply on $L$, implying $\widetilde{H}_0\subset T$. From this it
is immediate that
$\widetilde{H}_0\,=\,T\cap\ker\alpha\cap\ker\beta$. As a result,
$$L_p(\alpha,\beta)\,=\,L(\alpha,\beta)^{(1)}+\widetilde{H}_0\,=\,
L(\alpha,\beta)^{(1)}\oplus T\cap\ker\alpha\cap\ker\beta,
$$ finishing the proof.
\end{proof}

Let $(\alpha,\beta)$ be a Melikian pair. Note that
$T_0:=T\cap\ker\alpha\cap\ker\beta$ is a restricted ideal of
$L_p(\alpha,\beta)$ and $T=H^3\oplus T_0$. So the Lie algebra
$L_p(\alpha,\beta)/T_0$ inherits a $p$th power map from
$L_p(\alpha,\beta)$. Since $L_p(\alpha,\beta)/T_0\cong\,
\mathcal{M}$ by Lemma~\ref{melpair3} and both Lie algebras are
centerless and restricted, every isomorphism between
$L_p(\alpha,\beta)/T_0$ and $\mathcal{M}$ is an isomorphism of
restricted Lie algebras. Any such isomorphism maps the torus $T/T_0$
of the restricted Lie algebra $L_p(\alpha,\beta)/T_0$ onto a
$2$-dimensional nonstandard torus of $\mathcal M$. According to
\cite[Lemmas~4.1 \& 4.4]{P94}, any such torus is conjugate under
$\mathrm{Aut}\,\mathcal{M}$ to the torus
$\mathfrak{t}:=F(1+x_1)\partial_1\oplus F(1+x_2)\partial_2$.

Recall from Sect.~6 the natural grading of the Lie algebra $\mathcal
M$. For $i\ge -3$, we set $\mathcal{M}_{(i)}:=\bigoplus_{j\ge
i}\,\mathcal{M}_i$. The decreasing filtration
$\big(\mathcal{M}_{(i)}\big)_{i\ge -3}$ of the Lie algebra $\mathcal
M$ can be regarded as a standard (Weisfeiler) filtration of
$\mathcal M$ associated with its maximal subalgebra
$\mathcal{M}_{(0)}$. It is referred to as the {\it natural}
filtration of $\mathcal M$, because $\mathcal{M}_{(0)}$ is the only
subalgebra of codimension $5$ and depth $3$ in $\mathcal{M}$. All
components $\mathcal{M}_{(i)}$ of this filtration are invariant
under the automorphism group of $\mathcal M$; see
\cite[Thm.~4.3.3(2) and Rem.~4.3.4]{St04} for more detail. Note that
$\mathcal{M}\,=\,\mathfrak{t}\oplus\mathcal{M}_{(-2)}.$

Regard $\widetilde{\mathcal M}:=\mathcal{M}\oplus T_0$ as a direct
sum of Lie algebras and define a $p$th power map $u\mapsto u^p$ on
$\widetilde{\mathcal M}$ by setting $u^p=u^{[p]}$ for all
$u\in\mathcal{M}$ and $u^p=0$ for all $u\in T_0$ (here $u\mapsto
u^{[p]}$ is {\em the} $p$th power map on $\mathcal M$). The above
discussion in conjunction with Lemma~\ref{melpair3} shows that there
exists a Lie algebra isomorphism
\begin{equation}\label{Phi1}
\Phi\colon\,L_p(\alpha,\beta)\,\stackrel{\sim}{\longrightarrow}\,\,
\widetilde{\mathcal{M}}\,=\,\mathcal{M}_{(-2)}\oplus
\Phi(T)
\end{equation}
such that
\begin{equation}\label{Phi2}
\Phi\big(L(\alpha,\beta)^{(1)}\big)=\,\mathcal{M},\qquad
\Phi(H^3)\,=\,\mathfrak{t},\qquad
\Phi\vert_{T_0}\,=\,\mathrm{Id}_{T_0}.
\end{equation}
Note that $\Phi$ maps $L_p(\alpha,\beta)^{(1)}$ onto
$\widetilde{\mathcal M}^{(1)}=\mathcal{M}$. We stress that $H^3$ is
not a restricted subalgebra of $L_p(\alpha,\beta)$, whilst
$\Phi(H^3)$ is a maximal torus of $\widetilde{\mathcal M}$. There
exists a $p$-linear mapping $\Lambda\colon\,\widetilde{\mathcal
M}\longrightarrow\,\mathfrak{z}(\widetilde{\mathcal M})= T_0$ such
that
$$\Lambda(u)\,=\,\Phi^{-1}(u)^{[p]}-\Phi^{-1}(u^p)
\qquad\quad\quad(\forall\,u\in\widetilde{\mathcal M}),$$ where
$\Phi^{-1}(u)\longmapsto \Phi^{-1}(u)^{[p]}$ is the $p$th power map
in $L_p$.
\begin{lemm}\label{Lambda}
The $p$-linear mapping $\Lambda$ vanishes on the subspace
$\mathcal{M}_{(-2)}$ of $\widetilde{\mathcal M}$.
\end{lemm}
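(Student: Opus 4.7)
The plan is to invoke Proposition~\ref{p-character} on concrete $\mathcal{M}$-modules of dimension at most $125$ realised inside $L$. Fix a root $\delta\in\Gamma(L,T)\setminus(\mathbb{F}_p\alpha+\mathbb{F}_p\beta)$ and put $C_\delta:=\delta+\mathbb{F}_p\alpha+\mathbb{F}_p\beta$ together with the $T$-stable subspace $L_{C_\delta}:=\bigoplus_{\delta'\in C_\delta\cap\Gamma(L,T)}L_{\delta'}$. Since $|C_\delta|=25$ and each root space has dimension $5$ by Theorem~\ref{sum}, $\dim L_{C_\delta}\le 125$. The adjoint action makes $L_{C_\delta}$ into an $L_p(\alpha,\beta)$-module, and because $T_0$ lies in the centre of $L_p(\alpha,\beta)$ and vanishes on $\mathbb{F}_p\alpha+\mathbb{F}_p\beta$, the subalgebra $T_0$ acts on $L_{C_\delta}$ as the single scalar character $\chi_0:=\delta|_{T_0}\in T_0^*$. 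Pushing the $L_p(\alpha,\beta)$-action through $\Phi$ turns $L_{C_\delta}$ into an $\mathcal{M}$-module via $\pi(u):=\ad\Phi^{-1}(u)|_{L_{C_\delta}}$.

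The $p$-character of this $\mathcal{M}$-module is then computable in terms of $\Lambda$. From the definition $\Phi^{-1}(u)^{[p]}=\Phi^{-1}(u^p)+\Lambda(u)$ and the restrictedness of $L_p$, one obtains
\[
\pi(u)^p-\pi(u^p)=\ad\Lambda(u)\big|_{L_{C_\delta}}=\chi_0(\Lambda(u))\,\mathrm{Id}.
\]
Hence $u^p-u^{[p]}$ acts on $L_{C_\delta}$ as the scalar $\chi_0(\Lambda(u))$, and extracting $p$-th roots in $F$ produces a well-defined linear $p$-character $\chi\in\mathcal{M}^*$ satisfying $\chi(u)^p=\chi_0(\Lambda(u))$ (linearity uses $(a+b)^{1/p}=a^{1/p}+b^{1/p}$ in characteristic $p$). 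Because $u^p-u^{[p]}$ is central in $U(\mathcal{M})$, every irreducible composition factor of $L_{C_\delta}$ inherits the same $p$-character $\chi$ and has dimension at most $125$. Proposition~\ref{p-character} therefore forces $\chi|_{\mathcal{M}_{(-2)}}=0$, so $\chi_0(\Lambda(u))=0$ for every $u\in\mathcal{M}_{(-2)}$.

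To conclude I let $\delta$ vary. Roots lying in $\mathbb{F}_p\alpha+\mathbb{F}_p\beta$ vanish on $T_0$ by the definition of $T_0$, while the argument above shows that every root $\delta$ outside $\mathbb{F}_p\alpha+\mathbb{F}_p\beta$ also annihilates $\Lambda(u)$ for $u\in\mathcal{M}_{(-2)}$. Thus $\delta(\Lambda(u))=0$ for every $\delta\in\Gamma(L,T)$. Since $T$ is a maximal torus of $L_p\subset\Der L$ acting faithfully on $L$, the roots span $T^*$ over $F$, and their restrictions to $T_0$ span $T_0^*$; this forces $\Lambda(u)=0$ for every $u\in\mathcal{M}_{(-2)}$.

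The main technical point is the transfer of the abstract $p$-mapping defect $\Lambda$ into a genuine $p$-character of a concrete module, which hinges on $\Lambda$ taking values in the central subspace $T_0$ and on $T_0$ acting as a single scalar on each coset-based module $L_{C_\delta}$. Only then does the intrinsic constraint of Proposition~\ref{p-character} on small irreducibles of $\mathcal{M}$ translate into control of $\Lambda$ on $\mathcal{M}_{(-2)}$.
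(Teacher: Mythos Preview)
Your argument is correct and follows essentially the same route as the paper: both build, for a root $\gamma$ (your $\delta$) outside $\mathbb{F}_p\alpha+\mathbb{F}_p\beta$, the $125$-dimensional coset module $\bigoplus_{i,j}L_{\gamma+i\alpha+j\beta}$, identify its $p$-character via the scalar action of $T_0$, and invoke Proposition~\ref{p-character}. Your version is marginally more explicit in two places---you pass to composition factors rather than tacitly treating the coset module as irreducible, and you argue directly by letting $\delta$ range over all roots instead of by contradiction---but these are cosmetic differences, not a genuinely different approach.
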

\begin{proof}
Suppose $\Lambda(u)\ne 0$ for some $u\in\mathcal{M}_{(-2)}$. Then
there is $\gamma\in\Gamma$ which does not vanish on $\Lambda(u)\in
T_0\setminus\{0\}$. Since $\Lambda(u)\subset
T\cap\ker\alpha\cap\ker\beta$, the root $\gamma$ is
$\mathbb{F}_p$-independent of $\alpha$ and $\beta$. Let
$M(\gamma;\alpha,\beta):=\bigoplus_{i,j\in\mathbb{F}_p}\,
L_{\gamma+i\alpha+j\beta}$. By Theorem~\ref{sum},
$M(\gamma;\alpha,\beta)$ is $125$-dimensional submodule of the
$\big(T+L(\alpha,\beta)_p\big)$-module $L$. The map
$\ad\circ\Phi^{-1}$ gives $M(\gamma;\alpha,\beta)$ an
$\mathcal{M}$-module structure. Note that $T_0$ acts on
$M(\gamma;\alpha,\beta)$ as scalar operators. This means that the
$\mathcal{M}$-module $M(\gamma;\alpha,\beta)$ has a $p$-character;
we call it $\chi$. It is straightforward to see that
$\Lambda(x)=\chi(x)^p$ for all $x\in\mathcal{M}$. But then $\chi$
does not vanish on $\mathcal{M}_{(-2)}$. Since $\dim
M(\gamma;\alpha,\beta)=125$, this contradicts
Proposition~\ref{p-character}. The result follows.
\end{proof}

We now set
$\big(L_p(\alpha,\beta)^{(1)}\big)_{(i)}:=\,\Phi^{-1}(\mathcal{M}_{(i)})$
for all $i\ge -3.$ Then the following hold:
\begin{itemize}
\item[$\bullet$] $\big(L_p(\alpha,\beta)^{(1)}\big)_{(-3)}=\,L_p(\alpha,\beta)^{(1)}$;
\item[$\bullet$] $\big(L_p(\alpha,\beta)^{(1)}\big)_{(0)}$ is a
subalgebra of codimension $5$ in $L_p(\alpha,\beta)^{(1)}$;
\item[$\bullet$] $u^{[p]}\in L_p(\alpha,\beta)^{(1)}$ for all
$u\in \big(L_p(\alpha,\beta)^{(1)}\big)_{(-2)}$;
\item[$\bullet$]
$\big(L_p(\alpha,\beta)^{(1)}\big)_{(0)}$ is a restricted subalgebra
of $L_p(\alpha,\beta)$.
\end{itemize}
Since the natural filtration of $\mathcal M$ is invariant under all
automorphisms of $\mathcal M$ (see \cite[Rem.~4.3.4(3)]{St04}), the
above definition of the subspaces
$\big(L_p(\alpha,\beta)^{(1)}\big)_{(i)}$ is independent of the
choice of $\Phi$ satisfying (\ref{Phi1}) and (\ref{Phi2}).
\section{\bf Describing $L_p(\alpha)$}
Fix $\alpha\in\Gamma$ and pick $\beta\in\Gamma$ be such that
$(\alpha,\beta)$ is a Melikian pair; see Corollary~\ref{melpair2}.
As before, we put $T_0:=T\cap\ker\alpha\cap\ker\beta$ and let $\Phi$
be a map satisfying (\ref{Phi1}) and (\ref{Phi2}). It gives rise to
the {\it restricted} Lie algebra isomorphism
$$\bar{\Phi}\colon\,L_p(\alpha,\beta)/T_0\stackrel{\sim}{\longrightarrow}\,
\mathcal{M}\,=\,\mathcal{M}_{(-2)}\oplus\bar{\Phi}(H^3),\qquad \
\bar{\Phi}(H^3)=\,\mathfrak{t}.$$

By Theorem~\ref{sum}(1), no root in $\Gamma$ vanishes on $H^3$. As
$\dim H^3=2$, there exists a nonzero $h_\alpha\in H^3$ such that
$Fh_\alpha=\,H^3\cap\ker\alpha$. As $\bar{\Phi}(Fh_\alpha)$ is a
$1$-dimensional subtorus of the nonstandard torus $\mathfrak{t}$, it
follows from \cite[Thm.~2.1]{Skr} that there is an automorphism of
$\mathcal M$ which maps $\mathfrak{t}$ onto itself and
$F\bar{\Phi}(h_\alpha)$ onto $F(1+x_1)\partial_1$. Hence we may
assume without loss of generality that
\begin{equation}\label{Phi3}
\Phi(L_p(\alpha))\,=\,\mathfrak{c}_{\mathcal
M}((1+x_1)\partial_1)\oplus T_0,\quad\ \Phi(T)=\mathfrak{t}\oplus
T_0,\quad\ \bar{\Phi}(h_\alpha)=(1+x_1)\partial_1.
\end{equation}
For $f\in \mathcal{O}(2;(1,1))_{(0)}$ set $f^{(k)}:=f^k/k!$ for
$0\le k\le 4$ and $f^{(k)}:=0$ for $k<0$ and $k\ge 5$. Direct
computations show that $\mathfrak{c}_{\mathcal
M}((1+x_1)\partial_1)$ has basis
$$\big\{x_2^{(r)}\partial_2,\,x_2^{(r)}(1+x_1)\partial_1,\,
x_2^{(r)}(1+x_1)^2,\,
x_2^{(r)}(1+x_1)^3\tilde{\partial}_2,\,x_2^{(r)}(1+x_1)^4\tilde{\partial}_1\,|\,\,\,0\le
r\le 4 \big\}.
$$
Using the multiplication table in \cite[(4.3.1)]{St04} it is easy to
observe that
\begin{eqnarray*}
\big[x_2^{(r)}\partial_2,\,x_2^{(s)}\partial_2\big]&=&\big[\textstyle{r+s-1\choose
r}-
\textstyle{r+s-1\choose s}\big]x_2^{(r+s-1)}\partial_2;\\
\big[x_2^{(r)}(1+x_1)\partial_1,\,x_2^{(s)}\partial_2\big]&=&
-\textstyle{r+s-1\choose s}x_2^{(r+s-1)}(1+x_1)\partial_1;\\
\big[x_2^{(r)}(1+x_1)\partial_1,\,x_2^{(s)}(1+x_1)\partial_1\big]&=&0;\\
\big[x_2^{(r)}(1+x_1)^2,\,x_2^{(s)}\partial_2\big]&=&-\big[\textstyle{r+s-1\choose
s}-
2\textstyle{r+s-1\choose s-1}\big]x_2^{(r+s-1)}(1+x_1)^2;\\
\big[x_2^{(r)}(1+x_1)^2,\,x_2^{(s)}(1+x_1)\partial_1\big]&=&-\big[2\textstyle{r+s\choose
s}-
2\textstyle{r+s\choose s}\big]x_2^{(r+s)}(1+x_1)^2\,=\,0;\\
\big[x_2^{(r)}(1+x_1)^2,\,x_2^{(s)}(1+x_1)^2\big]
&=&2\big[-\textstyle{r+s-1\choose r}+
\textstyle{r+s-1\choose s}\big]x_2^{(r+s-1)}(1+x_1)^4\tilde{\partial}_1;\\
\big[x_2^{(r)}(1+x_1)^3\tilde{\partial}_2,\,x_2^{(s)}\partial_2\big]&=&-\textstyle{r+s\choose
r} x_2^{(r+s-1)}(1+x_1)^3\tilde{\partial}_2;\\
\big[x_2^{(r)}(1+x_1)^3\tilde{\partial}_2,\,x_2^{(s)}(1+x_1)\partial_1\big]&=&
\textstyle{r+s-1\choose r}x_2^{(r+s-1)}(1+x_1)^4\tilde{\partial}_1;\\
\big[x_2^{(r)}(1+x_1)^3\tilde{\partial}_2,\,x_2^{(s)}(1+x_1)^2\big]&=&-\textstyle{r+s\choose
r} x_2^{(r+s)}\partial_2;\\
\big[x_2^{(r)}(1+x_1)^3\tilde{\partial}_2,\,x_2^{(s)}(1+x_1)^3\tilde{\partial}_2\big]&=&
0;\\
\big[x_2^{(r)}(1+x_1)^4\tilde{\partial}_1,\,x_2^{(s)}\partial_2\big]&=&
-\big[\textstyle{r+s-1\choose s}+2
\textstyle{r+s-1\choose s-1}\big]x_2^{(r+s-1)}(1+x_1)^4\tilde{\partial}_1;\\
\big[x_2^{(r)}(1+x_1)^4\tilde{\partial}_1,\,x_2^{(s)}(1+x_1)\partial_1\big]&=&
-\big[3\textstyle{r+s\choose r}+2\textstyle{r+s\choose
s}\big]x_2^{(r+s)}\tilde{\partial}_1\,=\,0;\\
\big[x_2^{(r)}(1+x_1)^4\tilde{\partial}_1,\,x_2^{(s)}(1+x_1)^2\big]&=&-\textstyle{r+s\choose
r}x_2^{(r+s)}(1+x_1)\partial_1;\\
\big[x_2^{(r)}(1+x_1)^4\tilde{\partial}_1,\,x_2^{(s)}(1+x_1)^3\tilde{\partial}_2\big]&=&\textstyle{r+s\choose
r} x_2^{(r+s)}(1+x_2)^2;\\
\big[x_2^{(r)}(1+x_1)^4\tilde{\partial}_1,\,x_2^{(s)}(1+x_1)^4\tilde{\partial}_1\big]&=&0.
\end{eqnarray*}

In order to obtain a more invariant description of $L_p(\alpha)$ we
now consider a vector space $R=R'\oplus C$ over $F$ with $\dim
C=\dim T-2$ such that $R'$ has basis
$\big\{x_1^{(i)}x_2^{(j)}\,|\,\,\,0\le i,j\le 4,\,\,1\le i+j\le
7\big\}\cup\{x_2^{(5)}\}\cup\{z\}$. We  give $R$ a Lie algebra
structure  by setting
$$
\big[x_1^{(i)}x_2^{(j)},\,x_1^{(k)}x_2^{(l)}\big]:=\,
\Big[\textstyle{{i+k-1\choose i-1}{j+l-1\choose j}-{i+k-1\choose
i}{j+l-1\choose j-1}}\Big]x_1^{(i+k-1)}x_2^{(j+l-1)}$$ for all $
i,j,k,l$ with $3\le i+j+k+l\le 7$ such that $(j,l)\ne (0,0)$
whenever $i+k=5$, and by requiring that $[Fz+C,R]=0$ and
$$
\big[x_1^{(i)}x_2^{(j)},\,x_1^{(k)}x_2^{(l)}\big]:=\,\left\{
\begin{array}{ll}
0 &\quad \mbox{ if  } i+j+k+l\le 2,\\
(-1)^i z &\quad \mbox{ if  } j=l=0 \mbox{ and } i+k=5.
\end{array}
\right.
$$
The Lie algebra $R$ is a (nonsplit) central extension of
$H(2;\un{1})^{(2)}\oplus FD_H(x_2^{(5)})$. Computations show that
\begin{eqnarray*}
\big[x_1x_2^{(r)},\,x_1x_2^{(s)}]&=&\big[\textstyle{r+s-1\choose r}-
\textstyle{r+s-1\choose s}\big]x_1x_2^{(r+s-1)};\\
\big[-x_1^{(4)}x_2^{(r-1)},\,x_1x_2^{(s)}\big]&=&\left\{
\begin{array}{ll}
-\textstyle{r+s-1\choose s}\big(-x_1^{(4)}x_2^{(r+s-2)}\big)&\ \mbox{ if }\,r+s\ge 2,\\
-z&\ \mbox{ if }\,r=1,\,\,s=0;
\end{array}
\right.\\
\big[-x_1^{(4)}x_2^{(r-1)},\,-x_1^{(4)}x_2^{(s-1)}\big]&=&0;\\
\big[x_1^{(2)}x_2^{(r)},\,x_1x_2^{(s)}\big]&=&
-\big[\textstyle{r+s-1\choose s}-
2\textstyle{r+s-1\choose s-1}\big]x_1^{(2)}x_2^{(r+s-1)};\\
\big[x_1^{(2)}x_2^{(r)},\,-x_1^{(4)}x_2^{(s-1)}\big]&=&0;\\
\big[x_1^{(2)}x_2^{(r)},\,x_1^{(2)}x_2^{(s)}\big]
&=&2\big[-\textstyle{r+s-1\choose r}+
\textstyle{r+s-1\choose s}\big]x_1^{(3)}x_2^{(r+s-1)};\\
\big[x_2^{(r+1)},\,x_1x_2^{(s)}\big]&=&-\textstyle{r+s\choose
r} x_2^{(r+s)};\\
\big[x_2^{(r+1)},\,-x_1^{(4)}x_2^{(s-1)}\big]&=&
\textstyle{r+s-1\choose r}x_1^{(3)}x_2^{(r+s-1)};\\
\big[x_2^{(r+1)},\,x_1^{(2)}x_2^{(s)}\big]&=&-\textstyle{r+s\choose
r}x_1x_2^{(r+s)};\\
\big[x_2^{(r+1)},\,x_2^{(s+1)}\big]&=&0;\\
\big[x_1^{(3)}x_2^{(r)},\,x_1x_2^{(s)}\big]&=&
-\big[\textstyle{r+s-1\choose s}+2 \textstyle{r+s-1\choose
s-1}\big]x_1^{(3)}x_2^{(r+s-1)};\\
\big[x_1^{(3)}x_2^{(r)},\,-x_1^{(4)}x_2^{(s-1)}\big]&=&0;\\
\big[x_1^{(3)}x_2^{(r)},\,x_1^{(2)}x_2^{(s)}\big]&=&\left\{
\begin{array}{ll}-\textstyle{r+s\choose
r}\big(-x_1^{(4)}x_2^{(r+s-1)}\big)&\ \mbox{ if }\,r+s\ge 1,\\
-z&\ \mbox{ if }\,r=s=0;
\end{array}
\right.\\
\big[x_1^{(3)}x_2^{(r)},\,x_2^{(s+1)}\big]&=&\textstyle{r+s\choose
r} x_1^{(2)}x_2^{(r+s)}\\
\big[x_1^{(3)}x_2^{(r)},\,x_1^{(3)}x_2^{(s)}\big]&=&0.
\end{eqnarray*}
By comparing the displayed multiplications tables it is
straightforward to see that the following statement holds:
\begin{prop}\label{iso}
Any linear map $\Theta'\colon\,\mathfrak{c}_{\widetilde{\mathcal
M}}((1+x_1)\partial_1)\longrightarrow\,R$ which takes $T_0$
isomorphically onto $C$ and satisfies the conditions
\begin{eqnarray*}
\Theta'\big(x_2^{(r)}(1+x_1)\partial_1\big)&=&\left\{\begin{array}{ll}
-x_1^{(4)}x_2^{(r-1)}& \,\,\,\mbox{ \rm{if } } 1\le r\le 4,\\
z& \,\,\,\mbox{ \rm{if } } r=0,
\end{array}
\right.\\
\Theta'\big(x_2^{(r)}\partial_2\big)&=&x_1x_2^{(r)},\qquad\quad\,\,\, 0\le r\le 4,\\
\Theta'\big(x_2^{(r)}(1+x_1)^2\big)&=&x_1^{(2)}x_2^{(r)},\qquad\quad 0\le r\le 4,\\
\Theta'\big(x_2^{(r)}(1+x_1)^3\tilde{\partial}_2)\big)&=&x_2^{(r+1)},\qquad\quad\,\,\, 0\le r\le 4,\\
\Theta'\big(x_2^{(r)}(1+x_1)^4\tilde{\partial}_1\big)&=&x_1^{(3)}x_2^{(r)},\qquad \quad 0\le r\le 4,\\
\end{eqnarray*}
is an isomorphism of Lie algebras.
\end{prop}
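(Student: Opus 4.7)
The plan is to observe that $\Theta'$ is a linear bijection by construction: it sends the $25$ listed basis elements of $\mathfrak{c}_{\mathcal M}((1+x_1)\partial_1)$ one-to-one onto the $25$ basis elements of $R'$, while its prescribed restriction to $T_0$ is a linear isomorphism onto $C$. Together with the decomposition $\mathfrak{c}_{\widetilde{\mathcal M}}((1+x_1)\partial_1)\,=\,\mathfrak{c}_{\mathcal M}((1+x_1)\partial_1)\oplus T_0$, which holds because $T_0$ is a central ideal of $\widetilde{\mathcal M}$, this makes $\Theta'$ a linear isomorphism, and only compatibility with the brackets remains.

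For that I would proceed by direct term-by-term comparison of the two multiplication tables displayed immediately before the proposition. The first table records every nonzero bracket of the chosen basis of $\mathfrak{c}_{\mathcal M}((1+x_1)\partial_1)$, computed from the Melikian multiplication \cite[(4.3.1)]{St04}; the second, written in exactly the same order, records the brackets of the image basis in $R$ dictated by the defining relations of $R$. For each pair of non-central basis vectors the binomial coefficients on the two sides are literally identical, so the match is read off directly. All brackets involving a component in $T_0$ or $C$ vanish on both sides, so the hypothesis that $\Theta'_{|T_0}\colon T_0\to C$ is merely a linear isomorphism suffices there.

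The one subtle point is the role of the toral element $(1+x_1)\partial_1$, which appears as the $r=0$ case of $x_2^{(r)}(1+x_1)\partial_1$ and is central in $\mathfrak{c}_{\mathcal M}((1+x_1)\partial_1)$ because it is toral; every bracket in the first table with $r=0$ in the $x_2^{(r)}(1+x_1)\partial_1$ slot therefore vanishes, in agreement with the fact that $\Theta'((1+x_1)\partial_1)=z$ is declared central in $R$. The same element $(1+x_1)\partial_1$ also occurs as the output of the boundary brackets $[x_2(1+x_1)\partial_1,\partial_2]=-(1+x_1)\partial_1$ and $[(1+x_1)^4\tilde{\partial}_1,(1+x_1)^2]=-(1+x_1)\partial_1$; under $\Theta'$ these map precisely to the cocycle brackets $[-x_1^{(4)},x_1]=-z$ and $[x_1^{(3)},x_1^{(2)}]=-z$ that turn $R$ into a nonsplit central extension of $H(2;\un{1})^{(2)}\oplus FD_H(x_2^{(5)})$.

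I do not anticipate a genuine obstacle: the two multiplication tables have been written in lockstep precisely so that $\Theta'$ becomes an isomorphism. The only care needed is in identifying the boundary cases where the generic Hamiltonian formula for $[x_1^{(i)}x_2^{(j)},x_1^{(k)}x_2^{(l)}]$ degenerates, namely $j=l=0$ and $i+k=5$, in which the bracket in $R$ is replaced by $(-1)^{i}z$, and matching them with the two occurrences of the central element $(1+x_1)\partial_1$ on the Melikian side, as just indicated.
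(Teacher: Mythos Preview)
Your proposal is correct and follows exactly the paper's approach: the paper's entire proof is the single sentence ``By comparing the displayed multiplication tables it is straightforward to see that the following statement holds,'' and your outline carries out precisely that comparison, with the added care about the central element $(1+x_1)\partial_1$ and the cocycle brackets $(-1)^i z$ that the paper leaves implicit.
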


We now fix $\Theta'$ described in Proposition~\ref{iso} and set
$\Theta:=\Theta'\circ\Phi_{\vert L_p(\alpha)}$, where
$\Phi\colon\,L_p(\alpha,\beta)\stackrel{\sim}{\longrightarrow}\widetilde{\mathcal
M}$ is a Lie algebra isomorphism satisfying (\ref{Phi1}),
(\ref{Phi2}) and (\ref{Phi3}). Clearly,
$\Theta\colon\,L_p(\alpha)\stackrel{\sim}{\longrightarrow}\,R$ is a
Lie algebra isomorphism. We give $R$ a $p$th power map by setting
\begin{equation}\label{p-R}
r^p:=\,\Theta\big(\Theta^{-1}(r)^{[p]}\big)\qquad\quad\quad
(\forall\,r\in R).\end{equation} This turns $\Theta$ into an
isomorphism of restricted Lie algebras. Because the $p$-linear map
$\Lambda\colon\,\widetilde{\mathcal M}\longrightarrow\,T_0$ vanishes
on the subspace $\mathcal{\mathcal M}_{(-2)}$ of
$\widetilde{\mathcal M}$ by Lemma~\ref{Lambda} and $\Theta$ is
defined via $\Phi$, the explicit description of $\Theta'$ in
Proposition~\ref{iso} shows that the map (\ref{p-R}) has the
following properties:
\begin{eqnarray}\label{p-R-prop}
\big(x_2^{(r+1)}\big)^p=0\qquad&\mathrm{if}&\, 0\le r\le 4;
\nonumber\\ \big(x_1x_2^{(r)}\big)^p=0\qquad&\mathrm{if}&\, r\ne
0,1; \nonumber\\
\big(x_1^{(2)}x_2^{(r)}\big)^p=0\qquad&\mathrm{if}&\, 0\le r\le 4;\label{pth power}\\
\big(x_1^{(3)}x_2^{(r)}\big)^p=0\qquad&\mathrm{if}&\, 0\le r\le 4;
\nonumber\\
\big(x_1^{(4)}x_2^{(r-1)}\big)^p=0\qquad&\mathrm{if}&\, 1\le r\le 4;\nonumber\\
\big(x_1x_2\big)^p=x_1x_2\quad&\mathrm{\ \,i.e.}&\  x_1x_2\
\mathrm{is\ toral}\nonumber
\end{eqnarray}
(we refer to \cite{Skr} for more detail on the $p$-structure in the
restricted Melikian algebra). Note that $(x_1)^p$ and $z^p$ lie in
$\Theta(T)=Fz\oplus C$. Moreover,
$Fz=\Theta\big(H^3\cap\ker\alpha\big)$ coincides the image of
$F(1+x_1)\partial_1$ under $\Phi^{-1}$ and
$\Theta'\big((1+x_2)\partial_2)\big)=x_1+x_1x_2$.

We stress that all constructions of Sections~7 and 8 depend on the
choice of a Melikian pair.
\section{\bf The subalgebra $Q(\alpha)$}
The results obtained so far apply to all nonstandard tori of maximal
dimension in $L_p$. However, such tori need not be conjugate under
the automorphism group of $L$. In order to identify $L$ with one of
the Melikian algebras, we will require a sufficiently generic
nonstandard torus of maximal dimension in $L_p$.
\begin{prop}\label{generic}
There exists a nonstandard torus $T'$ of maximal dimension in $L_p$
for which $(\mathfrak{c}_L(T'))^3$ contains no nonzero toral
elements of $L_p$.
\end{prop}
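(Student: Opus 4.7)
The plan is to show that, starting from any nonstandard torus $T$ of maximal dimension in $L_p$, one can produce a nonstandard maximal torus $T'$ such that $(\mathfrak{c}_L(T'))^3$ contains no nonzero toral element; one can take $T' := T$ if $H^3 := (\mathfrak{c}_L(T))^3$ already has this property, and otherwise a toral switching is needed. The first step is the structural observation that the $\mathbb{F}_p$-span $(H^3)_{\rm tor}$ of toral elements in $H^3$ has $\mathbb{F}_p$-dimension at most $1$: by Theorem~\ref{sum}(2), $\dim_F H^3 = 2$ and the $p$-envelope of $H^3$ in $L_p$ equals $T$, and the standing hypothesis $TR(L) \ge 3$ from Section~5 gives $\dim_F T \ge 3$, so $H^3 \subsetneq T$. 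Were $(H^3)_{\rm tor}$ to have $\mathbb{F}_p$-dimension $2$, then $H^3 = (H^3)_{\rm tor} \otimes_{\mathbb{F}_p} F$ would be a restricted subalgebra (a $2$-dimensional subtorus of $T$), forcing the $p$-envelope of $H^3$ to equal $H^3$, a contradiction. Hence either $(H^3)_{\rm tor} = 0$ or $(H^3)_{\rm tor} = \mathbb{F}_p t$ for a toral generator $t$ unique up to $\mathbb{F}_p^\times$-scalar.

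In the latter case I would fix a Melikian pair $(\alpha, \beta)$ with $\alpha(t) = 0$, which is possible by Theorem~\ref{sum}(1) and Corollary~\ref{melpair2}. The isomorphism $\Phi\colon L_p(\alpha,\beta) \stackrel{\sim}{\longrightarrow} \widetilde{\mathcal{M}}$ from (\ref{Phi1})--(\ref{Phi3}) sends $t$ to a nonzero $\mathbb{F}_p$-multiple of $(1+x_1)\partial_1 \in \mathfrak{t}_{\rm tor} = \mathbb{F}_p(1+x_1)\partial_1 \oplus \mathbb{F}_p(1+x_2)\partial_2$. Writing $\Lambda\colon \widetilde{\mathcal{M}} \to T_0$ for the $p$-linear correction of Section~7, the identity $h^{[p]} = \Phi^{-1}(\Phi(h)^p) + \Lambda(\Phi(h))$ shows that for $h \in H^3$ with $\Phi(h) \in \mathfrak{t}_{\rm tor}$ the element $h$ is toral in $L_p$ exactly when $\Lambda(\Phi(h)) = 0$. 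Thus the desired conclusion $(H^3)_{\rm tor} = 0$ is equivalent to the $\mathbb{F}_p$-linear independence of $\Lambda((1+x_1)\partial_1)$ and $\Lambda((1+x_2)\partial_2)$ in $T_0$.

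I would then construct $T'$ by toral switching: selecting a root vector $y \in L_\gamma$ for a root $\gamma \in \Gamma$ with $\gamma(t) \neq 0$ and building from $y$ a Lie-algebra automorphism $\sigma$ of $L$ (for instance by exponentiating $\mathrm{ad}\,y$ after suitable adjustment, or by lifting a stabilizer automorphism of $t$ as in Lemma~\ref{stab}(i)), I would set $T' := \sigma(T)$. This is another torus of the same maximal dimension; it remains nonstandard because the Melikian pair structure is preserved. The key point is that $\sigma$ can be chosen to be a Lie-algebra but \emph{not} a restricted automorphism of $L_p$, so that the $p$-power structure on $T'$ and the new correction $\Lambda'$ genuinely differ from those on $T$, giving us enough freedom to place $\Lambda'((1+x_1)\partial_1)$ and $\Lambda'((1+x_2)\partial_2)$ in $\mathbb{F}_p$-independent position in the new $T_0'$. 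The principal obstacle is precisely this explicit switching construction: any restricted automorphism would preserve toral elements and therefore $\dim_{\mathbb{F}_p}(H^3)_{\rm tor}$, so genuinely non-restricted switchings are required, and showing that the available non-restricted switchings suffice to achieve $\mathbb{F}_p$-independence requires the explicit $[p]$-power table (\ref{p-R-prop}) describing $L_p(\alpha) \cong R$ from Section~8 together with the $p$-character restrictions on irreducible $\mathcal{M}$-modules in Proposition~\ref{p-character}.
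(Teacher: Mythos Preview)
Your preliminary observations are correct: the argument that $\dim_{\mathbb{F}_p}(H^3)_{\rm tor}\le 1$ is sound, and the characterization of toral elements of $H^3$ via $\Lambda$ matches the paper's setup (indeed $\Lambda((1+x_i)\partial_i)=\Lambda(\partial_i)$ coincides with the element $h_i=n_i^{[p]}$ used in the paper). The genuine gap is in your construction of $T'$. You propose to set $T'=\sigma(T)$ for a Lie-algebra automorphism $\sigma$ of $L$ built from a root vector, and you recognize that $\sigma$ must fail to be restricted on $L_p$. But this is impossible: any Lie automorphism $\sigma$ of the simple algebra $L$ induces the automorphism $D\mapsto \sigma D\sigma^{-1}$ of $\Der L$, which preserves the associative $p$-th power and hence the $[p]$-map; since $L_p$ is the $p$-closure of $\ad L$ in $\Der L$, this restricts to a \emph{restricted} automorphism of $L_p$. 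Consequently $\sigma$ carries toral elements to toral elements and $\dim_{\mathbb{F}_p}(\mathfrak{c}_L(\sigma(T)))^3_{\rm tor}=\dim_{\mathbb{F}_p}(H^3)_{\rm tor}$, so no automorphism of $L$ (whether obtained by exponentiating $\ad y$ or lifted from Lemma~\ref{stab}) can remove the toral element $t$. Your own diagnosis that ``genuinely non-restricted switchings are required'' is exactly right, but it rules out your proposed mechanism.

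The paper avoids this trap by never invoking automorphisms of $L$. Instead it varies the torus \emph{inside the Melikian $2$-section}: for $\sigma\in\mathrm{Aut}_0\,\mathcal{M}$ the subspace $T':=\Phi^{-1}(\sigma(\mathfrak{t}))\oplus T_0$ is again a nonstandard maximal torus of $L_p$, and because $\Phi$ is only a Lie isomorphism (not restricted), tori conjugate under $\mathrm{Aut}\,\mathcal{M}$ pull back to tori that are genuinely inequivalent in $L_p$. Concretely, using the transitivity of $\mathrm{Aut}_0\,\mathcal{M}$ on bases of $\mathcal{M}_{-3}$ one first arranges $h_1,h_2\ne 0$; then, writing $h_i=\sum_j\lambda_{ij}t_j$ in a toral basis of $T_0$, one rescales via $\mathbf{T}\subset\mathrm{Aut}_0\,\mathcal{M}$ to the torus $\mathfrak{t}'=F(a+x_1)\partial_1\oplus F(b+x_2)\partial_2$ with $a,b\in F^\times$ chosen so that $a^p\lambda_{11}$ and $b^p\lambda_{21}$ are $\mathbb{F}_p$-independent. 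A direct Jacobson-formula computation then shows $(\mathfrak{c}_L(T'))^3=F\Phi^{-1}((a+x_1)\partial_1)\oplus F\Phi^{-1}((b+x_2)\partial_2)$ contains no nonzero toral element. The freedom you were looking for lives in $\mathrm{Aut}\,\mathcal{M}$ acting through the non-restricted $\Phi$, not in $\mathrm{Aut}\,L$.
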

\begin{proof}
Let $T$ and $\Gamma$ be as Sect.~8 and let
$(\alpha,\beta)\in\Gamma^2$ be a Melikian pair. Choose an
isomorphism
$\Phi\colon\,L_p(\alpha,\beta)\stackrel{\sim}{\longrightarrow}
\widetilde{\mathcal M}$ satisfying (\ref{Phi1}) and (\ref{Phi2}).
Then $H^3=\Phi^{-1}(\mathfrak{t})$. Set
$q_i:=\Phi^{-1}(x_i\partial_i),\,$ $n_i=\Phi^{-1}(\partial_i)$ and
$h_i:=n_i^{[p]}$, where $i=1,2$. As the elements $x_i\partial_i$ are
toral in $\mathcal M$, Lemma~\ref{Lambda} says that both $q_1$ and
$q_2$ are toral elements of $L_p$. Note that $T=F(q_1+n_1)\oplus
F(q_2+n_2)\oplus T_0$, where $T_0=T\cap\ker\alpha\cap\ker\beta$.

As $\Phi$ is a Lie algebra isomorphism, it is straightforward to see
that $[q_i,n_i]=-n_i$ and $h_i\in T_0$ for $i=1,2$. So it follows
from Jacobson's formula that
$(q_i+n_i)^{[p]^k}=q_i+n_i+\sum_{j=0}^{k-1}\,h_i^{[p]^j}$ for all
$k\ge 1$. Since $(H^3)_p=T$ by Theorem~\ref{sum}(3) and
$H^3=F(q_1+n_1)\oplus  F(q_2+n_2)$, it follows that the $p$-closure
of $Fh_1+ Fh_2$ coincides with $T_0$.

Recall that $\dim T_0\ge 1$. Let $\{t_1,\ldots, t_s\}$ be a basis of
$T_0$ consisting of toral elements of $L_p$. For
$x=\sum_{j=1}^s\,\alpha_jt_j\in T_0$ define
$\mathrm{Supp}(x):=\,\{j\,|\,\,\alpha_j\ne 0\}.$ Write
$h_1=\sum_{j=1}^s\,\lambda_it_i$ and $h_2=\sum_{j=1}^s\,\mu_jt_j$
with $\lambda_j,\mu_j\in F$. Since the $[p]$-th powers of $h_1$ and
$h_2$ span $T_0$, it must be that
\begin{eqnarray*}
\mathrm{Supp}(h_1)\cup\mathrm{Supp}(h_2)=\{1,\ldots, s\}.
\end{eqnarray*}
In particular, $h_1\ne 0$ or $h_2\ne 0$. Recall from Sect.~6 the
maximal torus $\mathbf{T}$ of the group
$\mathrm{Aut}_0\,\mathcal{M}$ of all automorphisms of $\mathcal M$
preserving the natural grading of $\mathcal M$. For every
$\sigma\in\mathrm{Aut}_0\,\mathcal{M}$ the subalgebra
$\Phi^{-1}\big(\sigma(\mathfrak{t})+T_0\big)$ is a nonstandard torus
of maximal dimension in $L_p$ and the elements
$(\Phi^{-1}\circ\sigma)(x_1\partial_1)$ and
$(\Phi^{-1}\circ\sigma)(x_2\partial_2)$  are toral in $L_p$ by
Lemma~\ref{Lambda}. Since the group $\mathrm{Aut}_0\,\mathcal{M}$
acts transitively on the set of bases of $\mathcal{M}_{-3}$, there
is $\tau\in\mathrm{Aut}_0\,\mathcal{M}$ such that the elements
$\big((\Phi^{-1}\circ\tau)(\partial_1)\big)^{[p]}$ and
$\big(\Phi^{-1}\circ\tau)(\partial_2)\big)^{[p]}$ are  both nonzero.
Replacing $\mathfrak t$ by $\tau(\mathfrak{t})$ and renumbering the
$t_i$'s if necessary, we thus may assume that $\lambda_1$ and
$\mu_1$ are both nonzero.

Since $F$ is infinite, there exist $a,b\in F^\times$ such that the
elements $a^p\lambda_1$ and $b^p\mu_1$ of $F$ are linearly
independent over $\mathbb{F}_p$. Applying a suitable automorphism
from the subgroup $\mathbf{T}$ of $\mathrm{Aut}_0\,\mathcal{M}$ one
observes that $\mathfrak{t}':=F(a+x_1)\partial_1\oplus
F(b+x_2)\partial_2,$ is a $2$-dimensional nonstandard torus in
$\mathcal M$ and $\mathfrak{t}'\,=\,(\mathfrak{c}_{\mathcal
M}(\mathfrak{t}'))^3$ (alternatively, one can apply \cite[Lemmas~4.1
\& 4.4]{P94}). This entails that
$$T':=\,\Phi^{-1}(\mathfrak{t}'\oplus T_0)\,=\,
F(q_1+an_1)\oplus F(q_2+bn_2)\oplus T_0$$ is a nonstandard torus of
maximal dimension in $L_p$ with $F(q_1+an_1)\oplus
F(q_2+bn_2)\,=\,(\mathfrak{c}_L(T'))^3$. Suppose
\begin{eqnarray}\label{[p]}
\big(x(q_1+an_1)+y(q_2+bn_2)\big)^{[p]}=\,x(q_1+an_1)+y(q_2+bn_2)\end{eqnarray}
for some $x,y\in F$. Applying $\Phi$ to both sides of (\ref{[p]})
gives
\begin{equation*}
\big(x(a+x_1)\partial_1+y(b+x_2\partial_2)\big)^{[p]}\,=\,x(a+x_1)\partial_1+y(b+x_2)\partial_2.
\end{equation*}
As both $(a+x_1)\partial_1$ and $(b+x_2)\partial_2$ are toral
elements of $\mathcal M$, we get $x,y\in\mathbb{F}_p$. Hence
\begin{eqnarray*}
x(q_1+an_1)+y(q_2+bn_2)&=&
\big(x(q_1+an_1)+y(q_2+bn_2)\big)^{[p]}\\
&=&x(q_1+an_1+a^ph_1) + y(q_2+bn_2+b^ph_2),\end{eqnarray*} implying
$xa^ph_1+yb^ph_2=0$. As a consequence, $xa^p\lambda_j+yb^p\mu_j=0$
for all $j\le s$. But then $a^p\lambda_1$ and $b^p\mu_1$ are
linearly dependent over $\mathbb{F}_p$, a contradiction. We conclude
that $(\mathfrak{c}_L(T'))^3$ contains no nonzero toral elements of
$L_p$.
\end{proof}
Retain the notation introduced in Sections~7 and 8. In view of
Proposition~\ref{generic}, we may assume that for every
$\alpha\in\Gamma$ no nonzero element of $H^3\cap\ker\alpha$ is toral
in $L_p$.

The map
$\Theta\colon\,L_p(\alpha)\stackrel{\sim}{\longrightarrow}\,R$
defined in Sect.~8 induces a natural Lie algebra isomorphism
$$\bar{\Theta}\colon\,L_p(\alpha)/\mathfrak{z}(L_p(\alpha))
\stackrel{\sim}{\longrightarrow}\,R/\mathfrak{z}(R)\,\cong\,
H(2;\un{1})^{(2)}\oplus FD_H(x_2^{(5)}).$$ Let
$\big(R/\mathfrak{z}(R)\big)_{(i)}$ denote the $i$th component of
the standard filtration of the Cartan type Lie algebra
$R/\mathfrak{z}(R)$, where $i\ge -1$, and denote by
$L_p(\alpha)_{(i)}$ the inverse image of
$\big(R/\mathfrak{z}(R)\big)_{(i)}$ under $\bar{\Theta}$. We thus
obtain a filtration $\{L_p(\alpha)_{(i)}\,|\,\,i\ge -1\}$ of the Lie
algebra $L_p(\alpha)$ with $\bigcap_{i\ge
-1}L_p(\alpha)_{(i)}\,=\,T\cap\ker\alpha$ and $\dim
\big(L_p(\alpha)/L_p(\alpha)_{(0)}\big)=2$. This filtration is, in
fact, independent of the choice of $\bar{\Theta}$, because
$\big(R/\mathfrak{z}(R)\big)_{(0)}$ is the unique subalgebra of
codimension $2$ in the Cartan type Lie algebra $R/\mathfrak{z}(R)$.
Since $\bar{\Theta}$ is a restricted Lie algebra isomorphism, all
$L_p(\alpha)_{(i)}$ are restricted subalgebras of $L_p(\alpha)$. We
denote by $\mathrm{nil}_{[p]}\big(L_p(\alpha)_{(i)}\big)$ the
maximal ideal of $L_p(\alpha)_{(i)}$ consisting of $p$-nilpotent
elements of $L_p$.
\begin{defi}\label{defi}
{\rm Define}
\begin{eqnarray*}
W&:=&\big\{u\in L_p(\alpha)^{(1)}\cap L_p(\alpha)_{(0)}\,|\,\,\,
u^{[p]}\in L_p(\alpha)^{(1)}\big\};\\
P&:=&\big\{u\in W\,|\,\,\,[u,W]\subset W\big\};\\
Q(\alpha)&:=&P+\mathrm{nil}_{[p]}\big(L_p(\alpha)_{(3)}\big).
\end{eqnarray*}
\end{defi}
Because of the uniqueness of the filtration
$\{L_p(\alpha)_{(i)}\,|\,\,i\ge -1\}$ this definition is independent
of the choices made earlier. The main result of this section is the
following:
\begin{prop}\label{Q-alpha}
If $(\alpha,\beta)$ is a Melikian pair in $\Gamma^2$, then
$$Q(\alpha)\,=\,L_p(\alpha)\cap\big(L_p(\alpha,\beta)^{(1)}\big)_{(0)}.$$
\end{prop}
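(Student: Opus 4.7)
The plan is to translate Proposition~\ref{Q-alpha} to the explicit central extension $R$ via the restricted isomorphism $\Theta\colon L_p(\alpha)\xrightarrow{\sim} R$ constructed from $\Phi$ and Proposition~\ref{iso}, and then to verify the equality by comparing explicit bases in $R$. Since $\Phi$ maps $L_p(\alpha)\cap L_p(\alpha,\beta)^{(1)}$ onto $\mathfrak{c}_{\mathcal M}((1+x_1)\partial_1)$, the right-hand side of Proposition~\ref{Q-alpha} corresponds under $\Theta$ to $\Theta'\big(\mathfrak{c}_{\mathcal M}((1+x_1)\partial_1)\cap \mathcal M_{(0)}\big)$. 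Using the natural grading of $\mathcal M$, in which $\deg x^{(\alpha)}=3|\alpha|-2$, $\deg x^{(\alpha)}\partial_i=3|\alpha|-3$ and $\deg x^{(\alpha)}\tilde\partial_i=3|\alpha|-1$, each of the 25 basis vectors of $\mathfrak{c}_{\mathcal M}((1+x_1)\partial_1)$ displayed in Section~8 lies in $\mathcal M_{(0)}$ precisely when $r\ge 1$, while the five $r=0$ vectors have linearly independent projections onto $\mathcal M_{-3}\oplus\mathcal M_{-2}\oplus\mathcal M_{-1}$ and so cannot be combined away. Translating through Proposition~\ref{iso}, the right-hand side becomes the 20-dimensional subspace of $R$ spanned by $\{x_1^{(i)}x_2^{(j)}:2\le i+j\le 7,\ (i,j)\ne(2,0),(3,0)\}\cup\{x_2^{(5)}\}$.

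The next step is to compute $W$ and $P$ inside $R$. The multiplication table defining $R$ gives $R^{(1)}=\langle x_1^{(i)}x_2^{(j)}:1\le i+j\le 7\rangle\oplus Fz$; crucially $x_2^{(5)}\notin R^{(1)}$, since the only brackets of basis vectors landing in a scalar multiple of $x_2^{(s)}$ come from $[x_2^{(r+1)},x_1x_2^{(s)}]$, whose coefficient $\binom{5}{r}$ vanishes in characteristic $5$ when $r+s=5$. The subalgebra $\Theta(L_p(\alpha)_{(0)})$ is the preimage in $R$ of the degree-$\ge 0$ part of $R/\mathfrak z(R)$ under the Hamiltonian grading, hence is spanned by $\{x_1^{(i)}x_2^{(j)}:i+j\ge 2\}\cup\{x_2^{(5)}\}$ together with the centre $Fz\oplus C$; intersecting with $R^{(1)}$ leaves $\langle x_1^{(i)}x_2^{(j)}:2\le i+j\le 7\rangle\oplus Fz$. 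Proposition~\ref{generic} guarantees that $h_\alpha\in H^3\cap\ker\alpha$ is not toral; combined with the $p$-semisimplicity of $h_\alpha$ this forces $h_\alpha^{[p]}\notin Fh_\alpha$, equivalently $z^p\in Fz\oplus C$ has nonzero $C$-component. For $u=\lambda z+\sum c_{ij}x_1^{(i)}x_2^{(j)}$ in the above intersection, Jacobson's formula together with the $p$-power data (\ref{pth power})---each $(x_1^{(i)}x_2^{(j)})^p$ with $2\le i+j\le 7$ is either $0$ or equal to $x_1x_2\in R^{(1)}$, and the Jacobson corrections lie in $R^{(1)}$---yields $u^p\equiv\lambda^p z^p\pmod{R^{(1)}}$. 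Hence $u^p\in R^{(1)}$ forces $\lambda=0$, and $W=\langle x_1^{(i)}x_2^{(j)}:2\le i+j\le 7\rangle$, of dimension $21$. Inspection of the table shows that the only brackets of basis vectors of $R$ landing in $Fz$ are $[x_1^{(3)},x_1^{(2)}]=-z$ and $[x_1^{(4)},x_1]=z$; the latter cannot affect $P$ because $x_1\notin W$, so $u\in W$ lies in $P$ if and only if the coefficients of $x_1^{(2)}$ and $x_1^{(3)}$ in $u$ both vanish. This yields $P=\langle x_1^{(i)}x_2^{(j)}:2\le i+j\le 7,\ (i,j)\ne(2,0),(3,0)\rangle$, of dimension $19$.

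The final step is to form $\mathrm{nil}_{[p]}(L_p(\alpha)_{(3)})$ and adjoin it to $P$. Under $\Theta$ the subalgebra $L_p(\alpha)_{(3)}$ is spanned by $\{x_1^{(i)}x_2^{(j)}:5\le i+j\le 7\}\cup\{x_2^{(5)}\}$ together with the centre $Fz\oplus C$. Every vector of the first two families has vanishing $p$-th power by (\ref{pth power}), while $z$ and the toral basis of $C$ are $p$-semisimple and in fact span the unique maximal torus of the nilpotent restricted algebra $L_p(\alpha)_{(3)}$. Consequently $\mathrm{nil}_{[p]}(L_p(\alpha)_{(3)})=\langle x_1^{(i)}x_2^{(j)}:5\le i+j\le 7\rangle\oplus Fx_2^{(5)}$, and every vector of this subspace except $x_2^{(5)}$ already lies in $P$. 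Therefore $Q(\alpha)=P+Fx_2^{(5)}$ is the 20-dimensional subspace of $R$ spanned by precisely the 20 basis vectors identified for the right-hand side in the first paragraph, completing the proof. The main obstacle is the careful handling of the $p$-structure on $R$: one must verify that Jacobson's formula contributes no stray terms outside $R^{(1)}$, and invoke Proposition~\ref{generic} at exactly the point needed to pin down the $z$-coefficient of elements of $W$.
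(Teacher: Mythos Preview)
Your proof is correct and follows essentially the same approach as the paper: both translate everything to $R$ via $\Theta$, identify an explicit $20$-element basis for $\Theta\big(L_p(\alpha)\cap(L_p(\alpha,\beta)^{(1)})_{(0)}\big)$, then compute $\Theta(W)$, $\Theta(P)$ and $\Theta(\mathrm{nil}_{[p]}(L_p(\alpha)_{(3)}))$ directly from the multiplication table and the $p$-power data~(\ref{pth power}), and compare. Your exclusion of $z$ from $W$ via the nonzero $C$-component of $z^p$ is exactly the paper's argument that $h_\alpha^{[p]}-h_\alpha\in L_p(\alpha,\beta)^{(1)}\cap T_0=(0)$ would force $h_\alpha$ toral, rephrased on the $R$-side.
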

\begin{proof}
(a) Choose any Lie algebra isomorphism
$\Phi\colon\,L_p(\alpha,\beta)\stackrel{\sim}{\longrightarrow}\,\widetilde{\mathcal
M}\,=\,\mathcal{M}\oplus T_0$ satisfying (\ref{Phi1}), (\ref{Phi2})
and (\ref{Phi3}). Then
$\Phi\big(L_p(\alpha)\cap\big(L_p(\alpha,\beta)^{(1)}\big)_{(0)}\big)$
is spanned by
$$\big\{x_2^{(r)}\partial_2,\,x_2^{(r)}(1+x_1)\partial_1,\,x_2^{(r)}(1+x_1)^2,\,
x_2^{(r)}(1+x_1)^3\tilde{\partial}_2,\,x_2^{(r)}(1+x_1)^4\tilde{\partial}_1\,|\,\,\,1\le
r\le 4\big\}.$$ Let
$\Theta=\Phi\circ\Theta'\colon\,L_p(\alpha)\stackrel{\sim}{\longrightarrow}\,R$
be the isomorphism associated with $\Phi$. The explicit formulae for
$\Theta'$ yield that
$\Theta\big(L_p(\alpha)\cap\big(L_p(\alpha,\beta)^{(1)}\big)_{(0)}\big)$
is spanned by the set
$$
\big\{x_1x_2^{(r)},\, x_1^{(2)}x_2^{(r)},\,
x_1^{(3)}x_2^{(r)}\,|\,\,\,1\le r\le 4\big\}\cup
\big\{x_1^{(4)}x_2^{(r)}\,|\,\,\,0\le r\le
3\big\}\cup\big\{x_2^{(r)}\,|\,\,\, 2\le r\le 5\big\};
$$
see Proposition~\ref{iso}.

\smallskip

\noindent (b) Next we are going to determine $\Theta(W)$,
$\Theta(P)$ and $\Theta(Q(\alpha))$ by using Definition~\ref{defi}.
First we observe that
$$\Theta\big(L_p(\alpha)^{(1)}\cap
L_p(\alpha)_{(0)}\big)\,=\,Fz\oplus\big(\textstyle{\bigoplus}_{0\le
i,j\le 4,\,2\le i+j\le 7}\,Fx_1^{(i)}x_2^{(j)}\big);$$ see
Proposition~\ref{iso}. It is immediate from equations~(\ref{pth
power}) that
$$\big(x_1^{(i)}x_2^{(j)}\big)^p\in\,\Theta\big(L_p(\alpha)^{(1)}\cap
L_p(\alpha)_{(0)}\big)\ \quad\mbox{whenever }\, i+j\ge 2.$$ Recall
that $\Theta$ is an isomorphism of restricted Lie algebras. In
conjunction with Jacobson's formula, this shows that $\Theta(W)$ is
a subspace of $R$. As a consequence, we have the inclusion
$$\textstyle{\bigoplus}_{0\le i,j\le 4,\,2\le i+j\le
7}\,Fx_1^{(i)}x_2^{(j)}\subset\,\Theta(W).$$ On the other hand, if
$z\in \Theta(W)$, then the definition of $\Theta'$ and our
assumption on $\Phi$ yield $H^3\cap\ker\alpha\subset\, W$. Then
$h_\alpha\in W$. As $Fh_\alpha=H^3\cap\ker\alpha= F\Theta^{-1}(z)$,
our assumption on $h_\alpha$ in (\ref{Phi3}) yields
$h_\alpha=\Phi^{-1}((1+x_1)\partial_1)$. It follows that
$h_\alpha^{[p]}-h_\alpha\in L_p(\alpha)^{(1)}\cap T_0$. As
$h_\alpha^{[p]}\ne h_\alpha$ by our choice of $T$, this entails
$L_p(\alpha,\beta)^{(1)}\cap T_0\ne (0)$ contradicting
Lemma~\ref{melpair3}. We conclude that
$$
\Theta(W)\,=\,\textstyle{\bigoplus}_{0\le i,j\le 4,\,2\le i+j\le
7}\,Fx_1^{(i)}x_2^{(j)}.
$$

Let $u=\sum_{i,j}s_{i,\,j}\,x_1^{(i)}x_2^{(j)}\in\Theta(P)$. Since
$x_1^{(2)},\, x_1^{(3)}\in\Theta(W)$ and $[x_1^{(2)},x_1^{(3)}]=z$,
it follows readily from the definition of $P$ that
$s_{2,\,0}=s_{3,\,0}=0$. The multiplication table for $R$ given
Sect.~8 now shows that $\Theta(P)$ is spanned by
$$\big\{x_1^{(4)},\,x_2^{(2)},\,x_2^{(3)}\big\}\cup
\big\{x_1^{(i)}x_2,\,x_1^{(i)}x_2^{(2)},\,x_1^{(i)}x_2^{(3)}\,|\,\,\,
1\le i\le 4\big\} \cup\big\{x_1^{(i)}x_2^{(4)}\,|\,\,\, 0\le i\le
3\big\}.
$$

\smallskip

\noindent (c) Finally, the nilpotent subalgebra
$\Theta\big(L_p(\alpha)_{(3)}\big)$ is spanned by
$$
\big\{x_1^{(i)}x_2^{(4)}\,|\,\,\,0\le i,j\le 4;\,5\le i+j\le
7\big\}\cup\big\{x_2^{(5)},\,z\big\}\cup C.
$$
By (\ref{pth power}), the Lie product of any two elements in this
set is $p$-nilpotent in $R$. Since $\Theta$ is an isomorphism of
restricted Lie algebras, it follows that
$\Theta\big(\mathrm{nil}_{[p]}\big(L_p(\alpha)_{(3)}\big)\big)$ is
spanned by $\big\{x_1^{(i)}x_2^{(4)}\,|\,\,\,0\le i,j\le 4;\,5\le
i+j\le 7\big\}\cup\big\{x_2^{(5)}\big\}.$ Comparing the spanning set
of
$\Theta\big(L_p(\alpha)\cap\big(L_p(\alpha,\beta)^{(1)}\big)_{(0)}\big)$
from part~(a) of this proof with that of
$\Theta(Q(\alpha))\,=\,\Theta(P)+\Theta\big(\mathrm{nil}_{[p]}\big(L_p(\alpha)_{(3)}\big)\big)$
we now obtain that
$$
\Theta\big(L_p(\alpha)\cap\big(L_p(\alpha,\beta)^{(1)}\big)_{(0)}\big)
\,=\,\Theta(Q(\alpha)).
$$
Since $\Theta$ is an isomorphism, the proposition follows.
\end{proof}

\begin{rem}\label{sub}
{\rm Proposition~\ref{Q-alpha} implies that $Q(\alpha)$ is a
subalgebra of $L(\alpha)$.}
\end{rem}

At the end of Sect.~8 we mentioned that
$\Theta'((1+x_2)\partial_2)=x_1(1+x_2)$. In what follows we require
some computations in the subalgebra $\Theta(H)\subset
\mathfrak{c}_{R}(x_1(1+x_2))$. It follows from the multiplication
table for $R$ that $\mathfrak{c}_{R}(x_1(1+x_2))$ contains
$x_1^{(2)}(1+x_2)^2$ and $x_1^{(3)}(1+x_2)^3$. Set
$w:=x_2-x_2^{(2)}+2x_2^{(3)}-x_2^{(4)}-x_2^{(5)}$ and observe that
\begin{eqnarray}
[x_1(1+x_2),w]&=&[x_1,w]+[x_1x_2,w]\,=\,\big(-x_2+2x_2^{(2)}-x_2^{(3)}-x_2^{(4)}\big)\label{w1}\\
&+&\Big(x_2-\textstyle{2\choose 1}x_2^{(2)}+ 2\textstyle{3\choose
1}x_2^{(3)}-\textstyle{4\choose 1}x_2^{(4)}\Big)\,=\,0.\nonumber
\end{eqnarray}
Applying Proposition~\ref{iso} it is now easy to see that
$$\textstyle{\bigoplus}_{i=1}^3\,Fx_1^{(i)}(1+x_2)^{i}\oplus Fw\oplus
Fz\subset\,\Theta\big(L_p(\alpha,\beta)^{(1)}\cap
H_p\big)\subset\,\Theta(H).$$  Direct computations show that
\begin{eqnarray}
[x_1^{(2)}(1+x_2)^2,w]&=&x_1(1+x_2)^2(1-x_2+2x_2^{(2)}-x_2^{(3)}-x_2^{(4)})\label{w2}\\
&=&x_1(1+x_2)^6\,=\,x_1(1+x_2);\nonumber\\
{}[x_1^{(3)}(1+x_2)^3,w]&=&x_1^{(2)}(1+x_2)^3(1-x_2+2x_2^{(2)}-x_2^{(3)}-x_2^{(4)})\label{w3}\\
&=&x_1^{(2)}(1+x_2)^7\,=\,x_1^{(2)}(1+x_2)^2.\nonumber
\end{eqnarray}
\begin{prop}\label{linear}
Let $\alpha$ be an arbitrary root of $\Gamma$. Then for any
$r\in\mathbb{F}_p^\times$ there exists a linear map
$l_{r\alpha}\colon\,L_{r\alpha}\rightarrow H$ such that
$x-l_{r\alpha}(x)\in Q(\alpha)$ for all $x\in L_{r\alpha}$.
Furthermore, $H\cap Q(\alpha)\,=\,(0)$ and
$L(\alpha)\,=\,H+Q(\alpha).$
\end{prop}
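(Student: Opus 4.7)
The plan is to fix $\beta\in\Gamma$ so that $(\alpha,\beta)$ is a Melikian pair via Corollary~\ref{melpair2}, choose an isomorphism $\Phi\colon L_p(\alpha,\beta)\stackrel{\sim}{\longrightarrow}\widetilde{\mathcal M}=\mathcal M\oplus T_0$ satisfying (\ref{Phi1})--(\ref{Phi3}) together with the associated $\Theta=\Theta'\circ\Phi|_{L_p(\alpha)}\colon L_p(\alpha)\to R$, and translate the assertion into explicit coefficient calculations in $R$. By Proposition~\ref{Q-alpha}, $Q(\alpha)=L_p(\alpha)\cap (L_p(\alpha,\beta)^{(1)})_{(0)}$, which $\Phi$ sends onto the $20$-dimensional subspace $\mathfrak{c}_{\mathcal M}((1+x_1)\partial_1)\cap\mathcal M_{(0)}$ of $\mathcal M$. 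The inclusion $Q(\alpha)\subset L(\alpha)$ follows from $(L_p(\alpha,\beta)^{(1)})_{(0)}\subset L(\alpha,\beta)^{(1)}\subset L$ together with the identity $L_p(\alpha)\cap L=L(\alpha)$; the latter holds because $L$ is an ideal of $L_p$ and $T\cap L=\mathfrak{z}(H)\subset H$ by Theorem~\ref{CSA}(iv).

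Next I will compute $\Phi(H)$. Since $H_p=\mathfrak{c}_{L_p}(T)$ by Corollary~\ref{allGamma}(3) and $T\subset L_p(\alpha,\beta)$ by Corollary~\ref{allGamma}(2), the restricted subalgebra $H_p$ sits inside $L_p(\alpha,\beta)$ and $\Phi(H_p)$ equals the centralizer of $\mathfrak{t}\oplus T_0$ in $\widetilde{\mathcal M}$, namely $\mathfrak{c}_{\mathcal M}(\mathfrak t)\oplus T_0$. Restricting to $H=H_p\cap L$ and using $\dim\mathfrak{c}_{\mathcal M}(\mathfrak t)=5$ (which follows from \cite[Lemmas~4.1 \& 4.4]{P94}), one obtains $\Phi(H)=\mathfrak{c}_{\mathcal M}(\mathfrak t)\oplus(H\cap T_0)$, and hence $\dim H=5+\dim(H\cap T_0)$.

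The main obstacle is to establish $H\cap Q(\alpha)=(0)$, equivalently $\mathfrak{c}_{\mathcal M}(\mathfrak t)\cap\mathcal M_{(0)}=(0)$. For this I pass to $R$: the proof of Proposition~\ref{Q-alpha} supplies an explicit $20$-element monomial spanning set of $\Theta(Q(\alpha))$, while Proposition~\ref{iso}, the identity $\Theta'((1+x_1)\partial_1)=z$ and the computations (\ref{w1})--(\ref{w3}) exhibit $\{x_1(1+x_2),\,x_1^{(2)}(1+x_2)^2,\,x_1^{(3)}(1+x_2)^3,\,w,\,z\}$ as a basis of $\Theta(H\cap L(\alpha,\beta)^{(1)})$. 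Expanding each of these five vectors in the monomial basis of $R$ produces a nonzero coefficient at one of $x_1,\,x_1^{(2)},\,x_1^{(3)},\,x_2,\,z$; none of these monomials, nor any nonzero element of $C\supseteq\Theta(H\cap T_0)$, occurs among the spanning monomials of $\Theta(Q(\alpha))$, so a coefficient comparison forces every element of $\Theta(H)\cap\Theta(Q(\alpha))$ to vanish.

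Finally, $L(\alpha)=H+Q(\alpha)$ follows by dimension: Theorem~\ref{sum}(3) gives $\dim L(\alpha)=\dim H+4\cdot 5=\dim H+20$, and since $\dim Q(\alpha)=20$ with $H\cap Q(\alpha)=(0)$ and $H+Q(\alpha)\subset L(\alpha)$, the sum exhausts $L(\alpha)$. For any $r\in\mathbb{F}_p^\times$ and $x\in L_{r\alpha}\subset L(\alpha)=H\oplus Q(\alpha)$, the unique decomposition $x=h+q$ with $h\in H$, $q\in Q(\alpha)$ produces the required linear map $l_{r\alpha}(x):=h$.
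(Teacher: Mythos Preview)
Your proof is correct and follows essentially the same route as the paper's: both transfer the problem to $R$ via $\Theta$, use the explicit monomial spanning set of $\Theta(Q(\alpha))$ from the proof of Proposition~\ref{Q-alpha}, and verify that $\bigoplus_{i=1}^3 Fx_1^{(i)}\oplus Fx_2\oplus Fz\oplus C$ complements $\Theta(Q(\alpha))$ in $R$. The only organizational difference is that the paper writes down the maps $m_r$ explicitly on the basis of $\Theta(L_{r\alpha})$ and checks $y-m_r(y)\in\Theta(Q(\alpha))$ by hand, whereas you first establish $H\cap Q(\alpha)=(0)$ and then obtain $l_{r\alpha}$ abstractly as the $H$-component of the direct sum $L(\alpha)=H\oplus Q(\alpha)$ via a dimension count; since the explicit formulae for $m_r$ are not used later, your shortcut loses nothing.
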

\begin{proof}
In order to perform computations in $L_p(\alpha)$ we are going to
invoke the isomorphism $\Theta=\Theta'\circ\Phi$; see
Proposition~\ref{iso}. Recall that
$$\Theta(T)\,=\,Fx_1(1+x_2)\oplus Fz\oplus C.$$
Replacing $\alpha$ by an $\mathbb{F}_p^\times$-multiple of $\alpha$,
if necessary, we may assume that $\alpha((x_1(1+x_2))=1$. Using the
multiplication table for $R$ it is then straightforward to see that
$$\Theta(L_{r\alpha})\,=\,\textstyle{\bigoplus}_{i=1}^3\,Fx_1^{(i)}(1+x_2)^{r+i}
\oplus F\big(x_1^{(4)}(1+x_2)^{r-1}-r^{-1}z\big)\oplus
F\big((1+x_2)^r-1\big)$$ for all $r\in\mathbb{F}_p^\times$ and that
$\Theta(H)$ is sandwiched between
$\textstyle{\bigoplus}_{i=1}^3\,Fx_1^{(i)}(1+x_2)^{i}\oplus Fw\oplus
Fz$ and
$\Theta(H_p)\,=\,\textstyle{\bigoplus}_{i=1}^3\,Fx_1^{(i)}(1+x_2)^{i}\oplus
Fw\oplus Fz\oplus C.$ We now define a linear map
$l_{r\alpha}\colon\,L_{r\alpha}\rightarrow H$ by the formula
$l_{r\alpha}=\Theta^{-1}\circ m_r\circ\Theta$, where $m_{r}$ is the
linear map from $\Theta(L_{r\alpha})$ into $\Theta(H)$ given by
\begin{eqnarray*}
m_r\big(x_1^{(4)}(1+x_2)^{r-1}-r^{-1}z\big)&=&-r^{-1}z;\\
m_r\big(x_1^{(i)}(1+x_2)^{r+i}\big)&=&x_1^{(i)}(1+x_2)^i,\qquad\quad
1\le
i\le 3;\\
m_r\big((1+x_2)^r-1\big)&=&rw.
\end{eqnarray*}
Using the spanning set of $\Theta(Q(\alpha))$ from the proof of
Proposition~\ref{Q-alpha} one observes that
$w-x_2\in\Theta(Q(\alpha))$ and $x_1^{(i)}(1+x_2)^i-x_1^{(i)}\in
\Theta(Q(\alpha))$ for $1\le i\le 3$. By the same token, one finds
that the subspace $\bigoplus_{i=1}^3\, Fx_1^{(i)}\oplus Fx_2\oplus
Fz\oplus C$ of $R$ complements $\Theta(Q(\alpha))$. Since
$x_1^{(4)}(1+x_2)^{r-1}\in \Theta(Q(\alpha))$ for all
$r\in\mathbb{F}_p^\times$, this implies that $y-m_r(y)\in
\Theta(Q(\alpha))$ for all $y\in\Theta(L_{r\alpha})$ and
$R=\Theta(H_p)\oplus \Theta(Q(\alpha))$.

As a result, $x-l_{r\alpha}(x)\in Q(\alpha)$ for all
$r\in\mathbb{F}_p^\times$ and all $x\in L_{r\alpha}$. Consequently,
$L_p(\alpha)=H_p\oplus Q(\alpha)$. Since $Q(\alpha)\subset
L(\alpha)$, this yields $L(\alpha)=H\oplus Q(\alpha)$ and the
proposition follows.
\end{proof}
\begin{prop}\label{nilp}
Let $\mathcal{N}(H)$ denote the set of all $p$-nilpotent elements of
$L_p$ contained in $H$. Then the following hold:
\begin{itemize}
\item[(1)] $\mathcal{N}(H)$ is a $3$-dimensional subspace of $H$.

\smallskip

\item [(2)] There exists a unique $2$-dimensional subspace $H_{(-1)}$ in
$\mathcal{N}(H)$ satisfying the condition
$\big[H_{(-1)},H_{(-1)}\big]\subset \mathcal{N}(H)$. Moreover,
$\big[H_{(-1)},\big[H_{(-1)},H_{(-1)}\big]\big]=H^3$.

\smallskip

\item[(3)] For every $\alpha\in\Gamma$ the subspace $H_{(-1)}+Q(\alpha)$ is stable under
the adjoint action of $Q(\alpha)$.
\end{itemize}
\end{prop}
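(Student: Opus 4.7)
My plan is to tackle the three parts in order, with parts (1) and (2) extracting structure from $H$ itself and part (3) invoking the Melikian filtration.

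For part (1), I will combine Theorem~\ref{sum}(2) (giving $H^4=0$, $H^{[p]}\subset T$, and $\dim H/(H\cap T)=3$) with the fact that $L$ is an ideal of $L_p$. This ideal property implies for $h\in H$ that $h^{[p]}\in L\cap T=H\cap T=\mathfrak{z}(H)$. Since $H^4=0$ and $p=5$, every Jacobson correction term $s_i(x,y)$ in $(x+y)^{[p]}=x^{[p]}+y^{[p]}+\sum s_i(x,y)$ is a Lie polynomial of bracket-degree $5$ in $x,y\in H$ and so lies in $H^5\subset H^4=0$; hence $[p]\colon H\to\mathfrak{z}(H)$ is $F$-additive. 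For $h\in H$, the toral element $h^{[p]}$ commutes with $h$, so $(h-h^{[p]})^{[p]}=h^{[p]}-h^{[p]}=0$ and $h-h^{[p]}\in\mathcal{N}(H)$; this yields the Jordan decomposition $H=\mathcal{N}(H)\oplus\mathfrak{z}(H)$, whence $\dim\mathcal{N}(H)=3$.

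For part (2), I will first note that $\bar H:=H/\mathfrak{z}(H)$ is $3$-dimensional with $\bar H^3=0$ (as $H^3\subset\mathfrak{z}(H)$), and $\bar H^2\ne 0$ (else $H^3=[H,H^2]\subset[H,\mathfrak{z}(H)]=0$ would contradict $\dim H^3=2$); thus $\bar H$ is the $3$-dimensional Heisenberg algebra and $\mathcal{N}(H)\to\bar H$ is an $F$-linear isomorphism. Next I pick a Melikian pair $(\alpha,\beta)$ via Corollary~\ref{melpair2} and transport everything to $R$ via $\Theta\colon L_p(\alpha)\to R$ from Section~8. As in the proof of Proposition~\ref{Q-alpha}, $\Theta(H)$ equals $\bigoplus_{i=1}^3 Fx_1^{(i)}(1+x_2)^i\oplus Fw\oplus Fz$ plus a piece in $C$; since $C$ is toral, the Jordan decomposition forces $\Theta(\mathcal{N}(H))=Fu_1+Fu_2+Fu_3$ where $u_1:=x_1^{(2)}(1+x_2)^2$, $u_2:=x_1^{(3)}(1+x_2)^3$, $u_3:=w$, while $\Theta(H^3)=Fx_1(1+x_2)\oplus Fz$. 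Using the Section~8 multiplication table together with (\ref{w1})--(\ref{w3}), I will verify
\[
[u_1,u_2]=z,\qquad [u_1,u_3]=x_1(1+x_2),\qquad [u_2,u_3]=u_1,
\]
the first two toral and the third nilpotent. For $V=Fv_1+Fv_2$ with $v_i=a_iu_1+b_iu_2+c_iu_3$, the condition $[V,V]\subset\Theta(\mathcal{N}(H))$ requires $a_1b_2-a_2b_1=0=a_1c_2-a_2c_1$; if $(a_1,a_2)\ne 0$ these force $v_1,v_2$ to be parallel, contradicting $\dim V=2$, so $(a_1,a_2)=0$ and $V=Fu_2+Fu_3$ is the unique candidate. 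For this $V$, $[V,[V,V]]=F\cdot x_1(1+x_2)+F\cdot z=\Theta(H^3)$.

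For part (3), I will use Proposition~\ref{Q-alpha} to write $Q(\alpha)=L_p(\alpha)\cap\bigl(L_p(\alpha,\beta)^{(1)}\bigr)_{(0)}$, where $\bigl(L_p(\alpha,\beta)^{(1)}\bigr)_{(i)}:=\Phi^{-1}(\mathcal{M}_{(i)})$. Since $[\mathcal{M}_{(0)},\mathcal{M}_{(-1)}]\subset\mathcal{M}_{(-1)}$, the subspace $\bigl(L_p(\alpha,\beta)^{(1)}\bigr)_{(-1)}$ is stable under $\bigl(L_p(\alpha,\beta)^{(1)}\bigr)_{(0)}$, hence under $Q(\alpha)$. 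Applying the inverse of Proposition~\ref{iso} I will compute
\[
\Phi(H_{(-1)})=F(1+x_1)^3(1-x_2+2x_2^{(2)}-x_2^{(3)}-x_2^{(4)})\tilde{\partial}_2+F(1+x_1)^4(1+x_2)^3\tilde{\partial}_1,
\]
which reduces modulo $\mathcal{M}_{(0)}$ to nonzero multiples of $\tilde{\partial}_2$ and $\tilde{\partial}_1$ respectively; hence $\Phi(H_{(-1)})$ surjects onto $\mathcal{M}_{(-1)}/\mathcal{M}_{(0)}=F\tilde{\partial}_1+F\tilde{\partial}_2$. Combined with $H_{(-1)}\cap Q(\alpha)\subset H\cap Q(\alpha)=0$ (Proposition~\ref{linear}), this yields $H_{(-1)}+Q(\alpha)=L_p(\alpha)\cap\bigl(L_p(\alpha,\beta)^{(1)}\bigr)_{(-1)}$, which is $Q(\alpha)$-stable by the preceding observation. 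The main obstacle will be the explicit bracket computation in part~(2), especially verifying $[u_1,u_2]=z$, which requires expanding $(1+x_2)^2=1+2x_2+2x_2^{(2)}$ and $(1+x_2)^3=1+3x_2+x_2^{(2)}+x_2^{(3)}$ and checking that all cross-terms involving $x_1^{(4)}x_2^{(k)}$ cancel modulo $5$.
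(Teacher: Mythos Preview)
Your argument for part~(1) has a genuine gap. You claim that the ideal property of $L$ in $L_p$ forces $h^{[p]}\in L\cap T$, but an ideal need not be closed under the $p$-map: $L$ is typically \emph{not} a restricted subalgebra of $L_p$ (indeed, if $L\cong\mathcal{M}(m,n)$ with $(m,n)\ne(1,1)$ then $L$ is not restricted), so $h^{[p]}\in T$ need not lie in $H=\mathfrak{c}_L(T)$. Even granting $h^{[p]}\in T$, calling it ``toral'' is unwarranted: elements of $T$ are $p$-semisimple but need not satisfy $t^{[p]}=t$, so your computation $(h-h^{[p]})^{[p]}=h^{[p]}-h^{[p]}$ should read $h^{[p]}-(h^{[p]})^{[p]}$, which is generally nonzero. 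Consequently the decomposition $h=(h-h^{[p]})+h^{[p]}$ neither stays in $H$ nor exhibits the nilpotent/semisimple splitting you want.

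The fix is already implicit in your part~(2): once you transport to $R$ via $\Theta$, simply \emph{verify} (using the relations~(\ref{pth power}) and additivity of $[p]$ on $\Theta(H_p)$, which your $H^4=0$ argument does justify) that $u_1,u_2,u_3$ are $p$-nilpotent. Since $\Theta(H_p)=\Theta(T)\oplus N$ with $N=Fu_1\oplus Fu_2\oplus Fu_3$ and $\Theta(T)$ a torus, $N$ is exactly the $p$-nilpotent part of $\Theta(H_p)$; as $N\subset\Theta(H)$, you get $\mathcal{N}(H)=\Theta^{-1}(N)$, proving~(1). This is precisely the paper's route. Your part~(2) is then correct and matches the paper's computation.

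Your part~(3), by contrast, is cleaner than the paper's treatment. The paper verifies $[\Theta(Q(\alpha)),\Theta(H_{(-1)})]\subset\Theta(Q(\alpha))+\Theta(H_{(-1)})$ by direct inspection of the spanning sets and the multiplication table. Your identification $H_{(-1)}+Q(\alpha)=L_p(\alpha)\cap\bigl(L_p(\alpha,\beta)^{(1)}\bigr)_{(-1)}$ via the surjection of $\Phi(H_{(-1)})$ onto $\mathcal{M}_{(-1)}/\mathcal{M}_{(0)}$ and the dimension count (both sides have dimension~$22$) makes the $Q(\alpha)$-stability an immediate consequence of $[\mathcal{M}_{(0)},\mathcal{M}_{(-1)}]\subset\mathcal{M}_{(-1)}$, avoiding any case-by-case bracket checks.
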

\begin{proof}
Jacobson's formula together with (\ref{pth power}) and the
multiplication table for $R$ shows that the subspace
$N:=Fx_1^{(2)}(1+x_2)^2\oplus Fx_1^{(3)}(1+x_2)^3)\oplus Fw$
consists of $p$-nilpotent elements of $R$. On the other hand, it is
clear from our remarks in the proof of Proposition~\ref{linear} that
$\Theta(H_p)=\,\Theta(T)\oplus N$. Since $\Theta(T)$ is a torus,
this entails that $N$ coincides with the set of all $p$-nilpotent
elements of the restricted Lie algebra $\Theta(H_p)$. Since
$\Theta:\,L_p(\alpha)\stackrel{\sim}{\longrightarrow} R$ is an
isomorphism of restricted Lie algebras, we deduce that
$\mathcal{N}(H)=\Theta^{-1}(N)$ is a $3$-dimensional subspace of
$H$.

The elements $D_H(x_1^{(2)}(1+x_2)^2)$ and
$D_H((x_1^{(3)}(1+x_2)^3)$ of the Hamiltonian algebra
$H(2;\un{1})^{(2)}$ commute. Therefore, in our central extension $R$
we have the equality
\begin{equation}\label{ext}
\big[x_1^{(2)}(1+x_2)^2,x_1^{(3)}(1+x_2)^3)\big]\,=\,\big[x_1^{(2)},x_2^{(3)}\big]\,=\,z.
\end{equation}
Now take any linearly independent elements
$u_1=a_1x_1^{(2)}(1+x_2)^2+b_1x_1^{(3)}(1+x_2)^3+c_1w$ and
$u_2=a_2x_1^{(2)}(1+x_2)^2+b_2x_1^{(3)}(1+x_2)^3+c_2w$ in $N$ such
that $[u_1,u_2]\in N$. Then (\ref{ext}) together with (\ref{w2}) and
(\ref{w3}) yields
$$
N\ni
[u_1,u_2]=(a_1b_2-a_2b_1)z+(a_1c_2-a_2c_1)x_1(1+x_2)+(b_1c_2-b_2c_1)x_1^{(2)}(1+x_2)^2,
$$
forcing $a_1b_2=a_2b_1$ and $a_1c_2=a_2c_1$. If  $a_1\ne 0$, then
$u_2=\frac{a_2}{a_1}u_2$ which is false. Therefore, $a_1=0$. Arguing
similarly, one obtains $a_2=0$. This shows that
$H_{(-1)}:=\Theta^{-1}(Fx_1^{(3)}(1+x_2)^3\oplus Fw)$ is the only
$2$-dimensional subspace of $\mathcal{N}(H)$ with the property that
$\big[H_{(-1)},H_{(-1)}\big]\subset \mathcal{N}(H)$. Combining
(\ref{w3}), (\ref{w2}) and (\ref{ext}) one derives that
$\big[H_{(-1)},\big[H_{(-1)},H_{(-1)}\big]\big]=H^3$.

Using the spanning set for $\Theta(Q(\alpha))$ displayed in part~(a)
the proof of Proposition~\ref{Q-alpha} and the multiplication table
for $R$, it is routine to check that
$$[\Theta(Q(\alpha)),Fx_1^{(3)}(1+x_2)^3\oplus Fw]\subset
\Theta(Q(\alpha))+Fx_1^{(3)}(1+x_2)^3\oplus Fw.$$ This implies that
$H_{(-1)}+Q(\alpha)$ is invariant under the adjoint action of
$Q(\alpha)$.
\end{proof}
\section{\bf Conclusion}
For any $\gamma\in\Gamma$ we fix a map $l_\gamma\colon\,
L_\gamma\rightarrow H$ satisfying the conditions of
Proposition~\ref{linear}. Given $x\in L_\gamma$ we set
$\widetilde{x}:=x-l_\gamma(x)$, an element of $Q(\alpha)$. Define
$$
L_{(0)}\,:=\,\textstyle{\sum}_{\gamma\in\Gamma}\,Q(\gamma),
$$ a subspace of $L$. We are going to show that $L_{(0)}$ is actually a
subalgebra of $L$. Since it follows from Remark~\ref{sub} that
$[Q(\gamma),Q(\gamma)]\subset L_{(0)}$ for all $\gamma\in\Gamma$, we
just need to check that $[Q(\alpha),Q(\beta)]\subset L_{(0)}$ for
all $\mathbb{F}_p$-independent $\alpha,\beta\in\Gamma$.
\begin{lemm}\label{L(0)1}
Let $(\alpha,\beta)$ be an arbitrary Melikian pair in $\Gamma^2$ and
let $x\in L_\alpha$, $y\in L_\beta$. Then
$[\widetilde{x},\widetilde{y}]\in L_{(0)}$ and
$$
[\widetilde{x},\widetilde{y}]\,\equiv\,\widetilde{[x,y]}\quad\ \Big(
\mathrm{mod}\ \,Q(\alpha)+Q(\beta)\Big).
$$
\end{lemm}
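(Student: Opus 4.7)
The plan is to exhibit candidates $q_\alpha\in Q(\alpha)$ and $q_\beta\in Q(\beta)$ such that $[\widetilde x,\widetilde y]=\widetilde{[x,y]}+q_\alpha+q_\beta$; both parts of the lemma then follow at once, since $\widetilde{[x,y]}\in Q(\alpha+\beta)\subset L_{(0)}$ when $\alpha+\beta\in\Gamma$ and vanishes otherwise. First, Proposition~\ref{Q-alpha} applied to the two ordered Melikian pairs $(\alpha,\beta)$ and $(\beta,\alpha)$ shows $\widetilde x\in Q(\alpha)\subset L_p(\alpha,\beta)^{(1)}_{(0)}$ and $\widetilde y\in Q(\beta)\subset L_p(\alpha,\beta)^{(1)}_{(0)}$; since the latter corresponds under the isomorphism $\Phi$ of Section~7 to the maximal subalgebra $\mathcal{M}_{(0)}$ of $\mathcal{M}$, it is a subalgebra, and therefore $[\widetilde x,\widetilde y]\in L_p(\alpha,\beta)^{(1)}_{(0)}\subset L$.

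Next, I expand bilinearly
\[
[\widetilde x,\widetilde y]=[x,y]-[l_\alpha(x),y]-[x,l_\beta(y)]+[l_\alpha(x),l_\beta(y)],
\]
noting that the four summands lie in $L_{\alpha+\beta}$ (or $0$ if $\alpha+\beta\notin\Gamma$), $L_\beta$, $L_\alpha$, and $H$ respectively. Proposition~\ref{linear} writes each $v\in L_\gamma$ uniquely as $v=l_\gamma(v)+\widetilde v$ with $\widetilde v\in Q(\gamma)$, which forces the choices $q_\alpha:=-[x,l_\beta(y)]+l_\alpha([x,l_\beta(y)])\in Q(\alpha)$ and $q_\beta:=-[l_\alpha(x),y]+l_\beta([l_\alpha(x),y])\in Q(\beta)$. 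Collecting, all root-space components cancel and I am left with
\[
[\widetilde x,\widetilde y]-\widetilde{[x,y]}-q_\alpha-q_\beta\;=\;E\;\in\;H,
\]
where
\[
E:=[l_\alpha(x),l_\beta(y)]+l_{\alpha+\beta}([x,y])-l_\alpha([x,l_\beta(y)])-l_\beta([l_\alpha(x),y])
\]
(with $l_{\alpha+\beta}([x,y])=0$ when $\alpha+\beta\notin\Gamma$). Both claims of the lemma are thus equivalent to the single identity $E=0$.

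The main obstacle is proving $E=0$. One reduction is the elementary observation that $(Q(\alpha)+Q(\beta))\cap H=0$: the $\mathbb F_p$-independence of $\alpha$ and $\beta$ separates the root-space components of $Q(\alpha)$ from those of $Q(\beta)$ in the decomposition $L(\alpha,\beta)=H\oplus\bigoplus_\gamma L_\gamma$, so if $q_\alpha+q_\beta$ lies in $H$ then its $L_{i\alpha}$- and $L_{j\beta}$-components must vanish separately, whence $q_\alpha,q_\beta\in H\cap Q=0$ by Proposition~\ref{linear}. Consequently it suffices to show $E\in Q(\alpha)+Q(\beta)$. I would accomplish this by transporting the problem to $\widetilde{\mathcal M}$ via $\Phi$: the image $\Phi(E)$ lies in the Cartan subalgebra $\mathfrak{c}_{\mathcal M}(\mathfrak t)\oplus T_0$, and using the explicit descriptions of $\Phi(Q(\alpha))$, $\Phi(Q(\beta))$ and $\Phi(H)$ from Sections~7--8 together with a weight-by-weight calculation in $\mathcal{M}_{(0)}$ (reading the brackets off the Melikian multiplication table \cite[(4.3.1)]{St04}), one verifies that the specific combination defining $\Phi(E)$ collapses to zero. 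This final weight-space computation is the crucial technical step.
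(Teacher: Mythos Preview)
Your reduction is sound: once you have $[\widetilde x,\widetilde y]=\widetilde{[x,y]}+q_\alpha+q_\beta+E$ with $E\in H$, both assertions follow from $E=0$. The gap is in the last paragraph. The ``weight-by-weight calculation in $\mathcal{M}_{(0)}$'' you propose is not carried out, and it is not clear it can be: the explicit formula for $l_\alpha$ in Proposition~\ref{linear} uses an isomorphism $\Phi$ normalised by $\bar\Phi(h_\alpha)=(1+x_1)\partial_1$, whereas the formula for $l_\beta$ would require $\bar\Phi'(h_\beta)=(1+x_1)\partial_1$; these are different coordinate systems on $\mathcal{M}$, so one cannot simply read the four summands of $\Phi(E)$ off the tables of Sections~7--8 simultaneously.

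There is a much shorter finish hiding in what you already have. You observed $[\widetilde x,\widetilde y]\in \big(L_p(\alpha,\beta)^{(1)}\big)_{(0)}$. The same Proposition~\ref{Q-alpha}, applied to the Melikian pairs $(\alpha,\beta)$, $(\beta,\alpha)$ and $(\alpha+\beta,\alpha)$ (the last is Melikian by Lemma~\ref{melpair1} since $\alpha_{|H^3}$ and $(\alpha+\beta)_{|H^3}$ are independent), shows $q_\alpha,q_\beta,\widetilde{[x,y]}\in\big(L_p(\alpha,\beta)^{(1)}\big)_{(0)}$ as well. Hence $E\in H\cap\big(L_p(\alpha,\beta)^{(1)}\big)_{(0)}$, and under $\Phi$ this becomes $\Phi(E)\in\mathfrak{c}_{\mathcal M}(\mathfrak t)\cap\mathcal{M}_{(0)}$. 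That intersection is zero: the five elements $(1+x_1)\partial_1$, $(1+x_2)\partial_2$, $(1+x_1)^2(1+x_2)^2$, $(1+x_1)^3(1+x_2)^4\tilde\partial_2$, $(1+x_1)^4(1+x_2)^3\tilde\partial_1$ spanning $\mathfrak{c}_{\mathcal M}(\mathfrak t)$ have lowest-degree parts $\partial_1,\partial_2,1,\tilde\partial_2,\tilde\partial_1$, a basis of $\mathcal{M}/\mathcal{M}_{(0)}$. So $E=0$ with no computation at all.

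The paper argues slightly differently: it first proves the decomposition $\big(L_p(\alpha,\beta)^{(1)}\big)_{(0)}=Q(\alpha)\oplus\bigoplus_{j\in\mathbb F_p}Q(\beta+j\alpha)$ by a dimension count ($\dim Q(\delta)=20$ for each $\delta$, six of them, total $120$), which immediately gives $[Q(\alpha),Q(\beta)]\subset L_{(0)}$; then it runs your expansion and finishes with $\widetilde h\in H\cap L_{(0)}=0$. The two arguments are close in spirit; the paper's decomposition is re-used later in Section~10, which is why it is set up that way.
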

\begin{proof}
Set $\Delta:=\{\alpha\}\cup (\beta+\mathbb{F}_p\alpha)$.
Proposition~\ref{linear} says that $L(\delta)=H\oplus Q(\delta)$ for
any $\delta\in\Delta$. In conjunction with
Proposition~\ref{Q-alpha}, this gives
\begin{eqnarray}\label{q-delta}
\big(L_p(\alpha,\beta)^{(1)}\big)(\delta)\,=\,\big(H\cap
L_p(\alpha,\beta)^{(1)}\big)\oplus
Q(\delta)\qquad\quad\,(\forall\,\delta\in\Delta).
\end{eqnarray}
Recall that $\Phi\colon\,L_p(\alpha,\beta)^{(1)}
\stackrel{\sim}{\longrightarrow}\,\mathcal{M}$ is a Lie algebra
isomorphism taking $H\cap L_p(\alpha,\beta)^{(1)}$  onto
$\mathfrak{c}_{\mathcal{M}}(\mathfrak{t})$ and
$\big(L_p(\alpha,\beta)^{(1)}\big)_{(0)}$ onto $\mathcal{M}_{(0)}$.
Therefore,
\begin{eqnarray}\label{dim}
\dim H\cap L_p(\alpha,\beta)^{(1)}=\,5,\qquad
\dim\big(L_p(\alpha,\beta)^{(1)}\big)_{(0)}=\,120.
\end{eqnarray}
Combining (\ref{dim}) and (\ref{q-delta}) we now deduce that for
every $\delta\in\Delta$ the subalgebra $Q(\delta)\,=\,
L_p(\delta)\cap\big(L_p(\alpha,\beta)^{(1)}\big)_{(0)}$ has
codimension $5$ in the $1$-section
$\big(L_p(\alpha,\beta)^{(1)}\big)(\delta)$. Since
$L_p(\alpha,\beta)^{(1)}\cong\,\mathcal{M}$, it follows from
\cite[Lemmas~4.1 \& 4.4]{P94}, for instance, that
$\dim\big(L_p(\alpha,\beta)^{(1)}\big)(\delta)\,=\,25$. Therefore,
$\dim Q(\delta)=20$ for all $\delta\in\Delta$.

For any $\mu\in\Delta$ one has
$$Q(\mu)\cap
\big(\textstyle{\sum}_{\delta\in\Delta\setminus\{\mu\}}\,Q(\delta)\big)
\subset\,Q(\mu)\cap(\textstyle{\sum}_{\delta\in\Delta\setminus\{\mu\}}\,L(\delta)\big)
\subset\,Q(\mu)\cap H=\,(0).$$ This shows that the sum $Q(\alpha)+
\textstyle{\sum}_{j=0}^4\,Q(\beta+j\alpha)$ is direct. But then
$$
\dim\Big(Q(\alpha)\oplus
\textstyle{\bigoplus}_{j=0}^4\,Q(\beta+j\alpha)\Big)=\, 6\cdot
20\,=\,120\,=\,\dim \big(L_p(\alpha,\beta)^{(1)}\big)_{(0)},
$$
implying that
$\big(L_p(\alpha,\beta)^{(1)}\big)_{(0)}=\,Q(\alpha)+\textstyle{\sum}_{j\in\mathbb{F}_p}\,Q(\beta+j\alpha).$
As a consequence,
\begin{eqnarray}
\qquad\qquad\ \big[Q(\alpha),\,Q(\beta)\big]&\subset&
\Big[\big(L_p(\alpha,\beta)^{(1)}\big)_{(0)},\big(L_p(\alpha,\beta)^{(1)}\big)_{(0)}\Big]
\,\subset\,\big(L_p(\alpha,\beta)^{(1)}\big)_{(0)}\\
&=&Q(\alpha)+\,\textstyle{\bigoplus}_{j=0}^4\,Q(\beta+j\alpha)\,\subset\,\,L_{(0)}.\nonumber
\end{eqnarray}
This shows that $[\widetilde{x},\widetilde{y}]\in L_{(0)}$.
Computing modulo $Q(\alpha)+Q(\beta)$ we get
\begin{eqnarray*}
[\widetilde{x},\widetilde{y}]&=&\big([x,y]-l_{\alpha+\beta}([x,y]-
[x,l_\beta(y)]+l_\alpha([x,l_\beta(y)]-[l_\alpha(x),y]+l_\beta([l_\alpha(x),y]\big)
\\
&+&[l_\alpha(x),l_\beta(y)]+\big(l_{\alpha+\beta}([x,y])-
l_\alpha([x,l_\beta(y)])-l_\beta([l_\alpha(x),y])\big)\\
&=&
\widetilde{[x,y]}-\widetilde{[x,l_\beta(y)]}-\widetilde{[l_\alpha(x),y]}+\widetilde{h}\\
&\equiv&\widetilde{[x,y]}+\widetilde{h},
\end{eqnarray*}
where
$\widetilde{h}=l_{\alpha+\beta}([x,y])-l_\alpha([x,l_\beta(y)])-l_\beta([l_\alpha(x),y])+[l_\alpha(x),l_\beta(y)]$.
As $\widetilde{[x,y]}\in L_{(0)}$, it must be that $\widetilde{h}\in
H\cap L_{(0)}=\,H\cap\big(\sum_{\gamma\in\Gamma}\,Q(\gamma)\big).$
Expressing
$\widetilde{h}=\sum_{\gamma\in\Gamma}\,(v_\gamma-l_\gamma(v))$ with
$v_\gamma\in L_\gamma$ we see that $v_\gamma=0$ for all $\gamma$,
whence $l_\gamma(v_\gamma)=0$ and $\widetilde{h}=0$. The result
follows.
\end{proof}
\begin{theo}\label{L(0)2}
$L_{(0)}$ is a proper subalgebra of $L$.
\end{theo}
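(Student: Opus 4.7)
\noindent\textbf{Proof proposal for Theorem~\ref{L(0)2}.} The statement splits naturally into (i) closure of $L_{(0)}$ under the bracket and (ii) properness.

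\emph{Properness.} I first observe that $L_{(0)}$ is actually a direct sum, and splits off cleanly from $H$. By Proposition~\ref{linear}, each $Q(\gamma)$ meets $H$ trivially and the root-space projection restricts to a linear isomorphism $Q(\gamma)\stackrel{\sim}{\to}\bigoplus_{r\in\mathbb F_p^\times}L_{r\gamma}$. Since $Q(r\gamma)=Q(\gamma)$ for every $r\in\mathbb F_p^\times$ (the definition of $Q(\alpha)$ only involves $L_p(\alpha)$, which is unchanged upon rescaling $\alpha$), it is enough to range $\gamma$ over a set of representatives for $\Gamma/\mathbb F_p^\times$. The root-space images $\bigoplus_{r}L_{r\gamma}$ attached to distinct classes are disjoint, so the combined projection $L_{(0)}\to\bigoplus_{\delta\in\Gamma}L_\delta$ is both surjective and injective; in particular $L_{(0)}\cap H=(0)$ and $L=H\oplus L_{(0)}$ as vector spaces. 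Consequently $L_{(0)}$ has codimension $\dim H>0$, hence is a proper subspace of $L$.

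\emph{Closure, Melikian case.} To show $[Q(\alpha),Q(\beta)]\subset L_{(0)}$, the case $\beta\in\mathbb F_p\alpha$ is handled by Remark~\ref{sub}, so we may assume $\alpha,\beta$ are $\mathbb F_p$-independent. When $(\alpha,\beta)$ is a Melikian pair, the displayed containment appearing inside the proof of Lemma~\ref{L(0)1},
$$[Q(\alpha),Q(\beta)]\,\subset\,\big(L_p(\alpha,\beta)^{(1)}\big)_{(0)}\,=\,Q(\alpha)+\textstyle\sum_{j\in\mathbb F_p}Q(\beta+j\alpha)\,\subset\,L_{(0)},$$
already settles the point.

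\emph{Closure, non-Melikian case.} The remaining case is $(\alpha,\beta)$ not Melikian. By Lemma~\ref{melpair1} we have $H^3\cap\ker\alpha=H^3\cap\ker\beta$ (codimension one in $H^3$). Choose $\gamma\in\Gamma$ with $\gamma_{\vert H^3}$ not proportional to $\alpha_{\vert H^3}$; such $\gamma$ exists because $(H^3)^*\subset T^*$ is spanned by $\Gamma\cup\{0\}$ (Theorem~\ref{sum}(1)). Then $(\alpha,\gamma)$, $(\beta,\gamma)$, and $(r\alpha+s\beta,\gamma)$ (for any $(r,s)\ne(0,0)$) are all Melikian pairs by Lemma~\ref{melpair1}, since $(r\alpha+s\beta)_{\vert H^3}$ is a scalar multiple of $\alpha_{\vert H^3}$. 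For $x\in L_{r\alpha}$, $y\in L_{s\beta}$ expand, exactly as in Lemma~\ref{L(0)1},
$$[\widetilde x,\widetilde y]\,=\,\widetilde{[x,y]}-\widetilde{[x,l_{s\beta}(y)]}-\widetilde{[l_{r\alpha}(x),y]}+\widetilde h,$$
where every $\widetilde{[\cdot,\cdot]}$ lies in some $Q(\delta)\subset L_{(0)}$ and $\widetilde h\in H$. Since $L_{(0)}\cap H=(0)$ by the properness argument, the task reduces to verifying that $\widetilde h=0$, equivalently that $[\widetilde x,\widetilde y]$ itself lies in $L_{(0)}$. The plan is to pass to the three-section $L(\alpha,\beta,\gamma)$, where each of the two Melikian pairs $(\alpha,\gamma)$ and $(\beta,\gamma)$ yields a codimension-$5$ subalgebra $\big(L_p(\alpha,\gamma)^{(1)}\big)_{(0)}$ and $\big(L_p(\beta,\gamma)^{(1)}\big)_{(0)}$ containing $Q(\alpha)$ and $Q(\beta)$ respectively (Proposition~\ref{Q-alpha}). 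A Jacobi-identity argument then shows that $[\widetilde h,Q(\gamma)]$ consists of elements of $L_{(0)}$: one writes $[[\widetilde x,\widetilde y],\widetilde z]$ using Jacobi as $[[\widetilde x,\widetilde z],\widetilde y]+[\widetilde x,[\widetilde y,\widetilde z]]$ with $\widetilde z\in Q(\gamma)$, applies the Melikian case to $[\widetilde x,\widetilde z]$ and $[\widetilde y,\widetilde z]$, and exploits that the resulting inner brackets remain inside the Melikian-pair subalgebras $(L_p(\alpha,\gamma)^{(1)})_{(0)}$ and $(L_p(\beta,\gamma)^{(1)})_{(0)}$. Since $Q(\gamma)$ projects onto $\bigoplus_{t\in\mathbb F_p^\times}L_{t\gamma}$ and $L_{(0)}\cap H=(0)$, knowing $[\widetilde h,\widetilde z]\in L_{(0)}$ for enough $\widetilde z\in Q(\gamma)$ forces $\widetilde h=0$.

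\emph{Main obstacle.} The genuinely delicate point is the non-Melikian case: there is no direct analogue of $(L_p(\alpha,\beta)^{(1)})_{(0)}$ because the two-section now lies between $H(2;(2,1))^{(2)}$ and $H(2;(2,1))$ rather than being isomorphic to $\mathcal M$. Making the three-section argument rigorous requires carefully tracking how $Q(\alpha)$ and $Q(\beta)$ sit inside the two Melikian subalgebras of $L(\alpha,\beta,\gamma)$ and ensuring that the Jacobi manipulations close up inside $L_{(0)}$ rather than producing spurious components of $H$. I expect this reduction via the auxiliary root $\gamma$ to be the crux of the proof, with the Melikian case and the properness statement being comparatively straightforward consequences of the structural results already developed in Sections~8--10.
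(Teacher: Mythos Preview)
Your properness argument and the Melikian case are correct and agree with the paper. The gap is exactly where you flag it, in the non-Melikian case, and your proposed Jacobi reduction is circular.

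Trace the step. With $(\alpha,\gamma)$ Melikian, Lemma~\ref{L(0)1} gives $[\widetilde x,\widetilde z]=\widetilde{[x,z]}-\widetilde{[x,l_\gamma(z)]}-\widetilde{[l_\alpha(x),z]}$, and the middle term lies in $Q(\alpha)$. Bracketing that term with $\widetilde y\in Q(\beta)$ lands you back in $[Q(\alpha),Q(\beta)]$, which is precisely what you are trying to show lies in $L_{(0)}$. The symmetric piece of $[\widetilde x,[\widetilde y,\widetilde z]]$ produces a $[Q(\alpha),Q(\beta)]$ contribution as well (via $\widetilde{[y,l_\gamma(z)]}\in Q(\beta)$). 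So the argument does not close up. Your final implication, that $[\widetilde h,Q(\gamma)]\subset L_{(0)}$ forces $\widetilde h=0$, is also not established: unwinding it only yields $l_{t\gamma}([\widetilde h,z])=[\widetilde h,l_{t\gamma}(z)]$ for $z\in L_{t\gamma}$, which does not obviously kill $\widetilde h$.

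The paper avoids the circularity by decomposing $v\in L_\beta$ itself rather than bracketing from the outside. Schue's lemma with respect to $\Gamma(\alpha):=\{\gamma\in\Gamma\mid\gamma(h_\alpha)\ne 0\}$ gives $L_\beta=\sum_{\gamma\in\Gamma(\alpha)}[L_\gamma,L_{\beta-\gamma}]$, so $v=\sum_i[x_i,y_i]$ with $x_i\in L_{\gamma_i}$, $y_i\in L_{\beta-\gamma_i}$. Since each $(\gamma_i,\beta-\gamma_i)$ is Melikian, Lemma~\ref{L(0)1} rewrites $\widetilde v$ as $\sum_i[\widetilde{x_i},\widetilde{y_i}]$ modulo $\sum_i\big(Q(\gamma_i)+Q(\beta-\gamma_i)\big)$, and crucially $Q(\beta)$ has disappeared from the picture. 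One then applies Jacobi to $[\widetilde u,[\widetilde{x_i},\widetilde{y_i}]]$ and checks that all the pairs that arise, namely $(\alpha,\gamma_i)$, $(\alpha,\beta-\gamma_i)$, $(\alpha+\gamma_i,\beta-\gamma_i)$ and $(\gamma_i,\alpha+\beta-\gamma_i)$, are Melikian; the last two need a short verification using $\alpha+\beta\ne 0$ on $H^3$. This Schue-lemma decomposition of $v$ is the missing idea.
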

\begin{proof}
By our earlier remark in this section, we need to show that
$[Q(\alpha),Q(\beta)]\subset L_{(0)}$ for all pairs
$(\alpha,\beta)\in\Gamma^2$ such that $\alpha$ and $\beta$ are
$\mathbb{F}_p$-independent. If $(\alpha,\beta)$ is a Melikian pair,
this follows from Lemma~\ref{L(0)1}.

Take any $\mathbb{F}_p$-independent $\alpha,\beta\in\Gamma$ for
which $(\alpha,\beta)$ is not a Melikian pair. Then
$H^3\cap\ker\alpha\,=\,H^3\cap\ker\beta$; see Lemma~\ref{melpair1}.
Recall that $H^3\cap \ker\alpha=Fh_\alpha$ for some nonzero
$h_\alpha\in H^3$. Put
$\Gamma(\alpha):=\{\gamma\in\Gamma\,|\,\,\gamma(h_\alpha)\ne 0\}$.
Since $H^3\subset T$, the set $\Gamma(\alpha)$ is nonempty. Then it
follows from Schue's lemma~\cite[Prop.~1.3.6(1)]{St04} that
\begin{equation}\label{L(0)3}
L_\beta\,=\,\textstyle{\sum}_{\gamma\in\Gamma(\alpha)}\,[L_\gamma,L_{\beta-\gamma}].
\end{equation}

Let $\gamma$ be an arbitrary root in $\Gamma(\alpha)$. Since
$\alpha(h_\alpha)=\beta(h_\alpha)=0$, it is immediate from
Lemma~\ref{melpair1} that $(\alpha,\gamma)$  and
$(\alpha,\beta-\gamma)$ are Melikian pairs in $\Gamma^2$.

Suppose $(\alpha+\gamma,\beta-\gamma)$ is not a Melikian pair. Then
$(\beta-\gamma)(h_{\alpha+\gamma})=0$ by Lemma~\ref{melpair1}. As
$(\beta-\gamma)(h_\alpha)=-\gamma(h_\alpha)\ne 0$ and $\dim H^3=2$
by Theorem~\ref{sum}(2), this yields $H^3=Fh_\alpha\oplus
Fh_{\alpha+\gamma}$. Also, $(\alpha+\beta)(h_\alpha)=0$ and
$(\alpha+\beta)(h_{\alpha+\gamma})=\big((\alpha+\gamma)+(\beta-\gamma)\big)(h_{\alpha+\gamma})=0$
by our assumption on $(\alpha+\gamma,\beta-\gamma)$. This shows that
$\alpha+\beta$ vanishes on $H^3$ and hence on $(H^3)_p=T$; see
Theorem~\ref{sum}(2). But then $\alpha+\beta=0$, a contradiction.
Thus, $(\alpha+\gamma,\beta-\gamma)$ is a Melikian pair.

If $(\gamma,\alpha+\beta-\gamma)$ is not a Melikian pair, then
$\gamma(h_{\alpha+\beta-\gamma})=0$. As $\gamma\in\Gamma(\alpha)$,
we then have $H^3=Fh_\alpha\oplus Fh_{\alpha+\beta-\gamma}.$ But
then $\alpha+\beta=\gamma+(\alpha+\beta-\gamma)$ vanishes on
$(H^3)_p$, a contradiction. So $(\gamma,\alpha+\beta-\gamma)$ is a
Melikian pair, too.

We now take arbitrary $u\in L_\alpha$ and $v\in L_\beta$. By
(\ref{L(0)3}), there exist
$\gamma_1,\ldots,\gamma_N\in\Gamma(\alpha)$ such that
$v=\sum_{i=1}^N\,[x_i,y_i]$ for some $x_i\in L_{\gamma_i}$ and
$y_i\in L_{\beta-\gamma_i}$, where $1\le i\le N$. Applying
Lemma~\ref{L(0)1} and the preceding remarks we obtain
\begin{eqnarray*}
[\widetilde{u},\widetilde{v}]&\in&
\textstyle{\sum}_{i=1}^N\,[\widetilde{u},[\widetilde{x_i},\widetilde{y_i}]]\,+\,
\sum_{i=1}^N\,[Q(\alpha),Q(\gamma_i)+Q(\beta-\gamma_i)]\\
&\subset&\textstyle{\sum}_{i=1}^N\,
\big([[\widetilde{u},\widetilde{x_i}],\widetilde{y_i}]
+[\widetilde{x_i},[\widetilde{u},\widetilde{y_i}]]\big)+L_{(0)}\\
&\subset&\textstyle{\sum}_{i=1}^N\,\big([Q(\alpha+\gamma_i),Q(\beta-\gamma_i)]\,+\,
[Q(\gamma_i),Q(\alpha+\beta-\gamma_i)]\big)+L_{(0)}\subset\,
L_{(0)}.
\end{eqnarray*}
Consequently, $[Q(\alpha),Q(\beta)]\subset L_{(0)}$ in all cases.
The argument at the end of the proof of Lemma~\ref{L(0)1} shows that
$L_{(0)}\cap H=(0)$. Hence $L_{(0)}$ is a proper subalgebra of $L$.
\end{proof}
Recall the subspace $H_{(-1)}$ from Proposition~\ref{nilp}(2).
According to Proposition~\ref{nilp}(3), $\big[Q(\gamma),
H_{(-1)}]\subset H_{(-1)}+Q(\gamma)\subset H_{(-1)}+L_{(0)}$ for all
$\gamma\in\Gamma$. In view of Theorem~\ref{L(0)3}, this means that
$$
\big[L_{(0)},\,H_{(-1)}+L_{(0)}\big]\,=\,\big[\textstyle{\sum}_{\gamma\in\Gamma}\,\,Q(\gamma),H_{(-1)}+
\textstyle{\sum}_{\delta\in\Gamma}\,Q(\delta)\big] \subset\,
H_{(-1)}+L_{(0)}.
$$
Thus, $L_{(-1)}:=\,H_{(-1)}+L_{(0)}$ is stable under the adjoint
action of the subalgebra $L_{(0)}$.

\medskip

We have finally come to the end of this tale. Let $L'$ denote the
subalgebra of $L$ generated by $L_{(-1)}$. Proposition~\ref{nilp}(2)
shows that $H^3\subset L'$. Then the $p$-envelope of $L'$ in $L_p$
contains $(H^3)_p=T$; see Theorem~\ref{sum}(2). As a consequence,
$L'$ is $T$-stable. Let $\gamma$ be any root in $\Gamma$. Then
$[T,x-l_\gamma(x)]\subset L'$ for all $x\in L_\gamma$, implying
$L_\gamma\subset L'$. As this holds for all $\gamma\in\Gamma$ and
$L$ is simple, we deduce that $L'=L$.

It follows from Theorem~\ref{L(0)3} that $L_{(-1)}\supsetneq
L_{(0)}$. We now consider the standard filtration of $L$ associated
with the pair $\big(L_{(-1)},L_{(0)}\big)$ (it is defined
recursively by setting $L_{(i)}:=\,\{x\in
L_{(i-1)}\,|\,\,[x,L_{(i-1)}]\subset L_{(i-1)}\}$ and
$L_{(-i)}:=\,\big[L_{(-1)},L_{(-i+1)}\big]+L_{(-i+1)}$ for all
$i>0$). Since $L$ is simple and finite-dimensional, this filtration
is exhaustive and separating. Let $G=\bigoplus_{i\in\,\Z}\,G_i$
denote the associated graded Lie algebra, where
$G_i\,=\,\mathrm{gr}_i(L)\,=\,L_{(i)}/L_{(i+1)}$.

Since $L_{(-1)}=H_{(-1)}+L_{(0)}$, we have that
$L_{(-i)}=\,L_{(0)}\,+\,\sum_{j=1}^i\,(H_{(-1)})^j\,$ for all $i>0$.
Since $(H_{(-1)})^3\subset H^3\subset\mathfrak{z}(H)$ by
Theorem~\ref{sum}(2), this shows that $L_{(-4)}=\,L_{(-3)}$, i.e.
$G_{-4}=(0)$. As $\dim H_{(-1)}=2$, we obtain by the same token that
$\dim G_{-2}\le 1$ and $\dim G_{-3}\le 2$.

Let $(\alpha,\beta)$ be any Melikian pair in $\Gamma^2$. By our
remarks in the proof of Lemma~\ref{L(0)1},
$\big(L_p(\alpha,\beta)^{(1)}\big)\cap L_{(0)}\,=\,
\big(L_p(\alpha,\beta)^{(1)}\big)_{(0)}$, while from the explicit
description of $\Theta(H_{(-1)})$ in the proof of
Proposition~\ref{nilp} and Proposition~\ref{iso} we see that
\begin{equation}\label{filt}
H_{(-1)}+\big(L_p(\alpha,\beta)^{(1)}\big)_{(0)}=\,
\big(L_p(\alpha,\beta)^{(1)}\big)_{(-1)}. \end{equation} In
particular, $H_{(-1)}\subset L_p(\alpha,\beta)^{(1)}$. It follows
that the filtration of $L_p(\alpha,\beta)^{(1)}\cong\,\mathcal{M}$
induced by that of $L$ has the property that
$$
L_{(i)}\,=\,\big(L_p(\alpha,\beta)^{(1)}\cap
L_{(i)}\big)\,+\,L_{(i-1)},\qquad i=-1,-2,-3.
$$
In view of (\ref{filt}), this entails that $\dim G_{-1}=\dim
G_{-3}=2$ and $\dim G_{_2}=1.$

As $\dim G_{-1}=2$, and $G_0$ acts faithfully on $G_{-1}$, we have
an embedding $G_0\subset \mathfrak{gl}(2)$. As
$\big(L_p(\alpha,\beta)^{(1)}\big)_{(0)}$ acts on
$\big(L_p(\alpha,\beta)^{(1)}\big)_{(-1)}/\big(L_p(\alpha,\beta)^{(1)}\big)_{(0)}$
as $\mathfrak{gl}(2)$, it follows from (\ref{filt}) that
$\big(L_{(0)}\cap L_p(\alpha,\beta)^{(1)}\big)/\big(L_{(1)}\cap
L_p(\alpha,\beta)^{(1)}\big) \cong\,\mathfrak{gl}(2)$. As a
consequence, $G_0\cong\mathfrak{gl}(2)$. Finally, (\ref{filt})
yields that $L_p(\alpha,\beta)^{(1)}\cap L_{(4)}\ne (0)$, giving
$G_4\ne (0)$.

Applying \cite[Thm.~5.4.1]{St04} we now obtain that the graded Lie
algebra $G$ is isomorphic to a Melikian algebra $\mathcal{M}(m,n)$
regarded with its natural grading. By a result of Kuznetsov
\cite{Ku}, any filtered deformation of the naturally graded Lie
algebra $\mathcal{M}(m,n)$ is isomorphic to $\mathcal{M}(m,n)$; see
\cite[Thm.~6.7.3]{St04}. Thus, $L\cong \mathcal{M}(m,n)$, completing
the proof of Theorem~\ref{1.2}.
\begin{cor}
Let $L$ be a finite-dimensional simple Lie algebra of Cartan type
over an algebraically closed field of characteristic $p>3$ and let
$T$ be a torus of maximal dimension in $L_p\subset\Der L$. Then the
centralizer of $T$ in $L_p$ acts triangulably on $L$.
\end{cor}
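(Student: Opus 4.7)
The plan is to reduce the corollary to the statement that every torus of maximal dimension in $L_p$ is standard, and then invoke Theorem~\ref{1.2} to eliminate the nonstandard possibility. First, I would identify $\mathfrak{c}_{L_p}(T)$ explicitly. Using the root-space decomposition $L=H\oplus\bigoplus_{\gamma\in\Gamma(L,T)}L_\gamma$ together with $L_p=L+T$, any element of $L_p$ commuting with $T$ must lie in the zero weight space, so $\mathfrak{c}_{L_p}(T)=T+H$ and $T$ is central in this nilpotent restricted subalgebra. Consequently $\mathfrak{c}_{L_p}(T)^{(1)}=H^{(1)}$.

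Next I would verify the equivalence between triangulability and standardness. Since $T$ acts diagonally on $L$ via $\Gamma(L,T)$ and $\mathfrak{c}_{L_p}(T)$ preserves the root-space decomposition, triangulability of the $\mathfrak{c}_{L_p}(T)$-action on $L$ amounts to triangulability of the induced action on each $L_\gamma$ by the nilpotent restricted algebra $T+H$; as $T$ acts by scalars on $L_\gamma$, this is equivalent to $H^{(1)}$ acting nilpotently on each $L_\gamma$, hence on all of $L$. Because $H^{(1)}\subset L_p\subset\Der L$ and $p$-nilpotence in $L_p$ coincides with nilpotence of the corresponding derivation of $L$, this is in turn equivalent to $H^{(1)}$ consisting of nilpotent derivations of $L$, which is precisely the definition of $T$ being \emph{standard}.

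It now suffices to show $T$ is standard. Suppose for contradiction that $T$ is a nonstandard torus of maximal dimension in $L_p$. Then Theorem~\ref{1.2} yields $L\cong\mathcal{M}(m,n)$ for some $(m,n)\in\mathbb{N}^2$. But the Melikian algebras form a family disjoint from the filtered Lie algebras of Cartan type in the Classification Theorem (they occur only in characteristic $5$ and have dimensions $5^{m+n+1}$, and by construction are not filtered deformations of any graded Cartan-type algebra $W,S,H,K$), which contradicts the hypothesis on $L$. Hence every such $T$ is standard, and $\mathfrak{c}_{L_p}(T)$ acts triangulably on $L$.

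There is no substantive obstacle here: the entire mathematical content of the corollary is Theorem~\ref{1.2}, and the remainder is a routine identification of $\mathfrak{c}_{L_p}(T)$ together with an unwinding of the definition of \emph{standard} torus in terms of the action on each root space.
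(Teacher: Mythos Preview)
Your overall strategy coincides with the paper's: show that a nonstandard torus of maximal dimension would force $L\cong\mathcal{M}(m,n)$ by Theorem~\ref{1.2}, which is impossible for $L$ of Cartan type, and conclude that every such $T$ is standard. The paper's proof is a one-line citation of \cite[Thm.~A]{PS4} together with Theorem~\ref{1.2}.

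There is, however, a genuine gap in your reduction step. The assertion $L_p=L+T$ is not justified and is false in general: the $p$-envelope $L_p$ contains all iterated $[p]$-powers of root vectors $x\in L_\gamma$, and such $x^{[p]}$ lie in $\mathfrak{c}_{L_p}(T)$ (since they have weight $p\gamma=0$) but need not belong to $L+T$. Thus $\mathfrak{c}_{L_p}(T)$ can be strictly larger than $T+H$, and your claimed equality $\mathfrak{c}_{L_p}(T)^{(1)}=H^{(1)}$ does not follow. What one does get easily (using that $L$ is an ideal of $L_p$ with $[L_p,L_p]\subset L$) is the chain $H^{(1)}\subset \mathfrak{c}_{L_p}(T)^{(1)}\subset H$, but this is not enough: knowing that $H^{(1)}$ consists of nilpotent derivations does not by itself force every element of the possibly larger ideal $\mathfrak{c}_{L_p}(T)^{(1)}\subset H$ to act nilpotently. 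The passage from ``$T$ standard'' (a condition on $H^{(1)}$) to ``$\mathfrak{c}_{L_p}(T)$ acts triangulably'' (a condition on $\mathfrak{c}_{L_p}(T)^{(1)}$) therefore needs an additional argument, which is precisely what the citation of \cite[Thm.~A]{PS4} supplies in the paper's proof.
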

\begin{proof}
This is an immediate consequence of \cite[Thm.~A]{PS4} and
Theorem~\ref{1.2}.
\end{proof}

\end{document}